\newcounter{subeq}
\definecolor{darkblue}{rgb}{0,0,.5}
\definecolor{darkred}{rgb}{.5,0,0}
\definecolor{indigo}{rgb}{.29,0,.55}
\newtheorem{theorem}{Theorem}[chapter]
\newtheorem{proposition}[theorem]{Proposition}
\newtheorem{corollary}[theorem]{Corollary}
\newtheorem{definition}[theorem]{Definition}
\newtheorem{lemma}[theorem]{Lemma}
\newtheorem{example}[theorem]{Example}
\theoremstyle{definition}
\newtheorem{remark}[theorem]{Remark}
\setlist{parsep=0em} \setenumerate{label=(\roman*),topsep=0.5\baselineskip}
\newlist{enumarab}{enumerate}{1}
\setlist[enumarab,1]{
  label=(\arabic*),
  ref=(\arabic*),
  fullwidth,
  listparindent=\parindent, 
  itemindent=\parindent,
  leftmargin=0em,
  itemsep=0em
}
\newlist{enumalph}{enumerate}{1}
\setlist[enumalph,1]{
  label=(\alph*),
  ref=(\alph*),
  fullwidth,
  listparindent=\parindent,
  itemindent=\parindent,
  leftmargin=0em
}
\newlist{enumrom}{enumerate}{1}
\setlist[enumrom,1]{
  label=(\roman*),
  ref=(\alph*),
  fullwidth,
  listparindent=\parindent,
  itemindent=\parindent,
  leftmargin=0em
}
\newlist{enumpara}{enumerate}{1}
\setlist[enumpara,1]{
  label=\emph{\arabic*}.,
  ref=\arabic*,
  wide,
  nosep
}
\newlist{enumparaalph}{enumerate}{1}
\setlist[enumparaalph,1]{
  label=\alph*),
  ref=\alph*),
  wide,
  nosep
}
\makeatletter \newcommand{\thmenumhspace}[1]{\sbox{\@labels}{\unhbox\@labels\hskip#1}} \makeatother
\renewcommand{\i}{\mathrm{i}} 
\newcommand{\dd}{\, \mathrm{d}} \newcommand{\diff}{\mathop{}\!\mathrm{d}}
\newcommand{\argmin}{\operatornamewithlimits{arg\,min}} \newcommand{\sgn}{\operatorname{sgn}}
\newcommand{\supp}{\operatorname{supp}}
\newcommand{\e}{\mathrm{e}}
\renewcommand\Re{\operatorname{Re}}
\newcommand\etc{etc\@ifnextchar.{}{.\@\xspace}}
\newcommand\ie{i.e.\@ifnextchar,{}{\@\xspace}}
\newcommand\eg{e.g.\@ifnextchar,{}{\@\xspace}}
\newcommand\wrt{w.r.t.\@ifnextchar,{}{\@\xspace}}
\newcommand{\leadeq}[2][4]{\MoveEqLeft[#1] #2 \nonumber}
\newcommand{\leadeqnum}[2][4]{\MoveEqLeft[#1] #2}
\begin{document}
\flushbottom

\title{Minimizers and Gradient Flows of Attraction-Repulsion Functionals with Power Kernels and Their Total Variation Regularization} 
\subtitle{Minimierer und Gradientenflüsse von Anziehungs-Abstoßungs-Funktionalen mit Potenz-Kernen und ihre Regularisierung mittels totaler Variation}
\titlehead{Technische Universität München\\
  Department of Mathematics}
\publishers{
  \begin{tabular}{rl}
    Supervisor: & Prof. Dr. Massimo Fornasier\\
    Advisor: & Prof. Dr. Daniel Matthes
  \end{tabular}
}
\author{Master’s Thesis by Jan-Christian Hütter}
\date{Submission date: May 31, 2013}

\pagenumbering{roman}
\maketitle
\pagestyle{scrheadings}
\clearpage{}\makeatletter{}\noindent I assure the single handed composition of this Master's thesis only supported by declared resources.

\vspace{3\baselineskip}
\noindent{}Garching, May 31, 2013,
\hfill
\begin{minipage}[t]{9cm}
  \centering \hrulefill \\
  Jan-Christian Hütter
\end{minipage}
\cleardoublepage

\selectlanguage{ngerman}
\section*{\abstractname}

Wir untersuchen Eigenschaften eines Anziehungs-Abstoßungs-Funktionals, welches durch Potenz-Kerne gegeben ist und das zum Halftonen von Bildern eingesetzt werden kann. Im ersten Teil dieser Arbeit untersuchen wir die Existenz und das Verhalten von Minimieren des Funktionals und dessen Regularisierung mittels der totalen Variation, wobei wir auf variationelle Konzepte für Wahrscheinlichkeitsmaße zurückgreifen. Darüberhinaus führen wir Partikelapproximationen sowohl zum Funktional als auch zu seiner Regularisierung ein und beweisen ihre Konsistenz im Sinne von \( \Gamma \)-Konvergenz, die wir zusätzlich durch numerische Beispiele verdeutlichen. Im zweiten Teil betrachten wir den Gradientenfluss des Funktionals im \( 2 \)-Wasserstein Raum und beweisen Aussagen über sein asymptotisches Verhalten für große Zeiten, wofür wir die Technik der Pseudo-Inversen eines Wahrscheinlichkeitsmaßes in 1D verwenden. Abhängig von den gewählten Parametern beinhaltet dies Konvergenz einer Teilfolge gegen einen Gleichgewichtszustand oder sogar Konvergenz der gesamten Trajektorie gegen einen explizit bestimmbaren Grenzwert. Ein wichtiger Bestandteil der Argumentation ist in beiden Teilen dieser Arbeit die verallgemeinerte Fouriertransformation, mit deren Hilfe die konditionelle Positiv-Definitheit des Interaktionskerns im Falle übereinstimmender Anziehungs- und Abstoßungs-Exponenten nachgewiesen werden kann.
 
\selectlanguage{english}
\section*{\abstractname}

We study properties of an attractive-repulsive energy functional based on power-kernels, which can be used for halftoning of images. In the first part of this work, using a variational framework for probability measures, we examine existence and behavior of minimizers to the functional and to a regularization of it by a total variation term. Moreover, we introduce particle approximations to the functional and to its regularized version and prove their consistency in terms of \( \Gamma \)-convergence, which we additionally illustrate by numerical examples. In the second part, we consider the gradient flow of the functional in the \( 2 \)-Wasserstein space and prove statements about its asymptotic behavior for large times, for which we employ the pseudo-inverse technique for probability measures in 1D. Depending on the parameter range, this includes existence of a subsequence converging to a steady state or even convergence of the whole trajectory to a limit which we can specify explicitly. For both parts of the work, a key ingredient is the generalized Fourier transform, which allows us to verify the conditional positive definiteness of the interaction kernel for coinciding attractive and repulsive exponents.

\clearpage
\selectlanguage{english}
\section*{Acknowledgments}
I would like to thank all the people who supported me in writing this thesis. In particular, I highly appreciate all the time and effort Massimo Fornasier and Daniel Matthes put into advising, teaching and encouraging me. Moreover, I am very thankful to Marco DiFrancesco and José Antonio Carrillo for the inspiring discussions we had and for kindly hosting me in Bath and London, respectively. Also, I thank the \textsc{start} project ``Sparse Approximation and Optimization in High Dimensions'' for its financial support.

Finally, I'm much obliged to my family, especially my mother, Ingeborg Egel-Hütter, and all my friends, including Bernhard Werner, Felix Rötting, Jens Wolter, Thomas Höfelsauer, Vroni Auer and Wahid Khosrawi-Sardroudi, for their encouragement, company, patience and the inspiration they gave to me.

\clearpage{}
\addtocontents{toc}{\protect\markboth{}{}}
\tableofcontents
\cleardoublepage

\pagenumbering{arabic}
\clearpage{}\makeatletter{}
\chapter{Introduction}
\label{cha:introduction}

\section{Problem statement and related work}
\label{sec:problem-statement}

\begin{figure}[t]
  \centering
  \subfigure[Original image]{\includegraphics[width=6cm]{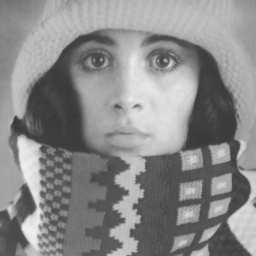}}
  \hspace{0.5cm}
  \subfigure[Dithered image]{\includegraphics[width=6cm]{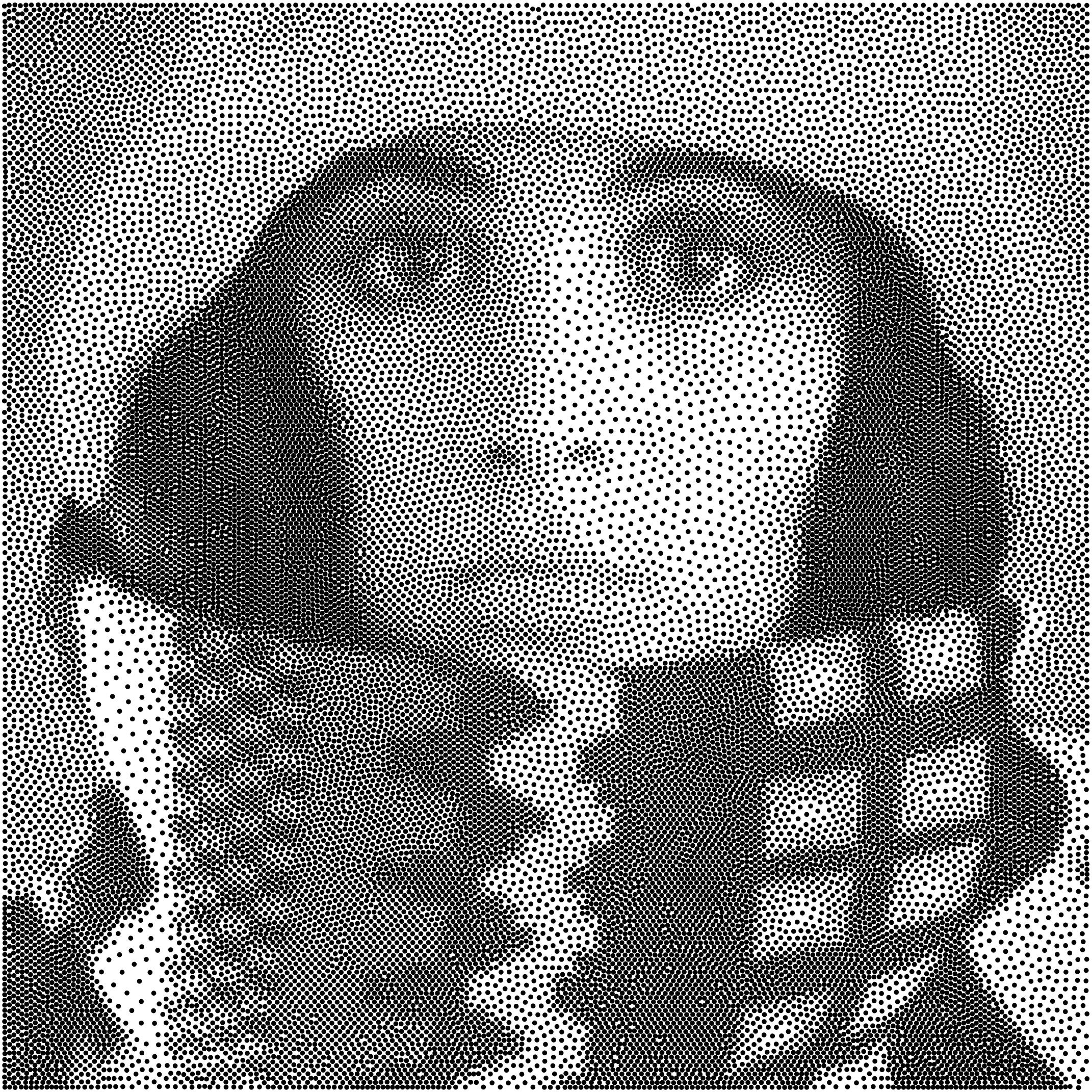}}
  \caption{Undithered and dithered image}
  \label{fig:figure2}
\end{figure}

In \cite{FHS12}, the authors proposed to use an attraction-repulsion functional to measure the quality of a point-approximation to a picture: If we interpret a black-and-white picture as a probability measure \( \omega \) on a compact set \( \Omega \), we are looking for points \( x_1,\ldots,x_N \in \mathbb{R}^2 \in\mathbb{N} \) such that the corresponding point measure \( \frac{1}{N} \sum_{i = 1}^{N} \delta_{x_i} \) approximates \( \omega \) well. While there are many ways to determine the proximity of those two probability measures (for a brief summary over some important ones, see \cite{07_Carillo_Toscani_prob-metrics}), the interesting idea in \cite{FHS12} consists of employing kinetic principles for that purpose. Namely, we consider the points \( x=(x_i)_{i=1,\ldots,N} \) to be attracted by the picture by introducing an attraction potential
\begin{equation}
  \label{eq:5}
  \mathcal{V}_N(x) := \frac{1}{N} \sum_{i=1}^{N} \int_{\mathbb{R}^2} \left| x_i - y \right| \diff \omega(y)
\end{equation}
which is to be minimized. If left as it is, this will most certainly not suffice to force the points into an intuitively good position: The
minimizer would consist of all the points being at the median of \( \omega \). Hence, we would like to enforce a spread of the points by adding a pairwise repulsion term
\begin{equation}
  \label{eq:6}
  \mathcal{W}_N(x) := - \frac{1}{2N^2} \sum_{i,j=1}^{N} \left| x_i-x_j \right|,
\end{equation}
leading to the minimization of the composed functional
\begin{equation}
  \label{eq:7}
  \mathcal{E}_N(x) := \mathcal{V}_N(x) + \mathcal{W}_N(x),
\end{equation}
which produces visually appealing results as in Figure \ref{fig:figure2} (taken from \cite{FHS12}).

Furthermore, an attraction-repulsion functional like this one admits more than one interpretation: one could also consider the particles as a population which is attracted to a food source \( \omega \), modeled by \( \mathcal{V} \), while at the same time being repulsed by competition with each other, modeled by \( \mathcal{W} \). 

Now, if we write \( \mu_x = \frac{1}{N} \sum_{i=1}^{N} \delta_{x_i}  \) instead of \( x \), we see that the above functionals can be expressed independently of \( N \), 
\begin{equation}
  \label{eq:8}
  \mathcal{E}[\mu_x] = \int_{\mathbb{R}^d\times\mathbb{R}^d} \left| x-y \right| \diff \omega(x) \diff \mu_x(y) - \frac{1}{2}\int_{\mathbb{R}^d\times\mathbb{R}^d} \left| x-y \right|  \diff \mu_x(x) \diff \mu_x(y).
\end{equation}
Generalizing further, in the following we also want to consider a different domain than \( \mathbb{R}^2 \) and a slightly larger class of interaction kernels than \( x \mapsto \left| x \right| \), as well as possibly allowing different kernels for attraction and repulsion. So, if we write
\begin{itemize}
\item \( \Omega \subseteq \mathbb{R}^d \) with \( d \in \mathbb{N} \) for the \emph{domain},
\item \( \psi_a \colon \mathbb{R}^d \rightarrow \mathbb{R} \) for the (radially symmetric) \emph{attraction kernel},
\item \( \psi_r \colon \mathbb{R}^d \rightarrow \mathbb{R} \) for the (also radially symmetric) \emph{repulsion kernel},
\item \( \omega \in \mathcal{P}(\Omega) \) for the \emph{datum}, where \( \mathcal{P}(\Omega) \) denotes the set of probability measures on \( \Omega \),
\end{itemize}
the functional of interest becomes
\begin{equation}
  \label{eq:9}
  \mathcal{E}[\mu] := \int_{\Omega} \psi_a(x-y) \diff \omega(x) \diff \mu(y) - \frac{1}{2} \int_{\Omega} \psi_r(x-y)  \diff \mu(x) \diff \mu(y).
\end{equation}
Additionally, we will shall consider a regularization of \( \mathcal{E} \) by a total variation term,
\begin{equation}
  \label{eq:355}
  \mathcal{E}^\lambda[\mu] := \mathcal{E}[\mu] + \lambda \left| D\mu \right|(\Omega),
\end{equation}
where \( \lambda > 0 \) and \( \mu \) is assumed to be in \( L^1(\Omega) \) and to have a distributional derivative \( D\mu \) which is a finite Radon measure with total variation \( \left| D\mu \right| \).
\\

\noindent Variational functionals like the one above, being composed of a quadratic and a linear integral term, arise in many models in biology, physics and mathematics as the limit of particle models. In particular the quadratic term, in our case denoted by \( \mathcal{W} \), corresponding to the self-interaction between particles, is of great interest in modeling physical or biological phenomena, see for example \cite{03-Camazine-Deneubourg-Franks-self-organization, 09-Couzin-effective-leadership, 08-Li-school-formation, 09-Vicsek-novel-type-of-phase-transition}.

The range of mathematical questions when investigating such models is diverse: Firstly, one can study the continuous functional to find conditions for the existence of (local or global) minimizers and afterwards determine some of their properties. Examples for this are the so called non-local isoperimetric problem studied in \cite{2012-Muratov} and \cite{13-Cicalese-droplet-minimizers}, where a total variation term as in \eqref{eq:355} not only appears but is in fact critical for the model, and the non-local Ginzburg-Landau energies for diblock polymer systems as in \cite{12-Serfaty-DropletI,12-Serfaty-DropletII}.

Secondly, one can consider the associated gradient flow of the energy functional, where some of the arising problems are its well-definedness, its asymptotic behavior for large times (\eg convergence to a steady state or pattern formation) and the relationship between the gradient flow of a particle approximation and the gradient flow of the limit functional, called the mean-field limit. One major breakthrough in the development of the theory of gradient flows in Wasserstein spaces was \cite{98-JKO} and a recent and thorough treatment of it can be found in \cite{AGS08}. For an introduction in particular to the mean field limit, see \cite{13-Carrillo-Choi-Hauray-MFL} and the references therein. Additionally, we refer to Section \ref{sec:intro} for a more in-depth review of results which are connected to the gradient flow of the functional in question.

With respect to our particular problem and the static setting, see \cite{testgwscwe11} for efficient optimization algorithms to find local minima of \( \mathcal{E} \) and \cite{grpost11} for the relationship of minimizers of \( \mathcal{E} \) and the error of quadrature formulas, also highlighting the connection between those minimizers and the problem of optimal quantization of measures (see \cite{00_Graf_Luschgy_quantization, 04-Gruber-Quantization}). As for the gradient flow, see for example \cite{BCLR} for the analysis of symmetric steady states for the gradient flow of interaction functionals similar to \( \mathcal{W} \), but being composed of the sum of an attractive and a repulsive power function.
\\

\noindent In the scope of this work, we shall limit our attention to the special case of power kernels,
\begin{equation}
  \label{eq:10}
  \psi_a(x) := \left| x \right|^{q_a}, \quad \psi_r(x) := \left| x \right|^{q_r}, \quad x \in \mathbb{R}^d,
\end{equation}
with \( q_a,q_r \in [1,2] \). The topics we would like to address are:

\begin{itemize}
\item Conditions for the well-definedness of the above expression (Section \ref{sec:prel-obs} and later on leading to the theory developed in Section \ref{sec:prop-funct-mathbbrd})
\item Existence/non-existence of minimizers (Section \ref{sec:prel-obs})
\item Convergence of minimizers of the functionals \( \mathcal{E}_N \) to minimizers of \( \mathcal{E} \) (Section \ref{sec:part-appr})
\item Compactness properties of the sub-levels of \( \mathcal{E} \) (Section \ref{sec:moment-bound-symm})
\item Regularization of the functional with an additional total variation term and a discretization in terms of point masses (Section \ref{sec:tv-reg} for the theory, Section \ref{sec:numer-exper} for numerical experiments)
\item Existence and asymptotic behavior of the associated gradient flow of \( \mathcal{E} \) in the space of probability measures with existing second moments, \( \mathcal{P}_2(\mathbb{R}) \), endowed with the \( 2 \)-Wasserstein metric (Section \ref{cha:gradient-flow-1d})
\end{itemize}

To our knowledge, all the results contained in this thesis, except for a few ones recalled from other sources, are original. Additionally, the mathematical tools used to prove them are diverse, such as variational calculus in spaces of probability measures (including in particular \( \Gamma \)-convergence, BV-functions and compactness arguments), harmonic analysis with generalized Fourier-transforms in Section \ref{cha:vari-prop-symm}, as well as fixed point arguments and the pseudo-inverse technique for gradient flows in Wasserstein spaces in Section \ref{cha:gradient-flow-1d}.

\section{Overview of the chapters}
\label{sec:organisation-thesis}

\subsection{Variational properties}
\label{sec:vari-prop}

In Section \ref{sec:prel-obs}, we start with a few theoretical preliminaries, followed by examples for and counterexamples to the existence of minimizers for \( \mathcal{E} \) in the case of power potentials, depending on the powers and the domain \( \Omega \), where elementary estimates for the behavior of the power functions are used in conjunction with appropriate notions of compactness for probability measures, \ie, uniform integrability of moments and moment bounds.

Beginning from Section \ref{sec:prop-funct-mathbbrd}, we study limiting case of coinciding powers for attraction and repulsion, where there is no longer an obvious confinement property given by the attraction term. To regain compactness and lower semi-continuity, we pass to the lower semi-continuous envelope of our functional, which can be proven to coincide with a Fourier representation of the functional \( \mathcal{E} \), see Corollary \ref{cor:lower-semi-cont} in Section \ref{sec:extension-p}, which is at first derived on \( \mathcal{P}_2(\mathbb{R}^d) \) in Section \ref{sec:fourier-formula-p2}. The main ingredient to find this representation is the generalized Fourier transform in the context of the theory of conditionally positive definite functions, which we briefly recapitulated in Appendix \ref{cha:cond-posit-semi}.

Having thus established a problem which is well-posed for our purposes, we proceed to prove one of our main results, namely the convergence of the minimizers of \( \mathcal{E}_N \) to \( \omega \), Theorem \ref{thm:cons-part-appr} in Section \ref{sec:part-appr}. This convergence will follow in a standard way from the \( \Gamma \)-convergence of the corresponding functionals. Furthermore, again applying the techniques of Appendix \ref{cha:cond-posit-semi} used to prove the Fourier representation, this allows us to derive a compactness property for the sublevels of \( \mathcal{E} \) in terms of a uniform moment bound in Section \ref{sec:moment-bound-symm}.

Afterwards, in Section \ref{sec:tv-reg}, we shall introduce the total variation regularization of \( \mathcal{E} \). Firstly, we prove consistency in terms of \( \Gamma \)-convergence for vanishing regularization parameter in Section \ref{sec:cons-regul-cont}. Then, in Section \ref{sec:discrete-version-tv-1}, we propose two ways of computing a version of it on the particle level and again prove consistency for \( N \to \infty \). One version consists of employing kernel density estimators, while in the other one each point mass is replaced by an indicator function extending up to the next point mass for the purpose of computing the total variation. In Section \ref{sec:numer-exper}, we illustrate the first approach by numerical experiments.

\subsection{Gradient flow in 1D}
\label{sec:vari-prop-1}

We begin with a more thorough summary of previously known results and connections to other works in Section \ref{sec:intro} and a reminder about the pseudo-inverse technique for Wasserstein gradient flows in Section \ref{sec:pseudo-inverse-techn}.

Section \ref{sec:existence-theory-linfty} then contains an global existence result for such a gradient flow associated to \( \mathcal{E} \) in the space \( L^\infty_c(\mathbb{R}) \), whose proof is based on a fixed point argument in the spirit of the Picard-Lindelöf theorem.

In Section \ref{sec:asymptotics}, we investigate some combinations of the parameters \( q_a \) and \( q_r \) for which we are able to prove statements about the asymptotic behavior. For \( q_a = q_r = 2 \), the solutions exhibit a traveling wave behavior, which can be seen elemetarily (Section \ref{sec:case-q-2}). For \( q_r = 1 \leq q_a \leq 2 \), the gradient flow converges to a convolution of \( \omega \) (Section \ref{sec:case-q-1}), which follows by the special structure of the repulsion term in this case together with the monotonicity of the attraction field.

Finally, in Section \ref{sec:conv-against-steady}, we show the existence of a convergent subsequence for large times in some of the remaining parameter range, namely the two cases \( 1 \leq  q_r = q_a < 4/3 \) and \( 1 \leq q_r < q_a \leq 2 \). For this, we use an energy-energy-dissipation inequality and draw on the compactness given by the moment bound proven in Section \ref{sec:moment-bound-symm}.

\clearpage{}
\clearpage{}\makeatletter{}\chapter{Variational properties}
\label{cha:vari-prop-symm}

In this section, we want to prove certain variational properties of the functional \( \mathcal{E} \) in order to prove consistency of particle approximations to it and its regularization by a total variation term.

We recall its definition:
\begin{equation}
  \label{eq:11}
    \mathcal{E}[\mu] := \int_{\Omega\times\Omega} \psi_a(x-y) \diff \omega(x) \diff \mu(y) - \frac{1}{2} \int_{\Omega\times\Omega} \psi_r(x-y)  \diff \mu(x) \diff \mu(y),
\end{equation}
for \( \omega \), \( \mu \in \mathcal{P}_2(\mathbb{R}^d) \) (at least for now) and
\begin{equation}
  \label{eq:12}
    \psi_a(x) := \left| x \right|^{q_a}, \quad \psi_r(x) := \left| x \right|^{q_r}, \quad x \in \mathbb{R}^d,
\end{equation}
with \( q_a \), \( q_r \in [1,2] \). Furthermore, denote for a vector-valued measure \( \nu \) its \emph{total variation} (which is a positive measure) by \( \left| \nu \right| \) and by \( BV(\mathbb{R}^d) \) the subset of distributions \( f \in L^1_{loc}(\mathbb{R}^d) \) whose distributional derivatives \( Df \) are finite Radon measures (see \cite[Definition 3.1]{00-Amb-Fusco-Pallara-BV}). Abusing terminology, we call \( \left| Df \right|(\Omega) \) the total variation of \( f \). Now, we define the \emph{total variation regularization} of \( \mathcal{E} \) by
\begin{equation}
  \label{eq:13}
  \mathcal{E}^\lambda[\mu] := \mathcal{E}[\mu] + \lambda \left| D\mu \right|(\Omega),
\end{equation}
where \( \mu \in \mathcal{P}_2(\mathbb{R}^d) \cap BV(\mathbb{R}^d) \).

\section{Preliminary observations}
\label{sec:prel-obs}

We shall briefly state some results which are in particular related to the asymmetric case of \( q_a \) and \( q_r \)  not necessarily being equal. 

\subsection{Narrow convergence and Wasserstein-convergence}
\label{sec:conv-p-pp}

We want to begin with a brief summary of measure theoretical results which will be needed in the following.

The first simple lemma is useful when switching the point of view and therefore also the involved topology from density functions to probability measures. For a brief introduction to the narrow topology, see \cite[Chapter 5.1]{AGS08}.

\begin{lemma}[\(L^{1}\)-convergence implies narrow convergence]
  \label{lem:1}
  Let \(\Omega \subseteq \mathbb{R}^{d}\) and \(f_{n} \in L^{1}(\Omega)\) be a sequence which converges to \(f \in L^{1}\) in \(L^{1}\). Then, \( f_{n} \rightarrow f \text{ narrowly}. \)
\end{lemma}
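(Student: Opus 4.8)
The plan is to unwind the definition of narrow convergence and reduce everything to the elementary $L^1$–$L^\infty$ duality estimate. Recall that, identifying each density $g \in L^1(\Omega)$ with the finite (signed) measure $g\,\mathcal{L}^d\llcorner\Omega$, narrow convergence $f_n \to f$ means precisely that
\begin{equation*}
  \int_\Omega \varphi\, f_n \dd x \longrightarrow \int_\Omega \varphi\, f \dd x
  \qquad \text{for every } \varphi \in C_b(\Omega),
\end{equation*}
i.e.\ testing against bounded continuous functions. So the task is to establish this convergence for a fixed but arbitrary $\varphi \in C_b(\Omega)$.

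First I would fix such a $\varphi$ and set $M := \|\varphi\|_{\infty} < \infty$. Then, by linearity of the integral and the triangle inequality,
\begin{equation*}
  \left| \int_\Omega \varphi\, f_n \dd x - \int_\Omega \varphi\, f \dd x \right|
  \le \int_\Omega |\varphi|\,|f_n - f| \dd x
  \le M\, \|f_n - f\|_{L^1(\Omega)}.
\end{equation*}
Since $\|f_n - f\|_{L^1(\Omega)} \to 0$ by hypothesis, the right-hand side tends to $0$, which gives the desired convergence of the integrals. As $\varphi \in C_b(\Omega)$ was arbitrary, this is exactly narrow convergence $f_n \to f$.

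There is essentially no real obstacle here: the only point worth a moment's care is bookkeeping, namely that narrow convergence is formulated as duality with $C_b(\Omega)$ (rather than, say, $C_c(\Omega)$ or $C_0(\Omega)$), and that the estimate above is in fact uniform over the $C_b$-unit ball, so no tightness or domain-truncation argument is needed even when $\Omega$ is unbounded. If one wishes to view the $f_n$ as elements of $\mathcal{P}(\Omega)$, one additionally notes that $L^1$-convergence preserves nonnegativity a.e.\ and total mass, so that $f_n, f \in \mathcal{P}(\Omega)$ is consistent, but this plays no role in the estimate itself.
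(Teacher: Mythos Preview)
Your proof is correct and takes essentially the same approach as the paper: fix a test function $\varphi \in C_b(\Omega)$ and bound the difference of integrals by $\|\varphi\|_\infty \|f_n - f\|_{L^1}$. The paper's proof is this one-line estimate without the surrounding commentary.
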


\begin{proof}
  Let \(g \in C_{b}(\Omega)\). Then,
  \begin{equation}
    \label{eq:14}
    \left| \int_{\Omega} g(x)\left( f_{n}(x) - f(x) \right) \diff x \right| \leq \left\| g \right\|_{\infty} \int_{\Omega} \left| f_{n}(x) - f(x) \right| \diff x \rightarrow 0.\qedhere
  \end{equation}
\end{proof}

On a complete metric space \( X \), the narrow topology can be characterized by countably many functions and if \( X \) is separable, it is compatible with building product measures.

\begin{lemma}[Metrizability of narrow convergence]
  \label{lem:25}
  \cite[Remark 5.1.1]{AGS08}
  There is a sequence of continuous functions \( (f_k)_{k \in \mathbb{N}} \) on \( \mathbb{R} \) with \( \sup_{x \in \mathbb{R}^d} \left| f_k(x) \right| \leq 1 \) such that the narrow convergence in \( \mathcal{P}(\mathbb{R}^d) \) can be metrized by 
  \begin{equation}
    \label{eq:318}
    \delta(\mu,\nu) := \sum_{k = 1}^{\infty} 2^{-k} \left| \int_{\mathbb{R}^d} f_k(x) \diff \mu(x) - \int_{\mathbb{R}^d} f_k(x) \diff \nu(x) \right|.
  \end{equation}
\end{lemma}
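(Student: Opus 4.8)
The plan is to reduce the statement to the classical characterization of narrow convergence on a \emph{compact} metric space. Concretely, I would embed \( \mathbb{R}^d \) into its one-point compactification \( \mathbb{S}^d := \mathbb{R}^d \cup \{\infty\} \), a compact metric space, so that \( \mathcal{P}(\mathbb{R}^d) \subseteq \mathcal{P}(\mathbb{S}^d) \) (a measure on \( \mathbb{R}^d \) being extended by \( \mu(\{\infty\}) = 0 \)). Since \( \mathbb{S}^d \) is compact metric, \( C(\mathbb{S}^d) \) is separable; I would fix a countable family \( (g_k)_{k \in \mathbb{N}} \subseteq C(\mathbb{S}^d) \) that is dense, for the uniform norm, in the unit ball \( \{ g \in C(\mathbb{S}^d) : \|g\|_\infty \le 1 \} \), and set \( f_k := g_k|_{\mathbb{R}^d} \in C_b(\mathbb{R}^d) \), so that \( \sup_{x \in \mathbb{R}^d} |f_k(x)| \le 1 \). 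With this sequence, symmetry, nonnegativity and the triangle inequality for the \( \delta \) of \eqref{eq:318} are immediate, so \( \delta \) is a pseudometric.

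Next I would check that \( \delta \) separates points: if \( \delta(\mu, \nu) = 0 \) then \( \int f_k \diff \mu = \int f_k \diff \nu \) for every \( k \), hence \( \int g_k \diff \mu = \int g_k \diff \nu \) on \( \mathbb{S}^d \) for every \( k \); by uniform density this extends to all \( g \in C(\mathbb{S}^d) \), and the Riesz representation theorem then forces \( \mu = \nu \) as Borel measures on \( \mathbb{S}^d \), hence on \( \mathbb{R}^d \).

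The heart of the matter is the equivalence \( \delta(\mu_n, \mu) \to 0 \iff \mu_n \to \mu \) narrowly in \( \mathcal{P}(\mathbb{R}^d) \). The direction ``narrow \( \Rightarrow \delta \)'' is immediate: each \( f_k \) lies in \( C_b(\mathbb{R}^d) \), so every summand in \eqref{eq:318} tends to \( 0 \) and is bounded by \( 2^{-k+1} \) uniformly in \( n \), and dominated convergence for the series gives \( \delta(\mu_n, \mu) \to 0 \). Conversely, from \( \delta(\mu_n, \mu) \to 0 \) one gets \( \int g_k \diff \mu_n \to \int g_k \diff \mu \) for every \( k \), hence by uniform density \( \int_{\mathbb{S}^d} g \diff \mu_n \to \int_{\mathbb{S}^d} g \diff \mu \) for every \( g \in C(\mathbb{S}^d) \), that is, \( \mu_n \to \mu \) narrowly in \( \mathcal{P}(\mathbb{S}^d) \). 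To upgrade this to narrow convergence in \( \mathcal{P}(\mathbb{R}^d) \) I would first establish tightness of \( (\mu_n) \): the Portmanteau property on \( \mathbb{S}^d \), applied to the closed sets \( F_R := \{ |x| \ge R \} \cup \{\infty\} \), yields \( \limsup_n \mu_n(\{ |x| > R \}) \le \mu(F_R) = \mu(\{|x| \ge R\}) \to 0 \) as \( R \to \infty \) (here \( \mu(\{\infty\}) = 0 \) is used), which together with the individual tightness of each \( \mu_n \) produces a common compact \( C \subseteq \mathbb{R}^d \) with \( \mu_n(\mathbb{R}^d \setminus C) < \varepsilon \) for all \( n \) and \( \mu(\mathbb{R}^d \setminus C) < \varepsilon \). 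Finally, for \( f \in C_b(\mathbb{R}^d) \) with \( \|f\|_\infty \le M \), I would extend \( f|_C \) to \( \tilde f \in C(\mathbb{S}^d) \) with \( \|\tilde f\|_\infty \le M \) (Tietze) and estimate \( |\int f \diff \mu_n - \int f \diff \mu| \le |\int \tilde f \diff \mu_n - \int \tilde f \diff \mu| + 4 M \varepsilon \); letting \( n \to \infty \) and then \( \varepsilon \to 0 \) shows \( \int f \diff \mu_n \to \int f \diff \mu \), i.e. narrow convergence.

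I expect the only genuinely non-formal point to be exactly this last upgrade: narrow convergence in the compactification does not by itself forbid escape of mass to \( \infty \), so one must exploit that the limit \( \mu \) carries no mass at \( \infty \) to deduce tightness of the whole sequence, and the Portmanteau estimate above is precisely that step. Everything else — separability of \( C(\mathbb{S}^d) \), the Riesz representation theorem, Tietze extension, and dominated convergence for the series — is standard bookkeeping.
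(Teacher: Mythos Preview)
The paper does not give its own proof of this lemma; it simply cites \cite[Remark 5.1.1]{AGS08}. Your argument via the one-point compactification \( \mathbb{S}^d \), separability of \( C(\mathbb{S}^d) \), and the Portmanteau-based tightness step to upgrade narrow convergence from \( \mathcal{P}(\mathbb{S}^d) \) to \( \mathcal{P}(\mathbb{R}^d) \) is correct and is one of the standard ways to establish this result.
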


\begin{lemma}[Convergence of product measures]
  \label{lem:27}
  Let \( \Omega \subseteq \mathbb{R}^d \). Since \( \Omega \) is separable, from \cite[Theorem 2.8]{68-Billingsley-Conv-Proba} it follows that if \( \left( \mu_n \right)_n \), \( \left( \nu_n \right)_n \) are two sequences in \( \mathcal{P}(\Omega) \) and \( \mu, \nu \in \mathcal{P}(\Omega) \), then
  \begin{equation}
    \label{eq:35}
    \mu_n \times \nu_n \rightarrow \mu \times \nu \text{ narrowly} \Leftrightarrow \mu_n \rightarrow \mu \text{ and } \nu_n \rightarrow \nu \text{ narrowly}.
  \end{equation}
\end{lemma}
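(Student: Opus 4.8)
The plan is to treat the two implications separately; the forward implication ($\Rightarrow$) is essentially free, and the content lies in ($\Leftarrow$). For the forward direction, I would observe that the coordinate projections $\pi_1,\pi_2\colon \Omega\times\Omega\to\Omega$ are continuous, so for $f\in C_b(\Omega)$ the function $(x,y)\mapsto f(x)$ lies in $C_b(\Omega\times\Omega)$; testing the narrow convergence $\mu_n\times\nu_n\to\mu\times\nu$ against it and using that the first marginal of $\mu_n\times\nu_n$ is $\mu_n$ yields $\int f\diff\mu_n\to\int f\diff\mu$, i.e. $\mu_n\to\mu$ narrowly, and symmetrically for $\nu_n$. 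This direction needs no separability; separability enters only in ($\Leftarrow$), and already in the formulation, to guarantee that $\mu\times\nu$ is a well-defined Borel measure on the product, the product Borel $\sigma$-algebra agreeing with the Borel $\sigma$-algebra of $\Omega\times\Omega$ since $\Omega\subseteq\mathbb{R}^d$ is second countable.

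For ($\Leftarrow$) the goal is to show $\int h\diff(\mu_n\times\nu_n)\to\int h\diff(\mu\times\nu)$ for every $h\in C_b(\Omega\times\Omega)$. I would first dispatch the elementary tensors: if $h(x,y)=f(x)g(y)$ with $f,g\in C_b(\Omega)$, then Fubini gives $\int h\diff(\mu_n\times\nu_n)=\big(\int f\diff\mu_n\big)\big(\int g\diff\nu_n\big)\to\big(\int f\diff\mu\big)\big(\int g\diff\nu\big)=\int h\diff(\mu\times\nu)$, using boundedness to pass the product to the limit. By linearity this extends to the algebra $\mathcal{A}\subseteq C_b(\Omega\times\Omega)$ spanned by such tensors, which contains the constants and separates points of $\Omega\times\Omega$.

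The remaining step is to upgrade from $\mathcal{A}$ to all of $C_b(\Omega\times\Omega)$, and this is where the only real work — and the main obstacle — lies, since $\mathcal{A}$ is not uniformly dense in $C_b$ on a non-compact space, so one must localize. Since $(\mu_n)$ and $(\nu_n)$ converge narrowly, they are tight (after viewing them inside $\mathcal{P}(\mathbb{R}^d)$, where this is Prokhorov's theorem; the reduction to this case, or a direct argument on the separable space $\Omega$, is precisely the content of the cited \cite[Theorem 2.8]{68-Billingsley-Conv-Proba}), so given $\varepsilon>0$ one can pick compacts $K_1,K_2\subseteq\Omega$ with $\mu_n(\Omega\setminus K_1),\mu(\Omega\setminus K_1)<\varepsilon$ and likewise for $\nu_n,\nu$ and $K_2$, uniformly in $n$. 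On the compact set $K_1\times K_2$, Stone--Weierstrass makes $\mathcal{A}|_{K_1\times K_2}$ dense in $C(K_1\times K_2)$, so we may choose $\tilde h\in\mathcal{A}$ with $\sup_{K_1\times K_2}|h-\tilde h|<\varepsilon$ and, after truncation, $\|\tilde h\|_\infty\le\|h\|_\infty+\varepsilon$. A routine $3\varepsilon$-estimate then finishes: the middle term $|\int\tilde h\diff(\mu_n\times\nu_n)-\int\tilde h\diff(\mu\times\nu)|\to0$ by the previous step, while the two outer terms $|\int(h-\tilde h)\diff\rho|$ for $\rho\in\{\mu_n\times\nu_n,\mu\times\nu\}$ are controlled, uniformly in $n$, by $\varepsilon$ on $K_1\times K_2$ plus $2(\|h\|_\infty+\varepsilon)\cdot2\varepsilon$ off it, using the tightness bound $\rho\big((\Omega\times\Omega)\setminus(K_1\times K_2)\big)<2\varepsilon$. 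Care is needed to keep all complement contributions genuinely uniform in $n$ and to handle the fact that $\Omega$ is only separable metrizable rather than Polish — where tightness of a single probability measure is not automatic if $\Omega$ is not closed in $\mathbb{R}^d$ — which is exactly why one appeals to Billingsley's theorem rather than reproving it from scratch.
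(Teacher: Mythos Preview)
The paper does not supply its own proof of this lemma; the statement itself already invokes \cite[Theorem~2.8]{68-Billingsley-Conv-Proba} as its justification, and no proof environment follows. Your proposal is a correct reconstruction of the standard argument behind that result, so there is nothing substantive to compare against.

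One point worth tightening: the phrase ``after truncation, $\|\tilde h\|_\infty\le\|h\|_\infty+\varepsilon$'' is doing more work than it appears. Naively truncating $\tilde h\in\mathcal{A}$ at level $\|h\|_\infty+\varepsilon$ takes you out of the algebra $\mathcal{A}$, so you lose the right to invoke the already-established convergence for the middle term of your $3\varepsilon$-estimate. The clean fix is to use a tensorised cutoff instead of a truncation: pick Urysohn functions $\chi_i\in C_b(\Omega)$ with $\chi_i=1$ on $K_i$ and $\chi_i=0$ off slightly larger compacts $K_i'$, approximate $h\cdot(\chi_1\otimes\chi_2)$ on $K_1'\times K_2'$ by some $g\in\mathcal{A}$ via Stone--Weierstrass, and set $\tilde h:=g\cdot(\chi_1\otimes\chi_2)$. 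This $\tilde h$ is again in $\mathcal{A}$ (the product of a tensor with a finite sum of tensors is a finite sum of tensors), has $\|\tilde h\|_\infty\le\|h\|_\infty+\varepsilon$ since it vanishes off $K_1'\times K_2'$, and approximates $h$ to within $\varepsilon$ on $K_1\times K_2$; your estimate then goes through verbatim. Alternatively, one can sidestep the approximation entirely: tightness of $(\mu_n\times\nu_n)_n$ plus convergence of integrals against the separating class $\mathcal{A}$ forces every Prokhorov-subsequential limit to equal $\mu\times\nu$, hence the full sequence converges.
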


Finally, we include some results about the continuity of integral functionals with respect to Wasserstein-convergence.

\begin{definition}[Wasserstein distance]
  \label{def:wasserstein-distance}
  \cite[Chapter 7.1]{AGS08}
  Let \( \Omega \subseteq \mathbb{R}^d \), \( p \in [1,\infty) \) as well as \( \mu_1, \mu_2 \in \mathcal{P}_p(\Omega) \) be two probability measures with finite \( p \)th moment. Denoting by \( \Gamma(\mu_1, \mu_2) \) the probability measures on \( \Omega \times \Omega \) with marginals \( \mu_1 \) and \( \mu_2 \), then we define
  \begin{equation}
    \label{eq:322}
    W_p^p(\mu_1, \mu_2) := \min \left\{ \int_{\Omega^2} \left| x_1 - x_2 \right|^p \diff \bm{\mu}(x_1, x_2) : \bm{\mu} \in \Gamma(\mu_1, \mu_2) \right\},
  \end{equation}
  the \emph{Wasserstein}-\( p \) distance between \( \mu_1 \) and \( \mu_2 \).

  Additionally, by Hölder's inequality, \( W_p(\mu_1,\mu_2) \) is non-increasing in \( p \) and therefore we can define
  \begin{equation}
    \label{eq:371}
    W_\infty(\mu_1,\mu_2) := \lim_{p \to \infty} W_p(\mu_1,\mu_2).
  \end{equation}
\end{definition}

\begin{definition}[Uniform integrability]
  On a measurable space \( X \), a measurable function \( f : X \rightarrow [0,\infty] \) is \emph{uniformly integrable} \wrt a family of measures \( \left\{ \mu_i \right\}_{i \in I} \), if
  \begin{equation}
    \label{eq:46}
    \lim_{M \rightarrow \infty} \sup_{i \in I} \int_{\left\{ f(x) \geq M\right\}} f(x) \diff \mu_i(x) = 0.
  \end{equation}
\end{definition}

\begin{lemma}[Topology of Wasserstein spaces]
  \label{lem:26}
  \cite[Proposition 7.1.5]{AGS08}
  For \( p  \geq 1\) and a subset \( \Omega \subseteq \mathbb{R}^n \), \( \mathcal{P}_p (\Omega) \) endowed with the Wasserstein-\( p \) distance is a separable metric space which is complete if \( \Omega \) is closed. A set \(  \mathcal{K} \subseteq \mathcal{P}_p(\Omega) \) is relatively compact iff it is \( p \)-uniformly integrable (and hence by Lemma \ref{lem:24} tight). In particular, for a sequence \( (\mu_n)_{n \in \mathbb{N}} \subseteq \mathcal{P}_p(\Omega) \),
  \begin{equation}
    \label{eq:320}
    \lim_{n\rightarrow\infty} W_p(\mu_n, \mu) = 0 \Leftrightarrow
    \begin{cases}
      \mu_n \rightarrow \mu \text{ narrowly},\\
      (\mu_n)_n \text{ has uniformly integrable } p \text{-moments}. 
    \end{cases}
  \end{equation}
\end{lemma}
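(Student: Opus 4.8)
The plan is to reproduce the classical proof of Proposition~7.1.5 in \cite{AGS08}, organized in three blocks: the metric‑space structure, the characterization \eqref{eq:320} of $W_p$‑convergence, and the compactness statement.

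\emph{Metric structure.} Symmetry of $W_p$ is immediate, and $W_p(\mu_1,\mu_2)=0 \Leftrightarrow \mu_1=\mu_2$ follows once the minimum in \eqref{eq:322} is known to be attained: $\Gamma(\mu_1,\mu_2)$ is narrowly compact (its elements have fixed, hence tight, marginals) and $\bm{\mu}\mapsto\int|x_1-x_2|^p\diff\bm{\mu}$ is narrowly lower semi‑continuous, so the direct method applies, and a plan of zero cost is concentrated on the diagonal. The triangle inequality is the gluing argument: disintegrate optimal plans for $(\mu_1,\mu_2)$ and $(\mu_2,\mu_3)$ with respect to the common $\mu_2$‑marginal, glue them into a measure on $\Omega^3$, project onto the first and third coordinates to obtain an admissible plan for $(\mu_1,\mu_3)$, and apply Minkowski's inequality in $L^p(\Omega^3)$. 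Separability is witnessed by the countable family of finitely supported measures with rational weights on a countable dense subset of $\Omega$. Completeness for closed $\Omega$ then follows from the next two blocks: a $W_p$‑Cauchy sequence admits a subsequence with $W_p(\mu_{n_{k+1}},\mu_{n_k})\leq 2^{-k}$, whose $p$‑th moments are controlled by a telescoping estimate, so it is tight and has a narrow limit $\mu\in\mathcal{P}_p(\Omega)$; the Cauchy property upgrades this to $W_p$‑convergence of the whole sequence.

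\emph{Characterization \eqref{eq:320}.} For ``$\Rightarrow$'': since $W_1\leq W_p$ by Hölder, $W_p$‑convergence forces $W_1$‑convergence and hence narrow convergence; moreover, fixing $x_0\in\Omega$, the triangle inequality gives $|W_p(\mu_n,\delta_{x_0})-W_p(\mu,\delta_{x_0})|\leq W_p(\mu_n,\mu)\to 0$, i.e. $\int|x-x_0|^p\diff\mu_n\to\int|x-x_0|^p\diff\mu$; narrow convergence together with convergence of these $p$‑th moments forces uniform integrability of $x\mapsto|x-x_0|^p$ by a Vitali‑type argument (otherwise mass would escape to infinity in the moments). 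For ``$\Leftarrow$'' one argues by truncation: on a ball $B_R(x_0)$ the cost $|x_1-x_2|^p$ is bounded and continuous, so narrow convergence controls the transport cost of the parts of $\mu_n,\mu$ sitting in $B_R(x_0)$, while $p$‑uniform integrability makes the mass and the cost carried outside $B_R(x_0)$ uniformly small in $n$; sending first $n\to\infty$ and then $R\to\infty$ gives $W_p(\mu_n,\mu)\to 0$.

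\emph{Relative compactness and main obstacle.} If $\mathcal{K}$ is $p$‑uniformly integrable, it is tight by Lemma~\ref{lem:24}, hence narrowly relatively compact by Prokhorov's theorem; any narrow limit of a sequence in $\mathcal{K}$ lies in $\mathcal{P}_p(\Omega)$ and, by the ``$\Leftarrow$'' implication, is a $W_p$‑limit, so $\mathcal{K}$ is $W_p$‑relatively compact. Conversely, if $\mathcal{K}$ were $W_p$‑relatively compact but not $p$‑uniformly integrable, one could pick $\mu_n\in\mathcal{K}$ and $M_n\to\infty$ with $\int_{\{ |x-x_0|^p\geq M_n \}}|x-x_0|^p\diff\mu_n$ bounded away from $0$; a $W_p$‑convergent subsequence would contradict the uniform integrability that ``$\Rightarrow$'' attaches to any $W_p$‑convergent sequence. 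I expect the genuinely delicate point to be the ``$\Leftarrow$'' direction of \eqref{eq:320}: on non‑compact $\Omega$ narrow convergence says nothing about transport cost hidden near infinity, so the argument stands or falls on using $p$‑uniform integrability to dominate the tails uniformly in $n$ — precisely the hypothesis that cannot be dropped.
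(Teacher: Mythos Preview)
The paper does not give its own proof of this lemma: it is stated with a citation to \cite[Proposition 7.1.5]{AGS08} and no proof environment follows. Your proposal is a correct and faithful sketch of the standard argument from that reference, so there is nothing to compare against beyond noting that you have supplied what the paper deliberately omits.
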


\begin{lemma}[Continuity of integral functionals]
  \label{lem:4}
  \cite[Lemma 5.1.7]{AGS08}
  Let \( \mu_n \in \mathcal{P}(\mathbb{R}^d) \) a sequence converging narrowly to \( \mu \in \mathcal{P}(\mathbb{R}^d) \), \( g : \mathbb{R}^d \rightarrow \mathbb{R} \) lower semi-continuous and \( f : \mathbb{R}^d \rightarrow \mathbb{R} \) continuous. If \( \left| f \right|, g^- := -\min \left\{ g, 0 \right\} \) are uniformly integrable \wrt \( \left\{ \mu_{n} \right\}_n \), then
  \begin{align}
    \label{eq:52}
    \liminf_{n\rightarrow\infty} \int_{\mathbb{R}^d} g(x) \diff \mu_n(x) \geq {} & \int_{\mathbb{R}^d} g(x) \diff \mu(x)\\
    \lim_{n\rightarrow\infty} \int_{\mathbb{R}^d} f(x) \diff \mu_n(x) = {} & \int_{\mathbb{R}^d} f(x) \diff \mu(x)
  \end{align}
\end{lemma}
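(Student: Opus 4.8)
The plan is to reduce the statement to the Portmanteau inequality for \emph{bounded} lower semi-continuous functions and then to remove boundedness by truncation, the uniform integrability hypotheses being used only to control tails. So I would first record the classical fact (a form of the Portmanteau theorem, cf.\ \cite{68-Billingsley-Conv-Proba}): if \( h\colon\mathbb{R}^d\to\mathbb{R} \) is lower semi-continuous and bounded below, then \( \liminf_n\int h\,\diff\mu_n\ge\int h\,\diff\mu \). The short argument: adding a constant, assume \( h\ge 0 \), and set \( h_k(x):=\inf_{y\in\mathbb{R}^d}\bigl(\min\{h(y),k\}+k\lvert x-y\rvert\bigr) \); then each \( h_k \) is bounded and \( k \)-Lipschitz with \( h_k\uparrow h \) pointwise (using lower semi-continuity of \( h \)), so narrow convergence gives \( \liminf_n\int h\,\diff\mu_n\ge\liminf_n\int h_k\,\diff\mu_n=\int h_k\,\diff\mu \), and letting \( k\to\infty \) with monotone convergence finishes it.

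Next I would derive the first inequality of the lemma by truncating \( g \) from below. Uniform integrability of \( g^- \) gives \( \varepsilon_M:=\sup_n\int_{\{g^-\ge M\}}g^-\,\diff\mu_n\to 0 \) as \( M\to\infty \), and in particular \( \sup_n\int g^-\,\diff\mu_n\le M+\varepsilon_M<\infty \), so each \( \int g\,\diff\mu_n \) is well defined in \( (-\infty,+\infty] \). For \( M>0 \) the function \( g_M:=\max\{g,-M\} \) is lower semi-continuous and bounded below, and pointwise \( 0\le g_M-g\le g^-\mathbf 1_{\{g^-\ge M\}} \), hence \( \int g\,\diff\mu_n\ge\int g_M\,\diff\mu_n-\varepsilon_M \) for every \( n \). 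Taking \( \liminf_n \) and using the bounded case for \( g_M \) yields \( \liminf_n\int g\,\diff\mu_n\ge\int g_M\,\diff\mu-\varepsilon_M \); then letting \( M\to\infty \) (monotone convergence along \( g_M\downarrow g \), noting that if \( \int g^+\,\diff\mu=+\infty \) both sides are already \( +\infty \)) gives \( \liminf_n\int g\,\diff\mu_n\ge\int g\,\diff\mu \).

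For the equality when \( f \) is continuous I would apply the previous step to \( \lvert f\rvert \), to \( f \), and to \( -f \). Since \( (\lvert f\rvert)^-\equiv 0 \), the step applied to \( \lvert f\rvert \) gives \( \int\lvert f\rvert\,\diff\mu\le\liminf_n\int\lvert f\rvert\,\diff\mu_n\le\sup_n\int\lvert f\rvert\,\diff\mu_n<\infty \), so \( \int f\,\diff\mu \) is finite. Applied to the lower semi-continuous functions \( f \) and \( -f \), whose negative parts \( f^- \) and \( f^+ \) are both dominated by the uniformly integrable \( \lvert f\rvert \) (hence are themselves uniformly integrable), the step gives \( \liminf_n\int f\,\diff\mu_n\ge\int f\,\diff\mu \) and \( \liminf_n\int(-f)\,\diff\mu_n\ge\int(-f)\,\diff\mu \), i.e.\ \( \limsup_n\int f\,\diff\mu_n\le\int f\,\diff\mu \); together these give \( \lim_n\int f\,\diff\mu_n=\int f\,\diff\mu \).

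The conceptual core — the Portmanteau inequality for bounded lower semi-continuous functions — is classical, so the only genuine care is needed in the truncation step: keeping track of possibly infinite integrals when \( g \) itself is unbounded and passing \( M\to\infty \) cleanly. I expect that to be the main (though still routine) obstacle; for the applications made later in this thesis \( g \) is typically continuous, so even that bookkeeping is not required there.
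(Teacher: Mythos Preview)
Your proof is correct and follows the standard route: Portmanteau for bounded lower semi-continuous integrands via Moreau--Yosida regularization, then truncation from below using the uniform integrability of \(g^-\), and finally the two-sided application to \(f\) and \(-f\). The paper itself does not give a proof of this lemma; it simply cites \cite[Lemma 5.1.7]{AGS08}, and your argument is essentially the one found there, so there is nothing to compare.
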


\begin{lemma}[Uniform integrability of moments]
  \label{lem:24}
  \cite[Corollary to Theorem 25.12]{95-Billingsley-Proba_and_Measuer}
  Given \( r > 0 \) and a family \( \left\{ \mu_i \right\}_{i \in I} \) of probability measures with
  \begin{equation}
    \label{eq:316}
    \sup_{i \in I} \int_{\mathbb{R}^d} \left| x \right|^r \diff \mu_i(x) < \infty,
  \end{equation}
  then the family \( \left\{ \mu_{i} \right\}_i \) is tight and for all \( 0 < q < r \), \( x \mapsto \left| x \right|^q \) is uniformly integrable \wrt \( \left\{ \mu_i \right\}_{i \in I} \). 
\end{lemma}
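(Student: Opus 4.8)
The plan is to set $C := \sup_{i \in I} \int_{\mathbb{R}^d} |x|^r \diff \mu_i(x)$, which is finite by hypothesis, and to derive both conclusions from this single bound via elementary truncation estimates.

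For tightness I would apply Markov's inequality: for every $R > 0$ and every $i \in I$,
\[
\mu_i\bigl(\{x : |x| > R\}\bigr) = \mu_i\bigl(\{x : |x|^r > R^r\}\bigr) \leq R^{-r} \int_{\mathbb{R}^d} |x|^r \diff \mu_i(x) \leq C R^{-r}.
\]
Given $\varepsilon > 0$, choosing $R$ with $C R^{-r} < \varepsilon$ produces a compact ball $\overline{B_R(0)}$ whose complement has $\mu_i$-measure less than $\varepsilon$ uniformly in $i$, which is precisely tightness.

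For the uniform integrability of $|x|^q$ with $0 < q < r$, the key observation is that on the superlevel set $\{|x|^q \geq M\}$, equivalently $\{|x| \geq M^{1/q}\}$, one has $|x|^q = |x|^r\, |x|^{q-r} \leq M^{(q-r)/q} |x|^r$, using that the exponent $q-r$ is negative. Hence
\[
\sup_{i \in I} \int_{\{|x|^q \geq M\}} |x|^q \diff \mu_i(x) \leq M^{(q-r)/q} \sup_{i \in I} \int_{\mathbb{R}^d} |x|^r \diff \mu_i(x) = C\, M^{(q-r)/q},
\]
and since $(q-r)/q < 0$ the right-hand side tends to $0$ as $M \to \infty$, which is exactly the defining condition for uniform integrability of $x \mapsto |x|^q$ with respect to $\{\mu_i\}_{i \in I}$.

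There is no genuine obstacle here: the whole argument rests on Markov's inequality together with the pointwise bound $|x|^q \leq M^{(q-r)/q}|x|^r$ on the relevant superlevel set. The only point requiring minimal care is tracking the sign of the exponent $(q-r)/q$ so that the power of $M$ indeed decays; everything else is routine.
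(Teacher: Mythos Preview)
Your proof is correct and essentially identical to the paper's: both use Markov's inequality for tightness and the pointwise bound $|x|^q \leq M^{(q-r)/q}|x|^r$ on the set $\{|x|^q \geq M\}$ for uniform integrability, yielding the same decay factor $M^{(q-r)/q} = M^{-(r-q)/q}$. The only cosmetic difference is that the paper writes the latter estimate by multiplying and dividing by $M^{(r-q)/q}$, whereas you factor $|x|^q = |x|^r|x|^{q-r}$ directly.
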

\begin{proof}
  For the uniform integrability, let \( M > 0 \). By the monotonicity of the power functions \( t \mapsto t^p \) for \( t > 0 \) and \( p > 0 \), we have
  \begin{align}
    \label{eq:337}
    \int_{\left\{ \left| x \right|^q \geq M \right\}} \left| x \right|^q \diff \mu_i = {} & \int_{\left\{ \left| x \right|^q \geq M \right\}} \left| x \right|^q \frac{M^{(r-q)/q}}{M^{(r-q)/q}} \diff \mu_i \\
    \leq {} & M^{-(r-q)/q} \int_{\left\{ \left| x \right|^q \geq M \right\}} \left| x \right|^r \diff \mu_i \\
    \leq {} & M^{-(r-q)/q} \int_{\mathbb{R}^d} \left| x \right|^r \diff \mu_i \rightarrow 0,
  \end{align}
  for \( M \rightarrow \infty \), uniformly in \( i \in I \).

  Similarly, for the tightness, 
  \begin{align}
    \label{eq:326}
    \mu_i\left( \left\{ \left| x \right| \geq M \right\} \right) \leq M^{-r} \int_{\mathbb{R}^d} \left| x \right|^r \diff \mu_i(x) \rightarrow 0
  \end{align}
  for \( M \rightarrow \infty \).
\end{proof}

\subsection{Situation on a compact set}
\label{sec:situ-comp-set}

From now on, let \( q_a, q_r \in [1,2] \).

\begin{proposition}
  Let \(\Omega \subset \mathcal{\mathbb{R}}^{d}\) be a compact subset in \(\mathbb{R}^{d}\). Then, the functionals \(\mathcal{E}\) and \(\mathcal{E}^{\lambda}\) are well-defined on \(\mathcal{P}(\Omega)\) and  \(\mathcal{P}(\Omega) \cap BV(\Omega)\), respectively, and \(\mathcal{E}\) admits a minimizer.

  If additionally \(\Omega\) has a Lipschitz boundary, \(\mathcal{E}^{\lambda}\) admits a minimizer as well.
\end{proposition}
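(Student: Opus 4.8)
The plan is to run the direct method of the calculus of variations in both cases, after first checking finiteness. For well-definedness, I would observe that $\Omega \times \Omega$ is compact, so $(x,y) \mapsto \psi_a(x-y) = |x-y|^{q_a}$ and $(x,y) \mapsto \psi_r(x-y) = |x-y|^{q_r}$ are continuous and bounded there; hence all four integrals in \eqref{eq:11} are finite for every $\mu \in \mathcal{P}(\Omega)$ (with $\omega$ fixed), and in fact $-\tfrac12(\operatorname{diam}\Omega)^{q_r} \le \mathcal{E}[\mu] \le (\operatorname{diam}\Omega)^{q_a}$, so $\mathcal{E}$ is bounded on $\mathcal{P}(\Omega)$. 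For $\mathcal{E}^\lambda$ one only adds the quantity $\lambda |D\mu|(\Omega)$, which is finite by the very definition of $BV(\Omega)$, so $\mathcal{E}^\lambda$ is real-valued on $\mathcal{P}(\Omega) \cap BV(\Omega)$.

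For the minimizer of $\mathcal{E}$, the point I would establish is that $\mathcal{E}$ is narrowly continuous on $\mathcal{P}(\Omega)$. Indeed, $y \mapsto \int_\Omega \psi_a(x-y)\diff\omega(x)$ is a bounded continuous function (dominated convergence), so by Lemma \ref{lem:4} the attraction term $\mu \mapsto \int_\Omega\!\big(\int_\Omega \psi_a(x-y)\diff\omega(x)\big)\diff\mu(y)$ is narrowly continuous in $\mu$; and if $\mu_n \to \mu$ narrowly then $\mu_n \times \mu_n \to \mu \times \mu$ narrowly by Lemma \ref{lem:27} (using that $\Omega$ is separable), so the repulsion term, being the integral of the bounded continuous function $\psi_r(x-y)$ against $\mu_n \times \mu_n$, converges as well. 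Since $\Omega$ is compact, $\mathcal{P}(\Omega)$ is narrowly compact, so the continuous functional $\mathcal{E}$ attains its infimum there.

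For the minimizer of $\mathcal{E}^\lambda$ under the additional Lipschitz-boundary hypothesis, I would take a minimizing sequence $(\mu_n)_n$. Because $\Omega$ then has non-empty interior, $\mathcal{P}(\Omega) \cap BV(\Omega)$ is non-empty (e.g.\ a smooth bump density supported in the interior), so $m := \inf \mathcal{E}^\lambda < \infty$, while $m \ge \inf_{\mathcal{P}(\Omega)} \mathcal{E} > -\infty$ by the bound above. Since $\mathcal{E}$ is bounded, $\lambda|D\mu_n|(\Omega) = \mathcal{E}^\lambda[\mu_n] - \mathcal{E}[\mu_n]$ stays bounded, and together with $\|\mu_n\|_{L^1(\Omega)} = 1$ this bounds $(\mu_n)_n$ in $BV(\Omega)$. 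Invoking the compact embedding $BV(\Omega) \hookrightarrow L^1(\Omega)$ for bounded Lipschitz domains, I extract $\mu_{n_k} \to \mu$ in $L^1(\Omega)$ with $\mu \in BV(\Omega)$; the $L^1$-lower semicontinuity of $\mu \mapsto |D\mu|(\Omega)$, the fact that $L^1$-limits of probability densities are probability densities (so $\mu \in \mathcal{P}(\Omega)$), and the narrow continuity of $\mathcal{E}$ (via Lemma \ref{lem:1}, since $L^1$-convergence implies narrow convergence) then combine to $\mathcal{E}^\lambda[\mu] = \mathcal{E}[\mu] + \lambda|D\mu|(\Omega) \le \lim_k \mathcal{E}[\mu_{n_k}] + \lambda\liminf_k |D\mu_{n_k}|(\Omega) \le \liminf_k \mathcal{E}^\lambda[\mu_{n_k}] = m$, so $\mu$ is a minimizer.

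The main obstacle is precisely this last compactness step: for $\mathcal{E}$ the narrow compactness of $\mathcal{P}(\Omega)$ does everything, but the regularizing term is not narrowly lower semicontinuous, so one genuinely needs $BV$-compactness in $L^1$, and this is where the Lipschitz-boundary assumption is used (it makes $\Omega$ an extension domain, so the Rellich--Kondrachov-type compact embedding for $BV$ holds). Without it, a minimizing sequence for $\mathcal{E}^\lambda$ could concentrate its variation on $\partial\Omega$ and fail to converge in $L^1$, which is why that part of the statement is conditional.
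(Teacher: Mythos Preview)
Your proof is correct and follows essentially the same approach as the paper: narrow continuity of $\mathcal{E}$ plus narrow compactness of $\mathcal{P}(\Omega)$ for the first part, and $BV$-compactness in $L^1$ (via the Lipschitz boundary) combined with $L^1$-lower semicontinuity of the total variation and Lemma~\ref{lem:1} for the second. You spell out a few points the paper leaves implicit (the use of Lemma~\ref{lem:27} for the product measure, non-emptiness of $\mathcal{P}(\Omega)\cap BV(\Omega)$), but the argument is the same.
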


\begin{proof}
  Note that since the mapping
  \begin{equation}
    \label{eq:15}
    (x,y) \mapsto \left| y - x \right|^{q},\quad x,y \in \mathbb{R}^{d},
  \end{equation}
  is jointly continuous in \(x\) and \(y\), it attains its maximum on the compact set \(\Omega \times \Omega\). Hence, the kernel \eqref{eq:15} is a bounded continuous function, which on the one hand implies that the functional \(\mathcal{E}\) is bounded (and in particular well-defined) on \( L^1(\Omega) \) and on the other hand that it is continuous with respect to the narrow topology. Together with the compactness of \( \mathcal{P}(\Omega) \), this implies we can employ the direct method of the calculus of variations to find a minimiser for \(\mathcal{E}\).

  The situation for \(\mathcal{E}^{\lambda}\) is similar: Due to the boundedness of \(\Omega\) and the regularity of its boundary, sub-levels of \( \left| D\,.\, \right|(\Omega) \) are relatively compact in \( L^{1}(\Omega) \cap \mathcal{P}(\Omega) \) by \cite[Chapter 5.2, Theorem 4]{92-Evans-fine-properties}. As the total variation is lower semi-continuous with respect to \(L^{1}\)-convergence by \cite[Chapter 5.2, Theorem 1]{92-Evans-fine-properties} and \(L^{1}\)-convergence implies narrow convergence by Lemma \ref{lem:1}, we get lower semi-continuity of \( \mathcal{E}^\lambda \) and therefore again existence of a minimizer.
\end{proof}

\subsection{Existence of minimizers for stronger attraction on arbitrary domains}
\label{sec:exist-minim-strong}

Note that from here on, the constants \( C \) and \( c \) are generic and may change in each line of a calculation.

\begin{lemma}
  \label{lem:2}
  For \( q \geq 1 \) and \( x,y \in \mathbb{R}^d \), there exist \( C,c > 0 \) such that
  \begin{equation}
    \label{eq:16}
    \left| x + y \right|^q \leq C \left( \left| x \right|^q + \left| y \right|^q \right).
  \end{equation}
  and
  \begin{equation}
    \label{eq:17}
    \left| x - y \right|^q \geq c \left| x \right|^q - \left| y \right|^q 
  \end{equation}
\end{lemma}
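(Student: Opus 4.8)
The plan is to prove the two inequalities by elementary means, exploiting the homogeneity of the power function and, for the second one, a compactness argument on the unit sphere.

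\medskip

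\noindent\textbf{First inequality.} For \eqref{eq:16} the clean route is via convexity. Since $t \mapsto t^q$ is convex on $[0,\infty)$ for $q \geq 1$, Jensen's inequality applied to the two points $2|x|$ and $2|y|$ with equal weights $1/2$ gives
\begin{equation*}
  \left( \tfrac{|x| + |y|}{1} \right)^q = \left( \tfrac{2|x| + 2|y|}{2} \right)^q \leq \tfrac{1}{2}\left( (2|x|)^q + (2|y|)^q \right) = 2^{q-1}\left( |x|^q + |y|^q \right),
\end{equation*}
and combining this with the triangle inequality $|x+y| \leq |x| + |y|$ yields \eqref{eq:16} with $C = 2^{q-1}$. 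Alternatively one can note that for $s,t \geq 0$ one has $\max\{s,t\}^q \leq s^q + t^q$ and $(s+t)^q \leq (2\max\{s,t\})^q = 2^q \max\{s,t\}^q$; either way $C$ depends only on $q$ (and one may of course just take $C = 2^q$ to be safe).

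\medskip

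\noindent\textbf{Second inequality.} For \eqref{eq:17} I would first dispose of the trivial case $x = 0$, where the claim reads $|y|^q \geq -|y|^q$. For $x \neq 0$, divide through by $|x|^q$ and set $z := y/|x|$; the inequality \eqref{eq:17} becomes $|e - z|^q \geq c - |z|^q$ for every unit vector $e = x/|x|$, i.e.\ it suffices to show
\begin{equation*}
  |e - z|^q + |z|^q \geq c \qquad \text{for all } e \in \mathbb{S}^{d-1},\ z \in \mathbb{R}^d.
\end{equation*}
The left-hand side is a continuous function of $(e,z)$ which is strictly positive everywhere (it can only vanish if $z = e$ and $z = 0$ simultaneously, impossible since $|e| = 1$), and as $|z| \to \infty$ it tends to $+\infty$ uniformly in $e$; hence it attains a positive minimum $c > 0$ over the (closed) set $\mathbb{S}^{d-1} \times \mathbb{R}^d$, say by restricting to a large ball in the $z$ variable and using compactness of $\mathbb{S}^{d-1}$ times that ball. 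A more computational alternative avoiding compactness: from \eqref{eq:16} applied to $x = (x-y) + y$ we get $|x|^q = |(x-y)+y|^q \leq C(|x-y|^q + |y|^q)$, hence $|x-y|^q \geq \frac{1}{C}|x|^q - |y|^q$, which is exactly \eqref{eq:17} with $c = 1/C$. This second derivation is shorter and reuses \eqref{eq:16} directly, so that is the one I would actually write down.

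\medskip

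\noindent\textbf{Main obstacle.} There is essentially no obstacle here — both bounds are soft consequences of convexity/homogeneity. The only point requiring a little care is making explicit that $C$ and $c$ depend on $q$ alone (not on $x,y$ or the dimension $d$), which matters because the lemma is invoked later with these as generic constants; deriving \eqref{eq:17} from \eqref{eq:16} as above makes this transparent.
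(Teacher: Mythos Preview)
Your proposal is correct and essentially identical to the paper's proof: both prove \eqref{eq:16} via the triangle inequality plus convexity of $t\mapsto t^q$ (getting $C=2^{q-1}$), and both derive \eqref{eq:17} by applying \eqref{eq:16} to $x=(x-y)+y$ to obtain $c=1/C$. Your compactness argument for \eqref{eq:17} is a valid alternative, but the shorter derivation you say you ``would actually write down'' is exactly what the paper does.
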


\begin{proof}
  By the monotonicity of the power function \( x \mapsto x^q \) for \( x \in [0,\infty) \) and the triangle inequality, we can deduce
  \begin{equation}
    \label{eq:18}
    \left| x + y \right|^q \leq \left( \left| x \right| + \left| y \right| \right)^q
  \end{equation}
  for \( x, y \in \mathbb{R} \). By the convexity of \( x \mapsto x^q \) for \( q \in [1,\infty) \) and \( x \in [0,\infty) \), we see that
  \begin{align}
    \label{eq:19}
    \left( \left| x \right| + \left| y \right| \right)^q = {} & \left( \frac{1}{2} \cdot 2 \left| x \right| + \frac{1}{2} \cdot 2 \left| y \right| \right)^q\\
    \leq {} & \frac{1}{2} \left( 2 \left| x \right| \right)^q + \frac{1}{2} \left( 2 \left| y \right| \right)^q,
  \end{align}
  yielding estimate \eqref{eq:16} with \( C := 2^{q-1} \).

  Now, using estimate \eqref{eq:16} on \(  x = x - y + y \) yields
  \begin{align}
    \label{eq:20}
    \left| x \right|^q \leq {} & C \left( \left| x - y \right|^q + \left| y \right|^q \right);
  \end{align}
  implying \eqref{eq:17} with \( c := C^{-1} \).
\end{proof}

\begin{theorem}
  \label{thm:exist-min-strong}
  Let \( q_a, q_r \in [1,2] \), \( \Omega \subseteq \mathbb{R}^d \) closed and \( q_a > q_r \). If \( \omega \in \mathcal{P}_{q_a}(\Omega) \), then the sub-levels of \( \mathcal{E} \) have uniformly bounded \( q_a \)th moments and \( \mathcal{E} \) admits a minimizer on \( \mathcal{P}_{q_r}(\Omega) \).
\end{theorem}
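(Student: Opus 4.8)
The plan is to run the direct method of the calculus of variations, with the compactness of the domain used in the compact case replaced by an a~priori moment bound coming from the fact that, since $q_a>q_r$, the attractive term dominates the repulsive one at infinity. \textbf{Step 1 (well-definedness and the moment estimate).} On $\mathcal{P}_{q_r}(\Omega)$ the functional is well-defined with values in $(-\infty,+\infty]$ and is not identically $+\infty$: by \eqref{eq:16} the repulsion term is bounded in absolute value by $C\int_\Omega|x|^{q_r}\diff\mu<\infty$, the attraction term is a nonnegative integral, and $\mu=\omega\in\mathcal{P}_{q_a}(\Omega)\subseteq\mathcal{P}_{q_r}(\Omega)$ gives a finite value. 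Writing $M_q(\mu):=\int_\Omega|x|^q\diff\mu$, I apply \eqref{eq:17} to the pair $(y,x)$ to obtain $|x-y|^{q_a}\ge c\,|y|^{q_a}-|x|^{q_a}$, so the attraction term is $\ge cM_{q_a}(\mu)-M_{q_a}(\omega)$; by \eqref{eq:16} the repulsion term is $\le CM_{q_r}(\mu)$; and since $q_r<q_a$ and $\mu$ is a probability measure, Jensen's inequality gives $M_{q_r}(\mu)\le M_{q_a}(\mu)^{q_r/q_a}$. Hence
\begin{equation*}
  \mathcal{E}[\mu]\ \ge\ cM_{q_a}(\mu)-CM_{q_a}(\mu)^{q_r/q_a}-M_{q_a}(\omega),
\end{equation*}
and since $t\mapsto ct-Ct^{q_r/q_a}$ is bounded below on $[0,\infty)$ and tends to $+\infty$, this shows $\mathcal{E}$ is bounded below and that every sub-level $\{\mathcal{E}\le K\}$ has uniformly bounded $q_a$th moment.

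\textbf{Step 2 (compactness of a minimizing sequence).} Let $(\mu_n)_n\subseteq\mathcal{P}_{q_r}(\Omega)$ be a minimizing sequence; by Step~1 we may assume $\sup_n M_{q_a}(\mu_n)<\infty$. By Lemma \ref{lem:24} the family $(\mu_n)_n$ is tight and has uniformly integrable $q_r$th moments, hence is relatively compact in $\mathcal{P}_{q_r}(\Omega)$ by Lemma \ref{lem:26}; since $\Omega$ is closed, $\mathcal{P}_{q_r}(\Omega)$ is complete, so a subsequence (not relabeled) converges in $W_{q_r}$ to some $\mu\in\mathcal{P}_{q_r}(\Omega)$. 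In particular $\mu_n\to\mu$ narrowly, and by Lemma \ref{lem:27} also $\omega\times\mu_n\to\omega\times\mu$ and $\mu_n\times\mu_n\to\mu\times\mu$ narrowly.

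\textbf{Step 3 (lower semicontinuity and conclusion).} Write $\mathcal{E}=\mathcal{A}-\tfrac12\mathcal{R}$ with $\mathcal{A}[\mu]=\int_{\Omega\times\Omega}|x-y|^{q_a}\diff\omega\diff\mu$ and $\mathcal{R}[\mu]=\int_{\Omega\times\Omega}|x-y|^{q_r}\diff\mu\diff\mu$. As $(x,y)\mapsto|x-y|^{q_a}$ is continuous and nonnegative, the lower-semicontinuity part of Lemma \ref{lem:4}, applied on $\Omega\times\Omega$ to $\omega\times\mu_n\to\omega\times\mu$, gives $\liminf_n\mathcal{A}[\mu_n]\ge\mathcal{A}[\mu]$. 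For $\mathcal{R}$, the bound $|x-y|^{q_r}\le C(|x|^{q_r}+|y|^{q_r})$ together with the uniform integrability of the $q_r$th moments of $(\mu_n)_n$ shows that $(x,y)\mapsto|x-y|^{q_r}$ is uniformly integrable with respect to $\{\mu_n\times\mu_n\}_n$, so the continuity part of Lemma \ref{lem:4} gives $\mathcal{R}[\mu_n]\to\mathcal{R}[\mu]$. Combining, $\liminf_n\mathcal{E}[\mu_n]\ge\mathcal{A}[\mu]-\tfrac12\mathcal{R}[\mu]=\mathcal{E}[\mu]$, which forces $\mathcal{E}[\mu]=\inf_{\mathcal{P}_{q_r}(\Omega)}\mathcal{E}$; thus $\mu$ is a minimizer (and $\mu\in\mathcal{P}_{q_a}(\Omega)$ by the estimate of Step~1, since $\mathcal{E}[\mu]$ is finite).

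\textbf{Main obstacle.} The load-bearing step is Step~1: one must chain the two opposite-direction power inequalities of Lemma \ref{lem:2} with the Jensen bound $M_{q_r}\le M_{q_a}^{q_r/q_a}$ --- precisely where the hypothesis $q_a>q_r$ enters --- so that the attraction beats the repulsion at infinity. The semicontinuity bookkeeping in Step~3, especially the uniform integrability of $|x-y|^{q_r}$ on the product space, is routine but has to be routed carefully through Lemmas \ref{lem:27} and \ref{lem:4}.
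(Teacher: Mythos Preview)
Your proof is correct and follows essentially the same route as the paper: the moment bound from Lemma~\ref{lem:2}, compactness via Lemmas~\ref{lem:24} and~\ref{lem:26}, and lower semicontinuity/continuity of the two pieces via Lemmas~\ref{lem:27} and~\ref{lem:4}. The only cosmetic difference is in Step~1, where you control $M_{q_r}$ by $M_{q_a}^{q_r/q_a}$ via Jensen, whereas the paper keeps the pointwise inequality $c|x|^{q_a}-C|x|^{q_r}\ge \tfrac{c}{2}|x|^{q_a}$ for $|x|\ge M$ and splits the integral; both yield the same uniform $q_a$-moment bound on sub-levels.
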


\begin{proof}
  We can show that the sub-levels of \( \mathcal{E} \) have a uniformly bounded \( q_a \)th moment, so that they are Wasserstein-\( q \) compact for any \( q < q_a \) by Lemma \ref{lem:26} and Lemma \ref{lem:24}, which means that we can extract a narrowly converging subsequence \( \left( \mu_n \right)_n \) with uniformly integrable \( q_r \)th moments. With respect to that convergence (which by Lemma \ref{lem:27} also implies the narrow convergence of \( \left( \mu_n \otimes \mu_n \right)_n \) and \( \left( \mu_n \otimes \omega \right)_n \)), the functional \( \mathcal{W} \) is continuous and the functional \( \mathcal{V} \) is lower semi-continuous by Lemma \ref{lem:4}, so we shall be able to apply the direct method of the calculus of variations to show existence of a minimizer in \( \mathcal{P}_{q_r}(\Omega) \).

  \emph{Ad moment bound:} Let \( \mu \in \mathcal{P}_{q_r}(\Omega) \). By estimate \eqref{eq:17} of Lemma \ref{lem:2}, we have
  \begin{align}
    \label{eq:22}
    \mathcal{V}[\mu] = {} & \int_{\Omega \times \Omega} \left| x - y \right|^{q_a} \diff \mu(x) \diff \omega(y)\\
    \geq {} & \int_{\Omega \times \Omega} \left( c \left| x \right|^{q_a} - \left| y \right|^{q_a} \right) \diff \mu(x) \diff \omega(x)\\
    = {} & c \int_{\Omega} \left| x \right|^{q_a} \diff \mu(x) - \int_{\Omega} \left| y \right|^{q_a} \diff \omega(y).\label{eq:23}
  \end{align}
  On the other hand, by estimate \eqref{eq:16}
  \begin{align}
    \label{eq:24}
    \mathcal{W}[\mu] = {} & - \frac{1}{2} \int_{\Omega \times \Omega} \left| x - y \right|^{q_r} \diff \mu(x) \diff \mu(y)\\
    \geq {} & - C \int_{\Omega \times \Omega} \left( \left| x \right|^{q_r} + \left| y \right|^{q_r} \right) \diff \mu(x) \diff \mu(y)\\
    \geq {} & - C \int_{\Omega} \left| x \right|^{q_r} \diff \mu(x)\label{eq:25}
  \end{align}
  Combining \eqref{eq:23} and \eqref{eq:25}, we have
  \begin{align}
    \label{eq:26}
    \mathcal{E}[\mu] + \int_{\Omega} \left| x \right|^{q_a} \diff \omega(x) \geq {} &\int_{\Omega} \left( c\left| x \right|^{q_a} - C \left| x \right|^{q_r} \right) \diff \mu(x)\\
    \geq {} &\int_{\Omega} \left( c - C \left| x \right|^{q_r - q_a} \right) \left| x \right|^{q_a} \diff \mu(x)
  \end{align}
  Since \( q_a > q_r \), there is an \( M > 0 \) such that
  \begin{equation}
    \label{eq:27}
    c - C \left| x \right|^{q_r - q_a} \geq \frac{c}{2}, \quad \left| x \right| \geq M,
  \end{equation}
  and hence
  \begin{align}
    \label{eq:28}
    \int_{\Omega} \left| x \right|^{q_a} \diff \mu(x) = {} & \int_{B_M(0)} \left| x \right|^{q_a} \diff \mu(x) + \int_{\Omega \setminus B_M(0)} \left| x \right|^{q_a} \diff \mu(x) \\
    \leq {} & M^{q_a} + \frac{2}{c} \left[ \mathcal{E}[\mu] + \int_{\Omega} \left| x \right|^{q_a} \diff \omega(x) \right] \label{eq:358}
  \end{align}
\end{proof}

\subsection{Counterexample to the existence of minimizers for stronger repulsion}
\label{sec:absence-minim-strong}

Now, let \(q_{a},q_{r} \in [1,2]\) with \( q_r > q_a \). On \(\Omega=\mathbb{R}^{d}\), this problem need not have a minimizer.

\begin{example}[Absence of minimizers for stronger repulsion]
  \label{exa:count-exist-minim}
  Let \(\Omega=\mathbb{R}\), \(q_{r} > q_{a}\), \(\omega = 1_{[-1,0]}\mathcal{L}^{1}\) and
  consider the sequence \(\mu_{n} := n^{-1}1_{[0,n]}\mathcal{L}^{1}\). Computing
  the values of the functionals used to define \(\mathcal{E}\) and
  \(\mathcal{E}^{\lambda}\) yields
  \begin{align}
    \mathcal{V}[\mu_{n}] = {} &\frac{1}{n} \int_{-1}^{0} \int_{0}^{n} \left| y -
      x \right|^{q_{a}} \diff x \diff y\\
    \leq {} &\frac{1}{n} \int_{0}^{n} (y+1)^{q_{a}} \diff y\\
    = {} &\frac{1}{n (q_{a}+1)} \left(n+1\right)^{q_{a}+1} - \frac{1}{n (q_{a}+1)}\\
    \leq {} &\frac{\left(n+1\right)^{q_{a}}}{q_{a}+1};\\
    \label{eq:29}
    \mathcal{W}[\mu_{n}] = {} & - \frac{1}{2n^{2}} \int_{0}^{n}
    \int_{0}^{n}\left| y-x \right|^{q_{r}} \diff x \diff y \\
    = {} & - \frac{1}{2n^{2}(q_{r}+1)} \int_{0}^{n} \left[(n - y)^{q_{r}+1} +
      y^{q_{r}+1}\right] \diff y\\
    = {} & - \frac{1}{2n^{2}(q_{r}+1)(q_{r}+2)} 2n^{q_{r}+2} =
    \frac{n^{q_{r}}}{(q_{r}+1)(q_{r}+2)};\\
    \left\| D\mu_{n} \right\| = {} &- \frac{2}{n}.
  \end{align}
  Taken together, we see that
  \begin{equation}
    \label{eq:30}
    \mathcal{E}[\mu_{n}]\rightarrow -\infty, \quad \mathcal{E}^{\lambda}[\mu_{n}]\rightarrow -\infty \quad \text{for } n\rightarrow \infty,
  \end{equation}
  which means there are no minimizers in this case.
\end{example}

\section{Properties of the functional on $\mathbb{R}^d$}
\label{sec:prop-funct-mathbbrd}

Now, let us consider \(\Omega=\mathbb{R}^{d}\) and 
\begin{equation}
  \label{eq:317}
  q := q_{a} = q_{r}, \quad \psi(x) := \psi_a(x) = \psi_r(x) = \left| x \right|^q, \quad x \in \mathbb{R}^d,
\end{equation}
for \( 1 \leq q < 2 \).

Here, neither the well-definedness of \(\mathcal{E}[\mu]\) for all \(\mu\in\mathcal{P}(\mathbb{R}^{d})\) nor the narrow compactness of the sub-levels as in the case of a compact \( \Omega \) in Section \ref{sec:situ-comp-set} are clear, necessitating additional conditions on \( \mu \) and \( \omega \). For example, if we assume the existence of the second moments, i.e.\@ \(\mu,\omega \in \mathcal{P}_{2}(\mathbb{R}^{d})\), the space of probability measures with finite second moment, we can a priori see that both \(\mathcal{V}[\mu]\) and \(\mathcal{W}[\mu]\) are finite.

Under this restriction, we can show a formula for \(\mathcal{E}\) involving the Fourier-Stieltjes transform of the measures \(\mu\) and \(\omega\). Namely, there is a constant \(C = C(q,\omega) \in \mathbb{R}\) such that
\begin{equation}
  \label{eq:31}
  \mathcal{E}[\mu] + C = -2^{-1} (2\pi)^{-d}\int_{\mathbb{R}^d}\left| 
    \widehat{\mu}(\xi) - \widehat{\omega}(\xi) \right|^2
  \widehat{\psi}(\xi) \dd \xi =: \widehat{\mathcal{E}}[\mu],
\end{equation}
where for \(\mu \in \mathcal{P}(\mathbb{R}^{d})\), \(\widehat{\mu}\) denotes its Fourier-Stieltjes transform,
\begin{equation}
  \label{eq:32}
  \widehat{\mu}(\xi) = \int_{\mathbb{R}^{d}} \exp(-\i x^{T}\xi) \dd \mu(x),
\end{equation}
and \(\widehat{\psi}\) is the \emph{generalized Fourier-transform of \(\psi\)}, \ie a Fourier transform with respect to a certain duality. We have gathered most of the important facts about it in Appendix \ref{cha:cond-posit-semi}. In this case, it can be computed to be
\begin{equation}
  \label{eq:33}
      \widehat{\psi}(\xi) := -2\cdot(2\pi)^{d} D_{q} \, \left| \xi \right|^{-d-q}, \quad \text{with a } D_{q} > 0,
\end{equation}
with
\begin{equation}
  D_{q} := -(2\pi)^{-d/2}\frac{2^{q + d/2}\, \Gamma((d+q)/2)}{2\Gamma(-q/2)} > 0,
\end{equation}
so that
\begin{equation}
  \label{eq:34}
  \widehat{\mathcal{E}}[\mu] = D_{q} \int_{\mathbb{R}^d}\left| 
    \widehat{\mu}(\xi) - \widehat{\omega}(\xi) \right|^2
  \left| \xi \right|^{-d-q} \dd \xi,
\end{equation}
which will be proved in Section \ref{sec:fourier-formula-p2}.

Formula \eqref{eq:34} makes sense on the whole space \(\mathcal{P}(\mathbb{R}^{d})\) and the sub-levels of \( \widehat{\mathcal{E}} \) can be proved to be narrowly compact as well as lower semi-continuous \wrt to the narrow topology (see Proposition \ref{prp:compctness-sublvls}), motivating the proof in Section \ref{sec:extension-p} that up to a constant, this formula is exactly the lower semi-continuous envelope of \(\mathcal{E}\) on \(\mathcal{P}(\mathbb{R}^{d})\) endowed with the narrow topology.

\subsection{Fourier formula in $\mathcal{P}_{2}(\mathbb{R}^{d})$}
\label{sec:fourier-formula-p2}

Assume that \(\mu,\omega \in \mathcal{P}_{2}(\mathbb{R}^{d})\) and observe that by using the symmetry of $\psi$, $\mathcal{E}[\mu]$ can be written as
\begin{align}
  \mathcal{E}[\mu] = {} & -\frac{1}{2}\int_{\mathbb{R}^d \times \mathbb{R}^d} \psi(y - x) \dd \mu(x) \dd \mu(y)
  + 
  \frac{1}{2}\int_{\mathbb{R}^d \times \mathbb{R}^d} \psi(y - x) \dd \omega(x) \dd \mu(y) \\
  & +\frac{1}{2}\int_{\mathbb{R}^d \times \mathbb{R}^d} \psi(y - x) \dd \omega(y) \dd \mu(x) -
  \frac{1}{2}\int_{\mathbb{R}^d \times \mathbb{R}^d} \psi(y - x) \dd \omega(x) \dd \omega(y)\\
  & +\frac{1}{2}\int_{\mathbb{R}^d \times \mathbb{R}^d} \psi(y - x) \dd \omega(x) \dd \omega(y)\\
  = {} &- \frac{1}{2}\int_{\mathbb{R}^d \times \mathbb{R}^d} \psi(y - x) \dd [\mu - \omega](x) \dd [\mu - \omega](y) +
  C,
\end{align}
where
\begin{equation}
  \label{eq:36}
  C = \frac{1}{2}\int_{\mathbb{R}^d \times \mathbb{R}^d} \psi(y - x) \dd \omega(x) \dd \omega(y).
\end{equation}
In the following, we shall mostly work with the symmetrized variant and denote it
by
\begin{equation}
  \label{eq:37}
  \widetilde{\mathcal{E}}[\mu] := - \frac{1}{2}\int_{\mathbb{R}^d \times \mathbb{R}^d} \psi(y - x) \dd [\mu -
  \omega](x) \dd [\mu - \omega](y).
\end{equation}

\subsubsection{Representation for point-measures}
\label{sec:four-repr-point}

Our starting point is a representation of $\widetilde{\mathcal{E}}$ in the case
that $\mu$ and $\omega$ are point-measures, which has been derived in \cite{Wend05}.

\begin{lemma}
  Let $\mu$ and $\omega$ be finite sums of Dirac measures such that
  \begin{equation}
    \label{eq:38}
    \mu - \omega = \sum_{j = 1}^N \alpha_j \delta_{x_j}
  \end{equation}
  with suitable $N \in \mathbb{N}$, $\alpha_j \in \mathbb{R}$ and pairwise distinct $x_j \in \mathbb{R}^d$ for all $j = 1,\ldots,N$.
  Then
  \begin{equation}
    \label{eq:39}
    \widetilde{\mathcal{E}}[\mu] = -2^{-1}(2\pi)^{-d}\int_{\mathbb{R}^d}\left| \sum_{j = 1}^N \alpha_j
      \exp(\i x_j^{T}\xi) \right|^2 \widehat{\psi}(\xi) \dd \xi,
  \end{equation}
  where
  \begin{equation}
    \label{eq:40}
    \widehat{\psi}(\xi) := -2\cdot(2\pi)^{d} D_{q} \, \left| \xi \right|^{-d-q}, \quad \text{with a } D_{q} > 0.
  \end{equation}
\end{lemma}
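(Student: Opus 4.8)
The plan is to unfold $\widetilde{\mathcal{E}}[\mu]$ into a finite bilinear form, exploit that the coefficients $\alpha_j$ sum to zero because $\mu$ and $\omega$ are probability measures, and then read off the claimed identity from the Parseval-type formula for generalized Fourier transforms of conditionally positive definite functions collected in Appendix~\ref{cha:cond-posit-semi} (following \cite{Wend05}).

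First I would expand. Since $\mu-\omega=\sum_{j=1}^N\alpha_j\delta_{x_j}$, Fubini applied to \eqref{eq:37} gives immediately
\[
  \widetilde{\mathcal{E}}[\mu]=-\frac12\sum_{j,k=1}^N\alpha_j\alpha_k\,\psi(x_j-x_k),
\]
while expanding the square on the right-hand side of \eqref{eq:39} yields $\bigl|\sum_j\alpha_j e^{\i x_j^T\xi}\bigr|^2=\sum_{j,k}\alpha_j\alpha_k e^{\i(x_j-x_k)^T\xi}$. Hence the assertion is equivalent, after matching terms, to the single scalar identity
\[
  \sum_{j,k=1}^N\alpha_j\alpha_k\,\psi(x_j-x_k)=(2\pi)^{-d}\int_{\mathbb{R}^d}\Bigl|\sum_{j=1}^N\alpha_j e^{\i x_j^T\xi}\Bigr|^2\widehat{\psi}(\xi)\,\diff\xi .
\]

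The decisive point is that $\sum_{j=1}^N\alpha_j=(\mu-\omega)(\mathbb{R}^d)=0$, i.e.\ the data $(\alpha_j,x_j)$ satisfy the moment condition of order $1$. This is what makes the right-hand integral well defined: near the origin $\bigl|\sum_j\alpha_j e^{\i x_j^T\xi}\bigr|^2=O(|\xi|^2)$, so the integrand behaves like $|\xi|^{2-d-q}$, which is locally integrable precisely because $q<2$, while for $|\xi|\to\infty$ the factor $\widehat{\psi}(\xi)\sim|\xi|^{-d-q}$ already provides decay. The function $\psi(x)=|x|^q$, $1\le q<2$, is a continuous tempered function, conditionally positive definite of order $1$, whose generalized Fourier transform (in the sense of Appendix~\ref{cha:cond-posit-semi}) is the classical function on $\mathbb{R}^d\setminus\{0\}$ of the form $c_{d,q}\,|\xi|^{-d-q}$; the constant $c_{d,q}$ is the one computed in \cite{Wend05}, and writing it in the form $-2(2\pi)^d D_q\,|\xi|^{-d-q}$ used in the statement forces the stated value of $D_q$, whose positivity comes from $-q/2\in(-1,-\tfrac12]$, on which $\Gamma(-q/2)<0$ while $\Gamma((d+q)/2)>0$. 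If one wants to produce both the formula and the constant from scratch, the cleanest route is the subordination identity $|x|^q=|\Gamma(-q/2)|^{-1}\int_0^\infty\bigl(1-e^{-t|x|^2}\bigr)t^{-1-q/2}\,\diff t$: inserting it into $\sum_{j,k}\alpha_j\alpha_k\psi(x_j-x_k)$, using $\sum_j\alpha_j=0$ to delete the constant term $1$, plugging in the spectral representation of the Gaussian kernel $e^{-t|x-y|^2}$, and interchanging the $t$- and $\xi$-integrations reduces everything to the Euler integral $\int_0^\infty t^{-1-q/2-d/2}e^{-|\xi|^2/(4t)}\,\diff t=\Gamma\bigl((d+q)/2\bigr)\,(|\xi|^2/4)^{-(d+q)/2}$, which produces $|\xi|^{-d-q}$ together with exactly the Gamma-function constant.

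With the generalized transform in hand, the identity is just the Parseval/quadrature formula for generalized Fourier transforms (Appendix~\ref{cha:cond-posit-semi}) applied to the signed discrete measure $\mu-\omega$ whose total mass vanishes, namely $\sum_{j,k}\alpha_j\alpha_k\psi(x_j-x_k)=(2\pi)^{-d}\int|\widehat{\mu-\omega}(\xi)|^2\widehat{\psi}(\xi)\,\diff\xi$ with $\widehat{\mu-\omega}(\xi)=\sum_j\alpha_j e^{-\i x_j^T\xi}$, which is \eqref{eq:39}. I expect the only genuinely delicate step to be the justification of this last formula: the defining duality $\int\psi\,\widehat{\gamma}=\int\widehat{\psi}\,\gamma$ holds a priori only for Schwartz functions $\gamma$ vanishing to second order at $0$ (since $\widehat{\psi}$ is non-integrable at the origin), and one must pass from that pairing to the discrete bilinear form by a mollification/approximation argument in which the hypothesis $\sum_j\alpha_j=0$ is exactly what places $|\widehat{\mu-\omega}|^2$ in the admissible class. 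This is the content of the conditionally-positive-definite machinery, so in the write-up it can be quoted rather than re-proved.
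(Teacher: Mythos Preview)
Your proposal is correct and follows essentially the same route as the paper: the paper's proof is a two-line citation of Theorem~\ref{thm:repr-thm-cond-semi} (the representation formula for conditionally positive semi-definite functions) together with Theorem~\ref{thm:cond-ft-power} (the explicit generalized Fourier transform of $|x|^q$), and your write-up simply unpacks what those two citations mean---expanding the bilinear form, isolating the moment condition $\sum_j\alpha_j=0$, and checking integrability. The subordination-identity computation you sketch is a pleasant self-contained alternative to Theorem~\ref{thm:cond-ft-power} for producing the constant $D_q$, but it is not needed once that theorem is quoted.
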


\begin{proof}
  The claim is an application of a general representation theorem for conditionally positive semi-definite functions. An extensive introduction can be found in \cite{Wend05}, of which we have included a brief summary in Appendix \ref{cha:cond-posit-semi}. Here, we use Theorem \ref{thm:repr-thm-cond-semi} together with the explicit computation of the generalized Fourier transform of \( \psi \) in Theorem \ref{thm:cond-ft-power}.
                \end{proof}

\begin{remark}
  By
  \begin{equation}
    \label{eq:42}
    \overline{\exp(\i x)} = \exp(-\i x), \quad x \in \mathbb{R},
  \end{equation}
  we can also write the above formula \eqref{eq:39} as
  \begin{equation}
    \label{eq:43}
    \widetilde{\mathcal{E}}[\mu] = D_q\int_{\mathbb{R}^d}\left| \widehat{\mu}(\xi) - \widehat{\omega}(\xi) \right|^2 \left| \xi \right|^{-d-q} \dd \xi, \quad \xi \in \mathbb{R}^d.
  \end{equation}
\end{remark}

\subsubsection{Point approximation of probability measures by the empirical process}
\label{sec:point-appr-empir}

\begin{lemma}[Consistency of empirical process]
  \label{lem:3}
  Let \(\mu \in \mathcal{P}(\mathbb{R}^{d})\) and \((X_{i})_{i \in \mathbb{N}}\) be  a sequence of i.i.d. random variables with \(X_{i} \sim \mu\) for all \(i \in \mathbb{N}\). Then the empirical distribution
  \begin{equation}
    \label{eq:44}
    \mu_{N} := \frac{1}{N} \sum_{i=1}^{N} \delta_{X_{i}}
  \end{equation}
  converges with probability \(1\) narrowly to \(\mu\), i.e.
  \begin{equation}
    \label{eq:45}
    P(\{\mu_{N}\rightarrow \mu \text{ narrowly}\}) = 1.
  \end{equation}
  
  Additionally, if for a \(p \in [1,\infty)\), \(\int_{\mathbb{R}^{d}}\left| x \right|^{p}\dd \mu < \infty\), then \( x \mapsto \left| x \right|^p \) is almost surely uniformly integrable \wrt \( \left\{ \mu_N \right\}_N \), which by Lemma \ref{lem:26} implies almost sure convergence of \( \mu_N \rightarrow \mu \) in the \( p \)-Wasserstein topology.
\end{lemma}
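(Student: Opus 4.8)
The plan is to reduce both assertions to the classical strong law of large numbers (SLLN), applied to suitable real-valued functions of the $X_i$, together with the metrization results already recorded.

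\emph{Narrow convergence.} Fix the sequence $(f_k)_k$ of continuous functions with $\sup_{\mathbb{R}^d}|f_k|\le 1$ that metrizes the narrow topology through $\delta$, as in Lemma~\ref{lem:25}. For each $k$ the real random variables $f_k(X_i)$, $i\in\mathbb{N}$, are i.i.d.\ and bounded by $1$, hence integrable with mean $\int_{\mathbb{R}^d}f_k\diff\mu$; the SLLN therefore gives $\int f_k\diff\mu_N=\tfrac1N\sum_{i=1}^{N}f_k(X_i)\to\int f_k\diff\mu$ on an event of probability $1$. Intersecting these countably many events yields an event of probability $1$ on which, for every $k$, the $k$-th summand of $\delta(\mu_N,\mu)$ tends to $0$ while being bounded by $2^{1-k}$ uniformly in $N$; splitting $\delta(\mu_N,\mu)$ into a finite head (which tends to $0$ in $N$) plus a tail $\le 2^{1-K_0}$ then forces $\delta(\mu_N,\mu)\to 0$ there, which is precisely~\eqref{eq:45}. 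The event in~\eqref{eq:45} is measurable, being equal to $\{\delta(\mu_N,\mu)\to 0\}$.

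\emph{Uniform integrability of the $p$-th moment and Wasserstein convergence.} Suppose $\int_{\mathbb{R}^d}|x|^p\diff\mu<\infty$. I would apply the SLLN to the integrable function $x\mapsto|x|^p$ and, for every $K\in\mathbb{N}$, to the bounded function $x\mapsto|x|^p\,1_{\{|x|<K\}}$; on a single event $B$ of probability $1$ one then has $\int|x|^p\diff\mu_N\to\int|x|^p\diff\mu$ and $\int_{\{|x|<K\}}|x|^p\diff\mu_N\to\int_{\{|x|<K\}}|x|^p\diff\mu$ for all $K$, hence, by subtraction, $\int_{\{|x|\ge K\}}|x|^p\diff\mu_N\to\int_{\{|x|\ge K\}}|x|^p\diff\mu$ for all $K$. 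Now fix a sample point of $B$ and $\varepsilon>0$. Pick $K_1$ with $\int_{\{|x|\ge K_1\}}|x|^p\diff\mu<\varepsilon$ and then $N_0$ with $\int_{\{|x|\ge K_1\}}|x|^p\diff\mu_N<\varepsilon$ for all $N\ge N_0$; since $K\mapsto\int_{\{|x|\ge K\}}|x|^p\diff\mu_N$ is non-increasing, the same bound holds for all $K\ge K_1$ and all $N\ge N_0$. For the finitely many indices $N<N_0$ one has the crude estimate $\int_{\{|x|\ge K\}}|x|^p\diff\mu_N\le\max_{1\le i<N_0}|X_i|^p\,1_{\{|X_i|\ge K\}}$, which vanishes once $K$ exceeds each $|X_i|$ with $i<N_0$. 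Hence $\sup_N\int_{\{|x|\ge K\}}|x|^p\diff\mu_N\to 0$ as $K\to\infty$, i.e.\ $x\mapsto|x|^p$ is uniformly integrable with respect to $\{\mu_N\}_N$ on $B$, and Lemma~\ref{lem:26} then yields $W_p(\mu_N,\mu)\to 0$ almost surely.

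\emph{Main obstacle.} The only point genuinely requiring care is the last step above: upgrading the $N$-wise limits $\int_{\{|x|\ge K\}}|x|^p\diff\mu_N\to\int_{\{|x|\ge K\}}|x|^p\diff\mu$ (each $K$ fixed) to a tail bound uniform in $N$. This is handled by treating the range $N\ge N_0$ via monotonicity in $K$ and the finitely many small indices $N<N_0$ directly; everything else is a routine application of the SLLN and of Lemmas~\ref{lem:25} and~\ref{lem:26}.
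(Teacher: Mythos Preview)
The proposal is correct and follows essentially the same approach as the paper: both parts reduce to the SLLN applied to a countable family of test functions (the metrizing $f_k$ for narrow convergence, and truncated/tail versions of $|x|^p$ for uniform integrability), followed by handling the finitely many small indices $N<N_0$ by enlarging the cutoff. The only cosmetic difference is that you obtain the tail integral $\int_{\{|x|\ge K\}}|x|^p\diff\mu_N$ by subtracting the truncated moment from the full $p$-th moment, whereas the paper applies the SLLN directly to $|x|^p\,1_{\{|x|^p\ge M\}}$; your version has the advantage of making the restriction to countably many cutoffs $K\in\mathbb{N}$ explicit.
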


\begin{proof}
  By Lemma \ref{lem:25}, it is sufficient to prove convergence of the integral functionals associated to a sequence of functions \( \left( f_k \right)_{k \in \mathbb{N}} \). But
  \begin{equation}
    \label{eq:319}
    \int_{\mathbb{R}^d} f_k(x) \diff \mu_N(x) = \frac{1}{N}\sum_{i = 1}^{N} f_k(X_i) \xrightarrow{N\rightarrow\infty} E[f_k(X)] = \int_{\mathbb{R}^d} f_k(x) \diff \mu(x),
  \end{equation}
  almost surely by the Strong Law of Large Numbers, \cite[Theorem 2.4.1]{Dur10}, leading to null sets \( A_k \) where the above convergence fails. Since a countable union of null sets is again a null set, the first claim follows.

  For the second claim, we apply the Strong Law of Large Numbers to the functions \( f_M(x) := \left| x \right|^p \cdot 1_{\left\{ \left| x \right|^p \geq M \right\}}\) for \( M > 0 \) to get the desired uniform integrability: For a given \( \varepsilon > 0 \), choose \( M > 0 \) large enough such that
  \begin{equation}
    \label{eq:321}
    \int_{\mathbb{R}^d} f_M(x) \diff \mu(x) < \frac{\varepsilon}{2},
  \end{equation}
  and then \( N_0 \in \mathbb{N} \) large enough such that
  \begin{equation}
    \label{eq:325}
    \left| \int_{\mathbb{R}^d} f_M(x) \diff \mu_N(x) - \int_{\mathbb{R}^d} f_M(x) \diff \mu(x) \right| < \frac{\varepsilon}{2}, \quad N \geq N_0, \text{ almost surely}.
  \end{equation}
  Now we possibly enlarge \( M \) by choosing \( M' \geq M \) sufficiently large to ensure that \( \left| X_{i} \right|^p < M' \) almost surely for all \( i < N_0 \). By the monotonicity of \( \int_{\mathbb{R}^d} f_M(x) \diff \mu(x) \) in \( M \), this ensures
  \begin{align}
    \label{eq:327}
    \sup_{N\in\mathbb{N}} \int_{\mathbb{R}^d} f_{M'}(x) \diff \mu_N = {} & \sup_{N \geq N_0} \int_{\mathbb{R}^d} f_{M'}(x) \diff \mu_N \leq {} \sup_{N\geq N_0} \int_{\mathbb{R}^d} f_M(x) \diff \mu_N(x) \\
    < {} & \frac{\varepsilon}{2} + \frac{\varepsilon}{2} = \varepsilon
  \end{align}

\end{proof}

\subsubsection{Representation for $\mathcal{P}_{2}(\mathbb{R}^{d})$}
\label{sec:four-repr-gener}

Now we establish continuity in both sides of \eqref{eq:39} with respect to the
\( 2 \)-Wasserstein-convergence to obtain the generalization we were aiming at.

\begin{lemma}[Continuity of \(\widetilde{\mathcal{E}}\)]
  \label{lem:5}
  Let
  \begin{equation}
    \label{eq:53}
    \mu_{k} \rightarrow  \mu, \quad \omega_{k} \rightarrow  \omega \quad     \text{for } k\rightarrow \infty \text{ in } \mathcal{P}_{2}(\mathbb{R}^{d}).
  \end{equation}
  Then,
  \begin{align}
    \label{eq:54}
    \leadeq\int_{\mathbb{R}^d \times \mathbb{R}^d} \psi(y - x) \dd [\mu_k - \omega_k](x) \dd [\mu_k - \omega_k](y) \\
    \rightarrow  {} &\int_{\mathbb{R}^d \times \mathbb{R}^d} \psi(y - x) \dd [\mu - \omega](x) \dd [\mu - \omega](y), \quad
    \text{for } k \rightarrow \infty.
  \end{align}
\end{lemma}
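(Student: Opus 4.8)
The plan is to expand the bilinear double integral and reduce the claim to a statement about products of \emph{probability} measures. Using bilinearity of $(\sigma,\tau)\mapsto\int_{\mathbb{R}^d\times\mathbb{R}^d}\psi(y-x)\dd\sigma(x)\dd\tau(y)$ in both arguments and the decomposition of $\mu_k-\omega_k$, the left-hand side of \eqref{eq:54} splits into four terms of the form $\int_{\mathbb{R}^d\times\mathbb{R}^d}\psi(y-x)\dd\sigma_k(x)\dd\tau_k(y)$, with $(\sigma_k,\tau_k)$ running through $(\mu_k,\mu_k)$, $(\mu_k,\omega_k)$, $(\omega_k,\mu_k)$, $(\omega_k,\omega_k)$, and likewise for the right-hand side with $(\mu,\omega)$ in place of $(\mu_k,\omega_k)$. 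Since $\mu_k\to\mu$ and $\omega_k\to\omega$ in $\mathcal{P}_2(\mathbb{R}^d)$ by \eqref{eq:53}, it therefore suffices to prove: if $\sigma_k\to\sigma$ and $\tau_k\to\tau$ in $\mathcal{P}_2(\mathbb{R}^d)$, then $\int_{\mathbb{R}^d\times\mathbb{R}^d}\psi(y-x)\dd\sigma_k(x)\dd\tau_k(y)\to\int_{\mathbb{R}^d\times\mathbb{R}^d}\psi(y-x)\dd\sigma(x)\dd\tau(y)$.

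For this I would view the double integral as a single integral of $f(x,y):=\psi(y-x)=|y-x|^q$ over $\mathbb{R}^{2d}$ against the product measure $\sigma_k\otimes\tau_k$, and apply Lemma \ref{lem:4} (with $2d$ in place of $d$, and with the trivial lower semi-continuous function $g\equiv 0$). The map $f$ is jointly continuous; by Lemma \ref{lem:26} convergence in $\mathcal{P}_2$ gives narrow convergence $\sigma_k\to\sigma$, $\tau_k\to\tau$ together with uniform integrability of their second moments; and Lemma \ref{lem:27} then yields $\sigma_k\otimes\tau_k\to\sigma\otimes\tau$ narrowly. Thus the only remaining hypothesis of Lemma \ref{lem:4} is the uniform integrability of $f$ with respect to $\{\sigma_k\otimes\tau_k\}_k$; once that is established, Lemma \ref{lem:4} gives $\int f\dd(\sigma_k\otimes\tau_k)\to\int f\dd(\sigma\otimes\tau)$, i.e. the convergence of each of the four terms, and summing them proves \eqref{eq:54}.

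The uniform integrability of $f$ is the technical core. By estimate \eqref{eq:16} of Lemma \ref{lem:2} one has $|y-x|^q\le C(|x|^q+|y|^q)$, so it is enough to show that $(x,y)\mapsto|x|^q$ and $(x,y)\mapsto|y|^q$ are uniformly integrable with respect to $\{\sigma_k\otimes\tau_k\}_k$, and by symmetry only the first need be treated. The point here is that integrating $|x|^q$ over a superlevel set of the form $\{|x|^q\ge M\}$ against $\sigma_k\otimes\tau_k$ reduces to $\int_{\{|x|^q\ge M\}}|x|^q\dd\sigma_k(x)$; since the uniform integrability of the second moments of $\{\sigma_k\}_k$ in particular forces $\sup_k\int|x|^2\dd\sigma_k<\infty$, Lemma \ref{lem:24} (with $r=2>q$) yields $\lim_{M\to\infty}\sup_k\int_{\{|x|^q\ge M\}}|x|^q\dd\sigma_k=0$. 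Carrying the bound $f\le C(|x|^q+|y|^q)$ through — splitting the relevant superlevel set into $\{|x|^q\ge M/(2C)\}\cup\{|y|^q\ge M/(2C)\}$ and estimating each resulting piece by a product of a vanishing marginal mass and a uniformly bounded $q$-th moment — is routine but is where a little care is needed. I expect this reduction of the non-product kernel $|y-x|^q$ to the moment bounds on the individual marginals to be the main obstacle; once it is in place, the statement follows directly from Lemma \ref{lem:4} and the splitting of the first paragraph.
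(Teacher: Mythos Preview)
Your proposal is correct and follows essentially the same route as the paper: expand the signed double integral into four integrals against product probability measures, use Lemma~\ref{lem:27} for narrow convergence of the products, bound $\psi(y-x)$ by a sum of one-variable moments, and conclude via Lemma~\ref{lem:4}. The only cosmetic difference is that the paper bounds $|y-x|^q\le C(1+|x|^2+|y|^2)$ and appeals directly to the uniform integrability of the \emph{second} moments furnished by $\mathcal{P}_2$-convergence (Lemma~\ref{lem:26}), whereas you bound by $C(|x|^q+|y|^q)$ and then pass through Lemma~\ref{lem:24} to obtain uniform integrability of the $q$-th moments; both variants work equally well.
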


\begin{proof}
  By the particular choice of $\psi$, we have the estimate
  \begin{equation}
    \label{eq:55}
    \left| \psi(y - x) \right| \leq C(1 + \left| y - x \right|^2) \leq 2C(1 + \left| x \right|^2 +
    \left| y \right|^2).
  \end{equation}
  After expanding the expression to the left of \eqref{eq:54} so that we only have to deal with integrals with respect to probability measures, we can use this estimate to get the uniform integrability of the second moments of \( \mu \) and \( \omega \) by Lemma \ref{lem:26} and are then able to apply Lemma \ref{lem:4} to obtain convergence.
\end{proof}

\begin{lemma}[Continuity of \(\widehat{\mathcal{E}}\)]
  \label{lem:6}
  Let
  \begin{equation}
    \label{eq:56}
    \mu_{k}\rightarrow \mu,\quad \omega_{k}\rightarrow \omega \quad \text{for } k\rightarrow \infty\, \text{ in } \mathcal{P}_{2}(\mathbb{R}^{d}),
  \end{equation}
  such that
  \begin{equation}
    \label{eq:57}
    \mu_k - \omega_k = \sum_{j = 1}^{N_k} \alpha_j^k \delta_{x_j^k}
  \end{equation}
  for suitable $\alpha_j^k \in \mathbb{R}$ and pairwise distinct $x_j^k \in \mathbb{R}^{d}$. Then,
  \begin{align}
    \label{eq:58}
    \leadeq{\int_{\mathbb{R}^d}\left| \sum_{j = 1}^{N_k} \alpha_j^k \exp(\i \xi \cdot x_j^k) \right|^2
    \widehat{\psi}(\xi) \dd \xi} \\ \rightarrow  {}
    &\int_{\mathbb{R}^d}\left| \int_{\mathbb{R}^d} \exp(\i \xi \cdot x) \dd [\mu - \omega](x) \right|^2
    \widehat{\psi}(\xi) \dd \xi \quad \text{for } k\rightarrow \infty.
  \end{align}
\end{lemma}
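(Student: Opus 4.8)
The plan is to transfer both sides of \eqref{eq:58} to the Fourier--Stieltjes side and then run a dominated-convergence argument after splitting off a neighbourhood of the origin, where \(\widehat{\psi}\) has a non-integrable singularity. Set
\[
g_k(\xi) := \int_{\mathbb{R}^d}\exp(\i\xi\cdot x)\dd[\mu_k-\omega_k](x), \qquad g(\xi) := \int_{\mathbb{R}^d}\exp(\i\xi\cdot x)\dd[\mu-\omega](x).
\]
Since \(\mu_k-\omega_k=\sum_j\alpha_j^k\delta_{x_j^k}\) and \(\overline{\widehat{\nu}(\xi)}=\int\exp(\i\xi\cdot x)\dd\nu(x)\) for a finite signed measure \(\nu\), we have \(g_k=\overline{\widehat{\mu_k}-\widehat{\omega_k}}\) and \(g=\overline{\widehat{\mu}-\widehat{\omega}}\), so that \eqref{eq:58} is literally the statement \(\int_{\mathbb{R}^d}|g_k|^2\widehat{\psi}\dd\xi\to\int_{\mathbb{R}^d}|g|^2\widehat{\psi}\dd\xi\). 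Recalling from \eqref{eq:40} that \(\widehat{\psi}(\xi)=-2(2\pi)^dD_q\left|\xi\right|^{-d-q}\) with \(D_q>0\) and factoring out this negative constant, it suffices to show
\[
\int_{\mathbb{R}^d}\left|g_k(\xi)\right|^2\left|\xi\right|^{-d-q}\dd\xi \longrightarrow \int_{\mathbb{R}^d}\left|g(\xi)\right|^2\left|\xi\right|^{-d-q}\dd\xi, \qquad k\to\infty.
\]

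Next I would record two elementary bounds on \(g_k\). Trivially \(\left|g_k(\xi)\right|\le\left|\widehat{\mu_k}(\xi)\right|+\left|\widehat{\omega_k}(\xi)\right|\le2\), and the same for \(g\). On the other hand, using \(\widehat{\mu_k}(0)=\widehat{\omega_k}(0)=1\) and \(\left|\exp(-\i x\cdot\xi)-1\right|\le\left|x\right|\left|\xi\right|\),
\[
\left|g_k(\xi)\right|\le\left|\widehat{\mu_k}(\xi)-1\right|+\left|\widehat{\omega_k}(\xi)-1\right|\le\left|\xi\right|\left(\int_{\mathbb{R}^d}\left|x\right|\dd\mu_k+\int_{\mathbb{R}^d}\left|x\right|\dd\omega_k\right)\le C\left|\xi\right|,
\]
where \(C\) is independent of \(k\): by \(\mathcal{P}_2\)-convergence the second moments of \(\left(\mu_k\right)_k\) and \(\left(\omega_k\right)_k\) are uniformly bounded (for instance \(W_2(\mu_k,\delta_0)\le W_2(\mu_k,\mu)+W_2(\mu,\delta_0)\); see also Lemma \ref{lem:26}), hence so are their first moments by Cauchy--Schwarz; the same bound holds for \(g\). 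Consequently \(\left|g_k(\xi)\right|^2\left|\xi\right|^{-d-q}\le C^2\left|\xi\right|^{2-d-q}\) for \(\left|\xi\right|\le1\) and \(\le4\left|\xi\right|^{-d-q}\) for \(\left|\xi\right|>1\); since \(q<2\) the former is integrable near the origin and since \(q\ge1\) the latter is integrable near infinity, so all the integrals above are finite, and in fact we have produced a majorant which is integrable and uniform in \(k\).

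The conclusion then follows by splitting \(\mathbb{R}^d=B_\delta(0)\cup(\mathbb{R}^d\setminus B_\delta(0))\) for small \(\delta>0\). On \(B_\delta(0)\), the bound \(\left|g_k\right|^2\left|\xi\right|^{-d-q}\le C^2\left|\xi\right|^{2-d-q}\) (and its analogue for \(g\)) gives \(\int_{B_\delta(0)}\left|g_k\right|^2\left|\xi\right|^{-d-q}\dd\xi\le C^2\int_{B_\delta(0)}\left|\xi\right|^{2-d-q}\dd\xi\), which tends to \(0\) as \(\delta\to0\) uniformly in \(k\) because \(2-d-q>-d\); likewise for \(g\). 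On \(\mathbb{R}^d\setminus B_\delta(0)\), \(\mathcal{P}_2\)-convergence implies narrow convergence (Lemma \ref{lem:26}), so \(\widehat{\mu_k}(\xi)\to\widehat{\mu}(\xi)\) and \(\widehat{\omega_k}(\xi)\to\widehat{\omega}(\xi)\) for every \(\xi\) because \(x\mapsto\exp(-\i x\cdot\xi)\) is bounded and continuous, whence \(\left|g_k(\xi)\right|^2\to\left|g(\xi)\right|^2\) pointwise, dominated by the integrable function \(4\left|\xi\right|^{-d-q}\); dominated convergence then yields \(\int_{\mathbb{R}^d\setminus B_\delta(0)}\left|g_k\right|^2\left|\xi\right|^{-d-q}\dd\xi\to\int_{\mathbb{R}^d\setminus B_\delta(0)}\left|g\right|^2\left|\xi\right|^{-d-q}\dd\xi\). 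A standard \(\varepsilon/3\) argument — fix \(\delta\) making the two near-field integrals smaller than \(\varepsilon/3\) for all \(k\), then take \(k\) large to bring the far-field difference below \(\varepsilon/3\) — finishes the proof. The main (really the only) obstacle is the singularity of \(\widehat{\psi}\) at \(\xi=0\): one must exploit that \(\mu_k-\omega_k\) is a difference of probability measures with uniformly bounded first moments, so that \(g_k\) vanishes linearly at the origin, and it is exactly here that the standing assumption \(q<2\) is essential, making \(\left|\xi\right|^{2-d-q}\) locally integrable. (The finite-sum-of-Diracs form of \(\mu_k-\omega_k\) is not used in this lemma; it is only needed to invoke the point-mass representation \eqref{eq:39} when the present lemma is later combined with Lemma \ref{lem:5}.)
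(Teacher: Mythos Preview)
Your proof is correct and follows essentially the same approach as the paper: pointwise convergence of the Fourier transforms via narrow convergence, a uniform bound \(|g_k(\xi)|\le C|\xi|\) near the origin coming from the uniformly bounded first moments (which is exactly the paper's Taylor-expansion estimate), and the trivial bound \(|g_k|\le 2\) away from the origin, combined with dominated convergence. The only difference is cosmetic: having already exhibited a single global integrable majorant, your subsequent \(\varepsilon/3\) splitting at \(B_\delta(0)\) is redundant---you could invoke dominated convergence directly on all of \(\mathbb{R}^d\), as the paper does.
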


\begin{proof}
  By the narrow convergence of $\mu_k$ and $\omega_k$, we get pointwise convergence of the Fourier transform, i.e.
  \begin{equation}
    \label{eq:59}
    \sum_{j = 1}^{N_k} \alpha_j^k \exp(\i \xi \cdot x_j^k) \rightarrow  \int_{\mathbb{R}^d} \exp(\i \xi \cdot x) \dd [\mu - \omega](x) \quad \text{for all } \xi \in \mathbb{R}^d \text{ and } k\rightarrow \infty.
  \end{equation}
  We want to use the Dominated Convergence Theorem: The Fourier transform  of \(\mu - \omega\) is bounded in \(\xi\), so that the case $\xi\rightarrow \infty$ poses no problem due to the integrability of $\widehat{\psi}(\xi) = C \left| \xi \right|^{-d-q}$ away from $0$. In order to justify the necessary decay at $0$, we use the control of the first moments (since we even control the second moments by the \(\mathcal{P}_{2}\) assumption): Inserting the Taylor expansion of the exponential function of order \(0\),
  \begin{equation}
    \label{eq:60}
    \exp(\i \xi \cdot x) = 1 + \i \xi \cdot x \int_0^1 \exp(\i \xi \cdot tx) \dd t,
  \end{equation}
  into the expression in question and using the fact that \(\mu_{k}\) and \(\omega_{k}\) are probability measures results in
  \begin{align}
    \leadeq{\left| \int_{\mathbb{R}^d} \exp(\i \xi \cdot x) \dd [\mu_k - \omega_k](x) \right| }\\
    = {} & \left| \int_{\mathbb{R}^d} \left( 1 + \i \xi \cdot x \int_0^1 \exp(\i \xi \cdot tx) \dd t \right) \dd
      [\mu_k - \omega_k](x) \right|\\
    = {} & \left| \int_{\mathbb{R}^d} \left( \i \xi \cdot x \int_0^1 \exp(\i \xi \cdot tx) \dd t \right) \dd [\mu_k - \omega_k](x) \right|\\
    \leq {} &|\xi| \underbrace{\left(\int_{\mathbb{R}^d} |x| \dd \mu_k(x) + \int_{\mathbb{R}^d} |x|\dd \omega_k(x) \right)}_{:= C}.
  \end{align}
  Therefore, we have a $k$-uniform bound $C$ such that
  \begin{equation}
    \label{eq:61}
    \left| \sum_{j = 1}^{N_k} \alpha_j^k \exp(\i \xi \cdot x_j^k) \right|^2 \leq C \, |\xi|^2,
  \end{equation}
  compensating the singularity of $\widehat{\psi}$ at the origin, hence together
  with the Dominated Convergence Theorem proving the claim.
\end{proof}

Combining the two lemmata above with the approximation provided by Lemma \ref{lem:3} yields

\begin{corollary}[Fourier-representation for $\widetilde{\mathcal{E}}$ on
  \(\mathcal{P}_{2}(\mathbb{R}^{d})\)]
  \label{cor:four-repr-widet}
  \begin{equation}
    \label{eq:62}
    \widetilde{\mathcal{E}}[\mu] = \widehat{\mathcal{E}}[\mu], \quad \mu \in \mathcal{P}_{2}(\mathbb{R}^{d}).
  \end{equation}
\end{corollary}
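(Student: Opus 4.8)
The plan is to reduce the identity to the already-established point-measure case by a \emph{double} approximation: replace both the measure $\mu$ and the datum $\omega$ by empirical measures, invoke the point-measure formula for each such pair, and then pass to the limit using the two continuity lemmata \ref{lem:5} and \ref{lem:6}.

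First I would fix $\mu,\omega\in\mathcal{P}_2(\mathbb{R}^d)$ and apply Lemma \ref{lem:3}: taking i.i.d.\ samples $X_i\sim\mu$, $Y_i\sim\omega$ and setting $\mu_N:=\frac1N\sum_{i=1}^N\delta_{X_i}$, $\omega_N:=\frac1N\sum_{i=1}^N\delta_{Y_i}$, with probability one $\mu_N\to\mu$ and $\omega_N\to\omega$ in the $2$-Wasserstein topology (both data have finite second moment by assumption, and a countable intersection of full-probability events still has full probability; alternatively one samples $(X_i,Y_i)\sim\mu\otimes\omega$ and uses Lemma \ref{lem:27}). Fixing one such realization, $\mu_N-\omega_N$ is a finite signed combination of Dirac masses; collecting the atoms at the distinct points among $X_1,\dots,X_N,Y_1,\dots,Y_N$ and discarding vanishing coefficients puts it in the form $\mu_N-\omega_N=\sum_{j}\alpha_j^N\delta_{x_j^N}$ with pairwise distinct $x_j^N$, so the hypotheses of both the point-measure lemma and Lemma \ref{lem:6} are met for every $N$.

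Next, for each fixed $N$ apply the point-measure lemma (and its Remark) to the pair $(\mu_N,\omega_N)$. Writing $\widetilde{\mathcal{E}}_{\omega_N}$, $\widehat{\mathcal{E}}_{\omega_N}$ for the functionals $\widetilde{\mathcal{E}}$, $\widehat{\mathcal{E}}$ with the datum $\omega$ replaced by $\omega_N$, this gives the exact identity $\widetilde{\mathcal{E}}_{\omega_N}[\mu_N]=\widehat{\mathcal{E}}_{\omega_N}[\mu_N]$. Then I let $N\to\infty$. The left-hand side converges to $\widetilde{\mathcal{E}}_{\omega}[\mu]=\widetilde{\mathcal{E}}[\mu]$ by Lemma \ref{lem:5} applied to the sequences $(\mu_N)_N$, $(\omega_N)_N$ (multiplying its conclusion by $-\tfrac12$). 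The right-hand side converges to $\widehat{\mathcal{E}}_{\omega}[\mu]=\widehat{\mathcal{E}}[\mu]$ by Lemma \ref{lem:6} applied to the same sequences (multiplying by $-\tfrac12(2\pi)^{-d}$ and using $\overline{\widehat{\nu}(\xi)}=\int\exp(\i\xi\cdot x)\diff\nu(x)$, so that $\bigl|\int\exp(\i\xi\cdot x)\diff[\mu-\omega](x)\bigr|^2=|\widehat{\mu}(\xi)-\widehat{\omega}(\xi)|^2$, which recovers the defining integral of $\widehat{\mathcal{E}}$). Equating the two limits yields $\widetilde{\mathcal{E}}[\mu]=\widehat{\mathcal{E}}[\mu]$.

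I do not expect any genuine obstacle here: the analytic substance lives entirely in Lemmata \ref{lem:3}, \ref{lem:5} and \ref{lem:6}, and the corollary is their assembly. The only points that require a little care are (i) producing a \emph{single} sequence of empirical approximations along which $\mu_N\to\mu$ \emph{and} $\omega_N\to\omega$ in $W_2$ simultaneously, which is handled by the countable-intersection remark (or by the product-measure trick via Lemma \ref{lem:27}), and (ii) rewriting $\mu_N-\omega_N$ over pairwise distinct support points so that the point-measure representation and Lemma \ref{lem:6} apply verbatim, which is the elementary merging of coincident atoms noted above.
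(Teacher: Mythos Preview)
Your proposal is correct and follows essentially the same approach as the paper: the paper simply states that the corollary follows by ``combining the two lemmata above with the approximation provided by Lemma \ref{lem:3}'', and your write-up is precisely an elaboration of that sentence. The care you take with simultaneous a.s.\ $W_2$-convergence of $(\mu_N,\omega_N)$ and with merging coincident atoms to meet the pairwise-distinct hypothesis is appropriate and not addressed explicitly in the paper.
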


\subsection{Extension to $\mathcal{P}(\mathbb{R}^{d})$}
\label{sec:extension-p}

While the well-definedness of \(\mathcal{E}[\mu]\) is not clear for all \(\mu \in \mathcal{P}(\mathbb{R}^{d})\), since the sum of two integrals with values \( \pm \infty \) may occur, for each such \(\mu\) we can certainly assign a value in \(\mathbb{R}\cup \left\{ \infty \right\}\) to \(\widehat{\mathcal{E}}[\mu]\). In the following, we want to justify in what sense it is possible to consider \(\widehat{\mathcal{E}}\) instead of the original functional, namely that \( \widehat{\mathcal{E}} \) can be considered the \emph{lower semi-continuous envelope} of \( \mathcal{E} \).

Firstly, we prove that \(\widehat{\mathcal{E}}\) has compact sub-levels in \(\mathcal{P}(\mathbb{R}^{d})\) endowed with the narrow topology, using the following lemma as a main ingredient.

\emph{Please note that in the following, \(C\) will be used as a generic positive constant, which might change during the course of an equation.}

\begin{lemma}
  \label{lem:7}
  [See \cite[Theorem 3.3.6]{Dur10} for a proof in the case \( d = 1 \).] Given a probability measure \(\mu \in \mathcal{P}(\mathbb{R}^{d})\) with Fourier transform
  \(\widehat{\mu}\colon \mathbb{R}^{d} \rightarrow  \mathbb{C}\), there are \(C_{1} = C_{1}(d) > 0\) and \(C_{2} = C_{2}(d) > 0\) such that for all
  \(u > 0\),
  \begin{equation}
    \label{eq:64}
    \mu\left(\left\{x : \left| x \right| \geq u^{-1}\right\}\right) \leq
    \frac{C_{1}}{u^{d}}\int_{\left| \xi \right|
      \leq C_{2} u} (1 - \Re \widehat{\mu}(\xi)) \dd \xi.
  \end{equation}
\end{lemma}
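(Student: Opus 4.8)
The plan is to start from the elementary identity that for any $u>0$,
\[
\frac{1}{(C_2 u)^d}\int_{|\xi|\le C_2 u}(1-\Re\widehat\mu(\xi))\dd\xi
= \int_{\mathbb R^d}\left(1 - \frac{1}{(C_2 u)^d}\int_{|\xi|\le C_2 u}\cos(x^T\xi)\dd\xi\right)\dd\mu(x),
\]
obtained by writing $\Re\widehat\mu(\xi)=\int\cos(x^T\xi)\dd\mu(x)$ and applying Fubini (justified since the integrand is bounded and the domain of $\xi$-integration has finite measure). So the whole problem reduces to a \emph{pointwise} estimate: I must show that the kernel
\[
K_u(x):=1-\frac{1}{(C_2 u)^d}\int_{|\xi|\le C_2 u}\cos(x^T\xi)\dd\xi
\]
is nonnegative and, more importantly, bounded below by a positive constant whenever $|x|\ge u^{-1}$. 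Since $K_u(x)=K_1(u\, x)$ by the substitution $\xi\mapsto u\xi$, it suffices to treat $u=1$ and show $K_1(x)\ge c>0$ for all $|x|\ge 1$, with $c$ depending only on $d$; then the desired inequality follows with $C_1 = C_2^d/c$.

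The core computation is therefore: evaluate (or bound) $\phi(x):=\frac{1}{(C_2)^d}\int_{|\xi|\le C_2}\cos(x^T\xi)\dd\xi$. By rotational symmetry this depends only on $r=|x|$, and it is (up to normalization) the Fourier transform of the indicator of a ball, i.e. a Bessel-type function that decays like $r^{-(d+1)/2}$ as $r\to\infty$. First I would choose the free constant $C_2$ conveniently (the statement only promises \emph{existence} of $C_2$): one clean choice in dimension $d=1$ is to note $\frac{1}{2a}\int_{-a}^a\cos(rx)\dd x=\frac{\sin(ar)}{ar}$, so $K_1(x)=1-\frac{\sin(ax)}{ax}$, and $\sup_{t\ne 0}\frac{\sin t}{t}<1$ is attained away from $0$; picking $a=C_2$ small enough that $\frac{\sin(C_2 r)}{C_2 r}\le 1-c$ for all $r\ge 1$ is elementary since $\frac{\sin t}{t}\le 1 - t^2/6 + t^4/120$ for small $t$ and $\frac{\sin t}{t}$ is bounded away from $1$ on any set $\{|t|\ge\varepsilon\}$. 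For general $d$, I would reduce to the one-dimensional picture by integrating $\xi$ over a cube $[-C_2,C_2]^d$ instead of a ball — the statement does not require the ball, and replacing the ball by a cube only changes the constant $C_1$ — so that the kernel factorizes as $\prod_{i=1}^d \frac{\sin(C_2 x_i)}{C_2 x_i}$; then $K_1(x)=1-\prod_i\frac{\sin(C_2 x_i)}{C_2 x_i}$, and since $\prod_i\frac{\sin(C_2 x_i)}{C_2 x_i}\le \frac{\sin(C_2 x_{i_0})}{C_2 x_{i_0}}$ for the coordinate $i_0$ with $|x_{i_0}|\ge |x|/\sqrt d\ge 1/\sqrt d$, the one-dimensional bound finishes the argument.

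Assembling: from the Fubini identity, nonnegativity of $K_u$, and the lower bound $K_u(x)\ge c$ for $|x|\ge u^{-1}$, I get
\[
\frac{1}{(C_2 u)^d}\int_{|\xi|\le C_2 u}(1-\Re\widehat\mu(\xi))\dd\xi
= \int_{\mathbb R^d}K_u(x)\dd\mu(x)
\ge c\,\mu\bigl(\{|x|\ge u^{-1}\}\bigr),
\]
which is exactly \eqref{eq:64} with $C_1=C_2^d/c$. The main obstacle is purely the pointwise kernel estimate — proving that the (suitably normalized) Fourier transform of the indicator of a fixed-size symmetric region is bounded away from $1$ uniformly on $\{|x|\ge 1\}$; everything else (Fubini, scaling, the reduction from ball to cube, and the reduction from $d$ dimensions to one) is routine. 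A secondary point to be careful about is that $C_2$ must be fixed \emph{independently of $\mu$ and $u$}, which the factorized form makes transparent.
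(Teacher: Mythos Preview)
Your approach is correct and takes a genuinely different route from the paper's. Both arguments begin with the same Fubini reduction to a pointwise kernel bound, but then diverge: the paper integrates over the ball $\{|\xi|\le u\}$ in spherical coordinates, integrates the radial cosine integral by parts to obtain $\frac{1}{d}-\frac{2}{u|\widetilde\xi\cdot x|}$, and then restricts to the spherical cap $\widetilde S(x)=\{\widetilde\xi\in S^{d-1}:|\widetilde\xi\cdot x|\ge|x|/2\}$ together with $\{|x|\ge 8d/u\}$ to force the lower bound $1/(2d)$. Your cube trick --- replacing the ball by $[-C_2u,C_2u]^d$ so that the kernel factorizes into a product of sinc functions and the problem collapses to the one-dimensional case --- is more elementary and avoids both the spherical coordinates and the integration by parts; the only price is the trivial conversion back to a ball via $[-C_2u,C_2u]^d\subset\{|\xi|\le C_2\sqrt d\,u\}$ and the nonnegativity of $1-\Re\widehat\mu$.

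One small slip to patch: the inequality $\prod_i\frac{\sin(C_2 x_i)}{C_2 x_i}\le \frac{\sin(C_2 x_{i_0})}{C_2 x_{i_0}}$ fails when an even number of factors are negative (two factors equal to $-0.2$ give product $0.04>-0.2$). The clean fix is to bound absolute values instead: since $|\sin t/t|\le 1$ always, $\bigl|\prod_i\frac{\sin(C_2 x_i)}{C_2 x_i}\bigr|\le \bigl|\frac{\sin(C_2 x_{i_0})}{C_2 x_{i_0}}\bigr|$, and provided $C_2/\sqrt d<\pi$ the right-hand side is at most $\sin(C_2/\sqrt d)/(C_2/\sqrt d)<1$ whenever $|x_{i_0}|\ge 1/\sqrt d$. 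This yields the uniform lower bound $K_1(x)\ge c>0$ on $\{|x|\ge 1\}$ that you need.
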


\begin{proof}
  Let \(u > 0\). Firstly, note that
  \begin{equation}
    \label{eq:65}
    1 - \Re \widehat{\mu}(\xi) = \int_{\mathbb{R}^{d}}(1 - \cos(\xi \cdot x)) \dd \mu(x) 
    \geq 0 \quad \text{for all } \xi \in \mathbb{R}^{d}.
  \end{equation}
  By starting with the integral on the right-hand side of \eqref{eq:64} (up to a constant in the integration domain) and using Fubini-Tonelli as well as integration in spherical coordinates, we get
  \begin{align}
    \label{eq:66}
    \leadeq{\int_{\left| \xi \right|\leq u} (1 - \Re \widehat{\mu}(\xi)) \dd \xi}\\
    = {} & \int_{\mathbb{R}^{d}} \int_{\left| \xi \right| \leq u} (1 - \cos(\xi \cdot x)) \dd \xi \dd \mu(x)\\
    = {} & \int_{\mathbb{R}^{d}} \int_{\left| \widetilde{\xi} \right| = 1} \int_{0}^{u} (1 - \cos(r\widetilde{\xi} \cdot x)) r^{d-1}\dd r \dd \sigma(\widetilde{\xi}) \dd \mu(x) \label{eq:356}\\
    = {} & \int_{\mathbb{R}^{d}} \int_{\left| \widetilde{\xi} \right| = 1} \left[ \frac{u^d}{d} - \int_{0}^{u} \cos(r\widetilde{\xi} \cdot x) r^{d-1}\dd r \right] \dd \sigma(\widetilde{\xi}) \dd \mu(x)\label{eq:67}
  \end{align}
  If \(d \geq 2\), integrating the integral over \(\cos(r\widetilde{\xi} \cdot x)r^{d-1}\) in
  \eqref{eq:67} by parts yields
  \begin{align}
    \label{eq:68}
    \int_{0}^{u} \cos(r\widetilde{\xi} \cdot x) r^{d-1}\dd r = {} &
    \frac{\sin(u\widetilde{\xi} \cdot x) u^{d-1}}{\widetilde{\xi} \cdot x} - (d-1)
    \int_{0}^{u}\frac{\sin(r\widetilde{\xi} \cdot x)}{\widetilde{\xi} \cdot x}r^{d-2} \dd r,
  \end{align}
  which can also be considered true for \(d = 1\) if the second part is assumed to be zero because of the factor \((d-1)\).

  We now prove \eqref{eq:64} by estimating the integrand in \eqref{eq:67} suitably from below. Using \(\left| \sin(x) \right| \leq 1\) for all \(x \in \mathbb{R}\) and dividing by \(u^{d}\), we get
                  \begin{align}
    \label{eq:70}
    \leadeq{d^{-1} - u^{-d}\int_{0}^{u} \cos(r\widetilde{\xi} \cdot x) r^{d-1}\dd r} \\
    = {} & d^{-1} - \frac{\sin(u \widetilde{\xi} \cdot x)}{u\widetilde{\xi} \cdot x} + \frac{(d-1)}{u^{d}} \int_{0}^{u}\frac{\sin(r\widetilde \xi \cdot x)}{\widetilde{\xi} \cdot x} r^{d-2}\dd r\\
    \geq {} & d^{-1} - \frac{1}{u\left| \widetilde{\xi} \cdot x \right|} - \frac{(d-1)}{u^{d}} \int_{0}^{u}\frac{1}{\left| \widetilde{\xi} \cdot x \right|} r^{d-2}\dd r\\
    = {} & d^{-1} - \frac{2}{u\left| \widetilde{\xi} \cdot x \right|}.
  \end{align}
  As we want to achieve an estimate from below, by the non-negativity of the integrand \( 1 - \cos(\xi \cdot x) \), we can restrict the integration domain in \eqref{eq:356} to
    \begin{equation}
    \label{eq:71}
    \widetilde{S}(x) := \left\{ \widetilde \xi \in S^{d-1} : \left| \widetilde \xi \cdot x \right| \geq \frac{1}{2}\left| x \right| \right\} \quad
    \text{and} \quad D(u) := \left\{ x : \left| x \right| \geq \frac{8d}{u} \right\},
  \end{equation}
  yielding
  \begin{equation}
    \frac{1}{d} - \frac{1}{u^{d}}\int_{0}^{u} \cos(r\widetilde{\xi} \cdot x) r^{d-1}\dd r \geq \frac{1}{2d}, \quad x \in D(u),\, \widetilde{\xi} \in \widetilde{S}(x).\label{eq:72}
  \end{equation}
  Combining \eqref{eq:72} with \eqref{eq:67} gives us
  \begin{equation}
    \label{eq:73}
    \frac{1}{u^{d}} \int_{\left| \xi \right|\leq u} (1 - \Re \widehat{\mu}(\xi)) \dd \xi \geq \frac{1}{C_{3}} \mu\left(\left\{ \left| x \right| \geq 8D's^{-1} \right\}\right)
  \end{equation}
  with
  \begin{equation}
    \label{eq:74}
    C_{3} := \frac{1}{2d} \operatorname{vol} (\widetilde S(x)),
  \end{equation}
  where \(\operatorname{vol} (\widetilde S(x))\) is independent of \(x\). Finally, we substitute \(\widetilde u := (8d)^{-1}u\) to get
  \begin{equation}
    \label{eq:75}
    \mu\left(\left\{x : \left| x \right| \geq \widetilde{u}^{-1}\right\}\right) \leq \frac{C_{1}}{\widetilde{u}^{d}}\int_{\left| \xi \right| \leq C_{2}\widetilde{u}} (1 - \Re \widehat{\mu}(\xi)) \dd \xi
  \end{equation}
  with
  \begin{equation}
    \label{eq:76}
    C_{1} := \frac{C_{3}}{(8d)^{d}} \quad \text{and} \quad  C_{2} := 8d.\qedhere
  \end{equation}
\end{proof}

\raggedbottom
\begin{proposition}
  \label{prp:compctness-sublvls}
  \(\widehat{\mathcal{E}}\colon \mathcal{P}(\mathbb{R}^{d}) \rightarrow  \mathbb{R}_{\geq 0} \cup \{\infty\}\) is lower
  semi-continuous with respect to narrow convergence and its sub-levels are
  narrowly compact.
\end{proposition}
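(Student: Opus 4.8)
The plan is to prove the two assertions separately: lower semi-continuity follows directly from Fatou's lemma applied to the Fourier integral, while narrow compactness of the sub-levels is reduced, via Prokhorov's theorem, to a single uniform tightness estimate produced by Lemma~\ref{lem:7}.

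For lower semi-continuity I would start from a narrowly convergent sequence $\mu_n \to \mu$; since $x \mapsto \exp(-\i x^T\xi)$ is bounded and continuous for each fixed $\xi$, the Fourier--Stieltjes transforms converge pointwise, $\widehat{\mu_n}(\xi) \to \widehat\mu(\xi)$, hence the nonnegative integrands $|\widehat{\mu_n}(\xi) - \widehat\omega(\xi)|^2 |\xi|^{-d-q}$ converge pointwise (for $\xi \neq 0$) to $|\widehat\mu(\xi) - \widehat\omega(\xi)|^2 |\xi|^{-d-q}$, and Fatou's lemma gives $\liminf_{n} \widehat{\mathcal{E}}[\mu_n] \geq \widehat{\mathcal{E}}[\mu]$. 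Nonnegativity of the integrand together with $D_q > 0$ also justifies the stated codomain $\mathbb{R}_{\geq 0}\cup\{\infty\}$, and lower semi-continuity immediately makes every sub-level $\mathcal{K}_c := \{\mu \in \mathcal{P}(\mathbb{R}^d) : \widehat{\mathcal{E}}[\mu] \leq c\}$ narrowly closed (and empty, hence trivially compact, for $c < 0$).

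The substantive step is the tightness of $\mathcal{K}_c$ for $c \geq 0$. Fixing $\mu \in \mathcal{K}_c$, I would use the pointwise bound $1 - \Re\widehat\mu(\xi) = (1 - \Re\widehat\omega(\xi)) + \Re(\widehat\omega(\xi) - \widehat\mu(\xi)) \leq (1 - \Re\widehat\omega(\xi)) + |\widehat\mu(\xi) - \widehat\omega(\xi)|$ inside Lemma~\ref{lem:7} to get, for every $u > 0$,
\[
  \mu\bigl(\{|x| \geq u^{-1}\}\bigr) \leq \frac{C_1}{u^d}\int_{|\xi| \leq C_2 u}\bigl(1 - \Re\widehat\omega(\xi)\bigr)\dd\xi + \frac{C_1}{u^d}\int_{|\xi| \leq C_2 u}\bigl|\widehat\mu(\xi) - \widehat\omega(\xi)\bigr|\dd\xi.
\]
The first term does not involve $\mu$; as $\widehat\omega$ is continuous with $\widehat\omega(0) = 1$, we have $\sup_{|\xi|\leq C_2 u}(1 - \Re\widehat\omega(\xi)) \to 0$ as $u \to 0^+$, and dividing by $u^d$ only contributes the bounded factor $|B_{C_2 u}|/u^d$, so this term tends to $0$. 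For the second term I would apply Cauchy--Schwarz with the splitting $|\widehat\mu - \widehat\omega| = \bigl(|\widehat\mu - \widehat\omega|\,|\xi|^{-(d+q)/2}\bigr)\,|\xi|^{(d+q)/2}$, giving
\[
  \int_{|\xi| \leq C_2 u}\bigl|\widehat\mu - \widehat\omega\bigr|\dd\xi \leq \Bigl(\tfrac{1}{D_q}\widehat{\mathcal{E}}[\mu]\Bigr)^{1/2}\Bigl(\int_{|\xi| \leq C_2 u}|\xi|^{d+q}\dd\xi\Bigr)^{1/2} \leq \Bigl(\tfrac{c}{D_q}\Bigr)^{1/2}\,C\,u^{d + q/2},
\]
so dividing by $u^d$ leaves a bound $C'u^{q/2}$, which also tends to $0$ as $u \to 0^+$ since $q > 0$. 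Both bounds are uniform over $\mathcal{K}_c$, so for every $\varepsilon > 0$ there is $R_\varepsilon$ with $\mu(\{|x| \geq R_\varepsilon\}) < \varepsilon$ for all $\mu \in \mathcal{K}_c$; thus $\mathcal{K}_c$ is tight, hence relatively narrowly compact by Prokhorov, hence compact since it is also closed.

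I expect the main obstacle to be precisely this tightness estimate: one needs to see that a bound on $\int |\widehat\mu - \widehat\omega|^2 |\xi|^{-d-q}\dd\xi$ pins $\widehat\mu$ close enough to $\widehat\omega$ near the origin to control the tails of $\mu$ through Lemma~\ref{lem:7}, and all of this uniformly in $\mu$. The splitting above is what resolves it --- the $\omega$-part is $\mu$-free and handled by continuity of $\widehat\omega$ at $0$, while the remainder is squeezed by Cauchy--Schwarz against $\int_{|\xi|\leq C_2 u}|\xi|^{d+q}\dd\xi \sim u^{2d+q}$, whose order $d + q/2 > d$ beats the $u^{-d}$ coming from Lemma~\ref{lem:7}. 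By comparison, the lower semi-continuity is routine.
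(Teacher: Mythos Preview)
Your proof is correct and follows essentially the same route as the paper: Fatou for lower semi-continuity, then Lemma~\ref{lem:7} together with a Cauchy--Schwarz splitting of $\int_{|\xi|\le C_2 u}|\widehat\mu-\widehat\omega|\,\dd\xi$ against the weight $|\xi|^{-(d+q)/2}$ to obtain uniform tightness of the sub-levels. The only cosmetic difference is that the paper handles the $\omega$-contribution by the same weighted $L^2$ estimate (invoking $\omega\in\mathcal P_2$ so that $\widehat{\mathcal E}[\delta_0]<\infty$), whereas you dispose of it more directly via continuity of $\widehat\omega$ at the origin.
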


\begin{proof}
  Lower semi-continuity and thence closedness of the sub-levels follows from Fatou’s Lemma, because narrow convergence corresponds to pointwise convergence of the Fourier transform and the integrand in the definition of \(\widehat{\mathcal{E}}\) is non-negative.
  
  Now, assume we have a \(K > 0\) and
  \begin{equation}
    \label{eq:77}
    \mu \in N_{K}(\widehat{\mathcal{E}}) := \{\mu \in \mathcal{P}(\mathbb{R}^{d}) : \widehat{\mathcal{E}}[\mu] \leq K\}.
  \end{equation}
  We show the tightness of the family of probability measures
  \(N_{K}(\widehat{\mathcal{E}})\) using Lemma \ref{lem:7}. Let \(0 < u \leq 1\). Then,
  \begin{align}
    \label{eq:78}
    \leadeq[2]{\frac{1}{u^{d}} \int_{\left| \xi \right|\leq C_{2}u} \left(1 - \Re \widehat{\mu}(\xi)\right) \dd \xi}\\
    \leq {} & C_2^d \int_{\left| \xi \right|\leq C_{2}u} \left| \xi \right|^{-d} \left(1 - \Re \widehat{\mu}(\xi)\right) \dd \xi \\
    \leq {} & C_2^d \int_{\left| \xi \right|\leq C_{2}u} \left| \xi \right|^{-d} \left(\left| 1 - \Re \widehat{\omega}(\xi) \right| + \left| \Re \widehat{\omega}(\xi) - \Re \widehat{\mu}(\xi) \right| \right)\dd \xi \\
    \leq {} & C_2^d \int_{\left| \xi \right|\leq C_{2}u} \left| \xi \right|^{-d} \left(\left| 1 - \widehat{\omega}(\xi) \right| + \left| \widehat{\omega}(\xi) - \widehat{\mu}(\xi) \right| \right)\dd \xi\\
    = {} & C_2^d \int_{\left| \xi \right|\leq C_{2}u} \left| \xi \right|^{(-d-q)/2} \cdot \left| \xi \right|^{(-d+q)/2} \left(\left| 1 - \widehat{\omega}(\xi) \right| + \left| \widehat{\omega}(\xi) - \widehat{\mu}(\xi) \right| \right)\dd \xi\\
     \leq {} & C_2^d \, {\underbrace{\left(\int_{\left| \xi \right|\leq C_{2}u} \left| \xi \right|^{-d+q} \dd \xi\right)}_{\text{\(=:f(u)\)}}}^{1/2} \cdot \Bigg[{\underbrace{\left(\int_{\left| \xi \right|\leq C_{2}u} \left| \xi \right|^{-d-q} \left| 1 - \widehat{\omega}(\xi) \right|^{2}\dd \xi\right)}_{\text{\(= C \cdot \widehat{\mathcal{E}}[\delta_0] < \infty\)}}}^{1/2}\label{eq:79}\\
     & + {\underbrace{\left(\int_{\left| \xi \right|\leq C_{2}u}\left| \xi \right|^{-d-q} \left| \widehat{\omega}(\xi) - \widehat{\mu}(\xi) \right|^{2} \dd \xi\right)}_{\text{\(\leq D_{q}^{-1}K\)}}}^{1/2}\Bigg] \quad \text{(Hölder’s inequality)} \label{eq:80}\\
    \leq {} & C_2^d \, (f(u))^{1/2} \left( C^{1/2} + \left(D_{q}^{-1}K\right)^{1/2} \right),
  \end{align}
  where in equations \eqref{eq:79} and \eqref{eq:80} we used the boundedness of the first summand in \eqref{eq:79} by a constant \(C > 0\), which is justified because \(\omega\) has an existing second moment. But
  \begin{equation}
    \label{eq:81}
    f(u) = \int_{\left| \xi \right|\leq C_{2} u} \left| \xi \right|^{-d+q} \dd \xi = O(u^{q})
    \quad \text{for } u \rightarrow  0,
  \end{equation}
  giving a uniform control of the convergence to zero of the left-hand side of \eqref{eq:78}. Together with Lemma \ref{lem:7}, this yields tightness of \(N_{K}(\widehat{\mathcal{E}})\), hence relative compactness with respect to narrow convergence. Compactness then follows from the aforementioned lower semi-continuity of \(\widehat{\mathcal{E}}\).
                                                        \end{proof}

From this proof, we cannot deduce a stronger compactness, so that the limit of a minimizing sequence for the original functional \(\widetilde{\mathcal{E}}\) (which coincides with \(\widehat{\mathcal{E}}\) on \(\mathcal{P}_{2}(\mathbb{R}^{d})\) by Corollary \ref{cor:four-repr-widet}) need not lie in the set \(\mathcal{P}_{2}(\mathbb{R}^{d})\) (actually, in Section \ref{sec:moment-bound-symm}, we shall see that we can prove a slightly stronger compactness). To apply compactness arguments, we hence need an extension of \(\widetilde{\mathcal{E}}\) to the whole of \(\mathcal{P}(\mathbb{R}^{d})\). For the direct method in the calculus of variations to work, this extension should also be lower semi-continuous; therefore the natural candidate is the \emph{lower semi-continuous envelope} of \(\widetilde{\mathcal{E}}\), now defined on the whole of \( \mathcal{P}(\mathbb{R}^d) \) by
\begin{equation}
  \label{eq:357}
  \widetilde{\mathcal{E}}[\mu] =
  \begin{cases}
    \widetilde{\mathcal{E}}[\mu], \quad & \mu \in \mathcal{P}_2(\mathbb{R}^d), \\
    \infty, & \mu \in \mathcal{P}(\mathbb{R}^d) \setminus \mathcal{P}_2(\mathbb{R}^d),
  \end{cases}
\end{equation}
which in our case can be defined as
\begin{equation}
  \label{eq:85}
  \widetilde{\mathcal{E}}^{-}[\mu] := \inf_{\substack{\mu_{n}\rightarrow \mu \text{        narrowly}\\\mu_{n} \in \mathcal{P}_{2}(\mathbb{R}^{d})}} \liminf_{n\rightarrow \infty}  \widetilde{\mathcal{E}}[\mu_{n}],
\end{equation}
or equivalently as the largest lower semi-continuous function \( \widetilde{\mathcal{E}^-} \leq \widetilde{\mathcal{E}} \).
This corresponds to \cite[Definition 3.1]{93-Dal_Maso-intro-g-conv} if we consider our functional initially to be \( +\infty \) for \( \mu \in \mathcal{P}(\mathbb{R}^d) \setminus \mathcal{P}_2(\mathbb{R}^d) \).

In order to show that \(\widetilde{\mathcal{E}}^{-} = \widehat{\mathcal{E}}\), which is the content of Corollary \ref{cor:lower-semi-cont} below, we need a sequence along which there is continuity in the values of \( \widetilde{\mathcal{E}} \), which we find by dampening an arbitrary \( \mu \) with a Gaussian:

\flushbottom
\begin{proposition}
  \label{prp:rec-seq}
  For \( \omega \in \mathcal{P}_2(\mathbb{R}^d) \) and \(\mu \in \mathcal{P}(\mathbb{R}^{d})\), there exists a sequence \((\mu_{n})_{n \in \mathbb{N}} \subseteq \mathcal{P}_{2}(\mathbb{R}^{d})\) such that
  \begin{alignat}{2}
    \label{eq:86}
    \mu_{n}&\rightarrow \mu \text{ narrowly} \quad&&\text{ for } n\rightarrow \infty,\\
    \widehat{\mathcal{E}}[\mu_{n}]&\rightarrow \widehat{\mathcal{E}}[\mu] &&\text{ for } n\rightarrow \infty.
  \end{alignat}
\end{proposition}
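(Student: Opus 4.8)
The plan is to damp $\mu$ with a shrinking Gaussian. I would fix $g_n(x):=\exp\bigl(-|x|^2/(2n^2)\bigr)$, set $Z_n:=\int_{\mathbb{R}^d}g_n\,\dd\mu\in(0,1]$ and define $\mu_n:=Z_n^{-1}g_n\,\mu$. Since $g_n$ decays faster than any polynomial, $\mu_n$ has finite moments of every order, in particular $\mu_n\in\mathcal{P}_2(\mathbb{R}^d)$, and $Z_n\uparrow1$ by monotone convergence. Narrow convergence of $\mu_n$ to $\mu$ is immediate: for $h\in C_b(\mathbb{R}^d)$ one has $h\,g_n/Z_n\to h$ pointwise with $|h\,g_n/Z_n|\le2\|h\|_\infty$ for $n$ large, so $\int h\,\dd\mu_n\to\int h\,\dd\mu$ by dominated convergence.

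For the energies I would first invoke the lower semi-continuity of $\widehat{\mathcal{E}}$ along narrow convergence (Proposition \ref{prp:compctness-sublvls}), which gives $\liminf_n\widehat{\mathcal{E}}[\mu_n]\ge\widehat{\mathcal{E}}[\mu]$ for free; if $\widehat{\mathcal{E}}[\mu]=\infty$ this already finishes the proof, so the work is in the case $\widehat{\mathcal{E}}[\mu]<\infty$. Writing $w(\xi):=|\xi|^{-d-q}$, using $\widehat{\mathcal{E}}[\nu]=D_q\int|\widehat\nu-\widehat\omega|^2w$ and the triangle inequality in $L^2(w\,\dd\xi)$, one obtains $\widehat{\mathcal{E}}[\mu_n]^{1/2}\le\widehat{\mathcal{E}}[\mu]^{1/2}+D_q^{1/2}\bigl(\int_{\mathbb{R}^d}|\widehat{\mu_n}-\widehat\mu|^2w\,\dd\xi\bigr)^{1/2}$, so everything reduces to showing $J_n:=\int_{\mathbb{R}^d}|\widehat{\mu_n}(\xi)-\widehat\mu(\xi)|^2w(\xi)\,\dd\xi\to0$.

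To prove $J_n\to0$ I would start from the identity $\widehat{\mu_n}(\xi)-\widehat\mu(\xi)=\int_{\mathbb{R}^d}\bigl(\exp(-\i x^{T}\xi)-1\bigr)\bigl(Z_n^{-1}g_n(x)-1\bigr)\dd\mu(x)$, which yields pointwise convergence $\widehat{\mu_n}(\xi)\to\widehat\mu(\xi)$ and $|\widehat{\mu_n}-\widehat\mu|\le2$, and split $J_n$ over $\{|\xi|>R\}$, $\{r\le|\xi|\le R\}$ and $\{|\xi|<r\}$. On $\{|\xi|>R\}$ one bounds by $4\int_{|\xi|>R}w\to0$ as $R\to\infty$, uniformly in $n$; on $\{r\le|\xi|\le R\}$ the integrand is dominated by $4w\in L^1$ and tends to $0$ pointwise, so this part vanishes as $n\to\infty$ by dominated convergence. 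On the innermost ball I would use $\widehat{\mu_n}(0)=\widehat\mu(0)=1$ and $|\exp(-\i x^{T}\xi)-1|\le\min(2,|x||\xi|)$ in two complementary ways: the bound $|\widehat{\mu_n}(\xi)-1|\le|\xi|\,Z_n^{-1}\int|x|g_n(x)\,\dd\mu(x)$, which after squaring and integrating $|\xi|^2w=|\xi|^{2-d-q}$ (integrable near $0$ since $q<2$) controls a ball $\{|\xi|<\delta_n\}$ with $\delta_n\to0$ chosen small relative to the finite truncated moment $\int|x|g_n\,\dd\mu$; and a layer-cake estimate $|\widehat{\mu_n}(\xi)-1|\le Z_n^{-1}\bigl(|\widehat\mu(\xi)-1|+2(1-Z_n)\bigr)$, which on the annulus $\{\delta_n\le|\xi|<r\}$ is handled via $\int_{|\xi|<r}|\widehat\mu-1|^2w<\infty$ (a consequence of $\widehat{\mathcal{E}}[\mu]<\infty$ and $\omega\in\mathcal{P}_2$, exactly as in the tightness estimate in the proof of Proposition \ref{prp:compctness-sublvls}) together with the decay of $1-Z_n$ against $\int_{|\xi|\ge\delta_n}w$. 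That $\delta_n$ can be chosen compatibly with both estimates is where one feeds in a polynomial tail bound on $\mu$ (slightly stronger than $\mu(\{|x|\ge t\})=O(t^{-q/2})$), which is furnished by Lemma \ref{lem:7} and the computation in the proof of Proposition \ref{prp:compctness-sublvls}. Letting $R\to\infty$, then $n\to\infty$, then $r\to0$ gives $\limsup_nJ_n=0$.

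I expect the only real difficulty to be this last estimate near $\xi=0$, where $w$ fails to be integrable: because $\widehat{\mathcal{E}}[\mu]<\infty$ does not force $\mu$ to have a first (or $q$-th) moment, the naive $n$-uniform majorant $c\bigl(\int|\exp(-\i x^{T}\xi)-1|\,\dd\mu\bigr)^2w(\xi)$ need not be integrable there, so one cannot merely dominate and must carefully interpolate the two a priori bounds above while exploiting the moment and Fourier-decay information carried by the finiteness of $\widehat{\mathcal{E}}[\mu]$. The construction, the narrow convergence and the large- and moderate-frequency estimates are all routine.
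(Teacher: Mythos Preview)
Your approach is correct and genuinely different from the paper's. Both constructions damp $\mu$ by a Gaussian; the paper puts the missing mass on $\delta_0$ while you renormalise, but this difference is inessential (in fact your $\mu_n$ also satisfies the paper's key uniform bound $|\widehat{\mu_n}(\xi)-1|\le Z_n^{-1}\int|\e^{-\i x\cdot\xi}-1|\,\dd\mu(x)$). The real divergence is in the low-frequency analysis. The paper finds an $n$-\emph{uniform} dominating function, namely $|\widehat{\mu_n}(\xi)-1|\le C\big[(1-\Re\widehat{\mu}(\xi))+\int|\sin(\xi\cdot x)|\,\dd\mu(x)\big]$, and then spends the bulk of the proof (three technical steps, repeatedly invoking Lemma~\ref{lem:7}, H\"older and Fubini) showing that the square of this majorant is integrable against $|\xi|^{-d-q}$. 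You instead use an $n$-dependent inner radius $\delta_n$ and two different bounds, trading the hard integrability lemma for a quantitative tail estimate on $\mu$.

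Your compatibility claim is the delicate point, and it does go through: from the proof of Proposition~\ref{prp:compctness-sublvls} one gets not just $\mu(\{|x|\ge t\})=O(t^{-q/2})$ but $o(t^{-q/2})$, because the second H\"older factor there is $\big(\int_{|\xi|\le C_2u}|\xi|^{-d-q}|1-\widehat{\mu}|^2\big)^{1/2}\to 0$ as $u\to 0$. A Ces\`aro/L'H\^opital argument then yields $M_n=o(n^{1-q/2})$ and $1-Z_n=o(n^{-q/2})$, so the simple choice $\delta_n=1/n$ makes both $M_n^2\delta_n^{2-q}$ and $(1-Z_n)^2\delta_n^{-q}$ tend to zero; with only the $O(t^{-q/2})$ bound these two quantities would stay bounded and the argument would fail at exactly this borderline. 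Once that is in hand, your three-region split and the order of limits $R\to\infty$, $n\to\infty$, $r\to 0$ close correctly. Your route is arguably more economical than the paper's dominated-convergence argument, but you should make the $o(t^{-q/2})$ step and the resulting bounds on $M_n$, $1-Z_n$ explicit rather than leaving them as a remark.
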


\begin{proof}
  \thmenumhspace{-1em}
  \begin{enumpara}
  \item \emph{Definition of \(\mu_{n}\).} Define
    \begin{equation}
      \label{eq:87}
      \eta(x) := (2\pi)^{-d/2}\exp\left(-\frac{1}{2}\left| x \right|^{2}\right), \quad \eta_{\varepsilon}(x) := \varepsilon^{-d}\eta(\varepsilon^{-1}x), \quad x \in \mathbb{R}^{d}.
          \end{equation}
    Then \((2\pi)^{-d} \widehat{\widehat{\eta_{\varepsilon}}} = \eta_\varepsilon \) is a non-negative approximate identity with respect to the convolution and \( \widehat{\eta}_\varepsilon = \exp(-\varepsilon^2 \left| x \right|^2/2) \). To approximate \(\mu\), we use a smooth dampening of the form
    \begin{equation}
      \label{eq:88}
      \mu_{n} := \widehat{\eta}_{n^{-1}}\cdot \mu + \left(1 - (\widehat{\eta}_{n^{-1}}\cdot \mu)(\mathbb{R}^{d})\right)\delta_{0},
    \end{equation}
    such that the resulting \( \mu_n \) are in \( \mathcal{P}_2 \), with Fourier transforms
    \begin{equation}
      \label{eq:89}
      \widehat{\mu}_{n}(\xi) =  (\widehat{\mu} \ast
      \eta_{n^{-1}})(\xi) - (\widehat{\mu} \ast
      \eta_{n^{-1}})(0) + 1, \quad \xi \in \mathbb{R}^{d}.
    \end{equation}
    Note that because \(\widehat{\mu}\) is continuous, \(\widehat{\mu}_{n}(\xi) \rightarrow \widehat{\mu}(\xi)\) for all \(\xi \in \mathbb{R}^{d}\). We want to use the Dominated Convergence Theorem to deduce that
    \begin{equation}
      \label{eq:90}
      \widehat{\mathcal{E}}[\mu_{n}] = D_{q} \int_{\mathbb{R}^{d}} \left| \xi \right|^{-d-q}
      \left| \widehat{\mu}_{n}(\xi)-\widehat{\omega}(\xi) \right|^{2} \dd \xi \rightarrow  \widehat{\mathcal{E}}[\mu] \quad \text{ for }
      n\rightarrow \infty.
    \end{equation}
  \item \emph{Trivial case and dominating function.} Firstly, note that if \(\widehat{\mathcal{E}}[\mu] = \infty\), then Fatou’s Lemma ensures that \(\widehat{\mathcal{E}}[\mu_{n}]\rightarrow \infty\) as well.

    Secondly, by the assumptions on \( \omega \), it is sufficient to find a dominating function for \( \xi \mapsto \left| \xi \right|^{-d-q} \left| \widehat{\mu}_n(\xi) - 1 \right|^2 \), which will only be problematic for \( \xi \) close to \( 0 \). We can estimate the behavior of \(\widehat{\mu}_{n}\) there by the behavior of \(\widehat{\mu}\) there by computing
    \begin{align}
      \label{eq:91}
      \left| \widehat{\mu}_{n}(\xi) - 1 \right| \leq {} & \int_{\mathbb{R}^{d}} \int_{\mathbb{R}^{d}} \eta_{n^{-1}}(\zeta) \left| \exp(\i(\zeta-\xi)\cdot x) - \exp(\i\zeta\cdot x) \right| \dd \mu(x) \dd \zeta \\
      = {} & \int_{\mathbb{R}^{d}} \underbrace{\int_{\mathbb{R}^{d}} \eta_{n^{-1}}(\zeta) \dd \zeta}_{\text{\(=1\)}}\,\left| \exp(-i\xi\cdot x) - 1 \right| \dd \mu(x) \\
      \leq {} &C\bigg[(1 - \Re \widehat{\mu}(\xi)) + \underbrace{\int_{\mathbb{R}^{d}} \left| \sin (\xi\cdot x) \right| \dd \mu(x)}_{\text{\(:= f(\xi)\)}}\bigg],\label{eq:92}
    \end{align}
    where the right-hand side \eqref{eq:92} is to serve as the dominating function. Note that we can estimate each summand in \eqref{eq:92} separately to justify integrability due to the elementary inequality
    \begin{equation}
      \label{eq:93}
      \left| a+b \right|^{2} \leq 2\left(\left| a \right|^{2} + \left| b \right|^{2}\right) \quad
      \text{for all } a,b \in \mathbb{C}.
    \end{equation}
    Taking the square of \eqref{eq:92} yields
    \begin{equation}
      \label{eq:94}
      \left| \widehat \mu_{n}(\xi) - 1 \right|^{2} \leq C\left[(1 - \Re \widehat{\mu}(\xi))^{2} + \left(\int_{\mathbb{R}^{d}} \left| \sin (\xi\cdot x) \right| \dd \mu(x)\right)^{2}\right].
    \end{equation}
    Now, by the existence of the second moment of \(\omega\), we know that
                                    \begin{align}
      \label{eq:96}
      \leadeq{\int_{\mathbb{R}^{d}} \left| \xi \right|^{-d-q} (1 - \Re \widehat{\mu}(\xi))^{2} \dd \xi}\\
      \leq {} & \int_{\mathbb{R}^{d}} \left| \xi \right|^{-d-q} \left| \widehat{\mu}(\xi) - 1 \right|^{2} \dd \xi\\
      \leq {} & 2\int_{\mathbb{R}^{d}} \left| \xi \right|^{-d-q}\left| \widehat{\mu}(\xi) - \widehat{\omega}(\xi) \right|^{2} \dd \xi + 2 \int_{\mathbb{R}^{d}} \left| \xi \right|^{-d-q} \left| \widehat{\omega}(\xi) - 1 \right|^{2} \dd \xi < \infty\label{eq:97}
    \end{align}
    This yields the integrability condition for the first term in equation \eqref{eq:94}. What remains is to show the integrability for the term \(f\), which will occupy the rest of the proof.
  \item \emph{Splitting \(f\).} We apply the estimate
    \begin{equation}
      \label{eq:98}
      \left| \sin(y) \right| \leq \min\{\left| y \right|,1\} \quad \text{for } y \in \mathbb{R},
    \end{equation}
    resulting in
    \begin{equation}
      \label{eq:99}
      f(\xi) = \int_{\mathbb{R}^{d}} \left| \sin(\xi\cdot x) \right| \dd \mu(x) \leq \underbrace{\left| \xi \right|
        \int_{\left| x \right| \leq \left| \xi \right|^{-1}} \left| x \right| \dd \mu(x)}_{\text{\(:= f_{1}(\xi)\)}}
      + \underbrace{\int_{\left| x \right| \geq
          \left| \xi \right|^{-1}} \dd \mu(x)}_{\text{\(:= f_{2}(\xi)\)}}.
    \end{equation}
  \item \emph{Integrability of \(f_{2}\):}\label{item:1} By Lemma \ref{lem:7}
    and Hölder's inequality, we can estimate \(f_{2}\) as follows:
    \begin{align}
      \label{eq:100}
      f_{2}(\xi) \leq {} &\frac{C_{1}}{\left| \xi \right|^{d}} \int_{\left| y \right| \leq C_{2}\left| \xi \right|} (1
      - \Re \widehat{\mu}(y)) \dd y\\
      \leq {} & \frac{C_{1}}{\left| \xi \right|^{d}} {\underbrace{\left(\int_{\left| y \right| \leq
              C_{2}\left| \xi \right|} 1 \dd y\right)^{1/2}}_{\text{\(= C\left| \xi \right|^{d/2}\)}}}
      \left(\int_{\left| y \right| \leq C_{2}\left| \xi \right|}(1 - \Re \widehat{\mu}(y))^{2} \dd
        y\right)^{1/2}\label{eq:101}
    \end{align}
    Hence, inserting \eqref{eq:101} into the integral which we want to show to be finite and applying Fubini-Tonelli yields
    \begin{align}
      \label{eq:102}
      \int_{\mathbb{R}^{d}} \left| \xi \right|^{-d-q} f_{2}(\xi)^{2} \dd \xi \leq {} &C \int_{\mathbb{R}^{d}} \left| \xi \right|^{-2d-q} \int_{\left| y \right|\leq C_{2}\left| \xi \right|} (1 - \Re \widehat{\mu}(y))^{2}\dd y \dd \xi\\
      \leq {} & C\int_{\mathbb{R}^{d}} (1 - \Re \widehat{\mu}(y))^{2} \underbrace{\int_{C_{2}\left| \xi \right| \geq \left| y \right|} \left| \xi \right|^{-2d-q} \dd \xi}_{\text{\(=C\left| y \right|^{-d-q}\)}} \dd y\\
      \leq {} & C\int_{\mathbb{R}^{d}} \left| y \right|^{-d-q}(1 - \Re \widehat{\mu}(y))^{2} \dd y < \infty
    \end{align}
    by \eqref{eq:97}.
  \item \emph{Integrability of \(f_{1}\):} We use Fubini-Tonelli to get a well-known estimate for the first moment, namely
    \begin{align}
      \label{eq:103}
      f_{1}(\xi) = {} & \left| \xi \right| \int_{\left| x \right|\leq \left| \xi \right|^{-1}} \left| x \right| \dd
      \mu(x)\\
      = {} & \left| \xi \right| \int_{\left| x \right|\leq \left| \xi \right|^{-1}} \int_{0}^{\left| x \right|} 1
      \dd z \dd \mu(x)\\
      = {} & \left| \xi \right| \int_{0}^{\infty} \int_{\mathbb{R}^{d}} 1_{\text{\(\{z \leq \left| x \right| \leq \left| \xi \right|^{-1}\}\)}} \dd \mu(x) \dd z\\
      \leq {} & \left| \xi \right| \int_{0}^{\left| \xi \right|^{-1}} \mu(\{z \leq \left| x \right|\}) \dd z.
    \end{align}
    Next, we use Lemma \ref{lem:7} and Hölder's inequality (twice) to obtain
    (remember that \(1 \leq q < 2\) which ensures integrability)
    \begin{align}
      \label{eq:104}
      f_{1}(\xi) \leq {} & C_{1} \left| \xi \right| \int_{0}^{\left| \xi \right|^{-1}} z^{d} \int_{\left| \zeta \right| \leq C_{2}z^{-1}} (1 - \Re \widehat{\mu}(\zeta)) \dd \zeta \dd z\\
      \leq {} & C_{1} \left| \xi \right| \int_{0}^{\left| \xi \right|^{-1}} {z^{d}}\underbrace{\left(\int_{\left| \zeta \right| \leq C_{2} z^{-1}} 1 \dd \zeta\right)^{1/2}}_{\text{\(=C\,z^{-d/2} = C\,z^{q/4 + (-d/2 - q/4)}\)}} \left(\int_{\left| \zeta \right| \leq C_{2} z^{-1}} (1 - \Re \widehat{\mu}(\zeta))^{2} \dd \zeta\right)^{1/2} \dd z\\
      \leq {} & C \left| \xi \right| \underbrace{\left(\int_{0}^{\left| \xi \right|^{-1}} z^{-q/2}\dd z\right)^{1/2}}_{\text{\(=C\left| \xi \right|^{q/4-1/2}\)}} \left(\int_{0}^{\left| \xi \right|^{-1}} \int_{\left| \zeta \right| \leq C_{2}z^{-1}} z^{d+q/2}(1 - \Re \widehat{\mu}(\zeta))^{2} \dd \zeta \dd z\right)^{1/2}.
    \end{align}
    Squaring the expression and using Fubini-Tonelli on the second term, we get
    \begin{align}
      \label{eq:105}
      f_{1}(\xi)^{2} \leq {} & C \left| \xi \right|^{1+q/2} \int_{\mathbb{R}^{d}} (1 - \Re \widehat{\mu}(\zeta))^{2} \int_{0}^{\left| \xi \right|^{-1}} 1_{\{z \leq C_{2}\left| \zeta \right|^{-1}\}} z^{d+q/2} \dd z \dd \zeta\\
      \leq {} & C \left| \xi \right|^{1+q/2} \int_{\mathbb{R}^{d}} (1 - \Re \widehat{\mu}(\zeta))^{2} \min\left\{\left| \xi \right|^{-d-q/2-1},\left| \zeta \right|^{-d-q/2-1}\right\} \dd \zeta\\
      = {} & C \left| \xi \right|^{-d} \int_{\left| \zeta \right| \leq \left| \xi \right|} (1 - \Re \widehat{\mu}(\zeta))^{2} \dd \zeta\label{eq:106}\\
      &+ \underbrace{C \left| \xi \right|^{1+q/2} \int_{\left| \zeta \right|\geq\left| \xi \right|}\left| \zeta \right|^{-d-q/2-1} (1 - \Re \widehat{\mu}(\zeta))^{2} \dd \zeta}_{:= f_{3}(\xi)}\label{eq:107}
    \end{align}
    The integrability against \(\xi \mapsto \left| \xi \right|^{-d-q}\) of the term \eqref{eq:106} can now be shown analogously to \eqref{eq:101} in Step \ref{item:1}. Inserting the term \eqref{eq:107} into the integral and again applying Fubini-Tonelli yields
    \begin{align}
      \label{eq:108}
      \leadeq{\int_{\mathbb{R}^{d}} \left| \xi \right|^{-d-q} f_{3}(\xi)^{2} \dd \xi} \\
      \leq {} & C \int_{\mathbb{R}^{d}} \left| \xi \right|^{-d-q/2+1} \int_{\left| \zeta \right|\geq\left| \xi \right|} \left| \zeta \right|^{-d-q/2-1} (1 - \Re \widehat{\mu}(\zeta))^{2} \dd \zeta \dd \xi\\
      = {} & C \int_{\mathbb{R}^{d}} \left| \zeta \right|^{-d-q/2-1} (1 - \Re \widehat{\mu}(\zeta))^{2} \underbrace{\int_{\left| \xi \right|\leq\left| \zeta \right|} \left| \xi \right|^{-d-q/2+1}\dd \xi}_{\text{\(=C\left| \zeta \right|^{-q/2+1}\)}} \dd \zeta\\
      = {} & C \int_{\mathbb{R}^{d}} \left| \zeta \right|^{-d-q} (1 - \Re \widehat{\mu}(\zeta))^{2} \dd \zeta < \infty,
    \end{align}
    because of \eqref{eq:97}, which ends the proof. \qedhere
  \end{enumpara}
\end{proof}

\begin{corollary}
  \label{cor:lower-semi-cont}
  We have that
  \begin{equation}
    \label{eq:109}
    \widetilde{\mathcal{E}}^{-}[\mu] = \widehat{\mathcal{E}}[\mu], \quad \mu \in \mathcal{P}(\mathbb{R}^{d})
  \end{equation}
  and that \( \omega \) is the unique minimizer of \( \widetilde{\mathcal{E}}^- \).
\end{corollary}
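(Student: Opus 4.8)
\emph{Proof strategy.} The plan is to obtain the identity as an immediate consequence of the three results already in place: the Fourier representation $\widetilde{\mathcal{E}} = \widehat{\mathcal{E}}$ on $\mathcal{P}_2(\mathbb{R}^d)$ (Corollary \ref{cor:four-repr-widet}), the narrow lower semi-continuity of $\widehat{\mathcal{E}}$ on all of $\mathcal{P}(\mathbb{R}^d)$ (Proposition \ref{prp:compctness-sublvls}), and the existence of a narrowly convergent sequence $(\mu_n)_n \subseteq \mathcal{P}_2(\mathbb{R}^d)$ along which $\widehat{\mathcal{E}}$ is value-continuous (Proposition \ref{prp:rec-seq}). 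Since the narrow topology on $\mathcal{P}(\mathbb{R}^d)$ is metrizable (Lemma \ref{lem:25}), the sequential formula \eqref{eq:85} genuinely computes the lower semi-continuous envelope.

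First I would prove the bound $\widetilde{\mathcal{E}}^-[\mu] \le \widehat{\mathcal{E}}[\mu]$ by fixing $\mu \in \mathcal{P}(\mathbb{R}^d)$ and feeding the recovery sequence $(\mu_n)_n$ of Proposition \ref{prp:rec-seq} into the infimum \eqref{eq:85}: it is admissible because $\mu_n \in \mathcal{P}_2(\mathbb{R}^d)$ and $\mu_n \to \mu$ narrowly, and along it Corollary \ref{cor:four-repr-widet} gives $\widetilde{\mathcal{E}}[\mu_n] = \widehat{\mathcal{E}}[\mu_n] \to \widehat{\mathcal{E}}[\mu]$, so that $\widetilde{\mathcal{E}}^-[\mu] \le \liminf_{n} \widetilde{\mathcal{E}}[\mu_n] = \widehat{\mathcal{E}}[\mu]$. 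For the reverse bound, I would note that for every admissible competitor $(\mu_n)_n$ (that is, $\mu_n \in \mathcal{P}_2(\mathbb{R}^d)$ with $\mu_n \to \mu$ narrowly), Corollary \ref{cor:four-repr-widet} turns $\liminf_n \widetilde{\mathcal{E}}[\mu_n]$ into $\liminf_n \widehat{\mathcal{E}}[\mu_n]$, which is $\ge \widehat{\mathcal{E}}[\mu]$ by the narrow lower semi-continuity from Proposition \ref{prp:compctness-sublvls}; taking the infimum over all competitors gives $\widetilde{\mathcal{E}}^-[\mu] \ge \widehat{\mathcal{E}}[\mu]$. The two inequalities together yield $\widetilde{\mathcal{E}}^- = \widehat{\mathcal{E}}$ on $\mathcal{P}(\mathbb{R}^d)$.

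For the uniqueness of the minimizer I would use the explicit form \eqref{eq:34}. Because $D_q > 0$ and $|\xi|^{-d-q} > 0$ for $\xi \ne 0$, the functional $\widehat{\mathcal{E}}$ is non-negative, and $\widehat{\mathcal{E}}[\omega] = 0$ since $\widehat{\omega} - \widehat{\omega} \equiv 0$ and $\omega \in \mathcal{P}_2(\mathbb{R}^d)$ by the standing assumption; hence $\omega$ is a global minimizer. Conversely, if $\widehat{\mathcal{E}}[\mu] = 0$ for some $\mu \in \mathcal{P}(\mathbb{R}^d)$, then the non-negative integrand $|\widehat{\mu}(\xi) - \widehat{\omega}(\xi)|^2 |\xi|^{-d-q}$ must vanish for Lebesgue-almost every $\xi$, so $\widehat{\mu} = \widehat{\omega}$ almost everywhere and therefore everywhere by continuity of the Fourier-Stieltjes transforms; injectivity of the Fourier transform on finite measures then forces $\mu = \omega$.

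The analytic substance of this corollary is entirely contained in Propositions \ref{prp:compctness-sublvls} and \ref{prp:rec-seq}, so what remains is bookkeeping; the only point that needs a little care is verifying that the admissible class in \eqref{eq:85} --- sequences in $\mathcal{P}_2(\mathbb{R}^d)$ converging narrowly --- is simultaneously the set on which $\widetilde{\mathcal{E}}$ is known to coincide with $\widehat{\mathcal{E}}$ and the topology in which $\widehat{\mathcal{E}}$ is lower semi-continuous. That compatibility is precisely what makes the upper and lower bounds meet, and I do not expect any genuine obstacle beyond it.
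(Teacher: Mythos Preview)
Your proof is correct and follows essentially the same approach as the paper: both establish the two inequalities by combining Corollary~\ref{cor:four-repr-widet} with the lower semi-continuity of $\widehat{\mathcal{E}}$ (Proposition~\ref{prp:compctness-sublvls}) for one direction and with the recovery sequence of Proposition~\ref{prp:rec-seq} for the other, and both deduce the minimizer characterization from the explicit Fourier formula~\eqref{eq:34}. Your version spells out the uniqueness argument (injectivity of the Fourier--Stieltjes transform) in slightly more detail than the paper, which simply remarks that it follows from the form of $\widehat{\mathcal{E}}$.
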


\begin{proof}
  For \(\mu \in \mathcal{P}(\mathbb{R}^{d})\) and any sequence \((\mu_{n})_{n\in\mathbb{N}} \subseteq
  \mathcal{P}_{2}(\mathbb{R}^{d})\) with \(\mu_{n}\rightarrow \mu\) narrowly, we have
  \begin{equation}
    \label{eq:110}
     \liminf_{n\rightarrow \infty} \widetilde{\mathcal{E}}[\mu_{n}] = \liminf_{n\rightarrow \infty}
     \widehat{\mathcal{E}}[\mu_{n}] \geq \widehat{\mathcal{E}}[\mu]
  \end{equation}
  by the lower semi-continuity of \(\widehat{\mathcal{E}}\). By taking the infimum, we conclude
  \begin{equation}
    \label{eq:111}
    \widetilde{\mathcal{E}}^{-}[\mu] \geq \widehat{\mathcal{E}}[\mu] \quad \text{for all } \mu \in
    \mathcal{P}(\mathbb{R}^{d}).
  \end{equation}
  
  Conversely, for \(\mu \in \mathcal{P}(\mathbb{R}^{d})\), employing the sequence \((\mu_{n})_{n\in\mathbb{N}}\subseteq \mathcal{P}_{2}(\mathbb{R}^{d})\) of Proposition \ref{prp:rec-seq} allows us to see that
  \begin{equation}
    \label{eq:112}
    \widehat{\mathcal{E}}[\mu] = \lim_{n\rightarrow \infty} \widehat{\mathcal{E}}[\mu_{n}] = \lim_{n\rightarrow \infty}
    \widetilde{\mathcal{E}}[\mu_{n}] \geq \widetilde{\mathcal{E}}^{-}[\mu].
  \end{equation}
  Combining \eqref{eq:112} with \eqref{eq:111} yields the first claim, while the characterization of the minimizer follows from the form of \( \widehat{\mathcal{E}}\) in \eqref{eq:34}.
\end{proof}

Having verified this, in the following we shall work with the functional
\(\widehat{\mathcal{E}}\) instead of \(\mathcal{E}\) or \(\widetilde{\mathcal{E}}\).

\begin{remark}
  The lower semi-continuous envelope and therefore \(\widehat{\mathcal{E}}\) is also the \(\Gamma\)-limit, see Definition \ref{def:gamma-conv} below, of a regularization of \(\widetilde{\mathcal{E}}\) using the second moment, i.e.\@ with
  \begin{equation}
    \mathcal{I}_{\varepsilon}[\mu] := \widetilde{\mathcal{E}}[\mu] + \varepsilon \int_{\mathbb{R}^{d}}\left| x \right|^{2} \dd \mu,
  \end{equation}
  we have
  \begin{equation}
    \mathcal{I}_{\varepsilon} \xrightarrow{\Gamma} \widetilde{\mathcal{E}}^{-} \quad \text{for
    } \varepsilon \rightarrow 0.
  \end{equation}
\end{remark}

\subsection{Consistency of the particle approximations}
\label{sec:part-appr}

We are interested in particle approximations to the minimization problem in accordance with the derivation of the functional in \cite{FHS12}. For this, let \(N \in \mathbb{N}\) and define
\begin{equation}
  \label{eq:114}
  \mathcal{P}^{N}(\mathbb{R}^{d}) := \left\{ \mu \in \mathcal{P}(\mathbb{R}^{d}) : \mu =
    \frac{1}{N}\sum_{i = 1}^{N}\delta_{x_{i}} \text{ for some } \{x_{i}\}_{i=1}^{N}
    \subseteq \mathbb{R}^{d} \right\}
\end{equation}
and consider the restricted minimization problem
\begin{equation}
  \label{eq:115}
  \widehat{\mathcal{E}}_{N}[\mu] :=
  \begin{cases}
    \widehat{\mathcal{E}}[\mu], &\mu \in \mathcal{P}^{N}(\mathbb{R}^{d}),\\
    \infty, &\text{otherwise}
  \end{cases}\rightarrow \min_{\mu \in \mathcal{P}(\mathbb{R}^{d})}.
\end{equation}

We want prove consistency of the restriction in terms of \(\Gamma\)-convergence of \(\widehat{\mathcal{E}}_{N}\) to \(\widehat{\mathcal{E}}\).

\begin{definition} [\( \Gamma \)-convergence]
  \label{def:gamma-conv}
  \cite[Definition 4.1, Proposition 8.1]{93-Dal_Maso-intro-g-conv}
  Let \( X \) be a metrizable space and \( F_N \colon X \rightarrow (-\infty,\infty] \), \( N \in \mathbb{N} \) be a sequence of functionals. Then we say that \( F_N \) \emph{\( \Gamma \)-converges} to \( F \), written as \( F_N \xrightarrow{\Gamma} F \), for an \( F \colon X \rightarrow (-\infty,\infty] \), if
  \begin{enumerate}
  \item \emph{\( \liminf \)-condition:} For every \( x \in X \) and every sequence \( x_N \rightarrow x \),
    \begin{equation}
      \label{eq:116}
      F(x) \leq \liminf_{N\rightarrow\infty} F_N(x_N);
    \end{equation}
  \item \emph{\( \limsup \)-condition:} For every \( x \in X \), there exists a sequence \( x_N \rightarrow x \), called \emph{recovery sequence}, such that
    \begin{equation}
      \label{eq:117}
      F(x) \geq \limsup_{N\rightarrow\infty} F_N(x_N).
    \end{equation}
  \end{enumerate}

  Furthermore, we call the sequence \( (F_N)_N \) \emph{equi-coercive} if for every \( c \in \mathbb{R} \) there is a compact set \( K \subseteq X \) such that \( \left\{ x : F_N(x) \leq c \right\} \subseteq K \) for all \( N \in \mathbb{N} \).
\end{definition}

\begin{lemma}
  [Convergence of minimizers]
  \label{lem:8}
  Let \( (F_N)_N \) be a family of equi-coercive functionals on a metrizable space \( X \), \( F_N \xrightarrow{\Gamma} F \) and \( x_N \in \argmin F_N \). Then, there is a subsequence \( (x_{N_k})_k \) and \( x^\ast \in X \) with
  \begin{equation}
    \label{eq:118}
    x_{N_k} \rightarrow x^\ast \in \argmin F.
  \end{equation}
\end{lemma}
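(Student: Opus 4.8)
The plan is to carry out the classical ``fundamental theorem of \(\Gamma\)-convergence'': use equi-coercivity to confine the minimizers \(x_N\) to one fixed compact set, extract a convergent subsequence by metrizability, and then play the two conditions of Definition \ref{def:gamma-conv} against each other to identify the limit as a minimizer of \(F\). The first step I would take is to bound the minimal values \(m_N := F_N(x_N) = \inf_X F_N\) from above. Picking any \(\bar x \in X\) with \(F(\bar x) < +\infty\) (such a point always exists in our applications — for \(F = \widehat{\mathcal{E}}\) one even has \(\widehat{\mathcal{E}}[\omega] = 0\); if \(F \equiv +\infty\) the assertion is vacuous), the \(\limsup\)-condition supplies a recovery sequence \(\bar x_N \to \bar x\) with \(\limsup_N F_N(\bar x_N) \le F(\bar x)\), and since \(m_N \le F_N(\bar x_N)\) this yields \(\limsup_N m_N \le F(\bar x) < \infty\). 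After discarding finitely many indices we may thus fix \(c \in \mathbb{R}\) with \(m_N \le c\) for all \(N\).

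Next I would invoke equi-coercivity: there is a compact \(K \subseteq X\) with \(\{x \in X : F_N(x) \le c\} \subseteq K\) for every \(N\), and since \(x_N\) realizes \(F_N(x_N) = m_N \le c\), all \(x_N\) lie in \(K\). As \(X\) is metrizable and \(K\) compact, \(K\) is sequentially compact, so we can extract a subsequence \((x_{N_k})_k\) converging to some \(x^\ast \in K\).

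It then remains to check \(x^\ast \in \argmin F\). The \(\liminf\)-condition applied to \(x_{N_k} \to x^\ast\) gives
\begin{equation*}
  F(x^\ast) \le \liminf_{k\to\infty} F_{N_k}(x_{N_k}) = \liminf_{k\to\infty} m_{N_k}.
\end{equation*}
For the reverse comparison, fix an arbitrary \(y \in X\) and take a recovery sequence \(y_N \to y\) with \(\limsup_N F_N(y_N) \le F(y)\); then \(m_{N_k} = F_{N_k}(x_{N_k}) \le F_{N_k}(y_{N_k})\) forces \(\liminf_k m_{N_k} \le \limsup_k F_{N_k}(y_{N_k}) \le F(y)\). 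Combining, \(F(x^\ast) \le F(y)\) for every \(y \in X\), i.e.\ \(x^\ast \in \argmin F\) (and, as a by-product, \(\min F_{N_k} \to \min F\)).

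The only point requiring care is the uniform upper bound on the \(m_N\) in the first step: without it the minimizers need not remain in a compact set — one can build equi-coercive \(F_N\) whose \(\Gamma\)-limit is identically \(+\infty\) and whose minimizers escape every compact set — which is precisely why the \(\limsup\)-condition (rather than equi-coercivity alone) enters there. The compactness extraction and the two-sided estimate on \(m_{N_k}\) are then entirely routine, so I do not expect a genuine obstacle beyond this bookkeeping.
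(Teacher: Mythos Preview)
Your proof is correct and follows essentially the same approach as the paper: extract a convergent subsequence by equi-coercivity, then sandwich the limit value between the \(\liminf\)-condition applied to the minimizers and the \(\limsup\)-condition applied to an arbitrary competitor. You are in fact slightly more careful than the paper, which simply asserts ``by equi-coercivity, it has a convergent subsequence'' without first establishing the uniform bound \(F_N(x_N)\le c\) that you obtain via a recovery sequence at a point where \(F\) is finite.
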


\begin{proof}
  Let \( \left( x_N \right)_N \) be such a sequence. By equi-coercivity, it has a convergent subsequence \( \left( x_{N_k} \right)_k \), \( x_{N_k} \rightarrow x^\ast \).

  Now, let \( \widetilde{x} \in X \). By the \( \limsup \)-condition, there exists another sequence \( \widetilde{x}_{N_k} \) with \( \widetilde{x}_{N_k} \rightarrow \widetilde{x} \) and
  \begin{equation}
    \label{eq:119}
    \limsup_k F_{N_k}(\widetilde{x}_{N_k}) \leq F(\widetilde{x}).
  \end{equation}
  On the one hand, the \( \liminf \)-condition yields
  \begin{equation}
    \label{eq:120}
    F(x^\ast) \leq \liminf_k F_{N_k}(x_{N_k}),
  \end{equation}
  while on the other hand, by the fact that the \( x_{N_k} \) are minimizers,
  \begin{equation}
    \label{eq:121}
    F_{N_k}(x_{N_k}) \leq F_{N_k}(\widetilde{x}_{N_k}), \quad k \in \mathbb{N},
  \end{equation}
  which combined gives
  \begin{equation}
    \label{eq:122}
    F(x^\ast) \leq \liminf_k F_{N_k}(x_{N_k}) \leq \limsup_k F_{N_k}(\widetilde{x}_{N_k}) \leq F(\widetilde{x}),
  \end{equation}
  showing that the limit \( x^\ast \) is indeed a minimizer of \( F \).
\end{proof}

We shall need a further simple lemma justifying the existence of minimizers for the problem \eqref{eq:115}.

\begin{lemma}
  \label{lem:9}
  For all \(N \in \mathbb{N}\), \(\mathcal{P}^{N}(\mathbb{R}^{d})\) is closed in the narrow topology.
\end{lemma}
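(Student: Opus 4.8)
The plan is to exploit that every $\mu\in\mathcal{P}^{N}(\mathbb{R}^{d})$ consists of atoms of weight exactly $1/N$, so that narrow convergence inside this class cannot lose mass to infinity; then the supporting points stay in a fixed compact set and a routine extraction identifies the limit. Since the narrow topology on $\mathcal{P}(\mathbb{R}^{d})$ is metrizable by Lemma \ref{lem:25} (hence in particular Hausdorff and sequential), it suffices to prove \emph{sequential} closedness: given $\mu_{k}=\tfrac1N\sum_{i=1}^{N}\delta_{x_{i}^{k}}\in\mathcal{P}^{N}(\mathbb{R}^{d})$ with $\mu_{k}\to\mu$ narrowly, I must write $\mu=\tfrac1N\sum_{i=1}^{N}\delta_{x_{i}}$ for suitable $x_{i}\in\mathbb{R}^{d}$.

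\emph{Step 1 (no escape of mass).} First I would show that the $N$-tuples $(x_{1}^{k},\ldots,x_{N}^{k})$ eventually lie in a common compact subset of $(\mathbb{R}^{d})^{N}$. Fix a continuous cutoff $\varphi_{R}$ with $\mathbf{1}_{\overline{B_{R}(0)}}\le\varphi_{R}\le\mathbf{1}_{B_{2R}(0)}$ and $0\le\varphi_{R}\le1$. Since $\mu$ is a probability measure and $\varphi_{R}\uparrow1$ pointwise as $R\to\infty$, monotone convergence lets me choose $R$ so large that $\int\varphi_{R}\dd\mu>1-\tfrac1{2N}$; narrow convergence then yields $\int\varphi_{R}\dd\mu_{k}>1-\tfrac1N$ for all $k\ge k_{0}$, hence $\mu_{k}(\overline{B_{2R}(0)})>1-\tfrac1N$. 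But $\mu_{k}(\overline{B_{2R}(0)})$ is an integer multiple of $1/N$, so it must equal $1$, i.e.\ $x_{i}^{k}\in\overline{B_{2R}(0)}$ for all $i$ and all $k\ge k_{0}$. (Alternatively one may simply invoke tightness of a narrowly convergent sequence of probability measures on the Polish space $\mathbb{R}^{d}$, which gives the same conclusion once the threshold $\varepsilon<1/N$ is chosen.)

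\emph{Step 2 (extract and identify the limit).} By compactness of $\overline{B_{2R}(0)}^{N}$ I pass to a subsequence along which $(x_{1}^{k},\ldots,x_{N}^{k})\to(x_{1},\ldots,x_{N})$. For every $f\in C_{b}(\mathbb{R}^{d})$ one has $\int f\dd\mu_{k}=\tfrac1N\sum_{i=1}^{N}f(x_{i}^{k})\to\tfrac1N\sum_{i=1}^{N}f(x_{i})$, so along this subsequence $\mu_{k}\to\tfrac1N\sum_{i=1}^{N}\delta_{x_{i}}$ narrowly. Since $\mu_{k}\to\mu$ narrowly and narrow limits are unique, $\mu=\tfrac1N\sum_{i=1}^{N}\delta_{x_{i}}\in\mathcal{P}^{N}(\mathbb{R}^{d})$, which proves the claim.

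The only step requiring care is Step 1, the control of possible escape of mass to infinity; this is precisely where the quantization of the mass in units of $1/N$ is decisive, turning what would otherwise be a soft tightness argument into an immediate consequence of narrow convergence to a probability measure. Everything else is the standard proper-map/extraction reasoning.
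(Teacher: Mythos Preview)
Your proof is correct and follows essentially the same route as the paper's: reduce to sequential closedness via metrizability, use tightness of the convergent sequence together with the $1/N$-quantization of the mass to trap all supporting points in a common compact set, extract a convergent subsequence of the $N$-tuples, and identify the limit by Hausdorffness of the narrow topology. The only difference is that you spell out explicitly the quantization step (why tightness with threshold $\varepsilon<1/N$ forces \emph{all} atoms into the compact set), which the paper leaves implicit in the phrase ``it is tight, whence the columns of $(x^{k})_{k}$ must all lie in a compact set.''
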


\begin{proof}
  Note that \(\mathcal{P}(\mathbb{R}^{d})\) endowed with the narrow topology is a metrizable space, hence it is Hausdorff and we can characterize its topology by sequences. Let \(N \in \mathbb{N}\) and \((\mu_{k})_{k \in \mathbb{N}} \subseteq \mathcal{P}^{N}(\mathbb{R}^{d})\) with
  \begin{equation}
    \label{eq:123}
    \mu_{k} \rightarrow  \mu \in \mathcal{P}(\mathbb{R}^{d}) \quad \text{narrowly for } k\rightarrow \infty.
  \end{equation}
  By ordering the points composing each measure, for example using a lexicographical ordering, we can identify the measures \(\mu_{k}\) with a collection of points \(x^{k} \in \mathbb{R}^{d\times N}\). As the sequence \((\mu_{k})_{k}\) is convergent, it is tight, whence the columns of \((x^{k})_{k}\) must all lie in a compact set \(K \subseteq \mathbb{R}^{d}\). So we can extract a subsequence \((x^{k_{l}})_{l \in \mathbb{N}}\) with
  \begin{equation}
    \label{eq:124}
    x^{k_{l}} \rightarrow  x^{\ast} = (x^{\ast}_{i})_{i = 1}^{N} \in \mathbb{R}^{d\times N} \quad \text{for } l\rightarrow \infty.
  \end{equation}
  This implies that
  \begin{equation}
    \label{eq:125}
    \mu_{k_{l}} \rightarrow  \mu^{\ast} = \frac{1}{N} \sum_{i}^{N}\delta_{x^{\ast}_{i}} \quad \text{narrowly for } l\rightarrow \infty.
  \end{equation}
  Since \(\mathcal{P}(\mathbb{R}^{d})\) is Hausdorff, \(\mu = \mu^{\ast} \in \mathcal{P}^{N}(\mathbb{R}^{d})\), concluding the proof.
\end{proof}

\begin{theorem}[Consistency of particle approximations]
  \label{thm:cons-part-appr}
  The functionals \((\widehat{\mathcal{E}}_{N})_{N \in \mathbb{N}}\) are equi-coercive and
  \begin{equation}
    \label{eq:131}
    \widehat{\mathcal{E}}_{N} \xrightarrow{\Gamma} \widehat{\mathcal{E}} \quad \text{for } N\rightarrow \infty
  \end{equation}
  with respect to the narrow topology. In particular,
  \begin{equation}
    \label{eq:132}
    \argmin_{\mu \in \mathcal{P}(\mathbb{R}^{d})} \widehat{\mathcal{E}}_{N}[\mu] \ni \widetilde{\mu}_{N} \rightarrow \widetilde{\mu} = \argmin_{\mu \in \mathcal{P}(\mathbb{R}^{d})} \widehat{\mathcal{E}}[\mu] = \omega,
  \end{equation}
  for each choice of minimizers \(\mu_{N}\).
\end{theorem}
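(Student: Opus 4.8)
The plan is to verify the two structural statements — equi-coercivity and $\Gamma$-convergence — in turn, after which the convergence of minimizers follows from Lemma \ref{lem:8} together with the fact, recorded in Corollary \ref{cor:lower-semi-cont}, that $\omega$ is the unique minimizer of $\widehat{\mathcal{E}}$.

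\emph{Equi-coercivity and the \(\liminf\)-condition.} Since $\widehat{\mathcal{E}}_{N}\geq\widehat{\mathcal{E}}$ pointwise on $\mathcal{P}(\mathbb{R}^{d})$ (the two agree on $\mathcal{P}^{N}(\mathbb{R}^{d})$ and $\widehat{\mathcal{E}}_{N}=+\infty$ off it), every sub-level $\{\mu:\widehat{\mathcal{E}}_{N}[\mu]\leq c\}$ is contained in $N_{c}(\widehat{\mathcal{E}})$, which by Proposition \ref{prp:compctness-sublvls} is narrowly compact and does not depend on $N$; this is exactly equi-coercivity. For the $\liminf$-condition, let $\mu_{N}\to\mu$ narrowly: if $\liminf_{N}\widehat{\mathcal{E}}_{N}[\mu_{N}]=\infty$ there is nothing to prove, and otherwise, along a subsequence realizing the $\liminf$ as a finite limit, one necessarily has $\mu_{N}\in\mathcal{P}^{N}(\mathbb{R}^{d})$, so $\widehat{\mathcal{E}}_{N}[\mu_{N}]=\widehat{\mathcal{E}}[\mu_{N}]$ and the desired inequality $\widehat{\mathcal{E}}[\mu]\leq\liminf_{N}\widehat{\mathcal{E}}[\mu_{N}]$ is precisely the narrow lower semi-continuity of $\widehat{\mathcal{E}}$ from Proposition \ref{prp:compctness-sublvls}.

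\emph{The \(\limsup\)-condition.} Here we must exhibit, for each $\mu\in\mathcal{P}(\mathbb{R}^{d})$, a recovery sequence $\mu_{N}\in\mathcal{P}^{N}(\mathbb{R}^{d})$ with $\mu_{N}\to\mu$ narrowly and $\limsup_{N}\widehat{\mathcal{E}}[\mu_{N}]\leq\widehat{\mathcal{E}}[\mu]$. If $\mu\in\mathcal{P}_{2}(\mathbb{R}^{d})$, draw i.i.d.\ samples $X_{i}\sim\mu$: by Lemma \ref{lem:3} the empirical measures $\mu_{N}=\frac1N\sum_{i=1}^{N}\delta_{X_{i}}\in\mathcal{P}^{N}(\mathbb{R}^{d})$ converge to $\mu$ almost surely in $W_{2}$, whence $\widehat{\mathcal{E}}[\mu_{N}]=\widetilde{\mathcal{E}}[\mu_{N}]\to\widetilde{\mathcal{E}}[\mu]=\widehat{\mathcal{E}}[\mu]$ by Corollary \ref{cor:four-repr-widet} and the $W_{2}$-continuity of $\widetilde{\mathcal{E}}$ (Lemma \ref{lem:5} with the second measure fixed at $\omega$); any single realization in this probability-one event is a recovery sequence. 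For a general $\mu$, Proposition \ref{prp:rec-seq} furnishes $\nu_{m}\in\mathcal{P}_{2}(\mathbb{R}^{d})$ with $\nu_{m}\to\mu$ narrowly and $\widehat{\mathcal{E}}[\nu_{m}]\to\widehat{\mathcal{E}}[\mu]$; each $\nu_{m}$ has a recovery sequence of empirical measures $(\mu_{N}^{(m)})_{N}$ by the previous case, and since the narrow topology is metrizable (Lemma \ref{lem:25}) a standard diagonal extraction $\mu_{N}:=\mu_{N}^{(m(N))}$ with $m(N)\to\infty$ sufficiently slowly yields $\mu_{N}\to\mu$ narrowly and $\widehat{\mathcal{E}}[\mu_{N}]\to\widehat{\mathcal{E}}[\mu]$ (and if $\widehat{\mathcal{E}}[\mu]=\infty$, any empirical approximation of $\mu$, narrowly convergent by Lemma \ref{lem:3} irrespective of moments, already works).

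\emph{Convergence of minimizers and the main obstacle.} Each $\widehat{\mathcal{E}}_{N}$ is bounded below by $0$, is narrowly lower semi-continuous (it equals $\widehat{\mathcal{E}}$ on the narrowly closed set $\mathcal{P}^{N}(\mathbb{R}^{d})$, cf.\ Lemma \ref{lem:9}, and $+\infty$ elsewhere), and has narrowly compact sub-levels by equi-coercivity, so it attains its infimum at some $\widetilde{\mu}_{N}$. Lemma \ref{lem:8} then produces a narrowly convergent subsequence whose limit minimizes $\widehat{\mathcal{E}}$, hence equals $\omega$ by Corollary \ref{cor:lower-semi-cont} (equivalently, $\widehat{\mathcal{E}}[\mu]=D_{q}\int_{\mathbb{R}^{d}}|\widehat{\mu}(\xi)-\widehat{\omega}(\xi)|^{2}|\xi|^{-d-q}\dd\xi$ vanishes only if $\widehat{\mu}=\widehat{\omega}$, i.e.\ $\mu=\omega$); applied to every subsequence, this gives $\widetilde{\mu}_{N}\to\omega$ narrowly. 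The only genuinely delicate point is the $\limsup$-inequality for $\mu\notin\mathcal{P}_{2}(\mathbb{R}^{d})$: the probabilistic empirical-process approximation is available only in the presence of a finite second moment, so it must be interlaced with the Gaussian-dampening sequence of Proposition \ref{prp:rec-seq} through a diagonal argument controlled simultaneously in the narrow metric and in the energy; everything else reduces to the compactness and lower semi-continuity already supplied by Proposition \ref{prp:compctness-sublvls}.
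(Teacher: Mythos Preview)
Your proof is correct and follows essentially the same approach as the paper: equi-coercivity from $\widehat{\mathcal{E}}_{N}\geq\widehat{\mathcal{E}}$ and Proposition~\ref{prp:compctness-sublvls}, the $\liminf$-inequality from lower semi-continuity, the recovery sequence via empirical measures on $\mathcal{P}_{2}$ combined with Proposition~\ref{prp:rec-seq} and a diagonal argument for general $\mu$, and convergence of minimizers via Lemma~\ref{lem:9}, Lemma~\ref{lem:8}, and the uniqueness in Corollary~\ref{cor:lower-semi-cont}. The only cosmetic difference is that you invoke Lemma~\ref{lem:5} (continuity of $\widetilde{\mathcal{E}}$ in $W_{2}$) for the empirical approximation, whereas the paper cites Lemma~\ref{lem:6} (continuity of $\widehat{\mathcal{E}}$); since the two functionals coincide on $\mathcal{P}_{2}$ by Corollary~\ref{cor:four-repr-widet}, either reference works.
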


\begin{proof}
  \thmenumhspace{-1em}
  \begin{enumpara}
  \item \emph{Equi-coercivity:} This follows from the fact that \(\widehat{\mathcal{E}}\) has compact sub-levels by Proposition \ref{prp:compctness-sublvls}, together with \(\widehat{\mathcal{E}}_{N} \geq \widehat{\mathcal{E}}\).

  \item \emph{\( \liminf \)-condition:} Let \(\mu_{N} \in \mathcal{P}(\mathbb{R}^{d})\) with \(\mu_{N} \rightarrow \mu\) narrowly for \(N\rightarrow \infty\). Then
    \begin{equation}
      \label{eq:133}
      \liminf_{N\rightarrow \infty} \widehat{\mathcal{E}}_{N}[\mu_{N}] \geq \liminf_{N\rightarrow \infty} \widehat{\mathcal{E}}[\mu_{N}] \geq \widehat{\mathcal{E}}[\mu]
    \end{equation}
    by the lower semi-continuity of \(\widehat{\mathcal{E}}\).

  \item \emph{\( \limsup \)-condition:} Let \(\mu \in \mathcal{P}(\mathbb{R}^{d})\). By Proposition \ref{prp:rec-seq}, we can find a sequence \((\mu^{k})_{k \in \mathbb{N}} \subseteq \mathcal{P}_{2}(\mathbb{R}^{d})\) for which \(\widehat{\mathcal{E}}[\mu^{k}] \rightarrow \widehat{\mathcal{E}}[\mu]\). Furthermore, by Lemma \ref{lem:3}, we can approximate each \(\mu^{k}\) by \((\mu^{k}_{N})_{N\in\mathbb{N}} \subseteq \mathcal{P}_{2}(\mathbb{R}^{d})\cap\mathcal{P}^{N}(\mathbb{R}^{d})\), a realization of the empirical process of \(\mu^{k}\). This has a further subsequence which converges in the \(2\)-Wasserstein distance by Lemma \ref{lem:26} for which we have continuity of \(\widehat{\mathcal{E}}\) by Lemma \ref{lem:6}. A diagonal argument then yields a sequence \(\mu_{N} \in \mathcal{P}^{N}(\mathbb{R}^{d})\) for which
    \begin{equation}
      \label{eq:134}
      \widehat{\mathcal{E}}_{N}[\mu_{N}] = \widehat{\mathcal{E}}[\mu_{N}] \rightarrow  \widehat{\mathcal{E}}[\mu] \quad \text{for } N\rightarrow \infty.
    \end{equation}

  \item \emph{Convergence of minimizers:} We find minimizers for \(\widehat{\mathcal{E}}_{N}\) by applying the direct method in the calculus of variations, which is justified because the \((\widehat{\mathcal{E}}_{N})_{N}\) are equi-coercive and each \(\widehat{\mathcal{E}}_{N}\) is lower semi-continuous by Fatou’s Lemma and Lemma \ref{lem:9}. The convergence of the minimizers \(\widetilde{\mu}^{N}\) to a minimizer \(\widetilde{\mu}\) of \(\widehat{\mathcal{E}}\) then follows by Lemma \ref{lem:8}. But \(\widetilde{\mu} = \omega\) because \(\omega\) is the unique minimizer of \(\widehat{\mathcal{E}}\).\qedhere
  \end{enumpara}
\end{proof}

\section{Moment bound in the symmetric case}
\label{sec:moment-bound-symm}

Let \( q_a = q_r \in (1,2) \) be strictly larger than \( 1 \) now. We want to prove that in this case, we have a stronger compactness than the one showed in Proposition \ref{prp:compctness-sublvls}, namely that the sub-levels of \( \widehat{\mathcal{E}} \) have a uniformly bounded \( r \)th moment for \( r < q/2 \).

In the proof, we shall be using the theory developed in Appendix \ref{cha:cond-posit-semi} in a more explicit form than before, in particular the notion of the generalized Fourier transform (Definition \ref{def:gen-fourier-transform}) and its computation in the case of the power function (Theorem \ref{thm:cond-ft-power}).

\begin{theorem}
  \label{thm:moment-bound-symmetric}
  Let \( \omega \in \mathcal{P}_2(\mathbb{R}^d) \). For \( r < q/2 \) and a given \( M \in \mathbb{R} \), there exists an \( M' \in \mathbb{R} \) such that
  \begin{equation}
    \label{eq:206}
    \int_{\mathbb{R}^d} \left| x \right|^{r} \diff \mu(x) \leq M', \quad \text{for all } \mu \text{ such that } \widehat{\mathcal{E}}[\mu] \leq M.
  \end{equation}
\end{theorem}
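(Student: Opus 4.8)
The plan is to combine the tail estimate for $\mu$ provided by Lemma \ref{lem:7} with the Hölder estimate that was already carried out in the proof of Proposition \ref{prp:compctness-sublvls}, and then to integrate the resulting tail bound by means of the layer-cake formula.

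First I would write, for any $\mu$ with $\widehat{\mathcal{E}}[\mu] \le M$,
\begin{equation}
  \int_{\mathbb{R}^d} \left| x \right|^r \dd \mu(x) = r \int_0^\infty z^{r-1} \mu\left( \left\{ \left| x \right| > z \right\} \right) \dd z,
\end{equation}
and split the integral at $z = 1$. On $(0,1)$ the integrand is dominated by $r z^{r-1}$ because $\mu$ is a probability measure, so this part contributes at most $1$. For $z \ge 1$ I would set $u := z^{-1} \in (0,1]$ and apply Lemma \ref{lem:7}, which gives
\begin{equation}
  \mu\left( \left\{ \left| x \right| > z \right\} \right) \le \mu\left( \left\{ \left| x \right| \ge u^{-1} \right\} \right) \le \frac{C_1}{u^d} \int_{\left| \xi \right| \le C_2 u} \left( 1 - \Re \widehat{\mu}(\xi) \right) \dd \xi.
\end{equation}

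The right-hand side is exactly the quantity estimated in the proof of Proposition \ref{prp:compctness-sublvls}: Hölder's inequality, the bound $\widehat{\mathcal{E}}[\delta_0] < \infty$ (which is where the hypothesis $\omega \in \mathcal{P}_2(\mathbb{R}^d)$ is used) and $\widehat{\mathcal{E}}[\mu] \le M$ together yield
\begin{equation}
  \frac{1}{u^d} \int_{\left| \xi \right| \le C_2 u} \left( 1 - \Re \widehat{\mu}(\xi) \right) \dd \xi \le C \, (f(u))^{1/2} \left( C^{1/2} + (D_q^{-1} M)^{1/2} \right),
\end{equation}
with $f(u) = \int_{\left| \xi \right| \le C_2 u} \left| \xi \right|^{-d+q} \dd \xi = O(u^q)$ as $u \to 0$. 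Hence there is a constant $K = K(q,d,\omega,M)$, depending on $\mu$ only through the bound $M$, such that $\mu(\{\left| x \right| > z\}) \le K z^{-q/2}$ for all $z \ge 1$. Plugging this into the tail part of the layer-cake integral gives
\begin{equation}
  r \int_1^\infty z^{r-1} \mu\left( \left\{ \left| x \right| > z \right\} \right) \dd z \le r K \int_1^\infty z^{r-1-q/2} \dd z = \frac{rK}{q/2 - r},
\end{equation}
which is finite precisely because $r - 1 - q/2 < -1 \Leftrightarrow r < q/2$. Adding the two contributions produces the claimed bound with $M' := 1 + rK/(q/2 - r)$.

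The bulk of the analytic work, namely the uniform control of $\int_{\left| \xi \right| \le C_2 u}(1 - \Re \widehat{\mu}(\xi)) \dd \xi$ over the sub-level set, has already been established for Proposition \ref{prp:compctness-sublvls}. Thus the only genuinely new ingredients are the passage to the layer-cake representation and the observation that $r < q/2$ is exactly the threshold making the tail integral converge; the only real care needed is the bookkeeping ensuring that the constant $K$ depends on $q$, $d$, $\omega$ and $M$ alone and not on the individual measure $\mu$.
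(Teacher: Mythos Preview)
Your argument is correct and is genuinely different from the paper's proof. The paper works on the Fourier side throughout: it approximates $\mu$ by a dampened and mollified sequence $\mu_n'$, sets $\nu_n = \mu_n' - \eta_{k_n^{-1}}$ so that $\widehat{\nu}_n \in \mathcal{S}_1(\mathbb{R}^d)$, and then invokes the generalized Fourier transform of $|x|^r$ (Theorem~\ref{thm:cond-ft-power}) to write
\[
  \int_{\mathbb{R}^d} |x|^r \,\widehat{\nu}_n(x)\,\diff x
  = C \int_{\mathbb{R}^d} |\xi|^{-d-r}\,\nu_n(\xi)\,\diff\xi,
\]
after which a Hölder splitting $|\xi|^{-d-r} = |\xi|^{-(d-q+2r)/2}\,|\xi|^{-(d+q)/2}$ on $\{|\xi|\le 1\}$ converts the right-hand side into the energy integral; the condition $r<q/2$ enters as the integrability of $|\xi|^{-d+(q-2r)}$ near the origin. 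A limit argument then recovers the moment of $\mu$.

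Your route avoids all of this approximation machinery and the explicit use of Theorem~\ref{thm:cond-ft-power}: you stay on the spatial side, use the layer-cake representation, and recycle verbatim the tail estimate $\mu(\{|x|\ge u^{-1}\}) \le C\,(f(u))^{1/2}(\ldots)$ from Proposition~\ref{prp:compctness-sublvls}. Since $f(u) = c\,u^q$ exactly (not just asymptotically), the bound $\mu(\{|x|>z\}) \le K z^{-q/2}$ holds uniformly for $z\ge 1$, and the threshold $r<q/2$ appears as the convergence condition for $\int_1^\infty z^{r-1-q/2}\,\diff z$. This is shorter and more elementary; the paper's approach, by contrast, illustrates a second direct application of the conditional-positive-definiteness framework of Appendix~\ref{cha:cond-posit-semi}, which is thematically consistent with the rest of the chapter but analytically heavier here.
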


\begin{proof}
  Let \( \mu \in \mathcal{P}(\mathbb{R}^d) \). If \( \widehat{\mathcal{E}}[\mu] \leq M \), then we also have
  \begin{align}
    \label{eq:207}
    M \geq {} & \widehat{\mathcal{E}}[\mu] = D_q \int_{\mathbb{R}^d} \left| \widehat{\mu}(\xi) - \widehat{\omega}(\xi) \right|^2 \left| \xi \right|^{-d-q} \diff \xi\\
    \geq {} & c \int_{\mathbb{R}^d} \left| \widehat{\mu}(\xi) - 1 \right|^2 \left| \xi \right|^{-d-q} \diff \xi - \int_{\mathbb{R}^d} \left| \widehat{\omega}(\xi) - 1 \right|^2 \left| \xi \right|^{-d-q} \diff \xi,
  \end{align}
  so that there is an \( M'' > 0\) with
  \begin{equation}
    \label{eq:208}
    \int_{\mathbb{R}^d} \left| \widehat{\mu} - 1 \right|^2 \left| \xi \right|^{-d-q} \diff \xi \leq M''.
  \end{equation}
  Now approximate \( \mu \) by the sequence of Proposition \ref{prp:rec-seq}, denoting it by \( \mu_n \),
  \begin{equation}
    \label{eq:353}
    \mu_{n} := \widehat{\eta}_{n^{-1}}\cdot \mu + \left(1 - (\widehat{\eta}_{n^{-1}}\cdot \mu)(\mathbb{R}^{d})\right)\delta_{0},
  \end{equation}
 and then \( \mu_n \) by a Gaussian mollification with \( \eta_{k_n^{-1}} \) to obtain the diagonal sequence \( \mu_{n}' := \mu_n \ast \eta_{k_n^{-1}}\), so that we have convergence \( \widehat{\mathcal{E}}[\mu_n'] \rightarrow \widehat{\mathcal{E}}[\mu] \). We set \( \widehat{\nu}_n := (\mu_n' - \eta_{k_n^{-1}}) \).

Then, \( \widehat{\nu}_n \in \mathcal{S}(\mathbb{R}^d) \), the space of Schwartz functions: By the dampening of Proposition \ref{prp:rec-seq}, the underlying measures have finite moment of any order, yielding decay of \( \widehat{\nu_n}(x) \) of arbitrary polynomial order for \( \left| x \right| \to \infty \), and the mollification takes care of \( \widehat{\nu}_n \in C^\infty(\mathbb{R}^d) \). Furthermore, \( \nu_n = \widehat{\nu}^\vee_n \) and recall that the inverse Fourier transform can also be expressed as the integral of an exponential function. By expanding this exponential function in its power series, we see that for each \( n \),
  \begin{equation}
    \label{eq:209}
    \widehat{\nu}_n(\xi) = O(\left| \xi \right|) \quad \text{for } \xi \rightarrow 0,
  \end{equation}
  by the fact that \( \mu_n' \) and \( \delta_0 \) have the same mass, namely \( 1 \). Therefore, \( \widehat{\nu}_n \in \mathcal{S}_1(\mathbb{R}^d) \), see Definition \ref{def:restr-schwartz}, and we can apply Theorem \ref{thm:cond-ft-power}\ref{itm:cond-ft-power} to get
  \begin{align}
    \label{eq:210}
    \leadeq{\int_{\mathbb{R}^d} \left| x \right|^{r} \widehat{\nu}_n(x) \diff x}\\
    = {} & C \int_{\mathbb{R}^d} \left| \xi \right|^{-d-r} \nu_n(\xi) \diff \xi\\
    \leq {} & C \Bigg[ \int_{\left| \xi \right| \leq 1} \underbrace{\left| \xi \right|^{-d-r}}_{\mathclap{= \left| \xi \right|^{-\frac{d-q+2r}{2}} \left| \xi \right|^{-\frac{d + q}{2}}}} \left| \nu_n(\xi) \right| \diff \xi + \underbrace{\int_{\left| \xi \right| > 1} \left| \xi \right|^{-d-r} \left| \nu_n(\xi) \right| \diff \xi}_{\mathclap{\leq C < \infty}} \Bigg]\\
    \leq {} & C \Bigg[ \underbrace{\left( \int_{\left| \xi \right| \leq 1} \left| \xi \right|^{-d+(q - 2r)} \diff \xi \right)^{1/2}}_{\smash{< \infty}} \left( \int_{\mathbb{R}^d} \left| \xi \right|^{-d-q} \left| \nu_n \right|^2 \diff \xi \right)^{1/2} + 1 \Bigg]\\
    \leq {} & C \left[ \left( \int_{\mathbb{R}^d} \left| \xi \right|^{-d-q} \left| \nu_n \right|^2 \diff \xi \right)^{1/2} + 1 \right].
  \end{align}
  
  Now, we recall again the continuity of \( \widehat{\mathcal{E}} \) for \( \omega = \delta_0 \) along \( \mu_n \) (Proposition \ref{prp:rec-seq}) and its continuity \wrt the Gaussian mollification. The latter can be seen either by the \( 2 \)-Wasserstein-convergence of the mollification for \( n \) fixed or by using the Dominated Convergence Theorem together with the power series expansion of \( \exp \), similar to Lemma \ref{lem:11} below. In total, we see that
  \begin{equation}
    \label{eq:211}
    \lim_{n\rightarrow\infty} \int_{\mathbb{R}^d} \left| \xi \right|^{-d-q} \left| \nu_n \right|^2 \diff \xi = (2 \pi)^{-d} \int_{\mathbb{R}^d} \left| \xi \right|^{-d-q} \left| \widehat{\mu} - 1 \right|^2 \diff \xi \leq (2 \pi)^{-d} M'',
  \end{equation}
  while on the other hand we have
  \begin{align}
    \label{eq:212}
    \liminf_{n\rightarrow\infty} \int_{\mathbb{R}^d} \left| x \right|^{r} \widehat{\nu}_n(x) \diff x = {} & \liminf_{n\rightarrow\infty} \int_{\mathbb{R}^d} \left| x \right|^{r} \diff \mu_n(x) - \underbrace{\lim_{n\rightarrow\infty} \int_{\mathbb{R}^d} \left| x \right|^r \eta_{k_n^{-1}} (x) \diff x}_{=0}\\
    \geq {} & \int_{\mathbb{R}^d} \left| x \right|^{r} \diff \mu(x)
  \end{align}
  by Lemma \ref{lem:4}, concluding the proof.
\end{proof}

\section{Regularization by using the total variation}
\label{sec:tv-reg}

We would like to regularize the functional \( \widehat{\mathcal{E}} \) by an additional total variation term, for example to reduce the possible effect of  noise in the given datum \( \omega \). In particular, we expect the minimizer of the corresponding functional to be piecewise smoothed or even constant while any sharp edges in \( \omega \) should be preserved, as it is the case for the regularization of a quadratic fitting term, see for example \cite[Chapter 4]{10_Chambolle_ea_tv_intro}.

In the following, we begin by introducing this regularization and prove that for a vanishing regularization parameter, the minimizers of the regularizations converge to the minimizer of the original functional. One effect of the regularization will be to allow us to consider approximating or regularized minimizers of \( \widehat{\mathcal{E}}[\mu] \) in \( \mathcal{P}(\mathbb{R}^d) \cap BV(\mathbb{R}^d) \), where \( BV(\mathbb{R}^d) \) is the space of bounded variation functions. In the classical literature, one finds plenty of discrete approximations to \( BV \)-minimizers of functionals including total variation terms, by means of finite element type approximations of the functions, see for example \cite{12-Bartels-TotalVariation}. Here however, we propose an approximation which depends on the position of (freely moving) particles in \( \mathbb{R}^d \), which can be combined with the particle approximation of Section \ref{sec:part-appr}. To this end, in Section \ref{sec:discrete-version-tv-1}, we shall present two ways of embedding the Dirac masses which are associated to particles into \( L^1 \).

\subsection{Consistency of the regularization for the continuous functional}
\label{sec:cons-regul-cont}

For \(\mu \in \mathcal{P}(\mathbb{R}^{d})\), define
\begin{equation}
  \label{eq:135}
  \widehat{\mathcal{E}}^{\lambda}[\mu] :=
  \begin{cases}
    \widehat{\mathcal{E}}[\mu] + \lambda \left| D\mu \right|(\mathbb{R}^d), &\mu \in \mathcal{P}(\mathbb{R}^{d}) \cap BV(\mathbb{R}^{d}),\\
    \infty, &\text{otherwise},
  \end{cases}
\end{equation}
where \(D\mu\) denotes the distributional derivative of \(\mu\) (being a finite Radon-measure) and \(\left| D \mu \right|(\mathbb{R}^d)\) its total variation. We present two easy lemmata before proceeding to prove the \( \Gamma \)-convergence \( \widehat{\mathcal{E}}^\lambda \xrightarrow{\Gamma} \widehat{\mathcal{E}} \).

\begin{lemma}
  [Continuity of \(\widehat{\mathcal{E}}\) \wrt Gaussian mollification]
  \label{lem:11}
  Let \( \omega \in \mathcal{P}_2(\mathbb{R}^d) \), \(\mu \in \mathcal{P}(\mathbb{R}^{d})\) and set
  \begin{equation}
    \label{eq:136}
    \eta(x) := (2\pi)^{-d/2}\exp\left(-\frac{1}{2}\left| x \right|^{2}\right), \quad \eta_{\varepsilon}(x) := \varepsilon^{-d}\eta(\varepsilon^{-1}x), \quad x \in \mathbb{R}^{d}.
  \end{equation}
  Then,
  \begin{equation}
    \label{eq:137}
    \widehat{\mathcal{E}}[\eta_{\varepsilon}\ast \mu] \rightarrow \widehat{\mathcal{E}}[\mu], \quad \text{for } \varepsilon \rightarrow 0.
  \end{equation}
\end{lemma}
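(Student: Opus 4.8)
The plan is to pass to the limit $\varepsilon \to 0$ directly inside the Fourier representation \eqref{eq:34}. Since $\eta_\varepsilon$ is a probability density, $\eta_\varepsilon \ast \mu \in \mathcal{P}(\mathbb{R}^d)$, and its Fourier–Stieltjes transform factorizes as $\widehat{\eta_\varepsilon \ast \mu}(\xi) = g_\varepsilon(\xi)\,\widehat{\mu}(\xi)$ with $g_\varepsilon(\xi) := \exp(-\varepsilon^2 |\xi|^2 / 2)$, the Fourier transform of $\eta_\varepsilon$. Hence
\[
  \widehat{\mathcal{E}}[\eta_\varepsilon \ast \mu]
  = D_q \int_{\mathbb{R}^d} \bigl| g_\varepsilon(\xi)\,\widehat{\mu}(\xi) - \widehat{\omega}(\xi) \bigr|^2 \, |\xi|^{-d-q} \dd \xi .
\]
As $0 \le g_\varepsilon \le 1$ and $g_\varepsilon(\xi) \to 1$ for every $\xi$, the integrand converges pointwise to $| \widehat{\mu}(\xi) - \widehat{\omega}(\xi) |^2 |\xi|^{-d-q}$, the integrand defining $\widehat{\mathcal{E}}[\mu]/D_q$. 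If $\widehat{\mathcal{E}}[\mu] = \infty$, Fatou's Lemma applied to these non-negative integrands already gives $\liminf_{\varepsilon \to 0} \widehat{\mathcal{E}}[\eta_\varepsilon \ast \mu] \ge \widehat{\mathcal{E}}[\mu] = \infty$, so the claim is immediate in that case.

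It remains to treat $\widehat{\mathcal{E}}[\mu] < \infty$, where I would apply the Dominated Convergence Theorem. Restricting to $\varepsilon \in (0,1]$ (enough, since $\varepsilon \to 0$), I split $g_\varepsilon \widehat{\mu} - \widehat{\omega} = (g_\varepsilon - 1)\widehat{\mu} + (\widehat{\mu} - \widehat{\omega})$ and use $|\widehat{\mu}| \le 1$ together with the elementary estimate $0 \le 1 - g_\varepsilon(\xi) \le \min\{1, \varepsilon^2 |\xi|^2 / 2\} \le \min\{1, |\xi|^2/2\}$ to obtain the $\varepsilon$-independent bound
\[
  \bigl| g_\varepsilon(\xi)\,\widehat{\mu}(\xi) - \widehat{\omega}(\xi) \bigr|^2 |\xi|^{-d-q}
  \le 2\min\{1, |\xi|^2/2\}^2 |\xi|^{-d-q} + 2\bigl| \widehat{\mu}(\xi) - \widehat{\omega}(\xi) \bigr|^2 |\xi|^{-d-q} =: h(\xi).
\]
The second term of $h$ is integrable because it is $2 D_q^{-1}$ times the integrand of the finite quantity $\widehat{\mathcal{E}}[\mu]$; the first is integrable since near the origin it is bounded by a constant multiple of $|\xi|^{4-d-q}$ with $4 - q > 0$ (recall $q < 2$), and away from the origin by $|\xi|^{-d-q}$, both integrable on the respective regions. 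Thus $h \in L^1(\mathbb{R}^d)$ and does not depend on $\varepsilon$, so dominated convergence yields $\widehat{\mathcal{E}}[\eta_\varepsilon \ast \mu] \to \widehat{\mathcal{E}}[\mu]$.

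The only point that requires care is the $\varepsilon$-uniform control of the integrand near $\xi = 0$, where the weight $|\xi|^{-d-q}$ is singular; this is exactly compensated by the quadratic vanishing $1 - g_\varepsilon(\xi) = O(|\xi|^2)$, which defeats the singularity because $q < 2 < 4$. Beyond this routine bookkeeping I do not expect any genuine obstacle.
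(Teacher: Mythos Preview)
Your proof is correct and follows essentially the same approach as the paper: both use the Fourier representation, treat the case $\widehat{\mathcal{E}}[\mu]=\infty$ via Fatou/lower semi-continuity, and in the finite case apply dominated convergence after exploiting the bound $1-g_\varepsilon(\xi)=O(|\xi|^2)$ to defeat the singularity of $|\xi|^{-d-q}$ at the origin (which works precisely because $q<2$). The only cosmetic difference is that the paper estimates the difference $|\widehat{\mathcal{E}}[\eta_\varepsilon\ast\mu]-\widehat{\mathcal{E}}[\mu]|$ via $|a^2-b^2|\le|a-b|(|a|+|b|)$, arriving at the integrand $C(1-g_\varepsilon(\xi))|\xi|^{-d-q}$ and showing this integral tends to $0$, whereas you dominate the integrand $|g_\varepsilon\widehat{\mu}-\widehat{\omega}|^2|\xi|^{-d-q}$ itself via the splitting $(g_\varepsilon-1)\widehat{\mu}+(\widehat{\mu}-\widehat{\omega})$ and $|a+b|^2\le 2|a|^2+2|b|^2$; the resulting dominating functions and the underlying idea are the same.
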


\begin{proof}
  If \(\widehat{\mathcal{E}}[\mu] = \infty\), then the claim is true by the lower semi-continuity of \(\widehat{\mathcal{E}}\) together with the fact that \(\eta_{\varepsilon} \ast \mu \rightarrow \mu\) narrowly.

  If \(\widehat{\mathcal{E}}[\mu] < \infty\), we can estimate the difference \(\left|\widehat{\mathcal{E}}[\eta_{\varepsilon}\ast \mu]-\widehat{\mathcal{E}}[\mu]\right|\) (which is well defined, but for now may be \(\infty\)) by using
  \begin{equation}
    \label{eq:138}
    \left| a^{2} - b^{2} \right| \leq \left| a - b \right| \cdot \big( \left| a \right| + \left| b \right|\big), \quad a,b \in \mathbb{\mathbb{C}}
  \end{equation}
  and
  \begin{equation}
    \label{eq:139}
    \widehat{\eta_{\varepsilon}\ast\mu}(\xi) = \exp\left( -\frac{\varepsilon^2}{2} \left| \xi \right|^{2} \right) \widehat\mu(\xi)
  \end{equation}
  as
  \begin{align}
    \label{eq:140}
    \leadeq{\left|\widehat{\mathcal{E}}[\eta_{\varepsilon}\ast \mu]-\widehat{\mathcal{E}}[\mu]\right|} \\
    \leq {} & D_{q}\int_{\mathbb{R}^{d}} \left| \left| \widehat\eta_{\varepsilon}(\xi) \widehat\mu(\xi) - \widehat\omega(\xi) \right|^{2} - \left| \widehat\mu(\xi) - \widehat\omega(\xi) \right|^{2}\right| \left| \xi \right|^{-d-q} \diff \xi\\
    \leq {} & D_{q}\int_{\mathbb{R}^{d}} \underbrace{\left(\left| \widehat\eta_{\varepsilon}(\xi) \widehat\mu(\xi) - \widehat\omega(\xi) \right| + \left| \widehat\mu(\xi) - \widehat\omega(\xi) \right| \right)}_{\leq 4} \underbrace{\left| \widehat\eta_{\varepsilon}(\xi) \widehat\mu(\xi) - \widehat\mu(\xi) \right|}_{\mathclap{=\left( 1 - \exp\left( -(\varepsilon^2/2)\left| \xi \right|^{2} \right) \right) \widehat\mu(\xi)}} \left| \xi \right|^{-d-q} \diff \xi\\
    \leq {} & C\int_{\mathbb{R}^{d}}\left( 1 - \exp\left( - \frac{\varepsilon^2}{2}\left| \xi \right|^{2} \right) \right) \left| \xi \right|^{-d-q}\diff \xi,\label{eq:141}
  \end{align}
  which converges to \(0\) by the Dominated Convergence Theorem: On the one hand we can estimate
  \begin{equation}
    \label{eq:142}
    \exp\left( -\frac{\varepsilon^2}{2}\left| \xi \right|^{2} \right) \geq 0, \quad \xi \in \mathbb{R}^{d},
  \end{equation}
  yielding a dominating function for the integrand in \eqref{eq:141} for \(\xi\) bounded away from \(0\) because of the integrability of \( \xi \mapsto \left| \xi \right|^{-d-q} \) there. On the other hand
  \begin{align}
    \label{eq:143}
    1 - \exp\left( -\frac{\varepsilon^2}{2}\left| \xi \right|^{2} \right) {} = & -\sum_{n = 1}^{\infty} \frac{1}{n!} \left(-\frac{\varepsilon^2}{2}\left| \xi \right|^{2}\right)^{n}\\
    = {} &\frac{\varepsilon^2}{2}\left| \xi \right|^{2} \sum_{n = 0}^{\infty} \frac{1}{(n+1)!} \left(-\frac{\varepsilon^2}{2}\left| \xi \right|^{2}\right)^{n},
  \end{align}
  where the sum on the right is bounded for \(\varepsilon\rightarrow 0\) as a convergent power-series, which combined with \(q < 2\) renders the integrand in \eqref{eq:141} dominated for \(\xi\) near \(0\) as well.
\end{proof}

  \begin{lemma}[Product formula for \( BV(\mathbb{R}^d) \)]
  \label{lem:14}
  Let \( f \in BV(\mathbb{R}^d) \) and \( \varphi \in C^\infty_c(\mathbb{R}^d) \). Then the distributional derivative of \( f\varphi \) is
  \begin{equation}
    \label{eq:183}
    D(f\varphi) = \varphi Df + f \nabla \varphi.
  \end{equation}
\end{lemma}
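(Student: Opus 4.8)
The plan is to verify the identity directly by testing against an arbitrary $\psi \in C^\infty_c(\mathbb{R}^d)$, componentwise, reducing everything to the classical Leibniz rule for smooth functions together with the defining property of the distributional derivative $Df$ of a $BV$ function. Fix an index $i \in \{1,\dots,d\}$; what has to be shown is that for every $\psi \in C^\infty_c(\mathbb{R}^d)$,
\[
  \int_{\mathbb{R}^d} f\varphi\, \partial_i \psi \diff x = -\int_{\mathbb{R}^d} \varphi\, \psi \diff D_i f - \int_{\mathbb{R}^d} f\, \psi\, \partial_i \varphi \diff x,
\]
since the right-hand side is exactly $-\int_{\mathbb{R}^d} \psi \diff\!\big(\varphi D_i f\big) - \int_{\mathbb{R}^d}\psi\,(f\,\partial_i\varphi)\diff x$, which identifies $D_i(f\varphi)$ with the measure $\varphi D_i f$ plus the absolutely continuous measure of density $f\,\partial_i\varphi$.

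Before that I would dispose of the (routine) bookkeeping that makes all objects meaningful: since $f$ is locally integrable and $\varphi$ is bounded with compact support, $f\varphi \in L^1(\mathbb{R}^d)$ and $f\,\nabla\varphi \in L^1(\mathbb{R}^d;\mathbb{R}^d)$; and $\varphi\, Df$ is a finite $\mathbb{R}^d$-valued Radon measure because $\varphi$ is bounded and $\left| Df \right|(\mathbb{R}^d) < \infty$. In particular, once the displayed identity is established it also shows $f\varphi \in BV(\mathbb{R}^d)$, so the statement is self-contained.

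The core step is then immediate. For $\psi \in C^\infty_c(\mathbb{R}^d)$ the product $\varphi\psi$ again lies in $C^\infty_c(\mathbb{R}^d)$ and is therefore an admissible test function for $Df$, while the classical product rule for smooth functions gives $\varphi\, \partial_i \psi = \partial_i(\varphi\psi) - \psi\, \partial_i \varphi$ pointwise. Multiplying by $f$, integrating, splitting the resulting integral, and invoking the definition of $D_i f$ with test function $\varphi\psi$, one obtains
\[
  \int_{\mathbb{R}^d} f\varphi\, \partial_i \psi \diff x = \int_{\mathbb{R}^d} f\, \partial_i(\varphi\psi) \diff x - \int_{\mathbb{R}^d} f\, \psi\, \partial_i\varphi \diff x = -\int_{\mathbb{R}^d} \varphi\,\psi \diff D_i f - \int_{\mathbb{R}^d} f\, \psi\, \partial_i\varphi \diff x,
\]
which is precisely the desired identity; since $\psi$ and $i$ were arbitrary, $D(f\varphi) = \varphi Df + f\nabla\varphi$ follows. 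There is essentially no obstacle here: the only two points meriting a line of justification are that $\varphi\psi$ is genuinely a $C^\infty_c$ test function (a product of compactly supported smooth functions is again such) and that the integrability recorded above lets one split the middle integral and pass to the measure formulation without any convergence issue.
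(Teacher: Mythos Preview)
Your proof is correct and is essentially the same as the paper's: both test against an arbitrary $C^\infty_c$ function, use the classical Leibniz rule to rewrite $f\varphi\,\partial_i\psi$ as $f\,\partial_i(\varphi\psi)-f\psi\,\partial_i\varphi$, and then invoke the definition of $Df$ with the admissible test function $\varphi\psi\in C^\infty_c(\mathbb{R}^d)$. The only difference is cosmetic---the paper writes it with the full gradient in one line and omits the integrability bookkeeping you spell out.
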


\begin{proof}
  Let \( g \in C^\infty_c(\mathbb{R}^d) \). Then,
  \begin{align}
    \label{eq:184}
    \int_{\mathbb{R}^d} f(x) \varphi(x) \nabla g(x) \diff x = {} & \int_{\mathbb{R}^d} f(x) \nabla (\underbrace{\varphi g}_{\mathclap{\in C^\infty_c(\mathbb{R}^d)}})(x) \diff x - \int_{\mathbb{R}^d} f(x) \nabla \varphi(x) g(x) \diff x\\
    = {} & -\int_{\mathbb{R}^d} g(x) \varphi(x) \diff Df(x) - \int_{\mathbb{R}^d} g(x) f(x) \nabla \varphi(x) \diff x,
  \end{align}
  proving that in a distributional sense, \( D(f\varphi) = \varphi Df + f \nabla \varphi \).
\end{proof}

\begin{proposition}
  [Consistency]
  \label{prp:consist-cont-bv}
  The functionals \((\widehat{\mathcal{E}}^{\lambda})_{N \in \mathbb{N}}\) are equi-coercive and
  \begin{equation}
    \label{eq:144}
    \widehat{\mathcal{E}}^{\lambda} \xrightarrow{\Gamma} \widehat{\mathcal{E}} \quad \text{for } \lambda\rightarrow 0
  \end{equation}
  with respect to the narrow topology. In particular,
  \begin{equation}
    \label{eq:145}
    \argmin_{\mu \in \mathcal{P}(\mathbb{R}^{d})} \widehat{\mathcal{E}}^{\lambda}[\mu] \ni \mu^{\lambda} \rightarrow \omega = \argmin_{\mu \in \mathcal{P}(\mathbb{R}^{d})} \widehat{\mathcal{E}}[\mu], \quad \lambda \to 0,
  \end{equation}
  for each choice of minimizers \(\mu^{\lambda}\).
\end{proposition}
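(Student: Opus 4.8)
The plan is to check equi-coercivity together with the two halves of Definition \ref{def:gamma-conv}, and then to read off the convergence of minimizers from Lemma \ref{lem:8}, entirely in parallel with the proof of Theorem \ref{thm:cons-part-appr}. Equi-coercivity is immediate: since \( \lambda \left| D\mu \right|(\mathbb{R}^d) \geq 0 \) and \( \widehat{\mathcal{E}}^{\lambda}[\mu] = \infty \geq \widehat{\mathcal{E}}[\mu] \) off \( BV(\mathbb{R}^d) \), one has \( \widehat{\mathcal{E}}^{\lambda} \geq \widehat{\mathcal{E}} \) for every \( \lambda > 0 \), so the sub-levels of the \( \widehat{\mathcal{E}}^{\lambda} \) lie inside the narrowly compact sub-levels of \( \widehat{\mathcal{E}} \) furnished by Proposition \ref{prp:compctness-sublvls}. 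The \( \liminf \)-condition is just as short: if \( \mu_{\lambda} \rightarrow \mu \) narrowly, then \( \liminf_{\lambda \to 0} \widehat{\mathcal{E}}^{\lambda}[\mu_{\lambda}] \geq \liminf_{\lambda \to 0} \widehat{\mathcal{E}}[\mu_{\lambda}] \geq \widehat{\mathcal{E}}[\mu] \) by the narrow lower semi-continuity of \( \widehat{\mathcal{E}} \).

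For the \( \limsup \)-condition I would fix \( \mu \in \mathcal{P}(\mathbb{R}^d) \), assume \( \widehat{\mathcal{E}}[\mu] < \infty \) (the case \( \widehat{\mathcal{E}}[\mu] = \infty \) being vacuous), and build the recovery sequence by Gaussian \emph{mollification}: a narrow recovery sequence for \( \widehat{\mathcal{E}} \) (such as the Gaussian-dampened measures of Proposition \ref{prp:rec-seq}) typically fails to consist of \( BV \)-functions at all. With \( \eta \), \( \eta_{\varepsilon} \) as in Lemma \ref{lem:11}, put \( \mu^{\varepsilon} := \eta_{\varepsilon} \ast \mu \in \mathcal{P}(\mathbb{R}^d) \cap W^{1,1}(\mathbb{R}^d) \subseteq \mathcal{P}(\mathbb{R}^d) \cap BV(\mathbb{R}^d) \). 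Young's inequality and the scaling of \( \eta_{\varepsilon} \) give
\[
  \left| D\mu^{\varepsilon} \right|(\mathbb{R}^d) = \left\| (\nabla\eta_{\varepsilon}) \ast \mu \right\|_{L^1(\mathbb{R}^d)} \leq \left\| \nabla\eta_{\varepsilon} \right\|_{L^1(\mathbb{R}^d)} = \varepsilon^{-1}\left\| \nabla\eta \right\|_{L^1(\mathbb{R}^d)},
\]
while \( \mu^{\varepsilon} \rightarrow \mu \) narrowly as \( \varepsilon \to 0 \) (approximate identity) and \( \widehat{\mathcal{E}}[\mu^{\varepsilon}] \rightarrow \widehat{\mathcal{E}}[\mu] \) by Lemma \ref{lem:11}. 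Then, for an arbitrary sequence \( \lambda_n \to 0 \), I would couple the mollification scale to \( \lambda \) slowly enough to absorb the (possibly diverging) total variation, concretely \( \mu_n := \mu^{\sqrt{\lambda_n}} \), so that \( \mu_n \rightarrow \mu \) narrowly and
\[
  \widehat{\mathcal{E}}^{\lambda_n}[\mu_n] = \widehat{\mathcal{E}}[\mu^{\sqrt{\lambda_n}}] + \lambda_n \left| D\mu^{\sqrt{\lambda_n}} \right|(\mathbb{R}^d) \leq \widehat{\mathcal{E}}[\mu^{\sqrt{\lambda_n}}] + \sqrt{\lambda_n}\,\left\| \nabla\eta \right\|_{L^1(\mathbb{R}^d)} \xrightarrow{n \to \infty} \widehat{\mathcal{E}}[\mu],
\]
hence \( \limsup_{n} \widehat{\mathcal{E}}^{\lambda_n}[\mu_n] \leq \widehat{\mathcal{E}}[\mu] \), as required.

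To conclude with the minimizers I first need their existence. Each \( \widehat{\mathcal{E}}^{\lambda} \) is coercive by the equi-coercivity just established, and narrowly lower semi-continuous, being the sum of \( \widehat{\mathcal{E}} \) (narrowly lower semi-continuous by Proposition \ref{prp:compctness-sublvls}) and of \( \mu \mapsto \left| D\mu \right|(\mathbb{R}^d) \); the latter is narrowly lower semi-continuous on \( \mathcal{P}(\mathbb{R}^d) \), because along a narrowly convergent sequence of probability measures with bounded total variation the \( BV \)-compactness theorem lets one extract an \( L^1_{loc} \)-limit, necessarily equal to the narrow limit, so that lower semi-continuity of the total variation under \( L^1_{loc} \)-convergence applies, while a subsequence along which the total variation diverges makes the inequality trivial. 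The direct method then yields minimizers \( \mu^{\lambda} \in \argmin \widehat{\mathcal{E}}^{\lambda} \). Applying Lemma \ref{lem:8} to \( (\widehat{\mathcal{E}}^{\lambda_n})_n \) for an arbitrary sequence \( \lambda_n \to 0 \) gives a subsequence of \( (\mu^{\lambda_n})_n \) converging narrowly to a minimizer of \( \widehat{\mathcal{E}} \), which by Corollary \ref{cor:lower-semi-cont} is forced to be \( \omega \); since every subsequence admits a further subsequence with this same limit, the whole family converges, \( \mu^{\lambda} \rightarrow \omega \) for \( \lambda \to 0 \).

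I expect the recovery sequence to be the only genuinely non-routine step. The obstruction is that a narrow recovery sequence for \( \widehat{\mathcal{E}} \) typically fails to lie in \( BV(\mathbb{R}^d) \), so one must first regularize into \( BV(\mathbb{R}^d) \), and then the total-variation seminorm of the regularization blows up like \( \varepsilon^{-1} \); the resolution is exactly that any coupling \( \varepsilon(\lambda) \to 0 \) with \( \lambda\,\varepsilon(\lambda)^{-1} \to 0 \) (here \( \varepsilon = \sqrt{\lambda} \)) simultaneously keeps the narrow and the energy convergence and kills the term \( \lambda \left| D\,\cdot\, \right|(\mathbb{R}^d) \). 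A secondary technical point is the narrow lower semi-continuity of the total-variation functional, needed only for the existence of the \( \mu^{\lambda} \).
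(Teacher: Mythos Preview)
Your proof is correct and follows essentially the same route as the paper: equi-coercivity and the \(\liminf\)-condition from \(\widehat{\mathcal{E}}^{\lambda}\geq\widehat{\mathcal{E}}\) and Proposition~\ref{prp:compctness-sublvls}, the recovery sequence via Gaussian mollification \(\mu^{\varepsilon}=\eta_{\varepsilon}\ast\mu\) together with Lemma~\ref{lem:11}, and convergence of minimizers via Lemma~\ref{lem:8} and Corollary~\ref{cor:lower-semi-cont}. Two small remarks: your scaling \(\|\nabla\eta_{\varepsilon}\|_{L^{1}}=\varepsilon^{-1}\|\nabla\eta\|_{L^{1}}\) is the correct one (the paper writes \(\varepsilon^{-d}\) and accordingly takes \(\varepsilon=\lambda^{1/(d+1)}\), which is harmless but suboptimal), and your existence argument for \(\mu^{\lambda}\) via narrow lower semi-continuity of \(\mu\mapsto|D\mu|(\mathbb{R}^{d})\) is more direct than the paper's \(L^{1}\)-compactness-with-cutoffs argument.
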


\begin{proof}
  Firstly, observe that equi-coercivity follows from the narrow compactness of the sub-levels of \(\widehat{\mathcal{E}}\) (Proposition \ref{prp:compctness-sublvls}) and that the \( \liminf \)-condition is a consequence of the lower semi-continuity of \(\widehat{\mathcal{E}}\) as in the proof of Theorem \ref{thm:cons-part-appr}.

  \emph{Ad existence of minimizers:} We again want to apply the direct method of the calculus of variations.

  Let \( (\mu_k)_k \) be a minimizing sequence for \( \widehat{\mathcal{E}}^\lambda \), so that the \( \mu_k \) are all contained in a common sub-level of the functional. Now, for a given \( \lambda \), the sub-levels of \( \widehat{\mathcal{E}}^\lambda \) are relatively compact in \( L^1(\mathbb{R}^d) \), which can be seen by combining the compactness of the sub-levels of the total variation in \( L^1_{\text{loc}}(\mathbb{R}^d) \) with the tightness gained by \( \widehat{\mathcal{E}} \): If \( \widehat{\mathcal{E}}^\lambda[\mu_k] \leq M < \infty \), we can consider \( (\theta_l \mu_k)_k \) for a smooth cut-off function \( \theta_l \) having its support in \( [-l^{-1}, l^{-1}] \). By Lemma \ref{lem:14}, we have
  \begin{equation}
    \label{eq:186}
    D\left(\theta_l \mu_k\right) = D \theta_l \mu_k + \theta_l D \mu_k
  \end{equation}
  and therefore
  \begin{align}
    \label{eq:187}
    \left| D \left( \theta_l \mu_k \right) \right| (\mathbb{R}^d) \leq {} &  \int_{\mathbb{R}^d} \mu_k(x) \left| D\theta_l (x) \right| \diff x + \int_{\mathbb{R}^d}  \theta_l (x) \diff \left| D \mu_k \right| (x)\\
    \leq {} & C_l + \left| D\mu_k \right|(\mathbb{R}^d),
  \end{align}
  so that for each \( l \), by the compactness of the sub-levels of the total variation in \( L^1_{\text{loc}} \), see \cite[Chapter 5.2, Theorem 4]{92-Evans-fine-properties}, we can select an \( L^{1} \)-convergent subsequence \( (\theta_l \mu_{k(l,i)})_i \). Then, we can choose a diagonal sequence \( (\mu_{k(l,i(l))})_l \) (for which we just write \( (\mu_{k(l)}) \)) such that it is a Cauchy sequence and therefore convergent by the completeness of \( L^1 \):
  \begin{align}
    \label{eq:188}
    \left\| \mu_{k(m)} - \mu_{k(l)} \right\|_{L^1} \leq {} & \left\| (1-\theta_{l}) (\mu_{k(m)} - \mu_{k(l)}) \right\|_{L^1} + \left\| \theta_{l} \mu_{k(m)} - \theta_{l} \mu_{k(l)} \right\|_{L^1}\\
    =: {} &d_1(m,l) + d_2(m,l),
  \end{align}
  where the \( i(l) \) can be chosen such that \(d_2 \rightarrow 0\) for \( \min \left\{ m,l \right\} \to \infty \) by the selection of the \(\mu_{k(l,i)} \) as convergent sequences and the fact that \( \theta_l(x) \) is increasing for all \( x \in \mathbb{R}^d \), and \( d_1 \rightarrow 0\) for \( \min \left\{ m,l \right\} \to \infty \) because of the tightness of the sub-levels of \( \widehat{\mathcal{E}} \).

  The lower semi-continuity of \( \widehat{\mathcal{E}}^\lambda \) follows from the lower semi-continuity of the total variation with respect to \( L^{1} \)-convergence and the lower semi-continuity of \( \widehat{\mathcal{E}} \) with respect to narrow convergence (which by Lemma \ref{lem:1} is weaker than \( L^1 \)-convergence). Summarizing, we have compactness and lower semi-continuity, giving us that \( (\mu_k)_k \) has a limit point which is a minimizer.

  \emph{Ad \( \limsup \)-condition:} Let \(\mu \in \mathcal{P}(\mathbb{R}^{d})\) and write \(\mu_{\varepsilon} := \widehat{\eta}_{\varepsilon}\ast\mu\) for the mollification of Lemma \ref{lem:11}. Now, by Fubini’s Theorem,
  \begin{align}
    \label{eq:146}
    \left| D (\widehat{\eta}_{\varepsilon}\ast \mu) \right|(\mathbb{R}^d) = {} &\int_{\mathbb{R}^{d}} \left| \left(\nabla \widehat{\eta}_{\varepsilon}\ast \mu\right)(x) \right| \diff x \\
    \leq {} & \left\| \nabla \widehat{\eta}_{\varepsilon} \right\|_{L^{1}(\mathbb{R}^{d})} \mu(\mathbb{R}^{d})\\
    = {} & \varepsilon^{-d} \left\| \nabla\widehat{\eta} \right\|_{L^{1}(\mathbb{R}^{d})},
  \end{align}
  so if we choose \(\varepsilon(\lambda)\) such that \(\lambda = o(\varepsilon^{d})\), for example \(\varepsilon(\lambda) := \lambda^{1/(d+1)}\), then
  \begin{equation}
    \label{eq:147}
    \lambda \left| D\mu_{\varepsilon(\lambda)} \right|(\mathbb{R}^d) \rightarrow 0, \quad \text{for } \lambda \rightarrow 0.
  \end{equation}
  On the other hand, \(\widehat{\mathcal{E}}[\mu_{\varepsilon(\lambda)}] \rightarrow \widehat{\mathcal{E}}[\mu]\) by Lemma \ref{lem:11}, yielding the required convergence \(\widehat{\mathcal{E}}^{\lambda}[\mu_{\varepsilon(\lambda)}] \rightarrow \widehat{\mathcal{E}}[\mu]\).

  The convergence of the minimizers then follows by Lemma \ref{lem:8}
\end{proof}

\subsection{Discrete versions of the TV regularization}
\label{sec:discrete-version-tv-1}

As one motivation for the functional \( \mathcal{E} \) was to compute its particle minimizers, we would also like to consider a discretized version of the total variation regularization, for example to be able to compute the minimizers of the functional directly on the level of the point approximations. We propose two techniques for this discretization: 

The first technique is well known in the non-parametric estimation of \( L^1 \) densities and consists of replacing each point with a small ``bump'' instead of interpreting it as a point measure. In order to get the desired convergence properties, we have to be careful when choosing the corresponding scaling of the bump. For an introduction to this topic, see \cite[Chapter 3.1]{85-Devroye-Gyoerfi-non-param-den-est}.

The second technique replaces the Dirac deltas by indicator functions which extend from the position of one point to the next one. Unfortunately, this poses certain difficulties in generalizing it to higher dimensions, as the set on which we extend would have to be replaced by something like a Voronoi cell, an object well-known in the theory of optimal quantization of measures, see for example \cite{00_Graf_Luschgy_quantization}.

Note that approximating the total variation regularization in this way in general unfortunately will not be computationally efficient due to the lack of the convexity of the regularization functional (see also Section \ref{sec:numer-exper} for some numerical examples). However, in the context of attraction-repulsion functionals, it is worth noting that the effect of the additional particle total variation term can again be interpreted as an attractive-repulsive-term. See Figure \ref{fig:discrete-tv} for an example in the case of kernel density estimation with a piecewise linear estimation kernel, where it can be seen that each point is repulsive at a short range, attractive at a medium range, and at a long range does not factor into the total variation any more.

\begin{figure}[t]
  \centering
  \subfigure[Linear \( K_1(x) \) and corresponding \( K_1'(x) \) ]{\includegraphics[]{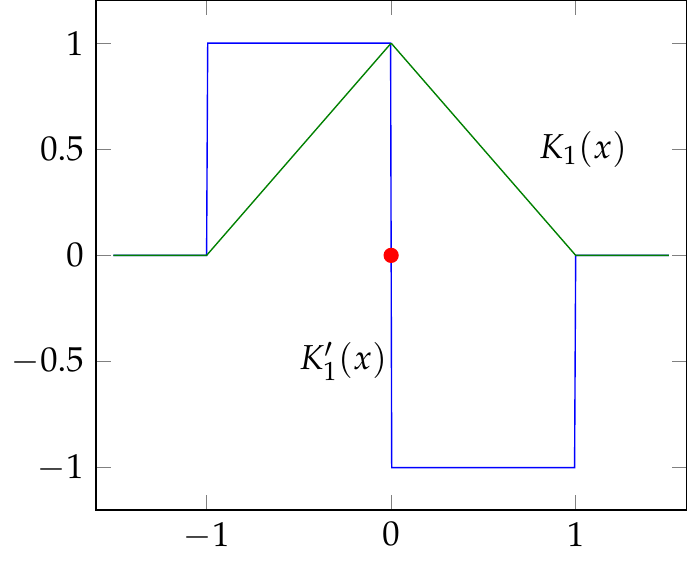}}
  \hspace{0.5cm}
  \subfigure[Discrete total variation for \( x_1 = 0 \) fixed (red), \( x_2 \) free, \( h = 0.75 \)]{\includegraphics[]{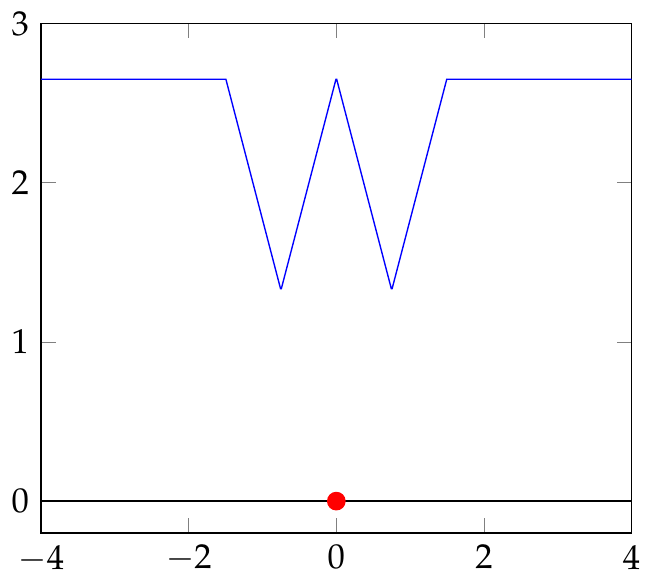}}   \caption{Example for the discrete total variation functional}
  \label{fig:discrete-tv}
\end{figure}

\subsubsection{Discretization by kernel estimators}
\label{sec:discr-kern-estim}

\begin{definition}
  [Discrete total variation via kernel estimate]
  \label{def:dens-est}
  For a \(\mu_{N} = \frac{1}{N}\sum_{i=1}^{N} \delta_{x_{i}} \in \mathcal{P}^{N}(\mathbb{R}^{d})\), a \emph{scale parameter} \(h = h(N)\) and a \emph{density estimation kernel} \( K \in W^{1,1}(\mathbb{R}^{d}) \) such that \( \nabla K \in BV(\mathbb{R}^d, \mathbb{R}^d) \), as well as
  \begin{equation}
    \label{eq:148}
    K \geq 0, \quad \int_{\mathbb{R}^{d}}K(x) \diff x = 1,
  \end{equation}
  we set
  \begin{equation}
    \label{eq:149}
    K_{h}(x) := \frac{1}{h^{d}}K\left(\frac{x}{h}\right)
  \end{equation}
  and define the corresponding \emph{\(L^{1}\)-density estimator} by
  \begin{equation}
    \label{eq:150}
    Q_h[\mu_N](x) := K_{h}\ast\mu_{N}(x) = \frac{1}{N h^{d}}\sum_{i=1}^{N} K\left(\frac{x-x_{i}}{h}\right),
  \end{equation}
  where the definition has to be understood for almost every \( x \). Then, we can introduce a discrete version of the regularization in \eqref{eq:135} as
  \begin{equation}
    \label{eq:151}
                        \widehat{\mathcal{E}}_{N}^{\lambda}[\mu_N] := \widehat{\mathcal{E}}[\mu_N] + \lambda \left| DQ_{h(N)}[\mu_N]\right|(\mathbb{R}^d), \quad \mu_N \in \mathcal{P}^N(\mathbb{R}^d).
  \end{equation}
            \end{definition}

We want to prove consistency of this approximation in terms of \( \Gamma \)-convergence of the functionals \( \widehat{\mathcal{E}}_N^\lambda \) to \( \widehat{\mathcal{E}}^\lambda \). For a survey of the consistency of kernel estimators in the probabilistic case under various sets of assumptions, see \cite{12-Wied-Weissbach-Kernel-Estimators}. Here however, we want to give a proof using deterministic and explicitly constructed point approximations.

\begin{figure}[t]
  \centering
  \subfigure[Notation of Definition \ref{def:good-tiling} for \( N = 5 \) ]{\includegraphics{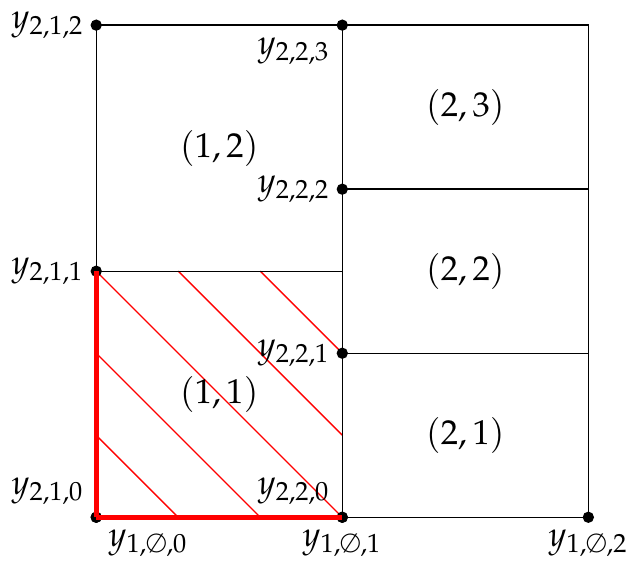}}
  \hspace{0\textwidth}
  \subfigure[{Tiling as in Example \ref{exp:constr-2d} for a uniform measure on two squares in \( [0,1]^2 \)}]{\includegraphics{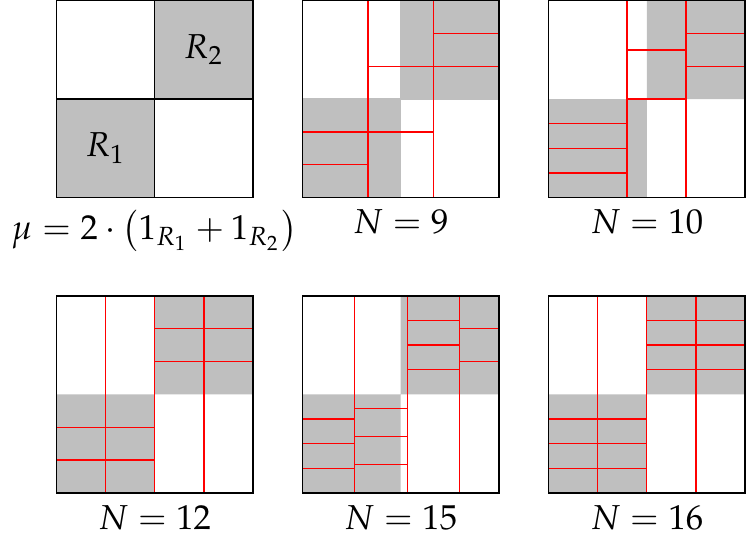}}
  \caption{Illustration of the tiling}
  \label{fig:tiling}
\end{figure}

In order to find a recovery sequence for the family of functionals \eqref{eq:151}, we have to find point approximations to a given measure with sufficiently good spatial approximation properties. For this, we suggest using a generalization of the quantile construction to higher dimensions. Let us state the properties we expect from such an approximation:

\begin{definition}
  [Tiling associated to a measure]
  \label{def:good-tiling}
  Let \( \mu \in \mathcal{P}_c(\mathbb{R}^d)\cap L^1(\mathbb{R}^d) \), where \( \mathcal{P}_c(\mathbb{R}^d) \) denotes the space of compactly supported probability measures, such that \( \operatorname{supp}(\mu) \subseteq [-R_\mu,R_\mu]^d \) and let \( N \in \mathbb{N} \). Set \( \widetilde{n} := \lfloor N^{1/d} \rfloor \). A good tiling (for our purposes) will be composed of an index set \( I \) and an associated tiling \( \left( T_i \right)_{i \in I} \) such that (see Figure \ref{fig:tiling} for an example of the notation):

  \begin{enumerate}
  \item \( I \) has \( N \) elements, \( \# I = N \), and in each direction, we have at least \( \widetilde{n} \) different indices, \ie,
    \begin{equation}
      \label{eq:155}
      \left\{ 1,\ldots,\widetilde{n} \right\}^d \subseteq I \subseteq \left\{ 1,\ldots,\widetilde{n}+1 \right\}^d.\\
    \end{equation}
    Additionally, for all \( k \in {1,\ldots,d} \) and \( (i_1,\ldots,i_{k-1},i_k,\ldots,i_d) \in I \),
    \begin{equation}
      \label{eq:156}
      n_{k,i_1,\ldots,i_{k-1}} := \# \left\{ j_k : j \in I, \, (j_1,\ldots,j_{k-1}) = (i_1,\ldots,i_{k-1}) \right\} \in \left\{ \widetilde{n},\widetilde{n}+1 \right\}.
    \end{equation}
  \item There is a family of ordered real numbers only depending on the first \( k \) coordinates,
    \begin{gather}
      \label{eq:157}
      y_{k,i_1,\ldots,i_k} \in [-R_\mu,R_\mu], \quad y_{k,i_1,\ldots,i_k-1} < y_{k,i_1,\ldots,i_k},\\
      \text{for all } k \in \left\{1,\ldots,d\right\} \text{ and } (i_1,\ldots,i_k,i_{k+1},\ldots,i_d) \in I, \nonumber
    \end{gather}
    with fixed end points,
    \begin{equation}
      \label{eq:372}
      y_{k,i_1,\ldots,i_{k-1},0}=-R_{\mu}, \quad y_{k,i_1,\ldots,i_{k-1},n_{k,i_1,\ldots,i_{k-1}}} = R_{\mu},
    \end{equation}
    associated tiles
    \begin{equation}
      \label{eq:373}
      T_i := \bigtimes_{k=1}^{d} \left[y_{k,i_1,\ldots,(i_k-1)},y_{k,i_1,\ldots,i_k}\right],
    \end{equation}
    and such that the mass of \( \mu \) is equal in each of them,
    \begin{equation}
      \label{eq:374}
      \mu \left( T_i \right) = \frac{1}{N}, \quad \text{for all }i \in I.
    \end{equation}
  \end{enumerate}
\end{definition}

Such a construction can always be found by generalizing the quantile construction. Let us show the construction explicitly for \( d = 2 \) as an example.

\begin{example}
  [Construction in 2D]
  \label{exp:constr-2d}
  Given \(N \in \mathbb{N}\), let \(\widetilde{n} := \lfloor \sqrt{N} \rfloor\). We can write \( N \) as
  \begin{equation}
    \label{eq:159}
    N = \widetilde{n}^{2-m}\left( \widetilde{n} +1 \right)^m + l,
  \end{equation}
with unique \( m \in \left\{ 0, 1 \right\} \) and \( l \in \left\{ 0,\ldots,\widetilde{n}^{1-m} \left( \widetilde{n} + 1 \right)^m - 1 \right\}. \)
  Then we get the desired tiling by setting
  \begin{alignat}{2}
    \label{eq:160}
    n_{1,\emptyset} := {} &
    \begin{cases}
      \widetilde{n} + 1 \quad &\text{if } m = 1,\\
      \widetilde{n} &\text{if } m = 0,
    \end{cases}\\
    n_{2,i_1} := {} &
    \begin{cases}
      \widetilde{n}+1 \quad &\text{if } i_1 \leq l,\\
      \widetilde{n} &\text{if } i_1 \geq l + 1,
    \end{cases} \quad && i_1 = 1,\ldots,n_{1,\emptyset},\label{eq:375}\\
    w_{2,i_1,i_2} := {} &\frac{1}{n_{2,i_1}}, \quad && i_1 = 1,\ldots,n_{1,\emptyset},\; i_2 = 1,\ldots,n_{2,i_1},\\
    w_{1,i_1} := {} &\frac{n_{2,i_1}}{\sum_{j_1} n_{2,j_1}}, \quad &&i_1 = 1,\ldots,n_{1,\emptyset},
  \end{alignat}
  and choosing the end points of the tiles such that
  \begin{align}
    \label{eq:161}
    \sum_{j_1 = 1}^{i_1} w_{1,j_1} = {} &\int_{-R_\mu}^{y_{1,i_1}} \int_{-R_\mu}^{R_\mu} \diff \mu(x_1,x_2),\\
    \sum_{j_1 = 1}^{i_1} \sum_{j_2 = 1}^{i_2} w_{1,j_1} w_{2,j_1,j_2} = {} &\int_{-R_\mu}^{y_{1,i_1}} \int_{-R_\mu}^{y_{2,i_1,i_2}} \diff \mu(x_1,x_2).
  \end{align}
  Now, check that indeed \(\sum_{j_1} n_{2,j_1} = N\) by \eqref{eq:159} and \eqref{eq:375} and that we have
  \begin{equation}
    \label{eq:162}
    \mu(T_{i_1,i_2}) = w_{1,i_1} w_{2,i_1,i_2} = \frac{1}{N} \quad \text{for all } i_1, i_2,
  \end{equation}
  by the choice of the weights \( w_{1,j_1} \), \( w_{2,j_1,j_2} \) as desired.
\end{example}

The general construction now consists of choosing a subdivision in \( \widetilde{n}+1 \) slices uniformly in as many dimensions as possible, while keeping in mind that in each dimension, we have to subdivide in at least \( \widetilde{n} \) slices. There will again be a rest \( l \), which is filled up in the last dimension.

\begin{proposition}
  [Construction for arbitrary \( d \)]
  \label{prp:constr-gen-dim}
  A tiling as defined in Definition \ref{def:good-tiling} exists for all \( d \in \mathbb{N} \).
\end{proposition}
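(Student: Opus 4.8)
The plan is to realize the tiling as a rooted tree of depth $d$: a node at depth $k-1$ carries an index $(i_1,\ldots,i_{k-1})$, has $n_{k,i_1,\ldots,i_{k-1}}$ children, the leaves form the index set $I$, and the hyperplanes $\{x_k = y_{k,i_1,\ldots,i_k}\}$ separate the children of a depth-$(k-1)$ node. The construction then splits into a combinatorial half — fix the branching numbers so that the tree has exactly $N$ leaves, every branching number depends only on its prefix and lies in $\{\widetilde n,\widetilde n+1\}$ — and an analytic half — place the cut points so that $\mu(T_i) = 1/N$ for every leaf $i$.

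\emph{Combinatorics.} Set $\widetilde n := \lfloor N^{1/d}\rfloor$, so $\widetilde n^d \le N < (\widetilde n+1)^d$. The $d+1$ numbers $a_m := \widetilde n^{\,d-m}(\widetilde n+1)^m$ increase strictly from $a_0 = \widetilde n^d$ to $a_d = (\widetilde n+1)^d$, hence the intervals $[a_m,a_{m+1})$, $m = 0,\ldots,d-1$, partition $[\widetilde n^d,(\widetilde n+1)^d)$; let $m$ be the unique index with $N \in [a_m,a_{m+1})$ and put $l := N - a_m$, so that $0 \le l < a_{m+1}-a_m = \widetilde n^{\,d-1-m}(\widetilde n+1)^m$ — which is also the number of depth-$(d-1)$ nodes of the tree that branches $\widetilde n+1$ times in coordinates $1,\ldots,m$ and $\widetilde n$ times in coordinates $m+1,\ldots,d-1$. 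I take that tree and, in coordinate $d$, give the first $l$ of those depth-$(d-1)$ nodes (in lexicographic order) $\widetilde n+1$ children and the remaining ones $\widetilde n$ children. Then $\#I = \widetilde n(a_{m+1}-a_m) + l = a_m + l = N$, every branching number lies in $\{\widetilde n,\widetilde n+1\}$ and depends only on its prefix, and \eqref{eq:155}--\eqref{eq:156} hold at once (all branching numbers are $\ge\widetilde n$, so $\{1,\ldots,\widetilde n\}^d \subseteq I$; all are $\le\widetilde n+1$, so $I \subseteq \{1,\ldots,\widetilde n+1\}^d$). Denote by $L_{i_1,\ldots,i_k}$ the number of leaves below the node $(i_1,\ldots,i_k)$, so $L_{i_1,\ldots,i_d} = 1$ and $\sum_{i_k} L_{i_1,\ldots,i_k} = L_{i_1,\ldots,i_{k-1}}$.

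\emph{Cut points.} I place the $y$'s by recursion on $k = 1,\ldots,d$, maintaining the invariant that the slab $S_{i_1,\ldots,i_{k-1}}$ carved out by the first $k-1$ coordinate windows satisfies $\mu(S_{i_1,\ldots,i_{k-1}}) = L_{i_1,\ldots,i_{k-1}}/N$ (trivial for $k=1$, with $S_\emptyset = [-R_\mu,R_\mu]^d$). Since $\mu \in L^1(\mathbb{R}^d)$, Fubini's theorem shows the marginal of $\mu$ restricted to $S_{i_1,\ldots,i_{k-1}}$ onto the $x_k$-axis has an $L^1$ density supported in $[-R_\mu,R_\mu]$; its cumulative distribution function $G$ is thus continuous and non-decreasing, with $G(-R_\mu) = 0$ and $G(R_\mu) = L_{i_1,\ldots,i_{k-1}}/N$. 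Taking $y_{k,i_1,\ldots,i_{k-1},0} := -R_\mu$ and, for $i_k = 1,\ldots,n_{k,i_1,\ldots,i_{k-1}}$, choosing (by the intermediate value theorem) $y_{k,i_1,\ldots,i_{k-1},i_k}$ with $G(y_{k,i_1,\ldots,i_{k-1},i_k}) = \frac1N\sum_{j\le i_k} L_{i_1,\ldots,i_{k-1},j}$, the last equation at $i_k = n_{k,i_1,\ldots,i_{k-1}}$ reads $G = G(R_\mu)$, so we may — and do — put the last cut at $R_\mu$, the $y$'s are strictly increasing because each increment $L_{i_1,\ldots,i_{k-1},i_k}/N$ is strictly positive, and $\mu(S_{i_1,\ldots,i_k}) = L_{i_1,\ldots,i_k}/N$, continuing the recursion. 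At $k=d$ we obtain $\mu(T_i) = L_{i_1,\ldots,i_d}/N = 1/N$ for every $i \in I$, so all requirements of Definition \ref{def:good-tiling} are met; for $d=2$ this reproduces Example \ref{exp:constr-2d}.

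\emph{The main obstacle} is the combinatorial bookkeeping: one must check that the intervals $[a_m,a_{m+1})$ really partition $[\widetilde n^d,(\widetilde n+1)^d)$ — this is what allows the surplus $l$ to be absorbed entirely in the last coordinate without violating the $\{\widetilde n,\widetilde n+1\}$ constraint — and that $l < a_{m+1}-a_m$ equals the number of columns available, so that ``the first $l$ columns'' is meaningful. The analytic part is routine once the $L^1$ hypothesis is used to make the marginals absolutely continuous; the only subtlety there is to keep all intermediate masses strictly positive so that \eqref{eq:157} is a strict ordering.
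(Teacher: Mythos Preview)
Your proof is correct and follows essentially the same approach as the paper: the same decomposition $N = \widetilde n^{\,d-m}(\widetilde n+1)^m + l$ (your $a_m$ and $l$ match the paper's exactly), the same branching pattern ($\widetilde n+1$ in the first $m$ coordinates, $\widetilde n$ in coordinates $m+1,\ldots,d-1$, and the surplus $l$ absorbed in coordinate $d$), and the same iterated quantile construction for the cut points. You supply considerably more detail than the paper's terse sketch --- in particular the verification that $a_{m+1}-a_m$ equals the number of depth-$(d-1)$ nodes, and the use of the $L^1$ hypothesis to ensure continuous marginals and strictly increasing $y$'s --- but the underlying idea is identical.
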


\begin{proof}
  Analogously to Example \ref{exp:constr-2d}, let \( \widetilde{n}:= \lfloor N^{1/d} \rfloor \) and set
  \begin{equation}
    \label{eq:163}
    N = \widetilde{n}^{d-m}\left( \widetilde{n} +1 \right)^m + l,
  \end{equation}
  with unique \( m \in \left\{ 0, \ldots, d-1 \right\} \) and \( l \in \left\{ 0,\ldots,\widetilde{n}^{d-1-m} \left( \widetilde{n} + 1 \right)^m - 1 \right\} \). Then, we get the desired ranges by
  \begin{alignat}{2}
    \label{eq:164}
    n_{k,i_1,\ldots,i_{k-1}} &{}:= \widetilde{n} + 1, &&\text{for } k \in \left\{ 1,\ldots,m \right\} \text{ and all relevant indices};\\     n_{k,i_1,\ldots,i_{k-1}}, &{}:= \widetilde{n}, &&\text{for } k \in \left\{ m+1,\ldots,d-1 \right\} \text{ and all relevant indices};\\
    n_{d,i_1,\ldots,i_{d-1}} &{}\in \left\{ \widetilde{n}, \widetilde{n}+1 \right\}, \quad&&\text{such that exactly \( l \) multi-indices are } \widetilde{n} + 1.
  \end{alignat}
  The weights can then be selected such that we get equal mass after multiplying them, and the tiling is found by iteratively using a quantile construction similar to \eqref{eq:161} in Example \ref{exp:constr-2d}.
\end{proof}

\begin{lemma}
  [Consistency of the approximation]
  \label{lem:13}
  For \(\mu\in \mathcal{P}_c(\mathbb{R}^d) \cap BV(\mathbb{R}^d)\), let \( (T_i)_{i \in I} \) be a tiling as in Definition \ref{def:good-tiling}, and \( x_i \in T_i \) for all \( i \in I \) an arbitrary point in each tile. Then, \(\mu_N = \frac{1}{N} \sum_{i=1}^{N} \delta_{x_i}\) converges narrowly to \(\mu\) for \(N \rightarrow\infty\).
  Furthermore, if
  \begin{equation}
    \label{eq:172}
    h = h(N)\rightarrow 0 \quad \text{and} \quad h^{2d}N \rightarrow \infty \quad \text{for } N\rightarrow \infty,
  \end{equation}
  then \(Q_{h(N)}[\mu_N] \rightarrow\mu\) strictly in \(BV(\mathbb{R}^d)\) (as defined in \cite[Definition 3.14]{00-Amb-Fusco-Pallara-BV}).
\end{lemma}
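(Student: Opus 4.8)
The plan is to prove the two assertions in turn, using throughout the same device: converting the mass equidistribution $\mu(T_i)=1/N$ into quantitative control of $\mu_N-\mu$ tile by tile.

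\emph{Narrow convergence of $\mu_N$.} Since $\mu\in BV(\mathbb R^d)$ is compactly supported it is absolutely continuous, so (the tiles overlapping only on $\mu$-null boundaries) I would define $\mu$-a.e.\ the quantization map $T_N\colon[-R_\mu,R_\mu]^d\to\{x_i:i\in I\}$ by $T_N(x)=x_i$ on the interior of $T_i$; because $\mu(T_i)=1/N$ it transports $\mu$ to $\mu_N$, so $\int f\,\diff\mu_N=\int f\circ T_N\,\diff\mu$ for $f\in C_b(\mathbb R^d)$ and, by dominated convergence, it is enough to show $T_N\to\mathrm{id}$ in $\mu$-measure. For $\delta>0$ the set $\{x:|T_N(x)-x|\ge\delta\}$ lies in the union $B_N^\delta$ of the tiles with $\operatorname{diam}(T_i)\ge\delta$, so I need $\mu(B_N^\delta)\to0$; this I would get from a counting estimate exploiting the quantile structure: the $k$-th side of $T_i$ depends only on $(i_1,\dots,i_k)$, and for each fixed prefix the cut-points $y_{k,i_1,\dots,i_{k-1},\cdot}$ partition $[-R_\mu,R_\mu]$, so at most $\lceil 2R_\mu\sqrt d/\delta\rceil$ of the associated slices are wider than $\delta/\sqrt d$; multiplying by the $O(\widetilde n^{\,k-1})$ prefixes and $O(\widetilde n^{\,d-k})$ completions and summing over $k$ bounds the number of ``wide'' tiles by $C(\delta,d,R_\mu)N^{(d-1)/d}$, whence $\mu(B_N^\delta)\le C(\delta,d,R_\mu)N^{-1/d}\to0$. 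This gives $\mu_N\to\mu$ narrowly (and, all measures living in a fixed compact set, in every $W_p$).

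\emph{$L^1$-convergence of $Q_{h(N)}[\mu_N]$.} I would write $Q_h[\mu_N]-\mu=K_h*(\mu_N-\mu)+(K_h*\mu-\mu)$: the second term tends to $0$ in $L^1$ because $(K_h)_h$ is a non-negative approximate identity and $\mu\in L^1$. For the first, $K_h*(\mu_N-\mu)(x)=\sum_i\int_{T_i}\bigl(K_h(x-x_i)-K_h(x-y)\bigr)\,\diff\mu(y)$, and using $\|K_h(\cdot+z)-K_h\|_{L^1}\le|z|\,\|\nabla K_h\|_{L^1}=|z|\,h^{-1}\|\nabla K\|_{L^1}$ (valid since $K\in W^{1,1}$) together with the trivial bound $2\|K_h\|_{L^1}=2$, one obtains
\[
\|K_h*(\mu_N-\mu)\|_{L^1}\le\sum_i\mu(T_i)\,\min\Bigl\{2,\ h^{-1}\|\nabla K\|_{L^1}\operatorname{diam}(T_i)\Bigr\}.
\]
I would split this at the diameter threshold $h^2$: tiles with $\operatorname{diam}(T_i)<h^2$ contribute at most $h^{-1}\|\nabla K\|_{L^1}h^2\sum_i\mu(T_i)=\|\nabla K\|_{L^1}h\to0$, while tiles with $\operatorname{diam}(T_i)\ge h^2$ contribute at most $2\mu(B_N^{h^2})\le C\,h^{-2}N^{-1/d}=C(h^{2d}N)^{-1/d}\to0$ by the hypothesis $h^{2d}N\to\infty$. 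Hence $Q_{h(N)}[\mu_N]\to\mu$ in $L^1(\mathbb R^d)$.

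\emph{Strict convergence in $BV$.} It remains to show $|DQ_{h(N)}[\mu_N]|(\mathbb R^d)\to|D\mu|(\mathbb R^d)$. Lower semicontinuity of the total variation under $L^1$-convergence, applied to the convergence just proved, gives $|D\mu|(\mathbb R^d)\le\liminf_N|DQ_{h(N)}[\mu_N]|(\mathbb R^d)$. For the reverse I would use $Q_h[\mu_N]\in W^{1,1}$ with $\nabla Q_h[\mu_N]=(\nabla K_h)*\mu_N=(\nabla K_h)*\mu+(\nabla K_h)*(\mu_N-\mu)$; for the first piece $(\nabla K_h)*\mu=K_h*D\mu$ (as $\mu\in BV$), so $\|(\nabla K_h)*\mu\|_{L^1}\le\|K_h\|_{L^1}|D\mu|(\mathbb R^d)=|D\mu|(\mathbb R^d)$, and for the second I would rerun the tile-by-tile argument with the $BV$ modulus estimate $\|(\nabla K_h)(\cdot+z)-(\nabla K_h)\|_{L^1}\le|z|\,h^{-2}|D\nabla K|(\mathbb R^d)$ — this is where $\nabla K\in BV(\mathbb R^d,\mathbb R^d)$ is used — against the trivial bound $2h^{-1}\|\nabla K\|_{L^1}$, again splitting the tiles by diameter and invoking $h^{2d}N\to\infty$, to conclude $\|(\nabla K_h)*(\mu_N-\mu)\|_{L^1}\to0$. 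Combining, $|DQ_{h(N)}[\mu_N]|(\mathbb R^d)\le|D\mu|(\mathbb R^d)+\|(\nabla K_h)*(\mu_N-\mu)\|_{L^1}$, so $\limsup_N|DQ_{h(N)}[\mu_N]|(\mathbb R^d)\le|D\mu|(\mathbb R^d)$; together with the $L^1$-convergence this is exactly strict convergence in $BV(\mathbb R^d)$.

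\emph{Main obstacle.} The step I expect to be delicate is the last one, $\|(\nabla K_h)*(\mu_N-\mu)\|_{L^1}\to0$: because $\nabla K_h$ carries an extra factor $h^{-1}$ relative to $K_h$, one has to understand rather precisely how the $\mu$-mass is distributed among tiles of each diameter, and the crude count ``$\#\{$tiles of diameter $\ge t\}=O(N^{(d-1)/d}/t)$'' that comfortably handled the $L^1$-step is only marginally compatible with $h^{2d}N\to\infty$ here. Making it close requires exploiting the equidistribution of the quantile construction more carefully — in particular that tiles can only be geometrically large where the density of $\mu$ is small, so large tiles carry even less mass than the crude count suggests. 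The remaining ingredients (the two approximate-identity / modulus-of-continuity estimates, the lower semicontinuity of total variation, and the fact that convolution with a probability density does not increase total variation) are routine.
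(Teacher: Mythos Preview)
Your narrow-convergence and $L^1$-convergence arguments are correct, though organized differently from the paper (you count ``wide'' tiles; the paper tests directly against Lipschitz functions). But the total-variation step has a genuine gap, and you have essentially flagged it yourself: your diameter-splitting strategy, fed with the crude count $\#\{i:\operatorname{diam}(T_i)\ge t\}\le C N^{(d-1)/d}/t$, only yields $\|(\nabla K_h)*(\mu_N-\mu)\|_{L^1}\to 0$ under the stronger hypothesis $h^{3d}N\to\infty$. Indeed, with threshold $t$ the two pieces are of order $h^{-2}t$ and $h^{-1}N^{-1/d}/t$, and both can be driven to zero only if their product $h^{-3}N^{-1/d}$ tends to zero. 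Your closing remarks about ``exploiting the equidistribution more carefully'' do not supply the missing estimate.

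The paper avoids the whole diameter-counting detour by proving, once and for all, the sharp $W_1$-type bound
\[
\sum_{i\in I}\int_{T_i}|x-x_i|\,\diff\mu(x)\ \le\ \frac{C}{\widetilde n}\ =\ C\,N^{-1/d},
\]
obtained by estimating $|x-x_i|\le\sum_k|x^k-x_i^k|$, grouping tiles by all indices except the $k$-th, bounding $|x^k-x_i^k|$ by the $k$-th side-length of the tile, and \emph{telescoping} those side-lengths to $2R_\mu$ along each one-dimensional stack of tiles. With this single inequality in hand, all three steps are immediate: testing against Lipschitz functions gives narrow convergence; the $L^1$ error is $\le h^{-1}\|\nabla K\|_{L^1}\cdot C N^{-1/d}$; and the total-variation error is $\le h^{-2}|D\nabla K|(\mathbb{R}^d)\cdot C N^{-1/d}=C(h^{2d}N)^{-1/d}\to 0$, which is exactly where the hypothesis $h^{2d}N\to\infty$ enters. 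So the missing idea is not a finer stratification of tiles by diameter, but this coordinate-wise telescoping bound on $\sum_i\int_{T_i}|x-x_i|\,\diff\mu$.
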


\begin{proof}
  Suppose again that
  \begin{equation}
    \label{eq:173}
    \operatorname{supp} \mu \subseteq [-R_{\mu},R_{\mu}]^d.
  \end{equation}
  
  \emph{Ad narrow convergence:}
  By \cite[Theorem 3.9.1]{Dur10}, it is sufficient to test convergence for bounded, Lipschitz-continuous functions. So let \(\varphi \in C_b(\mathbb{R}^d) \) be Lipschitz with constant \(L\). Then,
  \begin{align}
    \label{eq:174}
    \leadeq{\left| \int_{\mathbb{R}^d} \varphi(x) \diff \mu_N (x) - \int_{\mathbb{R}^d} \varphi(x) \diff \mu(x) \right|}\\
    = {} & \left| \frac{1}{N} \sum_{i=1}^{N} \varphi(x_i) - \int_{\mathbb{R}^d} \varphi(x) \diff \mu(x) \right|\\
    \leq {} & \sum_{i \in I} \int_{T_i} \left| \varphi(x) - \varphi(x_i) \right| \diff \mu(x)\\
    \leq {} & L \sum_{i \in I} \int_{T_i} \left| x - x_i \right| \diff \mu(x).
  \end{align}
  Denote by
  \begin{equation}
    \label{eq:175}
    \widehat{\pi}_k (i_1,\ldots,i_d) := (i_1,\ldots,i_{k-1},i_{k+1},i_d)
  \end{equation}
  the projection onto all coordinates except the \( k \)th one. Now, we exploit the uniformity of the tiling in all dimensions, \eqref{eq:155}: By using the triangular inequality and grouping the summands,
  \begin{align}
    \leadeq{\sum_{i \in I} \int_{T_i} \left| x - x_i \right| \diff \mu(x)} \\
    \leq {} & \sum_{i \in I} \sum_{k=1}^{d} \int_{T_i} \left| x^k - x_i^k \right| \diff \mu(x)\label{eq:176}\\
    = {} & \sum_{k=1}^{d} \sum_{i \in \widehat{\pi}_k(I)} \sum_{j=1}^{n_{k,i_1,\ldots,i_{k-1}}} \int_{T_i} \left| x^k - x_{i_1,\ldots,i_{k-1},j,i_k,\ldots,i_{d-1}}^k \right| \diff \mu(x)\\
    \leq {} & \sum_{k=1}^{d} \sum_{i \in \widehat{\pi}_k(I)} \underbrace{\sum_{j=1}^{n_{k,i_1,\ldots,i_{k-1}}} \left( y_{k,i_1,\ldots,i_{k-1},(j-1)}-y_{k,i_1,\ldots,i_{k-1},j} \right)}_{=2R_\mu} \underbrace{\int_{T_i} \diff \mu(x)}_{=1/N}\\
    \leq {} & 2R_{\mu}\,d \frac{\left(\widetilde{n}+1\right)^{d-1}}{N} \leq 2R_{\mu}\,d \frac{\left(\widetilde{n}+1\right)^{d-1}}{\widetilde{n}^{d}} \leq \frac{C}{\widetilde{n}} \rightarrow 0 \quad \text{for } N\rightarrow\infty.\label{eq:376}
  \end{align}
  
  \emph{Ad \(L^1\)-convergence:} As \( K \in W^{1,1}(\mathbb{R}^d) \subseteq BV(\mathbb{R}^d)\), we can approximate it by \( C^1 \) functions which converge \( BV \)-strictly, so let us additionally assume \( K \in C^1 \) for now. Then,
  \begin{align}
    \label{eq:177}
    \leadeq{\int_{\mathbb{R}^d} \left| K_h \ast \mu_N(x) - \mu(x) \right| \diff x}\\
    \leq {} & \int_{\mathbb{R}^d} \left| K_h \ast \mu_N(x) - K_h \ast \mu (x) \right| \diff x + \int_{\mathbb{R}^d} \left| K_h \ast \mu(x) - \mu(x) \right| \diff x.
  \end{align}
  By \(h \rightarrow 0\), the second term goes to \( 0 \) (see \cite[Chapter 5.2, Theorem 2]{92-Evans-fine-properties}), so it is sufficient to consider
  \begin{align}
    \leadeqnum{\int_{\mathbb{R}^d} \left| K_h \ast \mu_N(x) - K_h \ast \mu(x)  \right| \diff x}\label{eq:178}\\
    \leq {} & \sum_{i \in I} \int_{T_i} \int_{\mathbb{R}^d} \left| K_h(x - x_i) - K_h(x-y) \right| \diff x \diff \mu(y)\\
    = {} & \sum_{i \in I} \int_{T_i} \int_{\mathbb{R}^d} \left| \int_0^1 \nabla K_h(x - y + t (y - x_i)) \cdot (y - x_i) \diff t \right| \diff x \diff \mu(y)\\
    \leq {} & \sum_{i \in I} \int_{T_i} \int_0^1 \left| y - x_i \right| \int_{\mathbb{R}^d} \left| \nabla K_h(x - y + t (y - x_i)) \right| \diff x \diff t \diff \mu(y)\\
    = {} & \frac{1}{h} \left\| \nabla K \right\|_{L^1} \sum_{i \in I} \int_{T_i}  \left| y - x_i \right| \diff \mu(y).\label{eq:179}
  \end{align}
  Since the left-hand side \eqref{eq:178} and the right-hand side \eqref{eq:179} of the above estimate are continuous with respect to strict BV convergence (by Fubini-Tonelli and convergence of the total variation, respectively), this estimate extends to a general \( K \in BV(\mathbb{R}^d) \) and
  \begin{equation}
    \label{eq:180}
    \frac{1}{h} \sum_{i \in I} \int_{T_i}  \left| y - x_i \right| \diff \mu(y) \leq \frac{C}{\widetilde{n}h} \rightarrow 0, \quad \text{for } \mathbb{N} \rightarrow \infty,
  \end{equation}
                  by the calculation in \eqref{eq:176} and condition \eqref{eq:172}.

  \emph{Ad convergence of the total variation:} Similarly to the estimate in \eqref{eq:177}, by \( h \rightarrow 0 \) it is sufficient to consider the \(L^1\) distance between \( \nabla K_h \ast \mu_N \) and \( \nabla K_h \ast \mu \) if we approximate a general \( K \) with a \( K \in C^{2}(\mathbb{R}^d) \). By a calculation similar to \eqref{eq:178} -- \eqref{eq:179} as well as \eqref{eq:376} and using \( \nabla K_h(x) = h^{-d-1} K(x/h) \), we get
  \begin{align}
    \label{eq:182}
    \leadeq{\int_{\mathbb{R}^d} \left| \nabla K_h \ast \mu_N(x) - \nabla K_h \ast \mu(x)  \right| \diff x}\\
    \leq {} & C \frac{1}{h} \sum_{i \in I} \int_{T_i} \int_{\mathbb{R}^d} \left| \nabla K_h(x-x_i) - \nabla K_h(x-y) \right| \diff x \diff \mu(y)\\
    \leq {} & C \left\| D^2 K \right\|_{L^1} \frac{1}{\widetilde{n}h^2} \rightarrow 0 \quad \text{for } N\rightarrow \infty,
  \end{align}
  by the condition \eqref{eq:172} we imposed on \( h \).
\end{proof}

Since we associate to each \( \mu_N \in \mathcal{P}_N \) an \( L^1 \)-density \( Q_{h(N)}[\mu_N] \) and want to analyze both the behavior of \( \mathcal{E}[\mu_N] \) and \( \left| DQ_{h(N)}[\mu_N] \right|(\mathbb{R}^d) \), we need to incorporate the two different topologies involved, namely narrow convergence of \( \mu \) and \( L^1 \)-convergence of \( Q_{h(N)}[\mu] \), into the concept of \( \Gamma \)-convergence. This can be done by using a slight generalization introduced in \cite{94-Anzellotti-GeneralizedGamma}, named \( \Gamma(q,\tau^-) \)-convergence there:

\begin{definition}[\( \Gamma(q,\tau^-) \)-convergence]
  \label{def:-gammaq-tau}
  \cite[Definition 2.1]{94-Anzellotti-GeneralizedGamma}
  For \( N \in \mathbb{N} \), let \( X_N \) be a set and \( F_N \colon X_N \to \mathbb{R} \) a function. Furthermore, let \( Y \) be a topological space with topology \( \tau \) and \( q = \left\{ q_N \right\}_{N \in \mathbb{N}} \) a family of embedding maps \( q_N \colon X_N \to Y \). Then, \( F_N \) is said to \( \Gamma(q,\tau^-) \)-converge to a function \( F \colon Y \to \overline{\mathbb{R}} \) at \( y \in Y \), if
  \begin{enumrom}
  \item \emph{\( \liminf \)-condition}: For each sequence \( x_N \in X_N \) such that \( q_N(x_N) \xrightarrow{\tau} y \),
    \begin{equation}
      \label{eq:158}
      F(y) \leq \liminf_{N \to \infty} F_N(x_N).
    \end{equation}
  \item \emph{\( \limsup \)-condition}: There is a sequence \( x_N \in X_N \) such that \( q_N(x_N) \xrightarrow{\tau} y \) and
    \begin{equation}
      \label{eq:378}
      F(y) \geq \limsup_{N \to \infty} F_N(x_N).
    \end{equation}
  \end{enumrom}

  Furthermore, we say that the \( F_N \) \( \Gamma(q,\tau^-) \)-converge on a set \( \mathcal{D} \subseteq Y \) if the above is true for all \( y \in \mathcal{D} \) and we call the sequence \( F_N \) equi-coercive, if for every \( c \in \mathbb{R} \), there is a compact set \( K \subseteq Y \) such that \( q_N\left( \left\{ x : F_N(x) \leq c \right\} \right) \subseteq K \).
\end{definition}

\begin{remark}
  The main result with respect to \( \Gamma \)-convergence which we are interested in is the convergence of minimizers, Lemma \ref{lem:8}. This remains true in the case of \( \Gamma(q,\tau^-) \)-convergence, see \cite[Proposition 2.4]{94-Anzellotti-GeneralizedGamma}.
\end{remark}

Here, we are going to consider
\begin{equation}
  \label{eq:379}
  Y :=  \mathcal{P}(\mathbb{R}^d) \times BV(\mathbb{R}^d)
\end{equation}
with the corresponding product topology of narrow convergence and \( BV \)-convergence,
\begin{equation}
  \label{eq:380}
  X_N := \mathcal{P}^N(\mathbb{R}^d), \quad q_N(\mu) := (\mu,Q_{h(N)}[\mu]).
\end{equation}
and consider the limit \( \widehat{\mathcal{E}}^\lambda \) to be defined on the diagonal
\begin{equation}
  \label{eq:381}
  \mathcal{D} := \left\{ (\mu,\mu) : \mu \in \mathcal{P}(\mathbb{R}^d) \cap BV(\mathbb{R}^d) \right\}.
\end{equation}
Since for the existence of minimizers, we will be extracting convergent subsequences of pairs \( (\mu_N,Q_{h(N)}[\mu]) \), we need the following lemma to ensure that the limit is in \( \mathcal{Y} \).

\begin{lemma}[Consistency of the embedding \( Q_{h(N)} \)]
  \label{lem:29}
  If \( (\mu_N)_N \) is a sequence such that \( \mu_N \in \mathcal{P}^N(\mathbb{R}^d) \), \( \mu_N \to \mu \in \mathcal{P}^d \) narrowly and \( Q_{h(N)}[\mu_N] \to \widetilde{\mu} \in BV(\mathbb{R}^d) \) in \( L^1(\mathbb{R}^d) \), as well as \( h \to 0 \), then \( \mu = \widetilde{\mu} \).
\end{lemma}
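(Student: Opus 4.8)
The plan is to prove that $Q_{h(N)}[\mu_N] = K_{h(N)}\ast\mu_N$, regarded as an element of $\mathcal{P}(\mathbb{R}^d)$ (it is non-negative and $\int_{\mathbb{R}^d} K_h\ast\mu_N\,\diff x = \int_{\mathbb{R}^d}\int_{\mathbb{R}^d} K_h(x-y)\,\diff x\,\diff\mu_N(y) = 1$, and likewise $\widetilde{\mu}\ge 0$ with $\int\widetilde\mu = 1$ as an $L^1$-limit of such densities), converges \emph{narrowly} to $\mu$. Since by hypothesis $Q_{h(N)}[\mu_N]$ converges to $\widetilde\mu$ in $L^1$, hence narrowly by Lemma \ref{lem:1}, and the narrow topology on $\mathcal{P}(\mathbb{R}^d)$ is metrizable (in particular Hausdorff) by Lemma \ref{lem:25}, the two narrow limits must coincide, giving $\mu = \widetilde\mu$.

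To establish $K_{h(N)}\ast\mu_N \to \mu$ narrowly I would test against bounded Lipschitz functions, which suffices by \cite[Theorem 3.9.1]{Dur10} (as already used in the proof of Lemma \ref{lem:13}). Let $\varphi\in C_b(\mathbb{R}^d)$ have Lipschitz constant $L$. Then Fubini--Tonelli yields
\begin{equation*}
  \int_{\mathbb{R}^d}\varphi(x)\,\diff(K_h\ast\mu_N)(x) = \int_{\mathbb{R}^d} g_h(y)\,\diff\mu_N(y), \qquad g_h(y) := \int_{\mathbb{R}^d}\varphi(y+z)\,K_h(z)\,\diff z,
\end{equation*}
where $g_h\in C_b(\mathbb{R}^d)$ with $\|g_h\|_\infty \leq \|\varphi\|_\infty$ because $K\geq 0$, $\int K = 1$. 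Hence
\begin{equation*}
  \left| \int\varphi\,\diff(K_h\ast\mu_N) - \int\varphi\,\diff\mu \right| \leq \| g_h - \varphi\|_\infty + \left| \int\varphi\,\diff\mu_N - \int\varphi\,\diff\mu\right|,
\end{equation*}
and the second term tends to $0$ by narrow convergence of $\mu_N$ to $\mu$.

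For the first term — the one point where some care is needed — I would use that $K$ acts as an approximate identity uniformly over all probability measures: for every $\delta>0$,
\begin{equation*}
  \left| g_h(y) - \varphi(y)\right| \leq \int_{|z|\leq\delta} L\left| z\right| K_h(z)\,\diff z + 2\|\varphi\|_\infty\int_{|z|>\delta} K_h(z)\,\diff z \leq L\delta + 2\|\varphi\|_\infty\int_{|u|>\delta/h} K(u)\,\diff u,
\end{equation*}
using $\int K_h = 1$ and the change of variables $z = hu$. Since $K\in L^1(\mathbb{R}^d)$, the last integral vanishes as $h\to 0$; taking $\limsup_{h\to 0}$ and then $\delta\to 0$ gives $\|g_h - \varphi\|_\infty\to 0$. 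As $h = h(N)\to 0$ by assumption, both terms in the previous display tend to $0$, so $K_{h(N)}\ast\mu_N\to\mu$ narrowly, completing the argument.

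The main (and essentially only) obstacle is that narrow convergence of $\mu_N$ does not by itself transfer to $K_{h(N)}\ast\mu_N$, since the smoothing kernel $K_{h(N)}$ changes with $N$ together with the measure; it is precisely the measure-independent estimate $\|g_h-\varphi\|_\infty\to 0$ — valid because $K$ is a fixed non-negative $L^1$ approximate identity — that decouples the mollification from the convergence $\mu_N\to\mu$. The remaining ingredients (Fubini, $L^1$-convergence implying narrow convergence via Lemma \ref{lem:1}, and uniqueness of narrow limits via Lemma \ref{lem:25}) are routine.
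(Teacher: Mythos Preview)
Your proof is correct and follows essentially the same route as the paper: both argue that $K_{h(N)}\ast\mu_N\to\mu$ narrowly by testing against bounded Lipschitz functions, split the error into the mollification term $\|g_h-\varphi\|_\infty$ and the narrow-convergence term $|\int\varphi\,\diff\mu_N-\int\varphi\,\diff\mu|$, and then invoke uniqueness of narrow limits. The only minor difference is in handling the mollification term: the paper bounds it directly by $Lh\int|y|\,K(y)\,\diff y$ (implicitly using a first-moment bound on $K$), whereas your $\delta$-splitting needs only $K\in L^1$, which is slightly more robust but not a different idea.
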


\begin{proof}
  To show \( \mu = \widetilde{\mu} \), by the metrizability of \( \mathcal{P} \) it suffices to show that \( Q_{h(N)}[\mu_N] \to \mu \) narrowly. For this, as in the proof of Lemma \ref{lem:13}, we can restrict ourselves to test convergence of the integral against bounded and Lipschitz-continuous functions. Hence, let \( f \in C_b(\mathbb{R}^d) \cap \mathrm{Lip}(\mathbb{R}^d) \) with Lipschitz constant \( L \). Then,
  \begin{align}
    \label{eq:400}
    \leadeq{\left| \int_{\mathbb{R}^d} f(x) Q_{h(N)}[\mu_N](x) \diff x - \int_{\mathbb{R}^d} f(x) \diff \mu(x) \right|} \\
    \leq {} & \left| \int_{\mathbb{R}^d} f(x) K_{h(N)} \ast \mu_N(x) \diff x - \int_{\mathbb{R}^d} f(x) \diff \mu_N(x) \right|\\
    {} & + \left| \int_{\mathbb{R}^d} f(x) \diff \mu_N(x) - \int_{\mathbb{R}^d} f(x) \diff \mu(x) \right|,
  \end{align}
  where the second term goes to zero by \( \mu_N \to \mu \) narrowly. For the first term, by Fubini we get that
  \begin{equation}
    \label{eq:403}
    \int_{\mathbb{R}^d} f(x) K_{h(N)} \ast \mu_N(x) \diff x = \int_{\mathbb{R}^d} (f \ast K_{h(N)}(-.)) (x) \diff \mu_N(x)
  \end{equation}
  and therefore
  \begin{align}
    \label{eq:404}
    \leadeq{\left| \int_{\mathbb{R}^d} f(x) K_{h(N)} \ast \mu_N(x) \diff x - \int_{\mathbb{R}^d} f(x) \mu_N(x) \diff x \right|} \\
    = {} & \left| \int_{\mathbb{R}^d} \int_{\mathbb{R}^d} \left( f(x + y) - f(x) \right) K_{h(N)}(y) \diff y \diff \mu_N(x) \right| \\
    = {} & \left| \int_{\mathbb{R}^d} \int_{\mathbb{R}^d} \left( f(x + h(N)y) - f(x) \right) K(y) \diff y \diff \mu_N(x) \right| \\
    \leq {} & Lh \left\| K \right\|_{L^1} \mu_N(\mathbb{R}^d) \to 0, \quad N \to 0
  \end{align}
  by \( h(N) \to 0 \), proving \( Q_{h(N)}[\mu_N] \to \mu \) and therefore the claim.
\end{proof}

\begin{theorem}
  [Consistency of the kernel estimate]
  \label{thm:con-kern-est}
  The functionals \((\widehat{\mathcal{E}}_{N}^{\lambda})_{N \in \mathbb{N}}\) are equi-coercive and
  \begin{equation}
    \label{eq:185}
    \widehat{\mathcal{E}}_{N}^{\lambda} \xrightarrow{\Gamma(q,\tau^-)} \widehat{\mathcal{E}}^{\lambda} \quad \text{for } N \rightarrow \infty
  \end{equation}
  with respect to the topology of \( Y \) defined above, \ie weak convergence of \(\mu_{N}\) together with \(L^{1}\)-convergence of \(Q_{h(N)}[\mu_N]\). In particular, every sequence of minimizers of \( \widehat{\mathcal{E}}^{\lambda}_{N} \)  admits a subsequence converging to a minimizer of \(\widehat{\mathcal{E}}^{\lambda}\).
\end{theorem}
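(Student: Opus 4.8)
The plan is to verify the three ingredients of $\Gamma(q,\tau^-)$-convergence — equi-coercivity, the $\liminf$-condition, and the $\limsup$-condition — and then obtain convergence of minimizers from the generalized version of Lemma~\ref{lem:8} cited in the remark after Definition~\ref{def:-gammaq-tau}, together with an existence argument for minimizers of each $\widehat{\mathcal{E}}_N^\lambda$.

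First I would handle \emph{equi-coercivity}. On a sub-level $\{\widehat{\mathcal{E}}_N^\lambda \le c\}$ we have both $\widehat{\mathcal{E}}[\mu_N]\le c$ and $\lambda|DQ_{h(N)}[\mu_N]|(\mathbb{R}^d)\le c$. The first bound, via Proposition~\ref{prp:compctness-sublvls}, forces the $\mu_N$ into a narrowly compact set; the second bound gives a uniform bound on the total variation of the $L^1$-densities $Q_{h(N)}[\mu_N]$, and combined with the tightness coming from $\widehat{\mathcal{E}}$ (the same cut-off/diagonal argument already used in the proof of Proposition~\ref{prp:consist-cont-bv}, applied to $Q_{h(N)}[\mu_N]$ rather than to $\mu_N$ directly — note $Q_{h(N)}[\mu_N]$ is tight because $\mu_N$ is and $h\to0$) one gets relative compactness in $L^1(\mathbb{R}^d)$. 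So $q_N(\{\widehat{\mathcal{E}}_N^\lambda\le c\})$ is relatively compact in $Y$, and Lemma~\ref{lem:29} ensures the limit point lies on the diagonal $\mathcal{D}$, i.e.\ in the effective domain of $\widehat{\mathcal{E}}^\lambda$.

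Next, the \emph{$\liminf$-condition}: if $\mu_N\in\mathcal{P}^N(\mathbb{R}^d)$ with $\mu_N\to\mu$ narrowly and $Q_{h(N)}[\mu_N]\to\mu$ in $L^1$ (by Lemma~\ref{lem:29} the two limits must coincide), then $\liminf_N\widehat{\mathcal{E}}[\mu_N]\ge\widehat{\mathcal{E}}[\mu]$ by lower semi-continuity (Proposition~\ref{prp:compctness-sublvls}), while $\liminf_N|DQ_{h(N)}[\mu_N]|(\mathbb{R}^d)\ge|D\mu|(\mathbb{R}^d)$ by lower semi-continuity of the total variation under $L^1$-convergence; adding the two gives $\liminf_N\widehat{\mathcal{E}}_N^\lambda[\mu_N]\ge\widehat{\mathcal{E}}^\lambda[\mu]$. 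For the \emph{$\limsup$-condition} at a point $\mu\in\mathcal{P}(\mathbb{R}^d)\cap BV(\mathbb{R}^d)$, I would first reduce to compactly supported $\mu$ (truncate and renormalise, using continuity of $\widehat{\mathcal{E}}$ and $BV$-strict continuity of the truncation), then for such $\mu$ pick a tiling from Definition~\ref{def:good-tiling}/Proposition~\ref{prp:constr-gen-dim} with a scale $h(N)$ satisfying \eqref{eq:172}, and let $\mu_N=\tfrac1N\sum_{i}\delta_{x_i}$ with $x_i\in T_i$. Lemma~\ref{lem:13} then gives $\mu_N\to\mu$ narrowly and $Q_{h(N)}[\mu_N]\to\mu$ strictly in $BV$; strict convergence yields $|DQ_{h(N)}[\mu_N]|(\mathbb{R}^d)\to|D\mu|(\mathbb{R}^d)$, and narrow convergence with a uniform moment control (the tiling keeps everything in a fixed ball once $\mu$ is compactly supported, so in fact $\mu_N\to\mu$ in $W_2$) gives $\widehat{\mathcal{E}}[\mu_N]\to\widehat{\mathcal{E}}[\mu]$ via Lemma~\ref{lem:6}. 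Hence $\widehat{\mathcal{E}}_N^\lambda[\mu_N]\to\widehat{\mathcal{E}}^\lambda[\mu]$, which is more than needed. Existence of a minimizer of each $\widehat{\mathcal{E}}_N^\lambda$ follows from equi-coercivity plus lower semi-continuity of $\widehat{\mathcal{E}}_N^\lambda$ on $\mathcal{P}^N(\mathbb{R}^d)$ (Fatou for $\widehat{\mathcal{E}}$, lower semi-continuity of the total variation, and closedness of $\mathcal{P}^N(\mathbb{R}^d)$ from Lemma~\ref{lem:9}), and then the $\Gamma(q,\tau^-)$-analogue of Lemma~\ref{lem:8} gives the subsequential convergence of minimizers of $\widehat{\mathcal{E}}_N^\lambda$ to a minimizer of $\widehat{\mathcal{E}}^\lambda$.

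The main obstacle is the $\limsup$-step combined with the compactness half of equi-coercivity: one has to simultaneously control the narrow (resp.\ $W_2$) behavior of $\mu_N$ and the $L^1$ (resp.\ $BV$-strict) behavior of $Q_{h(N)}[\mu_N]$, and the scaling condition \eqref{eq:172} on $h(N)$ is exactly what makes the error terms in Lemma~\ref{lem:13} vanish; getting a recovery sequence at a general $\mu\in\mathcal{P}(\mathbb{R}^d)\cap BV(\mathbb{R}^d)$ (not compactly supported) requires a preliminary truncation/diagonal argument, and checking that the truncation is harmless for both $\widehat{\mathcal{E}}$ and the total variation is the one place where a little care is needed.
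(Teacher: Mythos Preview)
Your overall structure is correct and matches the paper: the $\liminf$-condition via lower semi-continuity of both summands, equi-coercivity via Proposition~\ref{prp:compctness-sublvls} plus the $BV$-compactness/tightness argument of Proposition~\ref{prp:consist-cont-bv}, and Lemma~\ref{lem:29} to ensure limits land on the diagonal. The existence of minimizers and their convergence then follow as you say.

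The substantive difference is in the $\limsup$-step. You propose to ``truncate and renormalise'' a general $\mu\in\mathcal{P}(\mathbb{R}^d)\cap BV(\mathbb{R}^d)$ to reduce to compact support, asserting continuity of $\widehat{\mathcal{E}}$ along this truncation. This is the delicate point, and the paper does \emph{not} do it this way: a bare cut-off $\chi_M\mu$ is not covered by any of the continuity results available (Proposition~\ref{prp:rec-seq}, Lemma~\ref{lem:11}, Lemma~\ref{lem:5}/\ref{lem:6}), since $\mu$ need not lie in $\mathcal{P}_2$ and $\widehat{\mathcal{E}}$ is only narrowly lower semi-continuous in general. The paper instead builds a longer approximation chain: first the Gaussian dampening of Proposition~\ref{prp:rec-seq} (which is the only tool giving $\widehat{\mathcal{E}}$-continuity for arbitrary $\mu\in\mathcal{P}$), then Gaussian mollification (Lemma~\ref{lem:11}), and only \emph{then} a smooth cut-off --- at that stage the approximants are already in $\mathcal{P}_2$, so the cut-off is $W_2$-continuous and hence $\widehat{\mathcal{E}}$-continuous. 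The paper then spends several displayed estimates verifying that this composite sequence is also $BV$-strictly convergent, before finally invoking Lemma~\ref{lem:13}.

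So your sketch correctly locates the obstacle but underestimates it: the ``little care'' needed is precisely the construction of Proposition~\ref{prp:rec-seq} combined with mollification, and the order of operations (dampen first, cut off last) is essential. Once you replace your truncation step by that chain, your argument coincides with the paper's.
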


\begin{proof}
  \emph{Ad \( \liminf \)-condition:} This follows from the lower semi-continuity of \( \widehat{\mathcal{E}} \) and \( \mu \mapsto \left| D\mu \right|(\mathbb{R}^d) \) \wrt narrow convergence and \(L^1\)-convergence, respectively.

  \emph{Ad \( \limsup \)-condition:}  We use a diagonal argument to find the recovery sequence: A general \(\mu \in BV(\mathbb{R}^d) \cap \mathcal{P}(\mathbb{R}^d)\) can by Proposition \ref{prp:rec-seq} be approximated by probability measures \( \mu_n \) with existing second moment such that \( \widehat{\mathcal{E}}[\mu_n] \rightarrow \widehat{\mathcal{E}}[\mu] \), namely
  \begin{equation}
    \label{eq:401}
    \mu_n = \widehat{\eta}_{n^{-1}} \cdot \mu + \left( 1 - \widehat{\eta}_{n^{-1}} \cdot \mu(\mathbb{R}^d) \right)\delta_0.
  \end{equation}
  By Lemma \ref{lem:11}, we can also smooth the approximating measures by convolution with a Gaussian \( \eta_{\varepsilon(n)} \) to get a narrowly convergent sequence \( \mu_n' \to \mu \),
    \begin{equation}
      \label{eq:386}
      \mu_n' = \eta_{\varepsilon(n)} \ast \mu_n = \eta_{\varepsilon(n)} \ast (\widehat{\eta}_{n^{-1}}\cdot \mu) + \left(1 - (\widehat{\eta}_{n^{-1}}\cdot \mu)(\mathbb{R}^{d})\right)\eta_{\varepsilon(n)},
    \end{equation}
    while still maintaining continuity in \( \widehat{\mathcal{E}} \). Since \( \left(1 - (\widehat{\eta}_{n^{-1}}\cdot \mu)(\mathbb{R}^{d})\right) \rightarrow 0 \), we can replace its factor \( \eta_{\varepsilon(n)} \) by \( \eta_1 \) to get
    \begin{equation}
      \label{eq:354}
      \mu_n'' = \eta_{\varepsilon(n)} \ast (\widehat{\eta}_{n^{-1}}\cdot \mu) + \left(1 - (\widehat{\eta}_{n^{-1}}\cdot \mu)(\mathbb{R}^{d})\right)\eta_{1},
    \end{equation}
    and still have convergence and continuity in \( \widehat{\mathcal{E}} \). These \( \mu_n'' \) can then be (strictly) cut-off by a smooth cut-off function \( \chi_M \) such that
    \begin{alignat}{2}
      \label{eq:389}
      \chi_M(x) & = 1 &&\text{ for } \left| x \right| \leq M, \\
      \chi_M(x) & \in [0,1] &&\text{ for } M < \left| x \right| < M+1, \\
      \chi_M(x) &= 0 &&\text{ for } \left| x \right| \geq M+1.
    \end{alignat}
    Superfluous mass can then be thrown onto a normalized version of \( \chi_1 \), summarized yielding
    \begin{equation}
      \label{eq:387}
      \mu_n''' = \chi_{M(n)} \cdot \mu_n'' + (1-\chi_{M(n)} \cdot \mu_n'')(\mathbb{R}^d) \frac{\chi_1}{\left\| \chi_1 \right\|_1},
    \end{equation}
    which for fixed \( n \) and \( M(n) \to \infty  \) is convergent in the \( 2 \)-Wasserstein topology, hence we can maintain continuity in \( \widehat{\mathcal{E}} \) by choosing \( M(n) \) large enough.

    Moreover, the sequence \( \mu_n''' \) is also strictly convergent in \( BV \): For the \( L^1 \)-convergence, we apply the Dominated Convergence Theorem for \( M(n) \to \infty \) when considering \( \mu_n''' \) and the Dominated Convergence Theorem and the approximation property of the Gaussian mollification of \( L^1 \)-functions for \( \mu_n'' \). Similarly, for the convergence of the total variation, consider
                        \begin{align}
      \leadeq{\left| \left| D \mu_n''' \right|(\mathbb{R}^d) - \left| D \mu \right|(\mathbb{R}^d) \right|}\\
      \leq {} & \left| \int_{\mathbb{R}^d} \chi_{M(n)}(x) \left| D \mu_n''(x) \right| \diff x - \int_{\mathbb{R}^d} \left| D \mu_n''(x) \right| \diff x \right| \label{eq:391} \\
      {} & + \int_{\mathbb{R}^d} \left| \nabla \chi_{M(n)}(x) \right| \mu_n''(x) \diff x \label{eq:392} \\
      {} & + \left| \left| D \mu_n''(x) \right| - \left| D \mu \right|(\mathbb{R}^d) \right| \label{eq:383} \\
      {} & + (1-\chi_{M(n)} \cdot \mu_n'')(\mathbb{R}^d) \frac{\left\| \nabla \chi_1 \right\|_1}{\left\| \chi_1 \right\|_1}, \label{eq:388}
    \end{align}
    where the terms \eqref{eq:391}, \eqref{eq:392} and \eqref{eq:388} tend to \( 0 \) for \( M(n) \) large enough by Dominated Convergence. For the remaining term \eqref{eq:383}, we have
    \begin{align}
      \leadeq{\left| \left| D \mu_n'' \right| - \left| D \mu \right|(\mathbb{R}^d) \right|}\\
      \leq {} & \left| \left| \eta_{\varepsilon(n)} \ast D (\widehat{\eta}_n \cdot \mu) \right|(\mathbb{R}^d) - \left| D (\widehat{\eta}_n \cdot \mu) \right|(\mathbb{R}^d) \right| \label{eq:393} \\
      {} & + \int_{\mathbb{R}^d} \left| \nabla \widehat{\eta}_n(x) \right| \diff \mu(x) \label{eq:394} \\
      {} & + \int_{\mathbb{R}^d} (1 - \widehat{\eta}_n(x)) \diff \left| D \mu \right|(x) \label{eq:395} \\
      {} & + \left(1 - (\widehat{\eta}_{n^{-1}}\cdot \mu)(\mathbb{R}^{d})\right)\left| D\eta_{1} \right|(\mathbb{R}^d). \label{eq:396}
    \end{align}
    Here, all terms vanish as well: \eqref{eq:393} for \( \varepsilon(n) \) large enough by the approximation property of the Gaussian mollification for \( BV \)-functions and \eqref{eq:394}, \eqref{eq:395} and \eqref{eq:396} by the Dominated Convergence Theorem for \( n \to \infty \). Finally, Lemma \ref{lem:13} applied to the \( \mu_n''' \) yields the desired sequence of point approximations.

  \emph{Ad equi-coercivity and existence of minimizers:} Equi-coercivity and compactness strong enough to ensure the existence of minimizers follow from the coercivity and compactness of level sets of \( \widehat{\mathcal{E}} \) and by \( \| Q_{h(N)}(\mu_N) \|_{L^1} = 1 \) together with compactness arguments in \( BV \), similar to Proposition \ref{prp:consist-cont-bv}. Since Lemma \ref{lem:29} ensures that the limit is in \( \mathcal{Y} \), standard \( \Gamma \)-convergence arguments then yield the convergence of minimizers.
\end{proof}

\subsubsection{Discretization by point-differences}
\label{sec:discrete-version-tv}

In one dimension, the geometry is sufficiently simple to avoid the use of kernel density estimators and in consequence the introduction of an additional scaling parameter as in the previous section and to allow us to explicitly see the intuitive effect the total variation regularization has on point masses (similar to the depiction in Figure \ref{fig:discrete-tv} in the previous section). In particular, formula \eqref{eq:192} below shows that the total variation acts as an additional attractive-repulsive force which enforces equi-spacing between the points masses.

In the following, let \(d = 1\) and \(\lambda > 0\) fixed.

Let \( N\in\mathbb{N} \), \( N \geq 2 \) and \(\mu_{N} \in \mathcal{P}^{N}(\mathbb{R})\) with
\begin{equation}
  \label{eq:189}
  \mu_{N} = \frac{1}{N} \sum_{i=1}^{N}\delta_{x_{i}} \quad \text{for some } x_{i} \in \mathbb{R}.
\end{equation}
Using the ordering on \(\mathbb{R}\), we can assume the \( (x_i)_i \) to be ordered, which allows us to associate to \(\mu_{N}\) a unique vector
\begin{equation}
  \label{eq:190}
  x := x(\mu_{N}) := (x_{1},\dotsc,x_{N}), \quad x_{1} \leq \dotsc \leq x_{N}.
\end{equation}
If \(x_{i} \neq x_{j}\) for all \(i \neq j \in \{1,\dotsc,N\}\), we can further
define an \(L^{1}\)-function which is piecewise-constant by
\begin{equation}
  \label{eq:191}
  \widetilde{Q}_{N}[\mu_N] := \frac{1}{N}\sum_{i = 2}^{N} \frac{1}{x_{i} - x_{i-1}}1_{[x_{i-1},x_{i}]}
\end{equation}
and compute the total variation of its derivative to be
\begin{align}
  \label{eq:192}
  \leadeq{\left| D\widetilde{Q}_{N}[\mu_N] \right|(\mathbb{R})} \\
  = {} & \frac{1}{N}\left[\sum_{i = 2}^{N-1}
    \left| \frac{1}{x_{i+1}-x_{i}} - \frac{1}{x_{i}-x_{i-1}} \right| +
    \frac{1}{x_{2}-x_{1}} + \frac{1}{x_{N}-x_{N-1}}\right],
\end{align}
if no two points are equal, and \(\infty\) otherwise. This leads us to the following definition of the regularized functional using piecewise constant functions:
\begin{align}
  \label{eq:193}
  \mathcal{P}^{N}_{\times}(\mathbb{R}) := {} &\left\{\mu \in \mathcal{P}^{N}(\mathbb{R}) : \mu = \frac{1}{N}
    \sum_{i=1}^{N}\delta_{x_{i}} \text{ with } x_{i} \neq x_{j} \text{ for } i \neq j
  \right\},\\
  \widehat{\mathcal{E}}^{\lambda}_{N,\mathrm{pwc}}[\mu] := {} &
  \begin{cases}
    \widehat{\mathcal{E}}[\mu] + \lambda \left| D\widetilde{Q}_{N}[\mu] \right|(\mathbb{R}), &\mu\in\mathcal{P}_{\times}^{N}(\mathbb{R});\\
    \infty,&\mu \in \mathcal{P}^N(\mathbb{R}) \setminus \mathcal{P}^N_\times(\mathbb{R}).
  \end{cases}
\end{align}

\begin{remark}
  The functions \(\widetilde{Q}_{N}[\mu_N]\) as defined above are not probability densities, but instead have mass \((N-1)/N\).
\end{remark}

We shall again prove \( \Gamma(q,\tau^-) \)-convergence as in Section \ref{sec:discr-kern-estim}, this time with the embeddings \( q_N \) given by \( \widetilde{Q}_{N} \). The following lemma yields the necessary recovery sequence:

\begin{lemma}
  \label{lem:15}
  If \(\mu \in \mathcal{P}_c(\mathbb{R}) \cap C_{c}^{\infty}(\mathbb{R}) \) is the density of a compactly supported probability measure, then there is a sequence \(\mu_{N} \in \mathcal{P}^{N}(\mathbb{R})\), \(N \in \mathbb{N}_{\geq 2}\) such that
  \begin{equation}
    \label{eq:194}
    \mu_{N} \rightarrow  \mu \quad \text{narrowly for } N\rightarrow \infty
  \end{equation}
  and
  \begin{equation}
    \label{eq:195}
    \widetilde{Q}_{N}[\mu_N]\rightarrow \mu \quad \text{in } L^{1}(\mathbb{R}), \quad \left| D\widetilde{Q}_{N}[\mu_N] \right|(\mathbb{R}) \rightarrow  \int_{\mathbb{R}} \left| \mu'(x) \right|
    \dd x \quad \text{for } N\rightarrow \infty.
  \end{equation}
\end{lemma}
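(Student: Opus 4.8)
The plan is to take for $\mu_N$ the empirical measure on the equal--mass quantiles of $\mu$. Writing $F(t):=\int_{-\infty}^t\mu(s)\,\dd s$ for the cumulative distribution function (continuous, in fact $C^1$, since $\mu\in C^\infty_c$) and $R_\mu$ with $\operatorname{supp}\mu\subseteq[-R_\mu,R_\mu]$, set $x_i:=\inf\{t:F(t)\ge (i-\tfrac12)/N\}$ for $i=1,\dots,N$, so that $F(x_i)=(i-\tfrac12)/N$. Then the $x_i$ are pairwise distinct, lie in $[-R_\mu,R_\mu]$, and satisfy $\mu([x_{i-1},x_i])=1/N$ for $i=2,\dots,N$; in particular $\widetilde{Q}_N[\mu_N]$ is well defined, equal on $[x_{i-1},x_i]$ to the $\mu$--average $\bar\mu_i:=\tfrac1{x_i-x_{i-1}}\int_{x_{i-1}}^{x_i}\mu=\tfrac{1/N}{x_i-x_{i-1}}$. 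Since each $x_i$ lies in the $i$th tile of the $d=1$ good tiling of Definition~\ref{def:good-tiling} (whose tiles are precisely the $1/N$--mass quantile intervals), the narrow convergence $\mu_N\to\mu$ is immediate from Lemma~\ref{lem:13}.

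For the $L^1$--convergence: on $(-\infty,x_1)\cup(x_N,\infty)$ one has $\widetilde{Q}_N[\mu_N]=0$ while $\mu$ carries total mass $1/N$ there; on each $[x_{i-1},x_i]$, picking by the intermediate value theorem a point $\xi_i\in[x_{i-1},x_i]$ with $\mu(\xi_i)=\bar\mu_i$ gives $|\mu(x)-\bar\mu_i|\le\int_{x_{i-1}}^{x_i}|\mu'|$, hence $\int_{x_{i-1}}^{x_i}|\widetilde{Q}_N[\mu_N]-\mu|\le(x_i-x_{i-1})\int_{x_{i-1}}^{x_i}|\mu'|$, and this per--interval error is trivially also $\le\int_{x_{i-1}}^{x_i}\widetilde{Q}_N[\mu_N]+\int_{x_{i-1}}^{x_i}\mu=2/N$. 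Since $[-R_\mu,R_\mu]$ contains at most $2R_\mu/\delta$ disjoint intervals of length $\ge\delta$, summing both bounds yields, for every $\delta>0$,
\[
  \|\widetilde{Q}_N[\mu_N]-\mu\|_{L^1}\;\le\;\frac1N+\delta\,\|\mu'\|_{L^1}+\frac2N\cdot\frac{2R_\mu}{\delta},
\]
whence $\limsup_N\|\widetilde{Q}_N[\mu_N]-\mu\|_{L^1}\le\delta\,\|\mu'\|_{L^1}$ for all $\delta>0$, i.e.\ $\widetilde{Q}_N[\mu_N]\to\mu$ in $L^1(\mathbb{R})$.

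For the total variation I would use the explicit formula \eqref{eq:192}, which reads $|D\widetilde{Q}_N[\mu_N]|(\mathbb{R})=\bar\mu_2+\bar\mu_N+\sum_{i=2}^{N-1}|\bar\mu_{i+1}-\bar\mu_i|$. With the same points $\xi_i$ one has $\xi_2\le\xi_3\le\cdots\le\xi_N$ (because $\xi_i\le x_i\le\xi_{i+1}$) and $\bar\mu_{i+1}-\bar\mu_i=\mu(\xi_{i+1})-\mu(\xi_i)=\int_{\xi_i}^{\xi_{i+1}}\mu'(t)\,\dd t$, so the sum telescopes in absolute value to $\sum_{i=2}^{N-1}\int_{\xi_i}^{\xi_{i+1}}|\mu'|=\int_{\xi_2}^{\xi_N}|\mu'|\le\int_{\mathbb{R}}|\mu'|$. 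The two boundary terms vanish: $F(x_1),F(x_2)\to0$ forces $x_1,x_2\to a:=\inf\operatorname{supp}\mu$, and $\mu$ continuous with $\mu(a)=0$ gives $\bar\mu_2\le\sup_{[x_1,x_2]}\mu\to0$, and symmetrically $\bar\mu_N\to0$. Hence $\limsup_N|D\widetilde{Q}_N[\mu_N]|(\mathbb{R})\le\int_{\mathbb{R}}|\mu'|$. The matching bound $\liminf_N|D\widetilde{Q}_N[\mu_N]|(\mathbb{R})\ge\int_{\mathbb{R}}|\mu'|=|D\mu|(\mathbb{R})$ comes for free from the $L^1$--convergence just proved together with the lower semicontinuity of the total variation under $L^1$--convergence (\cite[Chapter~5.2, Theorem~1]{92-Evans-fine-properties}); combining the two gives $|D\widetilde{Q}_N[\mu_N]|(\mathbb{R})\to\int_{\mathbb{R}}|\mu'(x)|\,\dd x$.

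I expect the main obstacle to be that the quantile bins $[x_{i-1},x_i]$ need not shrink uniformly: a $C^\infty_c$ density may vanish identically on subintervals of its support, so a (bounded) number of bins can stay macroscopically wide, and neither the $L^1$ estimate nor a naive Riemann--sum argument for the total variation can rely on a vanishing mesh. The remedy is the two--regime estimate above (oscillation bound on narrow bins, crude $2/N$ bound on the finitely many wide ones) for the $L^1$ step, and --- for the total--variation lower bound --- extracting it from $L^1$--lower semicontinuity rather than estimating $\sum|\bar\mu_{i+1}-\bar\mu_i|$ from below directly.
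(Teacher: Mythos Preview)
Your proof is correct and follows the paper's overall plan: quantile points for $\mu_N$, the mean value theorem to write $\bar\mu_i=\mu(\xi_i)$ for the TV upper bound, and lower semi-continuity of the total variation for the matching lower bound. Two minor differences are worth noting. For the $L^1$-convergence the paper uses dominated convergence directly (pointwise a.e.\ convergence of $\widetilde Q_N[\mu_N]$ to $\mu$, with $2\|\mu\|_\infty\,1_{[-R_\mu,R_\mu]}$ as dominating function), whereas your short/long-interval split is more quantitative and sidesteps the case analysis for points in the holes of $\supp\mu$; your version even yields an explicit rate. For the TV upper bound the paper avoids your separate limit argument for $\bar\mu_2,\bar\mu_N\to0$ by adjoining $t_1=-R_\mu$ and $t_{N+1}=R_\mu$ (where $\mu$ vanishes) to the partition, so that the two boundary terms become $|\mu(t_2)-\mu(t_1)|$ and $|\mu(t_{N+1})-\mu(t_N)|$ and are already absorbed into the pointwise-variation bound $\sum_i|\mu(t_{i+1})-\mu(t_i)|\le V(\mu)=|D\mu|(\mathbb{R})$.
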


\begin{proof}
  \thmenumhspace{-1em}
  \begin{enumpara}
  \item \emph{Definition and narrow convergence:} Let \( \supp \mu \subseteq [-R_\mu, R_\mu] \) and define the vector \(x^{N} \in \mathbb{R}^{N}\) as an \(N\)th quantile of \( \mu \), i.e.\@
    \begin{equation}
      \label{eq:196}
      \int_{x^{N}_{i-1}}^{x^{N}_{i}} \mu(x) \dd x = \frac{1}{N} \quad \text{with
      } x^{N}_{i-1} < x^{N}_{i} \text{ for all } i = 1,\dotsc,N-1,
    \end{equation}
    where we set \( x^{N}_{0} = -R_\mu \) and \( x^{N}_{N} = R_\mu \). Narrow convergence of the corresponding measure then follows by the same arguments used in the proof of Lemma \ref{lem:15}.
  \item \emph{\(L^1\)-convergence:} We want to use the Dominated Convergence Theorem: Let \(x \in \mathbb{R}\) with \(\mu(x) > 0\). Then, by the continuity of \( \mu \), there are \(x_{i-1}^{N}(x)\), \( x_{i}^{N}(x) \) such that \( x \in [x_{i-1}^N(x), x_{i}^N(x)] \) and
    \begin{align}
      \label{eq:197}
      \mu(x) - \widetilde{Q}_{N}[\mu_N](x) = {} & \mu(x) - \frac{1}{N(x^{N}_{i}(x)-x^{N}_{i-1}(x))}\\
      = {} & \mu(x) - \frac{1}{x^{N}_{i}(x)-x^{N}_{i-1}(x)}
      \int_{x^{N}_{i-1}(x)}^{x^{N}_{i}(x)}\mu(y) \dd y. \label{eq:390}
    \end{align}
    Again by \(\mu(x) > 0\) and the continuity of \(\mu\),
    \begin{equation}
      \label{eq:198}
      x^{N}_{i}(x) - x^{N}_{i-1}(x) \rightarrow  0 \quad \text{for } N\rightarrow \infty,
    \end{equation}
    and therefore
    \begin{equation}
      \label{eq:199}
      \widetilde{Q}_{N}[\mu_N](x) \to \mu(x) \quad \text{for all } x \text{ with } \mu(x) > 0.
    \end{equation}
    On the other hand, if we consider an \( x \in [-R_\mu,R_\mu] \) such that \( x \notin \supp \mu \), say \( x \in [a,b] \) such that \( \mu(x) = 0 \) for all \( x \in [a,b] \) and again denote by \( x^N_{i-1}(x), x^N_i(x) \) the two quantiles for which \( x \in [x^N_{i-1}(x),x^N_i(x)] \), then \( x_i^N(x) - x_{i-1}^N(x) \) stays bounded from below because \( x_{i-1}^N(x) \leq a \) and \( x_i^N(x) \geq b \), together with \( N \to \infty \) implying that for such an \( x \),
    \begin{equation}
     \widetilde{Q}_{N}[\mu_N](x) = \frac{1}{N(x^N_i - x^N_{i-1})} \leq \frac{1}{N(b-a)} \to 0. \label{eq:402}
   \end{equation}
   Taking into account that \( \mu(x) = 0 \) for an \( x \in \supp \mu \) can only occur at countably many points, we thus have
    \begin{equation}
      \label{eq:397}
      \widetilde{Q}_{N}[\mu_N](x) \to \mu(x) \quad \text{for almost every } x \in \mathbb{R}.
    \end{equation}
    Furthermore, by \eqref{eq:390} and the choice of the \( (x^N_i)_i \), we can estimate the difference by 
    \begin{equation}
      \label{eq:398}
      \left| \mu(x) - \widetilde{Q}_{N}[\mu_N](x) \right| \leq 2 \left\| \mu \right\|_\infty \cdot 1_{[-R_\mu,R_\mu]}(x),
    \end{equation}
    yielding an integrable dominating function for \( \left| \mu(x) - \widetilde{Q}_{N}[\mu_N](x) \right| \) and therefore justifying the \( L^1 \)-convergence
    \begin{equation}
      \label{eq:385}
      \int_{\mathbb{R}}\left| \mu(x) - \widetilde{Q}_{N}[\mu_N](x) \right| \dd x \to 0, \quad N \to \infty.
    \end{equation}
                                                  \item \emph{Strict BV-convergence:} For strict convergence of \(\widetilde{Q}_{N}[\mu_N]\) to \(\mu\), we additionally have to check that \(\limsup_{N\rightarrow \infty} \left| D\widetilde{Q}_{N}[\mu_N] \right|(\mathbb{R}) \leq \left| D\mu \right|(\mathbb{R}) \) (since the inequality in the other direction is already fulfilled by the lower semi-continuity of the total variation). To this end, consider
    \begin{align}
      \label{eq:202}
      \leadeq{\left| D\widetilde{Q}_{N}[\mu_N] \right|(\mathbb{R})}\\
      = {} &\sum_{i = 2}^{N-1} \left| \frac{1}{N}\frac{1}{x^{N}_{i+1}-x^{N}_{i}} - \frac{1}{N}\frac{1}{x^{N}_{i}-x^{N}_{i-1}} \right| + \frac{1}{N(x^{N}_{2}-x^{N}_{1})} + \frac{1}{N(x^{N}_{N}-x^{N}_{N-1})}\\
      = {} & \sum_{i = 2}^{N-1} \left| \frac{1}{x^{N}_{i+1}-x^{N}_{i}}\int_{x^{N}_{i}}^{x^{N}_{i+1}}\mu(x) \dd x - \frac{1}{x^{N}_{i}-x^{N}_{i-1}}\int_{x^{N}_{i-1}}^{x^{N}_{i}}\mu(x) \dd x \right|\\
      &+ \frac{1}{x^{N}_{2}-x^{N}_{1}}\int_{x^{N}_{1}}^{x^{N}_{2}}\mu(x) \dd x + \frac{1}{x^{N}_{N}-x^{N}_{N-1}}\int_{x^{N}_{N-1}}^{x^{N}_{N}}\mu(x) \dd x\\
      = {} &\sum_{i = 1}^{N}\left| \mu(t_{i+1})-\mu(t_{i}) \right|
    \end{align}
    for \(t_{i} \in [x^{N}_{i},x^{N}_{i-1}]\), \(i = 2,\dotsc,N\) chosen by the mean value theorem (for integration) and \(t_{1},t_{N+1}\) denoting \( -R_\mu \) and \( R_\mu \), respectively. Hence,
    \begin{equation}
      \label{eq:203}
      \left| D\widetilde{Q}_{N}[\mu_{N}] \right|(\mathbb{R}) \leq \sup \left\{\sum_{i=1}^{n - 1}\left| \mu(t_{i+1})-\mu(t_{i}) \right| : n \geq 2,\, t_{1} < \dotsb < t_{n}\right\} = V(\mu),
    \end{equation}
    the pointwise variation of \(\mu\), and the claim now follows from \(V(\mu) = \left| D\mu \right|(\mathbb{R})\) by \cite[Theorem 3.28]{00-Amb-Fusco-Pallara-BV}, because by the smoothness of \( \mu \), it is a good representative of its equivalence class in \( BV(\mathbb{R}) \), \ie, one for which the pointwise variation coincides with the measure theoretical one. \qedhere
  \end{enumpara}
\end{proof}

As in the previous section, we have to verify that a limit point of a sequence \( (\mu_N, \widetilde{Q}_N[\mu_N]) \) is in the diagonal \( \mathcal{Y} \):

\begin{lemma}[Consistency of the embedding \( \widetilde{Q}_N \)]
  \label{lem:30}
  Let \( (\mu_N)_N \) be a sequence where \( \mu_N \in \mathcal{P}^N(\mathbb{R}) \), \( \mu_N \to \mu \) narrowly and \( \widetilde{Q}_N[\mu_N] \to \widetilde{\mu} \) in \( L^1(\mathbb{R}) \). Then \( \mu = \widetilde{\mu} \).
\end{lemma}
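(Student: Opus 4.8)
The plan is to imitate the strategy of Lemma~\ref{lem:29}: show that $\widetilde{Q}_N[\mu_N]$, regarded as the measure $\widetilde{Q}_N[\mu_N]\,\mathcal{L}^1$, converges narrowly to $\mu$. Since $\widetilde{Q}_N[\mu_N] \to \widetilde{\mu}$ in $L^1$ implies narrow convergence by Lemma~\ref{lem:1}, and narrow limits are unique because the narrow topology is metrizable (Lemma~\ref{lem:25}) and hence Hausdorff, this forces $\mu = \widetilde{\mu}$. In contrast to Lemma~\ref{lem:29}, the embedding $\widetilde{Q}_N$ is not a convolution against a kernel of vanishing bandwidth: it smears each atom $\tfrac1N\delta_{x_i}$ over the whole interval $[x_{i-1},x_i]$, whose length need not be uniformly small, so a direct estimate against Lipschitz test functions does not close. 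Instead, in one dimension I would compare cumulative distribution functions, which is entirely elementary.

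First I would record the trivial facts that $\|\widetilde{Q}_N[\mu_N]\|_{L^1} = (N-1)/N \to 1$ and $\widetilde{Q}_N[\mu_N] \geq 0$, so that (using $\|\widetilde{Q}_N[\mu_N] - \widetilde{\mu}\|_{L^1} \to 0$ together with the reverse triangle inequality and passage to an a.e.-convergent subsequence) $\widetilde{\mu} \geq 0$ a.e.\@ and $\|\widetilde{\mu}\|_{L^1} = 1$; hence $\widetilde{\mu}\,\mathcal{L}^1 \in \mathcal{P}(\mathbb{R})$ and $\widetilde{F}(t) := \int_{-\infty}^{t}\widetilde{\mu}(s)\dd s$ is a genuine, continuous distribution function. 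Writing the ordered atoms of $\mu_N$ as $x_1 < \dots < x_N$ (necessarily distinct, since otherwise $\widetilde{Q}_N[\mu_N] \notin L^1$) and setting $F_N(t) := \mu_N((-\infty,t])$ and $G_N(t) := \int_{-\infty}^{t}\widetilde{Q}_N[\mu_N](s)\dd s$, a direct computation from \eqref{eq:191} gives, for $t \in [x_k,x_{k+1})$ with the obvious conventions at the endpoints, that $F_N(t) = k/N$ while $G_N(t) \in [(k-1)/N, k/N]$, whence
\[
0 \leq F_N(t) - G_N(t) \leq \frac{1}{N} \qquad \text{for all } t \in \mathbb{R}.
\]

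From here the conclusion is quick. On the one hand, $L^1$-convergence yields $|G_N(t) - \widetilde{F}(t)| \leq \|\widetilde{Q}_N[\mu_N] - \widetilde{\mu}\|_{L^1} \to 0$ uniformly in $t$, so by the displayed bound $F_N \to \widetilde{F}$ pointwise on $\mathbb{R}$; since $\widetilde{F}$ is a distribution function, this means $\mu_N \to \widetilde{\mu}\,\mathcal{L}^1$ narrowly. On the other hand, $\mu_N \to \mu$ narrowly by hypothesis. By uniqueness of narrow limits the two agree, i.e.\@ $\mu = \widetilde{\mu}\,\mathcal{L}^1$, which is the claim.

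The only mildly technical point is the bookkeeping in the explicit evaluation of $G_N$ and the verification of $|F_N - G_N| \leq 1/N$, including the boundary intervals; once that is in place everything is immediate. One should also note that the normalization mismatch — $\widetilde{Q}_N[\mu_N]$ has mass $(N-1)/N$ rather than $1$ — is harmless, precisely because that defect of $1/N$ is exactly the size of the gap appearing in the bound above.
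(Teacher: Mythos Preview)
Your proof is correct and is essentially the same as the paper's: both rest on the observation that the cumulative distribution function of $\widetilde{Q}_N[\mu_N]$ is the piecewise-linear interpolant of the step-function CDF of $\mu_N$, so the two differ by at most $1/N$ everywhere. The only cosmetic difference is direction: the paper uses $F_N \to F$ (from $\mu_N \to \mu$ narrowly) to conclude $\widetilde{F}_N \to F$, whereas you use $G_N \to \widetilde{F}$ (from the $L^1$-convergence) to conclude $F_N \to \widetilde{F}$ and then invoke uniqueness of narrow limits; your version is arguably a touch cleaner because $\widetilde{F}$ is automatically continuous, sidestepping any fuss about continuity points.
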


\begin{proof}
  Denote the distribution functions of \( \widetilde{Q}_N[\mu_N] \), \( \mu_N \) and \( \mu \) by \( \widetilde{F}_N \), \( F_N \) and \( F \), respectively. We can deduce \( \mu = \widetilde{\mu} \) if \( \widetilde{F}_N(x) \to F(x) \) for every \( x \in \mathbb{R} \)  (even if the measures \( \widetilde{Q}_N[\mu_N] \) have only mass \( (N-1)/N \), this is enough to show that the limit measures have to coincide, for example by rescaling the measures to have mass \( 1 \)). Note that the construction of \( \widetilde{Q}_N[\mu_N] \) exactly consists of replacing the piecewise constant functions \( F_n \) by piecewise linear functions interpolating between the points \( (x^N_i)_i \). Now, taking into account that the jump size \( F_N(x^N_i)-F_N(x^N_{i-1}) \) is always \( 1/N \) we see that
  \begin{align}
    \label{eq:405}
    | \widetilde{F}_N(x) - F(x)  | \leq {} & | \widetilde{F}_N(x) - F_N(x) | + \left| F_N(x) - F(x) \right|\\
    \leq {} & \frac{1}{N} + \left| F_N(x) - F(x) \right|\to 0, \quad N \to 0,
  \end{align}
  which is the claimed convergence.
\end{proof}

\begin{theorem}
  [Consistency of \( \widehat{\mathcal{E}}^{\lambda}_{N,\mathrm{pwc}} \)]
  \label{thm:cons-discrete-tv}
  For \(N\rightarrow \infty\), \(\widehat{\mathcal{E}}^{\lambda}_{N} \xrightarrow{\Gamma(q,\tau^-)} \widehat{\mathcal{E}}^{\lambda}\) with respect to the topology of \( Y \) in \eqref{eq:379} in the case \( d = 1 \), \ie, the topology induced by narrow convergence together with \(L^{1}\)-convergence of the associated densities, and the family \((\widehat{\mathcal{E}}^{\lambda}_{N})_{N}\) is equi-coercive. In particular, every sequence of minimizers of \( \widehat{\mathcal{E}}^{\lambda}_{N} \)  admits a subsequence converging to a minimizer of \(\widehat{\mathcal{E}}^{\lambda}\).
\end{theorem}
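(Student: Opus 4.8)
The plan is to verify the \( \liminf \)- and \( \limsup \)-conditions of Definition~\ref{def:-gammaq-tau} together with equi-coercivity, following the blueprint of Theorem~\ref{thm:con-kern-est} but with the piecewise-constant embeddings \( \widetilde{Q}_N \) in place of the kernel embeddings \( Q_{h(N)} \). For the \( \liminf \)-condition, I would take any \( \mu_N \in \mathcal{P}^N(\mathbb{R}) \) with \( \mu_N \to \mu \) narrowly and \( \widetilde{Q}_N[\mu_N] \to \widetilde\mu \) in \( L^1(\mathbb{R}) \); Lemma~\ref{lem:30} forces \( \mu = \widetilde\mu \), so the limit pair lies on the diagonal \( \mathcal{D} \), and then \( \widehat{\mathcal{E}}^\lambda[\mu] \leq \liminf_N \widehat{\mathcal{E}}^\lambda_{N,\mathrm{pwc}}[\mu_N] \) follows by adding the narrow lower semi-continuity of \( \widehat{\mathcal{E}} \) (Proposition~\ref{prp:compctness-sublvls}) to the \( L^1 \)-lower semi-continuity of \( \mu \mapsto \left| D\mu \right|(\mathbb{R}) \), exactly as in Theorem~\ref{thm:con-kern-est}.

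The \( \limsup \)-condition is the substantial part. Since Lemma~\ref{lem:15} only provides a recovery sequence when the target \( \mu \) is a compactly supported \( C^\infty \) density, for a general \( \mu \in \mathcal{P}(\mathbb{R}) \cap BV(\mathbb{R}) \) I would first approximate \( \mu \) by such densities, reusing the chain of regularizations from the proof of Theorem~\ref{thm:con-kern-est}: Gaussian dampening as in Proposition~\ref{prp:rec-seq} to land in \( \mathcal{P}_2(\mathbb{R}) \) with \( \widehat{\mathcal{E}} \) converging, then Gaussian mollification (Lemma~\ref{lem:11}) to gain smoothness with the residual mass placed on a fixed bump as in \eqref{eq:354}, then a smooth cut-off \( \chi_{M(n)} \) followed by renormalization, which for fixed \( n \) is \( 2 \)-Wasserstein-continuous and hence preserves continuity of \( \widehat{\mathcal{E}} \). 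As in Theorem~\ref{thm:con-kern-est}, each step is also \emph{strictly} \( BV \)-convergent — the \( L^1 \)-parts by Dominated Convergence, the total-variation parts by the approximation property of Gaussian mollification in \( BV \) together with the estimates on \( \nabla\chi_M \) and \( \nabla\widehat{\eta}_n \). Having reduced to a smooth compactly supported density, Lemma~\ref{lem:15} supplies point measures whose \( \widetilde{Q}_N \)-images converge in \( L^1 \) and whose total variations converge to \( \left| D\mu \right|(\mathbb{R}) \); a diagonal extraction over the regularization index and \( N \) then yields \( \mu_N \in \mathcal{P}^N_\times(\mathbb{R}) \) with \( \widehat{\mathcal{E}}^\lambda_{N,\mathrm{pwc}}[\mu_N] = \widehat{\mathcal{E}}[\mu_N] + \lambda\left| D\widetilde{Q}_N[\mu_N] \right|(\mathbb{R}) \to \widehat{\mathcal{E}}[\mu] + \lambda\left| D\mu \right|(\mathbb{R}) = \widehat{\mathcal{E}}^\lambda[\mu] \).

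For equi-coercivity and the existence of minimizers I would argue as in Proposition~\ref{prp:consist-cont-bv} and Theorem~\ref{thm:con-kern-est}: from \( \widehat{\mathcal{E}}^\lambda_{N,\mathrm{pwc}} \geq \widehat{\mathcal{E}} \) the measure-components of any sublevel set lie in a narrowly compact sublevel of \( \widehat{\mathcal{E}} \) (Proposition~\ref{prp:compctness-sublvls}), and on such a sublevel \( \left| D\widetilde{Q}_N[\mu_N] \right|(\mathbb{R}) \) is bounded while \( \| \widetilde{Q}_N[\mu_N] \|_{L^1} = (N-1)/N \leq 1 \), so a \( BV \)-compactness argument extracts an \( L^1 \)-convergent subsequence of densities whose limit lies on \( \mathcal{D} \) by Lemma~\ref{lem:30}; combined with the lower semi-continuity from the \( \liminf \)-step, the direct method gives a minimizer of each \( \widehat{\mathcal{E}}^\lambda_{N,\mathrm{pwc}} \) (noting that \( \mathcal{P}^N_\times(\mathbb{R}) \) differs from \( \mathcal{P}^N(\mathbb{R}) \) only on a set where the functional is \( +\infty \)), and the convergence of minimizers is the standard consequence of \( \Gamma(q,\tau^-) \)-convergence recalled after Definition~\ref{def:-gammaq-tau}.

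The main obstacle I anticipate is the \( \limsup \)-construction: one has to keep \( \widehat{\mathcal{E}} \)-continuity (which needs finite second moments and holds only along carefully chosen sequences) and \emph{strict} \( BV \)-convergence of the densities (which is what Lemma~\ref{lem:15} consumes) alive at the same time, and none of the renormalizations forced by the Gaussian dampening or the compact cut-off may destroy either; everything else reduces to lower semi-continuity and compactness that are already established.
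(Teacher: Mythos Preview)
Your proposal is correct and follows essentially the same approach as the paper: the paper's proof verifies the \(\liminf\)-condition via lower semi-continuity of the two summands, obtains the recovery sequence by literally reusing the diagonal regularization argument from Theorem~\ref{thm:con-kern-est} with Lemma~\ref{lem:15} in place of Lemma~\ref{lem:13}, and treats equi-coercivity and convergence of minimizers exactly as you outline, invoking Lemma~\ref{lem:30} for the diagonal constraint. Your write-up is in fact more detailed than the paper's terse version, which simply points back to Theorem~\ref{thm:con-kern-est} for the \(\limsup\)-construction.
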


\begin{proof}
  \thmenumhspace{-1em}
  \begin{enumpara}
  \item \emph{\( \liminf \)-condition:} Let \(\mu_{N} \in \mathcal{P}_{N}(\mathbb{R})\) and
    \(\mu \in BV(\mathbb{R}) \cap \mathcal{P}(\mathbb{R}) \) with \(\mu_{N}\rightarrow \mu\) narrowly and \(\widetilde{Q}_{N}[\mu_N]\rightarrow \mu\) in \(L^{1}\).
    Then,
    \begin{equation}
      \label{eq:205}
      \liminf_{N\rightarrow \infty} \widehat{\mathcal{E}}^{\lambda}_{N,\mathrm{pwc}}[\mu_{N}] = \liminf_{N\rightarrow \infty} \left[\widehat{\mathcal{E}}[\mu_{N}] + \left| D\widetilde{Q}_{N}[\mu_N] \right|(\mathbb{R}) \right] \geq \widehat{\mathcal{E}}[\mu] + \left| D\mu \right|(\mathbb{R})
    \end{equation}
    by the lower semi-continuity of the summands with respect to the involved
    topologies.
  \item \emph{\( \limsup \)-condition:} We use the same diagonal argument used in the proof of Theorem \ref{thm:con-kern-est}, replacing the final application of Lemma \ref{lem:13} there by Lemma \ref{lem:15}, which serves the same purpose, but uses the point differences instead of the kernel estimators.
  \item \emph{Equi-coercivity and existence of minimizers:} The coercivity follows analogously to the proof of Theorem \ref{thm:con-kern-est}, which also justifies the existence of minimizers for each \( N \). The convergence of minimizers to an element of \( \mathcal{Y} \) then follows by standard arguments together with Lemma \ref{lem:30}.
  \end{enumpara}
\end{proof}

\begin{remark}
  \label{rem:non-diagonal-topology}
  In both cases, instead of working with two different topologies, we could also consider
  \begin{equation}
    \label{eq:399}
    \widehat{\mathcal{E}}^\lambda_{N,\mathrm{alt}} := \widehat{\mathcal{E}}[Q[\mu]] + \lambda \left| DQ[\mu] \right| (\mathbb{R}^d),
  \end{equation}
  for a given embedding \( Q \) (which in the case of point differences would have to be re-scaled to keep mass \( 1 \)). Then, we would obtain the same results (with identical arguments), but without the need to worry separately about narrow convergence, since it is then implied by the \( L^1 \)-convergence of \( Q[\mu_N] \) by Lemma \ref{lem:1}.
\end{remark}

\section{Numerical experiments}
\label{sec:numer-exper}

\begin{figure}[t]
  \centering
  \subfigure[\( \omega_1 \)  \label{fig:data1}
]{\includegraphics[width=6cm]{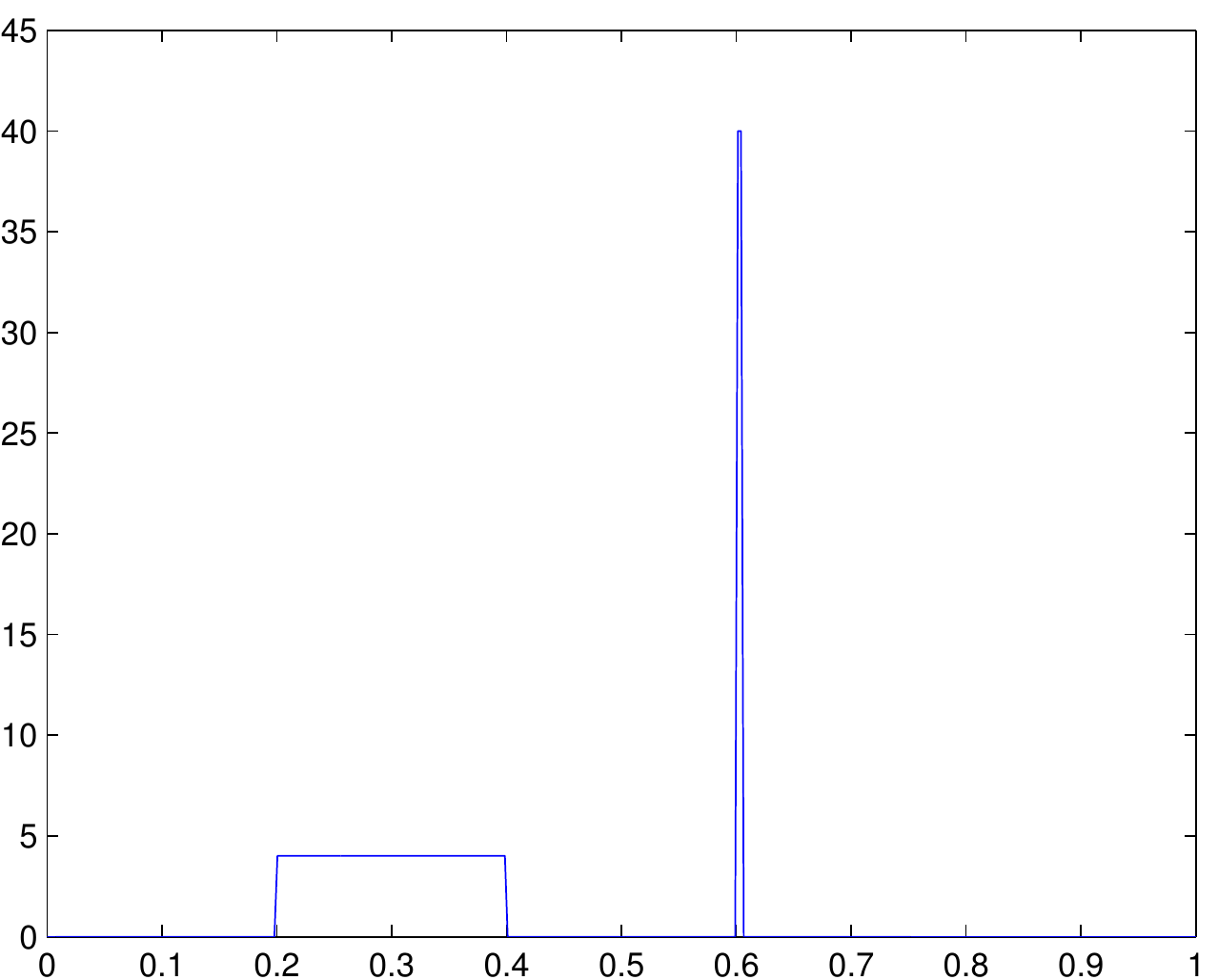}}
  \hspace{0.5cm}
    \subfigure[\( \omega_2 \)  \label{fig:data2} ]{\includegraphics[width=6cm]{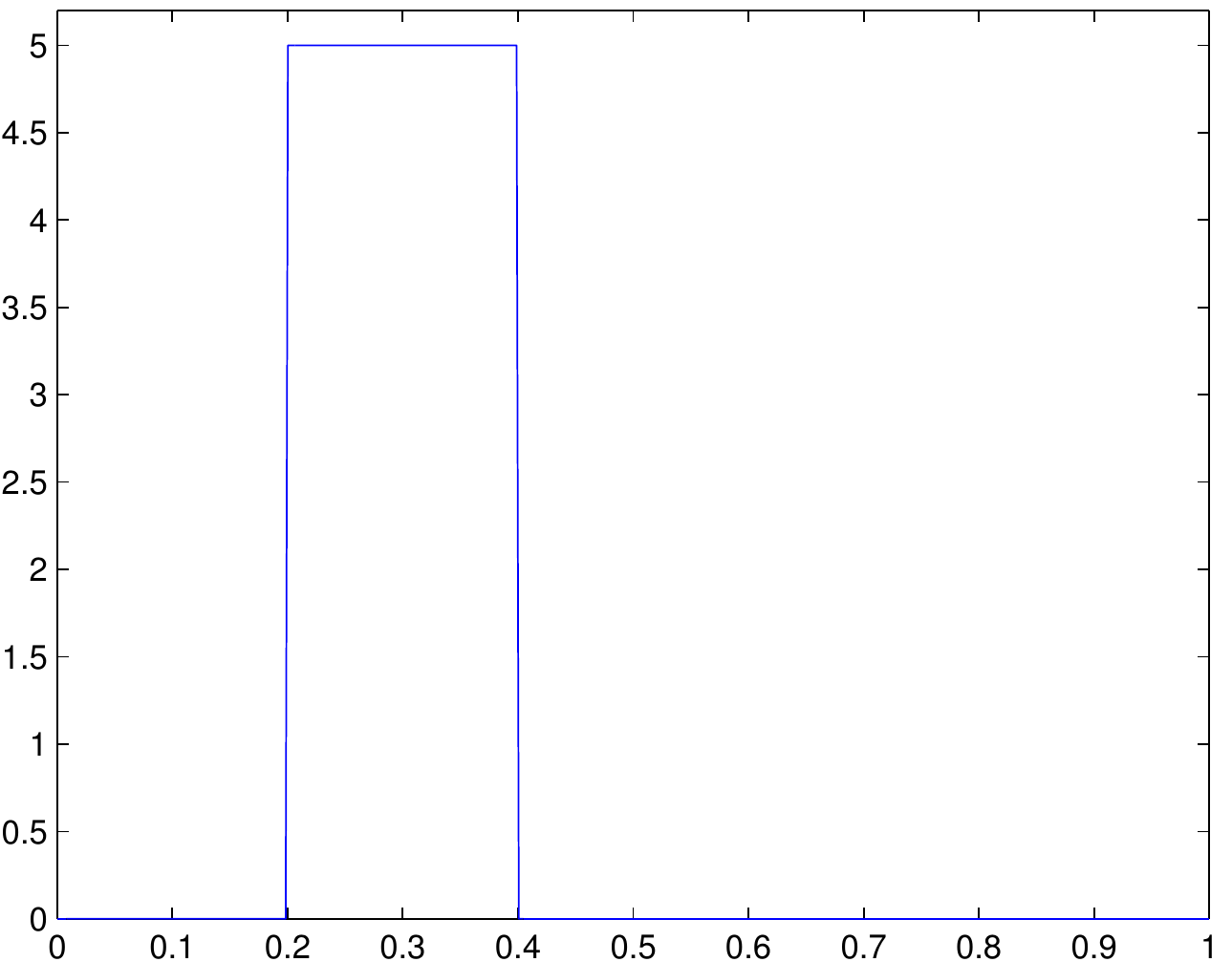}}
  \caption{The data \( \omega_1 \) and \( \omega_2 \)}
\end{figure} 

In this section, we shall show a few results of the numerical computation of minimizers to \( \widehat{\mathcal{E}}^\lambda \) and \( \widehat{\mathcal{E}}^\lambda_N \) in 1D in order to numerically verify the \( \Gamma \)-convergence result in Theorem \ref{thm:con-kern-est}.

\subsection{Grid approximation}
\label{sec:continuous-case}

\begin{figure}[t]
  \centering
  \subfigure[\( \mu \in L^1, \, \lambda = 10^{-4} \)]{\includegraphics[width=6cm]{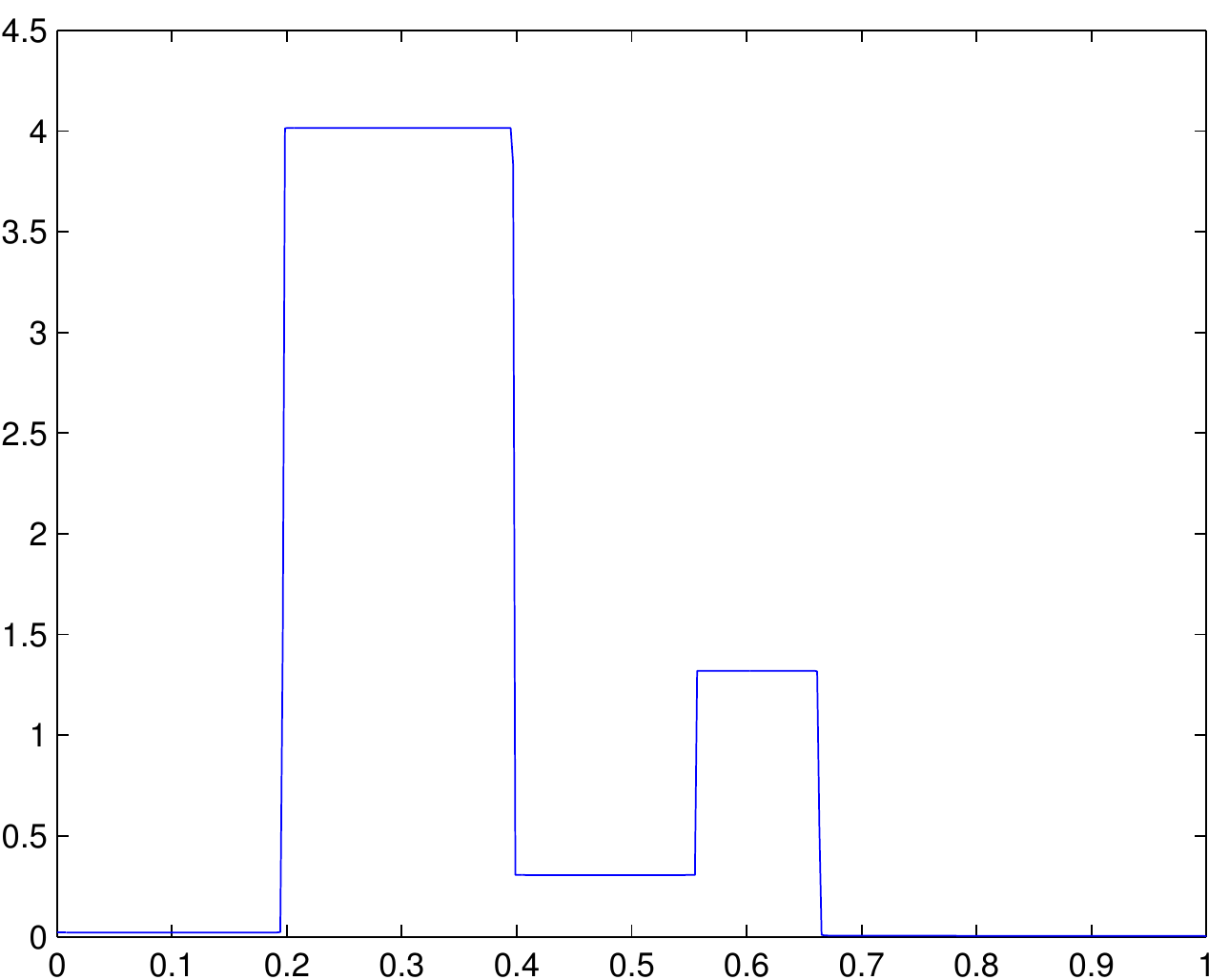}}
  \hspace{0.5cm}
  \subfigure[Particles, \( \lambda = 10^{-4} \)]{\includegraphics[width=6cm]{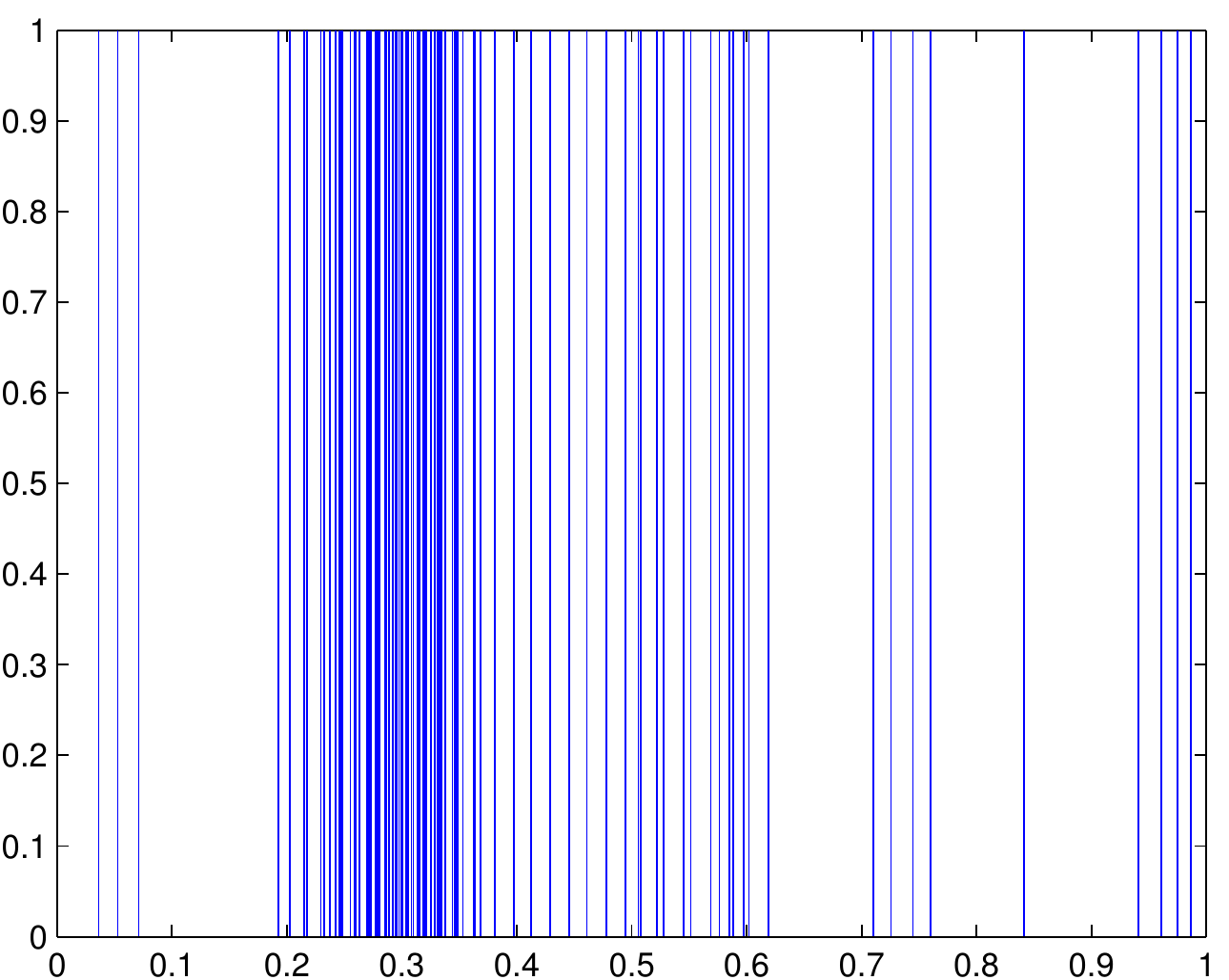}}

  \subfigure[\( \mu \in L^1, \, \lambda = 10^{-6} \)]{\includegraphics[width=6cm]{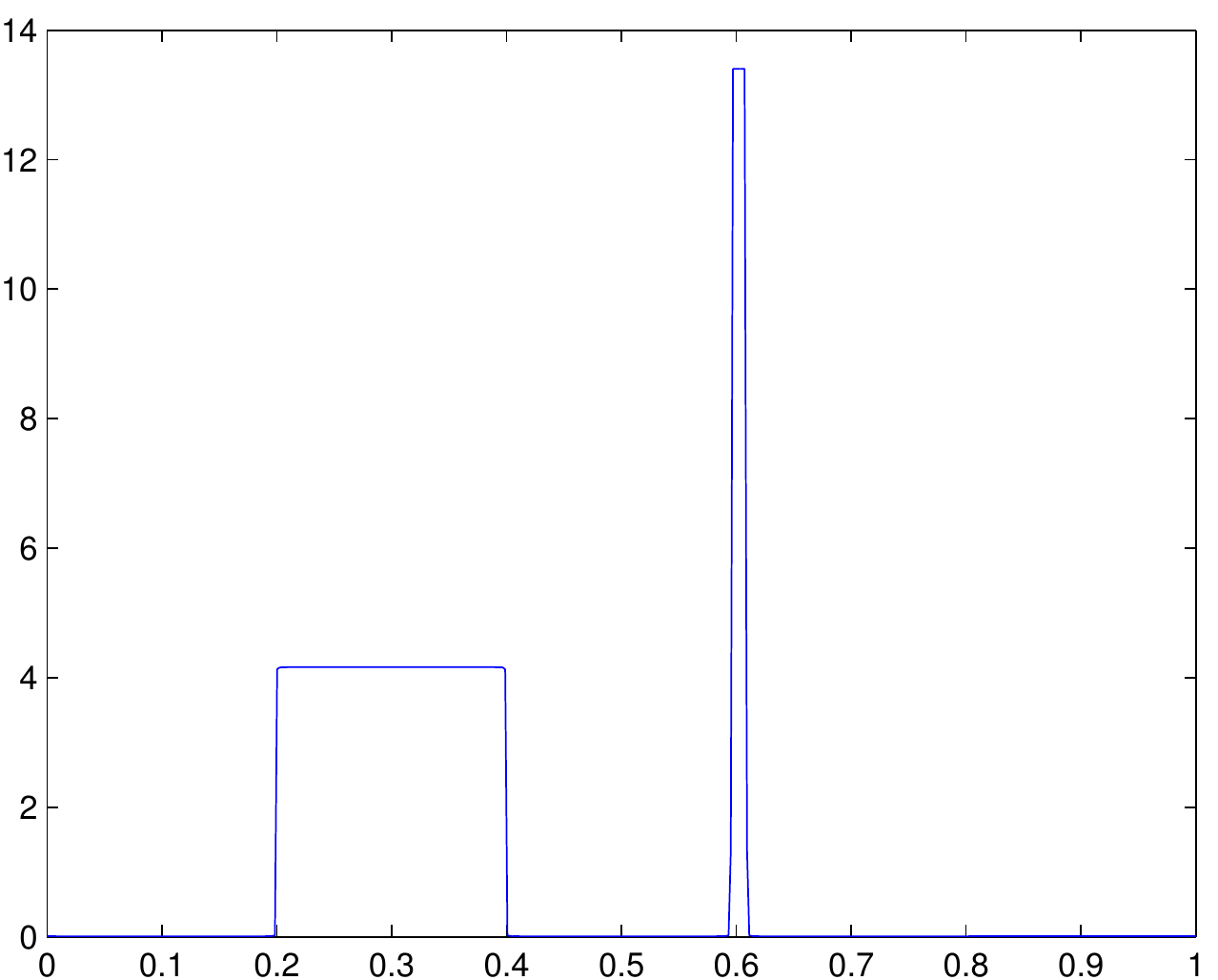}}
  \hspace{0.5cm}
  \subfigure[Particles, \( \lambda = 10^{-6} \)]{\includegraphics[width=6cm]{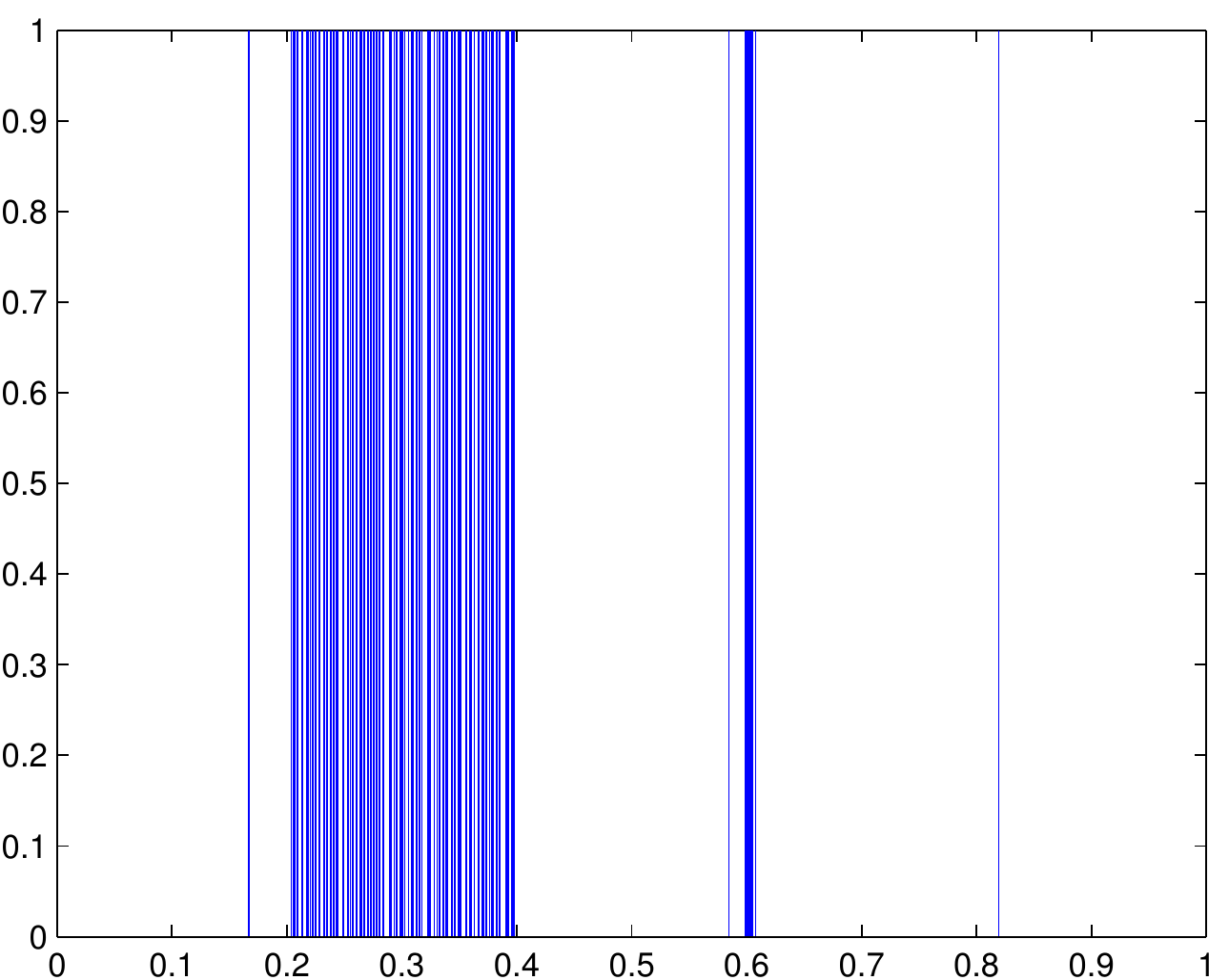}}
  \caption{Minimizers \( \mu \) of \eqref{eq:323} and minimizers \( \mu_N \) of \( \widehat{\mathcal{E}}^\lambda_N \) for \( \omega_1 \) as in \ref{fig:data1} and parameters \( q = 1.0, \, N = 100 \)}
  \label{fig:spikes}
\end{figure}

By Theorem \ref{thm:con-kern-est}, we know that \( \widehat{\mathcal{E}}^\lambda_N \xrightarrow{\Gamma} \widehat{\mathcal{E}}^\lambda \), telling us that the particle minimizers of \( \widehat{\mathcal{E}}^\lambda \) will be close to a minimizer of the functional \( \widehat{\mathcal{E}}^\lambda \), which will be a \( BV \) function. Therefore, we would like to compare the particle minimizers to minimizers which were computed by using a more classical approximation method which in contrast maintains the underlying \( BV \) structure. One such approach is to approximate a function in \( BV \) by interpolation by piecewise constant functions on an equispaced discretization of the interval \( \Omega = [0, 1] \). Denoting the restriction of \( \widehat{\mathcal{E}}^\lambda \) to the space of these functions on a grid with \( N \) points by \( \widehat{\mathcal{E}}^\lambda_{N,\mathrm{grid}} \), it can be seen that we have \( \widehat{\mathcal{E}}^\lambda_{N,\mathrm{grid}} \xrightarrow{\Gamma} \widehat{\mathcal{E}}^\lambda \), hence it makes sense to compare minimizers of \( \widehat{\mathcal{E}}^\lambda_{N,\mathrm{grid}} \) and \( \widehat{\mathcal{E}}^\lambda_N \) for large \( N \).

If we denote by \( u \in \mathbb{R}^m \) the approximation to \( \mu \) and by \( w \in \mathbb{R}^m \) the one to \( \omega \), then the problem to minimize \( \widehat{\mathcal{E}}^\lambda_{N,\mathrm{grid}} \) takes the form
\begin{equation}
  \label{eq:323}
  \begin{aligned}
    &\text{minimize }& &(u-w)^T A_{q,\Omega} (u-w) + \lambda \sum_{i = 1}^{m-1} \left| u_{i + 1}-u_{i} \right| \\
    &\text{subject to } & &u \geq 0, \quad \sum_{i = 1}^{m} u_i = m,
  \end{aligned}
\end{equation}
where \( A_{q,\Omega} \) is the corresponding discretization matrix of the quadratic integral functional \( \widehat{\mathcal{E}} \), which is positive definite on the set \( \left\{ v : \sum v = 0 \right\} \) by the theory of Appendix \ref{cha:cond-posit-semi}. Solving the last condition \( \sum_{i=1}^{m} u_i = m \) for one coordinate of \( u \), we get a reduced matrix \( \widetilde{A}_{q, \Omega} \) which is positive definite. Together with the convex approximation term to the total variation, problem \eqref{eq:323} is a convex optimization problem which can be solved with the \textsc{cvx} package \cite{cvx}, \cite{08-Grant-Boyd-Convex-Programs}.

As a model case to study the influence of the total variation, the following cases were considered
\begin{enumerate}
\item \( \omega_1 = 4 \cdot 1_{[0.2,0.4]} + 40 \cdot 1_{[0.6,0.605] } \), the effect of the regularization being that the second bump gets smaller and more spread out with increasing parameter \( \lambda \), see Figure \ref{fig:spikes};
\item a version of \( 5 \cdot 1_{[0.2,0.4]} + \eta \), where \( \eta \) is some Gaussian noise disturbing the reference measure \( \omega_2 = 5 \cdot 1_{[0.2,0.4]} \) and where we cut off the negative part and re-normalized the datum to get a probability measure. The effect of the regularization here is a filtering of the noise, see Figure \ref{fig:noise}.
\end{enumerate} 

\subsection{Particle approximation}
\label{sec:part-appr-1}

\begin{figure}[t]
  \centering
  \subfigure[\( \mu \in L^1, \, \lambda = 0 \)]{\includegraphics[width=6cm]{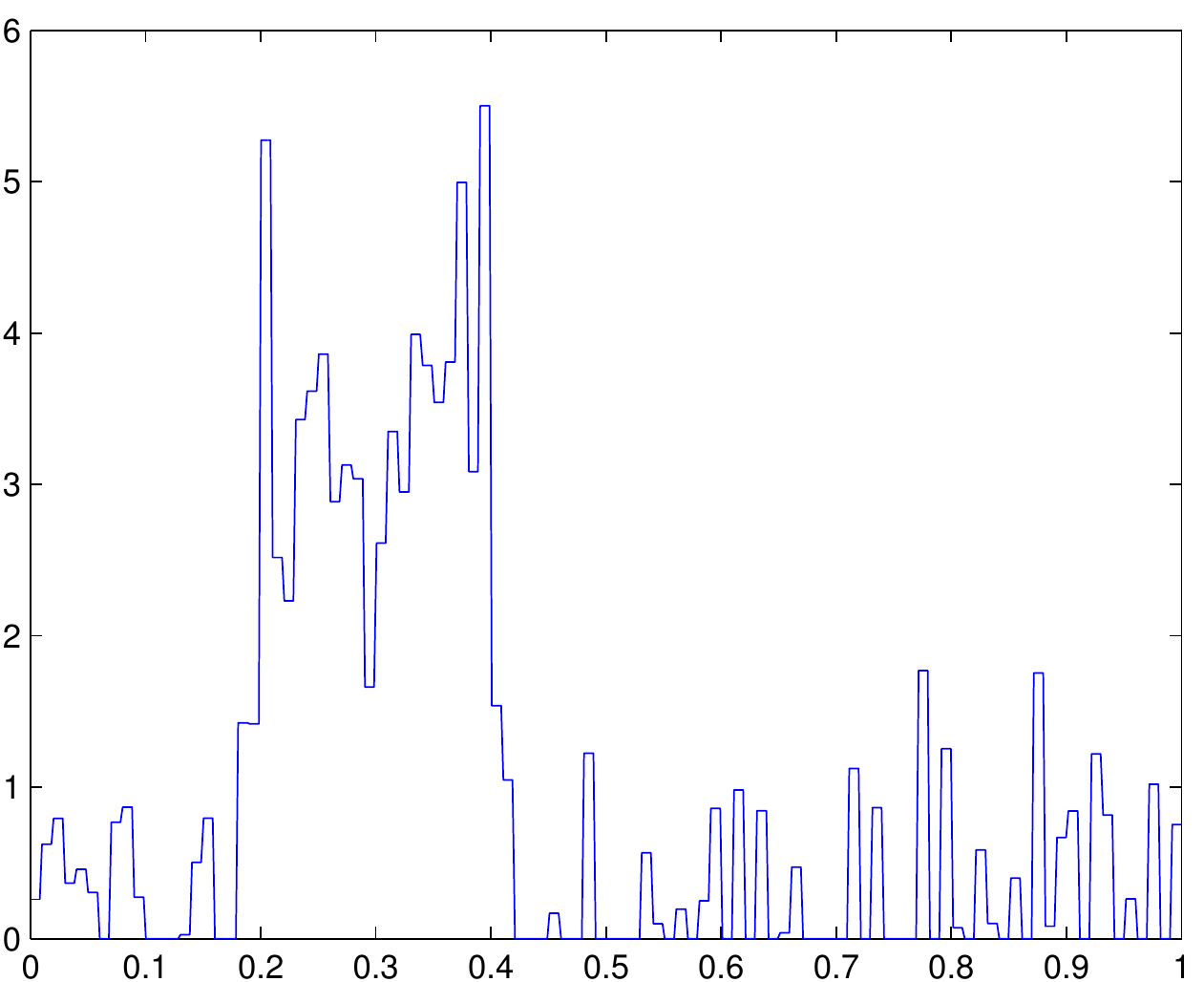}}
  \hspace{0.5cm}
  \subfigure[\( Particles, \, \lambda = 0 \)]{\includegraphics[width=6cm]{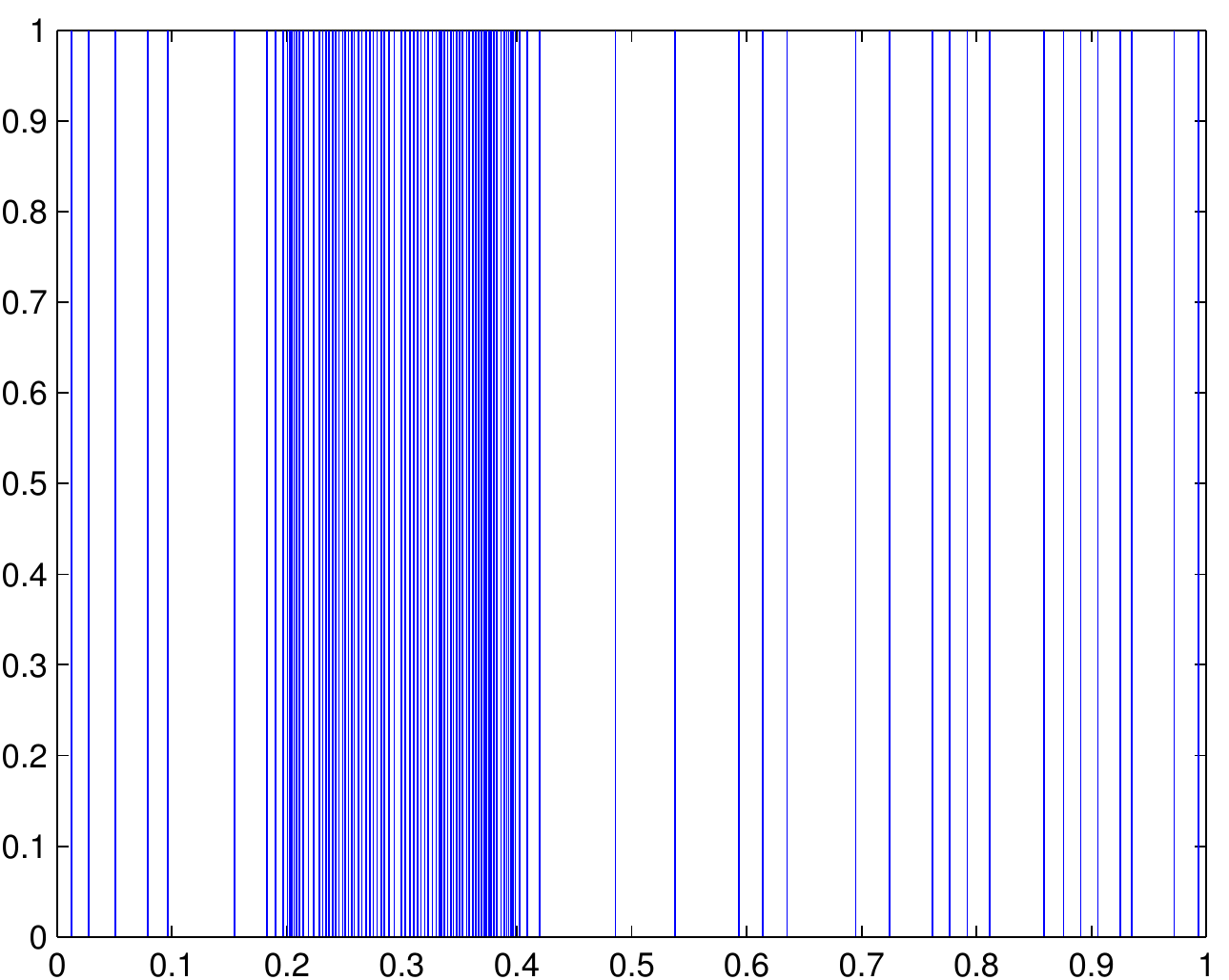}}

  \subfigure[\( \mu \in L^1, \, \lambda = 10^{-5} \)]{\includegraphics[width=6cm]{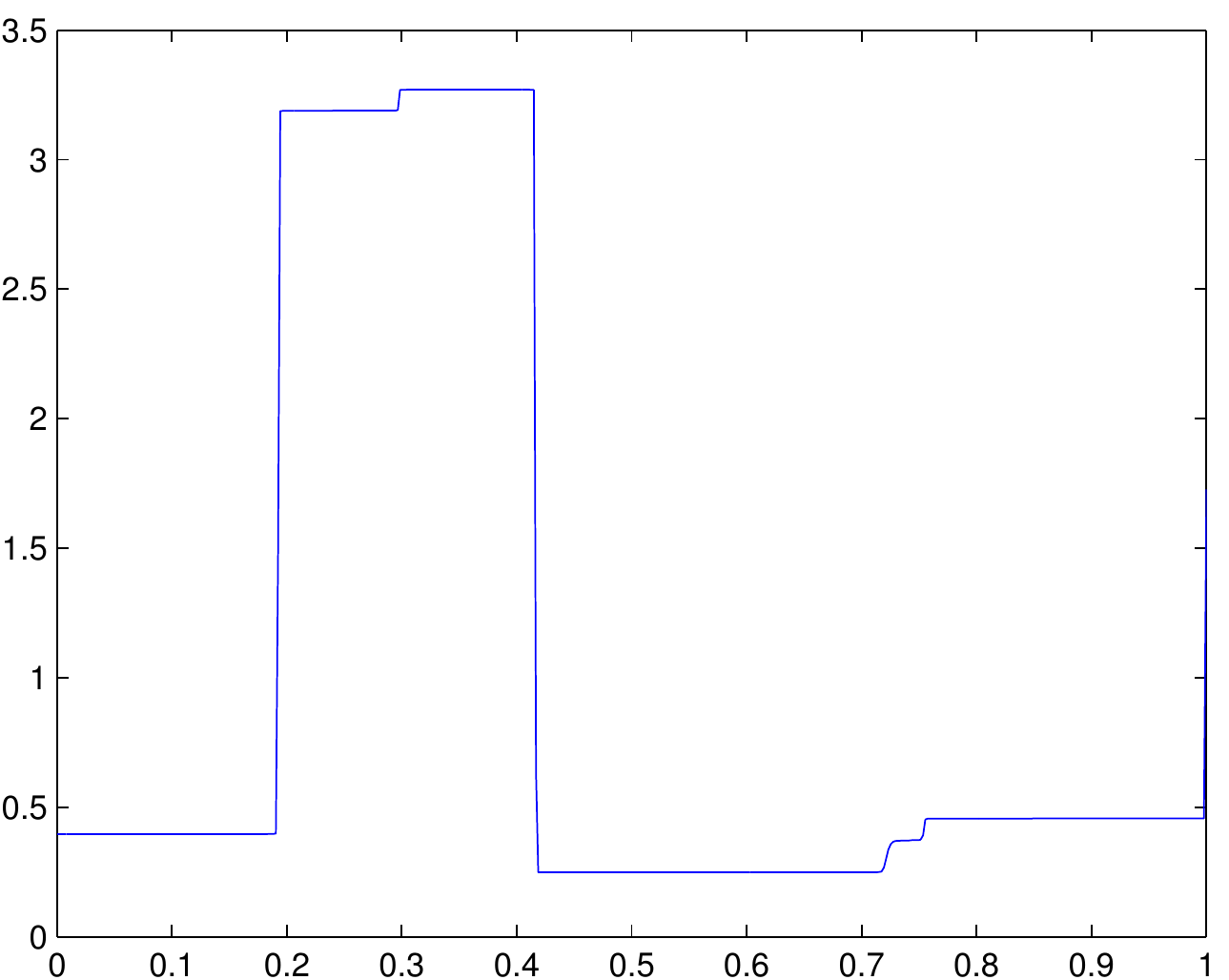}}
  \hspace{0.5cm}
  \subfigure[\( Particles, \, \lambda = 10^{-5} \)]{\includegraphics[width=6cm]{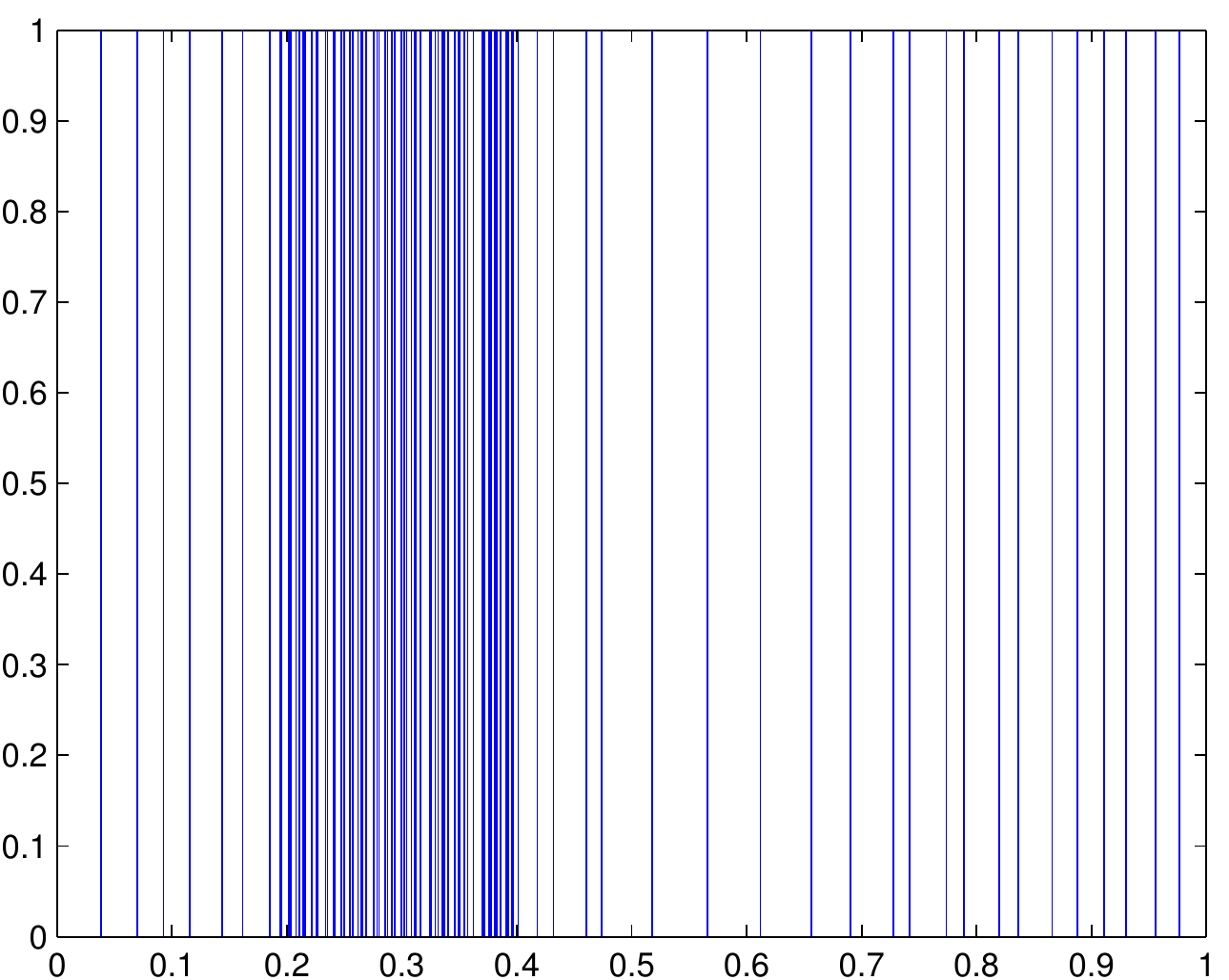}}
  \caption{Minimizers \( \mu \) of \eqref{eq:323} and minimizers \( \mu_N \) of \( \widehat{\mathcal{E}}^\lambda_N \) for a projection of \( \omega_2 + \eta \) as in \ref{fig:data2} and parameters \( q = 1.5, \, N = 100 \)}
  \label{fig:noise}
\end{figure}

The solutions in the particle case were computed by the \textsc{matlab} optimization toolbox \cite{matlab}, \cite{matlab-opt}, in particular the Quasi-Newton method available via the \textsc{fminunc} command. The corresponding function evaluations were computed directly in the case of the repulsion functional and by a trapezoidal rule in the case of the attraction term. For the kernel estimator, we used the one sketched in Figure \ref{fig:discrete-tv},
\begin{equation}
  \label{eq:324}
  K(x) = \left( 1 - \left| x \right| \right) \cdot 1_{[-1,1]}(x), \quad x \in \mathbb{R}.
\end{equation}

\subsection{Results}
\label{sec:num-results}

As for the \( L^1 \) case, we see that the total variation regularization works well and allows us to recover the original profile from a datum disturbed by noise.

When it comes to the particle case, we see the theoretical results of convergence for \( N \rightarrow \infty \) of Section \ref{sec:discrete-version-tv-1} confirmed, since the minimizers of the particle system behave roughly like the minimizers of the problem in \( L^1 \). On the other hand, the latter seems to be far more amenable to an efficient numerical treatment than the former because we lose convexity of the total variation term when passing to the particle formulation and the results there are for reasonably small \( N \) (like \( N = 100 \) here) strictly dependent on the choice of \( h_N \).

\section{Conclusion}
\label{sec:conclusion}

Apart from the easy conclusions for asymmetric exponents \( q_a \neq q_r \) in Section \ref{sec:prel-obs}, the Fourier representation of Section \ref{sec:prop-funct-mathbbrd}, resting upon the theory of Appendix \ref{cha:cond-posit-semi}, proved essential to establish a good formulation of the problem in terms of the lower semi-continuous envelope. This allowed us to use the well-established theory of the calculus of variations, in particular the machinery of \( \Gamma \)-convergence, to prove statements like the consistency of the particle approximation, Theorem \ref{thm:cons-part-appr}, and the moment bound, Theorem \ref{thm:moment-bound-symmetric}, which are otherwise not at all obvious when just considering the original spatial definition of \( \mathcal{E} \).

Moreover, it enabled us to easily analyze the regularized version of the functional in Section \ref{sec:tv-reg}, which on the particle level exhibits an interesting attractive-repulsive behavior, translating the regularizing effect of the total variation in the continuous case into an energy which tries to enforce a configuration of the particles which is as homogeneous as possible, while simultaneously minimizing \( \widehat{\mathcal{E}} \).

\clearpage{}\makeatletter{}
\chapter{Gradient flow in 1D}
\label{cha:gradient-flow-1d}

In this section we shall consider the gradient flow of the functional \eqref{eq:9} in the space \( \mathcal{P}_2(\mathbb{R}) \) endowed with the \( 2 \)-Wasserstein metric (see Definition \ref{def:wasserstein-distance}), which can be written as  
\begin{equation}
  \label{eq:213}
  \partial_t\mu = \nabla \cdot \left[ \left(\nabla \psi_a \ast \omega - \nabla \psi_r \ast \mu \right) \mu\right], \quad \mu(0) = \mu_0 \in \mathcal{P}_2(\mathbb{R}),
\end{equation}
with the notation of \eqref{eq:9}. 

We shall try to answer questions about its existence and its asymptotic behavior for \( t \rightarrow \infty \) after having given a brief overview of previously known results in Section \ref{sec:intro}.  For this, we restrict ourselves to the case \( q_a = q_r \in [1,2] \) and \( \Omega = \mathbb{R} \) in order to be able to use the \emph{pseudo inverse transform} which we briefly introduce in Section \ref{sec:pseudo-inverse-techn} and which renders the involved mathematical objects and thence the asymptotic analysis much easier. What follows is Section \ref{sec:existence-theory-linfty}, which deals with the well-posedness of the pseudo inverse equation, while Section \ref{sec:asymptotics} is concerned with the asymptotic behavior for the limit cases \( q = 1 \) and \( q = 2 \).

\section{Previously known results}
\label{sec:intro}

\subsection{Well-posedness}
\label{sec:well-posedness}

The linear attractive term \( \nabla \cdot \left[ \left(\nabla \psi_a \ast \omega \right ) \mu \right ] \) in equation \eqref{eq:213} does not pose much difficulties when it comes to the question of well-posedness as it corresponds to a Lipschitz flow under mild assumptions on $\omega$ (see Lemma \ref{lem:18} below). However, the repulsive part \( - \nabla \cdot \left[ \left(\nabla \psi_r \ast \mu \right) \mu \right ] \) still presents some problems with respect to well-posedness and particle approximation, despite being studied intensively for its broad range of applications in mathematical modeling.

There, the typical setting is
\begin{equation}
  \label{eq:215}
  \partial_t\mu = \nabla \cdot \left[ \left(\nabla W \ast \mu \right)
    \mu\right], \quad \mu(0) = \mu_0 \in \mathcal{P}(\mathbb{R}^d),
\end{equation}
where $ \mathcal{P}(\mathbb{R}^d)$ is the set of probability measures on $\mathbb R^d$, and $W$ is a suitable kernel associated to the non-local driving potential
 \begin{equation}
  \label{eq:216}
  \mathscr W[\mu]=\frac{1}{2}\int_{\mathbb R^d} \int_{\mathbb R^d} W(x-y) d \mu(x) d\mu (y).
\end{equation}
One standard set of regularity assumptions on \( W \) to ensure existence and uniqueness of a gradient flow solution is for example that of \cite{AGS08,CDFLS}, namely that
\begin{itemize}
\item \( W \) is symmetric, \( W(x) = W(-x) \),
\item \( W \in C^1(\mathbb{R}^d \setminus \left\{ 0 \right\}) \),
\item \( W \) is \( \lambda \)-convex, \ie
  \begin{equation}
    \label{eq:377}
    \exists \lambda < 0 : W(x) - \lambda \left| x \right|^2 \text{ is convex},
  \end{equation}
  which also implies that the singularity of \( \nabla W \) at \( 0 \) is not worse than Lipschitz.
\end{itemize}
Unfortunately, the last condition above fails for the repulsive kernel in question.

One possibility to gain further results is restricting the space in which we are looking for solutions to \( L^p \cap \mathcal{P}_2 \) as in \cite[Theorem 5]{belaro11}. The results there ensure global existence in the case of a repulsive kernel for
\begin{itemize}
\item \( W \) being radially symmetric,
\item \( W \) smooth on \( \mathbb{R}^d \setminus \left\{ 0 \right\} \),
\item the singularity of \( W \) at \( 0 \) not being worse than Lipschitz and \( W \) not exhibiting pathological oscillations there,
\item its derivatives decaying fast enough for \( x \rightarrow \infty \),
\end{itemize}
(together implying \( W \in W^{1, p'}(\mathbb{R}^d) \)) for \( p > d/(d - 1) \). Yet, the case \( d = 1 \) is not included there (or would require to take the formal limit \( p \rightarrow \infty \)).

Another approach is \cite[Theorem 7]{BCLR}, where the existence of strong classical solutions to equation \eqref{eq:215} is shown under integrability assumptions on the first two derivatives of \( W \) and boundedness of the positive part of the Laplacian of \( W \), rendering it applicable to the local behavior of repulsion kernel for \( 1 < q \leq 2 \). A recent result in \cite[Theorem 4.1]{13-Carrillo-Choi-Hauray-MFL} is also applicable in this case, as the repulsive kernel fulfills the growth requirements
\begin{equation}
  \label{eq:334}
  \left| W'(x) \right| \leq \frac{C}{\left| x \right|^\alpha}, \quad \left| W''(x) \right| \leq \frac{C}{\left| x \right|^{1+\alpha}} \quad \forall x \in \mathbb{R}^d \setminus \left\{ 0 \right\},
\end{equation}
with \( -1 \leq \alpha < d-1 = 0 \) for \( \alpha := 1-q \), yielding a local existence result for weak measure solutions in \( \mathcal{P}_1(\mathbb{R}) \cap L^p(\mathbb{R}) \) for \( p' < (2-q)^{-1} \).

To address the remaining case of \( W(x) \sim \pm \left| x \right| \) near \( 0 \), we could employ the arguments of \cite{BS}, where it is shown that while the kernel \( W \) is not \( \lambda \)-convex, the functional \( \mathscr{W} \) for \( d = 1 \) in fact is. This follows an idea in \cite{CDFLS} where the gradient flow selects an appropriate limit of (non-unique) empirical measure solutions of equation \eqref{eq:215}. While not directly applicable, a possible direction towards a generalization for \( d > 2 \) can be found in \cite[Theorem 2.3]{belale11} where well-posedness in \( L^\infty \) is shown for the Newtonian potential for \( d \geq 2 \), while \( W(x) = \pm \left| x \right| \) corresponds to the Newtonian potential for \( d = 1 \).

Summarizing, we could use \cite[Theorem 4.1]{13-Carrillo-Choi-Hauray-MFL} for the case $1<q\leq 2$, and \cite[Theorem 4.3.1]{BS}, for $q=1$ to show well-posedness in our case. However, we want to present a different approach in Section \ref{sec:existence-theory-linfty} where we follow \cite{belale11,BDi08}, in particular the proof of \cite[Theorem 2.9]{BDi08}. We work directly on the level of the pseudo-inverse of \( \mu \), providing unifying arguments for both parameter ranges in question and immediately yielding the formulation of the equation needed for the analysis of the asymptotic behavior, a purpose for which the pseudo-inverse has been very successfully employed (despite its limitation to \( d = 1 \)) as \eg in \cite{lito04,cato04,BDi08,FR1,FR2,ra10}.
 
\subsection{Asymptotic behavior of solutions}
\label{sec:asympt-behav-solut}

In \cite{FR1,FR2,ra10}, it is shown that the asymptotic behavior of equation \eqref{eq:215} depends decisively on the repulsiveness of \( W \). Under strong enough regularity assumptions, convergence can only occur towards sums of Dirac measures, while singular kernels allow uniformly bounded steady states.

In our case, the specific nature of the attraction term
\begin{equation}
  \label{eq:214}
  \nabla \cdot \left[  (\nabla \delta \mathscr V) \mu  \right ] = \nabla \cdot \left[ \left(\nabla \psi_a \ast \omega \right ) \mu  \right ]
\end{equation}
encourages us to look for a more specific description of the steady states. In Section \ref{sec:asymptotics}, we show that for \( q_a = q_r = 2 \), the solution is a traveling wave with \( \mu_0 \) as profile which converges exponentially to match the centers of mass of \( \mu_0 \) and \( \omega \). Moreover, for \( q_a = q_r = 1 \), we are able to confirm the numerical evidence from \cite{FHS12} which suggests that for the whole range \( q_a = q_r \in [1,2) \), there will always be convergence to the given profile \( \omega \). Yet, this being a natural conjecture since we have shown in Corollary \ref{cor:lower-semi-cont} that this is indeed the unique minimizer of the associated energy functional, we did not succeed in adapting our approach in order to prove it.
 
\section{The Pseudo-inverse}
\label{sec:pseudo-inverse-techn}

Here and below $\psi_r(x) = \left| x \right|^{q_r}\), \(\psi_a(x)= |x|^{q_a}$, for $1 \leq q_a,q_r \leq 2$. When considering the symmetric case \( q_a = q_r \), we sometimes just write \( \psi = \psi_a = \psi_r \)

\subsection{Definition and elementary properties}
\label{sec:pseudo-inverse-elem}
In one spatial dimension, we can exploit a special transformation technique which makes equation \eqref{eq:213} much more amenable to estimates in the Wasserstein distance. More precisely, this distance can be explicitly computed in terms of pseudo-inverses.

\begin{definition}[CDF and Pseudo-Inverse]
  \label{def:cdf-pseudo-inverse}
  Given a probability measure $\mu$ on the real line, we define its
  \emph{cumulative distribution function (CDF)} as
  \begin{equation}
    \label{eq:217}
    F_\mu(x) := \mu((-\infty,\,x]), \ x \in \mathbb{R}
  \end{equation}
  and its \emph{pseudo-inverse} as
  \begin{equation}
    \label{eq:218}
    X_\mu(z) := \inf \left\{x \in \mathbb{R} : F_\mu(x) > z\right\}, \ z \in [0,1].
  \end{equation}
\end{definition}

Note that in some cases, $X$ indeed is an inverse of $F$. Namely, if $F$ is \emph{strictly} monotonically increasing, corresponding to \(\mu\) having its support on the whole of $\mathbb{R}$, then \(X \circ F = \mathrm{id}\); if \(F\) is \emph{continuous}, which means that it does not having any point masses, then \(F \circ X = \mathrm{id}\). However, in general we only have
\begin{equation}
  \label{eq:219}
  (X \circ F) (x) \geq x, \ x \in \mathbb{R}, \quad (F \circ X)(z) \geq z, \ z\in[0,1].
\end{equation}

Furthermore, we have the following lemmata:
\begin{lemma}[Substitution formula]
  \label{lem:16} For all $f \in L^1(\mu)$,
  \begin{equation}
    \label{eq:220}
    \int_\mathbb{R} f(x) \, \mathrm{d} \mu(x) = \int_0^1 f(X(z)) \, \mathrm{d} z.
  \end{equation}  
\end{lemma}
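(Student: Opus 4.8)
\emph{Proof proposal.} The plan is to recognize the pseudo-inverse $X = X_\mu$ as the transport map pushing the restricted Lebesgue measure $\mathcal{L}^1|_{[0,1]}$ forward onto $\mu$, and then to invoke the change-of-variables formula for image measures. First I would record two elementary facts about $X$. It is non-decreasing: if $z_1 < z_2$ then $\{x : F_\mu(x) > z_2\} \subseteq \{x : F_\mu(x) > z_1\}$, so taking infima gives $X(z_1) \le X(z_2)$; hence $X$ is Borel measurable. Moreover $X(z) \in \mathbb{R}$ for every $z \in (0,1)$, because $F_\mu(x) \to 0$ as $x \to -\infty$ and $F_\mu(x) \to 1$ as $x \to +\infty$ make the set $\{x : F_\mu(x) > z\}$ nonempty and bounded below; the two remaining values $z \in \{0,1\}$ form an $\mathcal{L}^1$-null set and may be disregarded.

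The central step is the identity
\[
  \mathcal{L}^1\bigl(\{\, z \in [0,1] : X(z) \le t \,\}\bigr) = F_\mu(t), \qquad t \in \mathbb{R}.
\]
I would prove it by squeezing: $[0,F_\mu(t)) \subseteq \{\, z : X(z) \le t \,\} \subseteq [0,F_\mu(t)]$. The left inclusion is immediate, since $z < F_\mu(t)$ means $t \in \{x : F_\mu(x) > z\}$, hence $X(z) \le t$. For the right inclusion, suppose $z > F_\mu(t)$; then $F_\mu(x) \le F_\mu(t) < z$ for all $x \le t$, so $\{x : F_\mu(x) > z\} \subseteq (t,\infty)$ and therefore $X(z) \ge t$; and right-continuity of $F_\mu$ at $t$ forces $\lim_{x \downarrow t} F_\mu(x) = F_\mu(t) < z$, which rules out the infimum being attained at $t$, so in fact $X(z) > t$, i.e. $z \notin \{\, z : X(z) \le t \,\}$. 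Since the set in question thus lies between $[0,F_\mu(t))$ and $[0,F_\mu(t)]$, which differ only by the single point $F_\mu(t)$, its Lebesgue measure is $F_\mu(t)$.

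Because the intervals $(-\infty,t]$, $t \in \mathbb{R}$, form a $\pi$-system generating the Borel $\sigma$-algebra on $\mathbb{R}$, and because $X_{\#}\bigl(\mathcal{L}^1|_{[0,1]}\bigr)$ and $\mu$ are both probability measures agreeing on this $\pi$-system by the identity just proved, the uniqueness theorem for measures agreeing on a generating $\pi$-system gives $X_{\#}\bigl(\mathcal{L}^1|_{[0,1]}\bigr) = \mu$. The asserted substitution formula is then precisely the change-of-variables formula $\int_\mathbb{R} g \diff (T_\# \nu) = \int g \circ T \diff \nu$ applied with $T = X$ and $\nu = \mathcal{L}^1|_{[0,1]}$: it holds for $g = 1_B$ by the definition of the image measure, extends to non-negative simple functions by linearity, to non-negative measurable functions by monotone convergence, and finally to $f \in L^1(\mu)$ by writing $f = f^+ - f^-$, both parts having finite integral since $\|f\|_{L^1(\mu)} < \infty$.

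I expect the only delicate point to be the boundary bookkeeping in the displayed identity, namely keeping track of whether the infimum defining $X(z)$ is attained — which depends on whether $F_\mu$ has a jump at the relevant level. This is harmless, however, since the ambiguity affects only a single value of $z$ and hence does not change the Lebesgue measure; everything else is routine.
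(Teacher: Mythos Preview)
The paper states this lemma without proof; it is a standard fact about the quantile function, so there is no argument to compare against. Your proof is correct and is precisely the classical route: identify $X$ as a measurable map with $X_\#(\mathcal{L}^1|_{[0,1]}) = \mu$ via the sandwich $[0,F_\mu(t)) \subseteq \{z : X(z) \le t\} \subseteq [0,F_\mu(t)]$, then invoke the change-of-variables formula for image measures. The use of right-continuity of $F_\mu$ to upgrade $X(z) \ge t$ to $X(z) > t$ when $z > F_\mu(t)$ is exactly the point where the argument needs care, and you handle it correctly; the endpoint $z = F_\mu(t)$ genuinely can fall on either side of the inclusion depending on whether $F_\mu$ is flat to the right of $t$, but as you note this affects only a single point and hence not the Lebesgue measure.
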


\begin{lemma}[Formula for the Wasserstein-distance]
  \label{lem:17} \cite[Section 2.2]{07_Carillo_Toscani_prob-metrics} Let $\mu$ and $\omega$ be two Borel measures with pseudo-inverses $X$ and $Y$, respectively, and \(p \in [1,\infty]\). Then,
  \begin{equation}
    \label{eq:221}
    W_p(\mu,\omega) = \left\| X-Y \right\|_{L^p[0,1]} =
    \begin{cases}
      \left( \int_0^1 \lvert X(z) - Y(z)\rvert^p \, \mathrm{d} z \right)^{1/p} & 1 \leq p < \infty,\\
      \sup_{z \in [0,1]} \left| X(z) - Y(z) \right| & p = \infty.
    \end{cases}
  \end{equation}
\end{lemma}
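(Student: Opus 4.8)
The plan is to realize the \emph{quantile map} $z\mapsto(X(z),Y(z))$ as an admissible transport plan between $\mu$ and $\omega$, which immediately gives the inequality ``$\le$'', and then to prove that this particular coupling is optimal, which is the substantive point. For the first step I would observe that, directly from Definition~\ref{def:cdf-pseudo-inverse}, for every $x\in\mathbb{R}$ the set $\{z\in[0,1]:X(z)\le x\}$ differs from $[0,F_\mu(x)]$ only by a Lebesgue-null set, so that $X$ pushes the Lebesgue measure on $[0,1]$ forward onto $\mu$; equivalently this is the substitution formula of Lemma~\ref{lem:16} applied to $f=1_{(-\infty,x]}$. The same holds for $Y$ and $\omega$, hence $z\mapsto(X(z),Y(z))$ pushes the Lebesgue measure on $[0,1]$ forward to some $\boldsymbol{\mu}_0\in\Gamma(\mu,\omega)$, and inserting $\boldsymbol\mu_0$ into Definition~\ref{def:wasserstein-distance} and using Lemma~\ref{lem:16} once more yields
\begin{equation*}
  W_p^p(\mu,\omega)\;\le\;\int_{\mathbb{R}^2}\lvert x-y\rvert^p\,\mathrm{d}\boldsymbol{\mu}_0(x,y)\;=\;\int_0^1\lvert X(z)-Y(z)\rvert^p\,\mathrm{d}z,\qquad 1\le p<\infty.
\end{equation*}

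For the reverse inequality I would fix $1\le p<\infty$, take an arbitrary $\boldsymbol\mu\in\Gamma(\mu,\omega)$ (an optimal one exists by the direct method, since $\Gamma(\mu,\omega)$ is narrowly compact and the cost is lower semi-continuous), and exploit the elementary two-point rearrangement inequality
\begin{equation*}
  \lvert a-b\rvert^p+\lvert a'-b'\rvert^p\;\le\;\lvert a-b'\rvert^p+\lvert a'-b\rvert^p\qquad\text{for }a\le a',\ b\le b',
\end{equation*}
a consequence of the convexity of $t\mapsto\lvert t\rvert^p$. The cleanest route is to establish the claim first for empirical measures: if $\mu=\tfrac1N\sum_i\delta_{a_i}$ and $\omega=\tfrac1N\sum_j\delta_{b_j}$ with $a_1\le\dots\le a_N$, $b_1\le\dots\le b_N$, then by Birkhoff's theorem the transport cost over $\Gamma(\mu,\omega)$ is minimized at a permutation, and iterating the displayed inequality shows the sorted (identity) matching is optimal; but this matching is exactly $\boldsymbol\mu_0$, since the pseudo-inverses of these measures are the sorted step functions. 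For general $\mu,\omega$ I would approximate them in $W_p$ by uniform empirical measures (Lemma~\ref{lem:3}) and pass to the limit, using that $W_p$-convergence forces narrow convergence, hence a.e.\ convergence of the pseudo-inverses, and supplies uniform integrability of $p$-th moments, which upgrades this to $L^p[0,1]$-convergence. This gives $W_p(\mu,\omega)=\lVert X-Y\rVert_{L^p[0,1]}$ for $1\le p<\infty$; the case $p=\infty$ then follows by letting $p\uparrow\infty$, the left side tending to $W_\infty(\mu,\omega)$ by Definition~\ref{def:wasserstein-distance} and $\lVert X-Y\rVert_{L^p[0,1]}\uparrow\lVert X-Y\rVert_{L^\infty[0,1]}$ being the standard behaviour of $L^p$-norms on a finite measure space.

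The hard part will be the passage from the finitely supported case to the general one --- or, in the alternative argument that derives $c$-cyclical monotonicity (hence ``no crossings'') for the support of an optimal plan directly from the two-point inequality, the assertion that any plan with marginals $\mu,\omega$ supported on a monotone subset of $\mathbb{R}^2$ must coincide with $\boldsymbol\mu_0$. Atoms of $\mu$ or $\omega$ turn the graph of the monotone rearrangement into a set carrying vertical or horizontal segments, so one has to parametrize that set consistently over $[0,1]$ through the CDFs; this is routine but fiddly measure-theoretic bookkeeping, which is precisely why it is preferable to route the general case through the empirical approximation and the continuity/uniform-integrability argument above rather than to argue it on the level of supports. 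For the case $p=1$ the two-point inequality is no longer strict, but the same scheme still works: the empirical case is unaffected, and in the limiting step only the value of the cost along $\boldsymbol\mu_0$ matters, not uniqueness of the optimizer.
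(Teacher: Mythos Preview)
The paper does not prove this lemma at all; it is stated with a citation to \cite[Section 2.2]{07_Carillo_Toscani_prob-metrics} and used as a black box. So there is no ``paper's own proof'' to compare against.

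That said, your sketch is the standard route and is essentially correct: the quantile coupling gives the upper bound via Lemma~\ref{lem:16}, and optimality follows from the two-point rearrangement inequality (equivalently, $c$-cyclical monotonicity for the convex cost $|x-y|^p$). Reducing to empirical measures via Birkhoff's theorem and then passing to the limit through Lemma~\ref{lem:3} and $W_p$-continuity is a clean way to handle the general case; your remarks on atoms and on the non-strictness at $p=1$ are apt. One small caveat: for $p=\infty$ you identify the limit of $\|X-Y\|_{L^p[0,1]}$ with $\|X-Y\|_{L^\infty[0,1]}$, i.e.\ the \emph{essential} supremum, whereas the lemma as stated writes $\sup_{z\in[0,1]}|X(z)-Y(z)|$. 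These agree because $X$ and $Y$ are monotone and right-continuous, so $X-Y$ has only countably many discontinuities and the pointwise and essential suprema coincide; you should say this explicitly if you keep the $p\uparrow\infty$ argument.
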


\subsection{The transformed equation}
\label{sec:pseudo-inverse-trans}

In order to transform equation \eqref{eq:213} in terms of the pseudo-inverse, denote by $\mu$ one of its solutions and by $\omega$ the given datum, as well as by $F$ and $G$ their respective CDFs and by $X$ and $Y$ their pseudo-inverses. Let us further assume for now that equality holds in the inequalities \eqref{eq:219}. Then we can, at least formally, compute the derivatives of these identities.

From $F(t,X(t,z)) = z$, we get by differentiating with respect to time and space, respectively:
\begin{gather}
  \partial_t F(t, X(t,z)) + \partial_x F (t,X(t,z)) \cdot \partial_t X(t,z) = 0, \label{eq:222}\\
  \partial_x F(t, X(t,z)) \cdot \partial_z X(t,z) = 1. \label{eq:223}
\end{gather}
From \eqref{eq:222}, we get
\begin{equation}
  \label{eq:224}
  \partial_t X = \left( -(\partial_x F)^{-1} \cdot \partial_t F\right) \circ X.
\end{equation}
Now we can integrate \eqref{eq:213} in space to get an equation for $\partial_t F$, namely
\begin{equation}
  \label{eq:225}
  \partial_tF = (\psi_a' \ast \omega - \psi_r' \ast \mu)\mu,
\end{equation}
where at the moment we interpret $\mu$ as a density. Using $\partial_x F = \mu$
and combining \eqref{eq:222} and \eqref{eq:223}, we see that
\begin{equation}
  \label{eq:226}
  \partial_t X = - (\psi_a' \ast \omega - \psi_r' \ast \mu) \circ X = - \int_\mathbb{R} \psi_a'(X - y) \, \mathrm{d} \omega(y)
  + \int_\mathbb{R} \psi_r'(X - y) \, \mathrm{d} \mu(y).
\end{equation}
Using the substitution formula of Lemma \ref{lem:16}, we find the formulation which we want to work with:
\begin{equation}
  \label{eq:227}
  \partial_t X(t,z) = - \int_0^1 \psi_a'(X(t,z) - Y(\zeta)) \, \mathrm{d} \zeta + \int_0^1 \psi_r'(X(t,z) - X(t,\zeta)) \, \mathrm{d} \zeta.
\end{equation}
In the case \(q_r = q_a = 1\), where we assume both \(\mu\) and \(\omega\) to be absolutely continuous and \(\psi'(x) = \operatorname{sgn}(x)\), this equation has a particular structure. Namely, by using
\begin{align}
  \label{eq:228}
  \int_{\mathbb{R}} \psi'(x - y) \, \mathrm{d} \mu(y) &= \int_\mathbb{R} \operatorname{sgn}(x - y) \, \mathrm{d} \mu (y) 
  = \mu((-\infty,x]) - \mu((x,\infty))\\
  &= 2\mu((-\infty,x]) - \mu(\mathbb{R}) = 2 F(x) - 1,\label{eq:328}
\end{align}
the equation for \(q = 1\) reads as
\begin{equation}
  \label{eq:229}
  \partial_{t}X(t,z) = 2\left[F(t,X(t,z)) - G(X(t,z))\right],
\end{equation}
where \(F(t,x)\) denotes the CDF of $\mu(t)$.

Note that these formal computations can sometimes be made rigorous, as we shall do in the next section. There, in the case \(q_a, q_r \in [1,2]\), we construct under certain additional assumptions a solution to equation \eqref{eq:227} and prove that for all \(t \in [0,\infty)\) there is an associated measure $\mu(t)$ with pseudo-inverse $X(t,.)$ which fulfills \eqref{eq:213} in a distributional sense.

\section{Existence of solutions}
\label{sec:existence-theory-linfty}

Let \(q_a, q_r \in [1,2]\). Under certain further restrictions on $\omega$ and $\mu_0$, we can employ a fixed-point iteration for the pseudo-inverse in $L^\infty([0,1])$ to find solutions to equation \eqref{eq:227}, corresponding to distributional solutions of \eqref{eq:213}. We also want to allow the mass of \( \omega \) to be different from \( 1 \).

\begin{theorem}[Existence of solutions]
  \label{thm:linfty-ext}
  Let $\omega,\mu_0 \in L^\infty_c(\mathbb{R})$, the space of functions in \( L^\infty(\mathbb{R}) \) which are compactly supported, such that \( \omega,\mu_0 \geq 0 \) almost everywhere and
  \begin{equation}
    \label{eq:361}
    \int_{\mathbb{R}} \mu_0(x) \diff x = 1, \quad \int_{\mathbb{R}} \omega(x) \diff x \in (0,\infty)
  \end{equation}
  Then there is a unique curve
  \begin{equation}
    \label{eq:231}
    X(.,.) \in C^{1}([0,\infty),L^\infty([0,1]))
  \end{equation}
  such that
 \begin{enumrom}
 \item $X(0,.)$ is the pseudo-inverse of $\mu_0$;
 \item for every $t \in [0,\infty)$, $X(t,.)$ is the pseudo-inverse of a probability measure \(\mu(t)\);
 \item for almost all $t \in [0,\infty)$ and every $z \in[0,1]$, the curve $X(t,.)$ fulfills the pseudo-inverse formulation \eqref{eq:227} if we interpret \( \psi_{a,r}' = \sgn(x) \) if \( q_{a,r} = 1 \);
 \item the curve \(\mu(t)\) is a distributional solution of the original equation \eqref{eq:213}, i.e.\@ for all $\varphi \in C^{\infty}_{c}([0,\infty) \times \mathbb{R})$, it fulfills the weak formulation
   \begin{align}
     \label{eq:232}
     \leadeq{-\int_{0}^{\infty} \int_{\mathbb{R}} \partial_{t}\varphi(t,x) \, \mathrm{d} \mu(t,x) \, \mathrm{d} t - \int_{\mathbb{R}} \varphi(0,x) \, \mathrm{d} \mu(0,x)}\\
     ={}&\int_{0}^{\infty} \int_{\mathbb{R}} \partial_{x}\varphi(t,x) \cdot \left(\psi_r' \ast \mu(t,.)\right)(x) \, \mathrm{d} \mu(t,x) \, \mathrm{d} t \nonumber\\
     &- \int_{0}^{\infty} \int_{\mathbb{R}} \partial_{x} \varphi(t,x) \cdot \left(\psi_a' \ast \omega\right)(x)\, \mathrm{d} \mu(t,x) \, \mathrm{d} t.
   \end{align}
 \end{enumrom}
\end{theorem}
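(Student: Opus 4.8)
The plan is to realize $X$ as the fixed point of a Picard-type iteration carried out directly on pseudo-inverses, and then to read off properties (i)--(iv) from the construction. Abbreviate the right-hand side of \eqref{eq:227} as $\Phi[X](z) := -(\psi_a' \ast \omega)(X(z)) + \int_0^1 \psi_r'(X(z)-X(\zeta))\,\diff\zeta$; by the substitution formula (Lemma \ref{lem:16}), if $X$ is the pseudo-inverse of $\mu$ the second term equals $(\psi_r' \ast \mu)(X(z))$. Since $\omega \in L^\infty_c(\mathbb{R})$ and $\psi_a$ is convex with at most linearly growing derivative, $\psi_a' \ast \omega$ is bounded and Lipschitz on every bounded interval --- for $1<q_a<2$ because $\psi_a''(x)=c\left| x \right|^{q_a-2}$ is locally integrable, for $q_a=1$ because $\psi_a''\ast\omega = 2\omega \in L^\infty$, for $q_a=2$ trivially --- so the attraction part of $\Phi$ is globally Lipschitz from $L^\infty([0,1])$ into itself (this is the ``Lipschitz flow'' alluded to earlier). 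The repulsion part is the genuinely delicate piece: $\psi_r'$ is only $(q_r-1)$-Hölder at the origin when $q_r<2$, so $X \mapsto \int_0^1\psi_r'(X(\cdot)-X(\zeta))\diff\zeta$ fails to be Lipschitz on all of $L^\infty$, and one only recovers a Lipschitz estimate on the class of pseudo-inverses of probability measures carrying a uniform upper bound on their density.

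First I would set up the scheme by freezing the measure in the repulsive term: put $X^0 \equiv X_{\mu_0}$ and define $X^{n+1}$ by solving, for each $z$, the scalar ODE $\partial_t X^{n+1}(t,z) = -(\psi_a'\ast\omega)(X^{n+1}(t,z)) + (\psi_r'\ast\mu^n(t))(X^{n+1}(t,z))$ with $X^{n+1}(0,z)=X_{\mu_0}(z)$, where $\mu^n(t)$ denotes the measure with pseudo-inverse $X^n(t,\cdot)$. The vector field is Lipschitz in the spatial variable and of at most linear growth, so each $X^{n+1}(t,z)$ exists globally, is unique, and is monotone in the initial datum; hence $X^{n+1}(t,\cdot)$ is non-decreasing and $\mu^{n+1}(t) := X^{n+1}(t,\cdot)_\#(1_{[0,1]}\mathcal{L}^1)$ is a probability measure. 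Two a priori bounds, uniform in $n$, make the scheme close on $[0,T]$: a support bound, since $\left| \psi_a'\ast\omega(x) \right| + \left| \psi_r'\ast\mu(x) \right| \le C(1+\left| x \right|)$ with $C$ depending only on $T$ and on the supports (not the densities), so the characteristics stay in $[-R_T,R_T]$; and a density bound, obtained from the variational equation $\partial_t(\partial_z X^{n+1}) = \big[-(\psi_a''\ast\omega)(X^{n+1}) + (\psi_r''\ast\mu^n)(X^{n+1})\big]\,\partial_z X^{n+1}$ together with the convexity of $\psi_r$, which gives $\psi_r''\ast\mu^n \ge 0$ and hence $\partial_z X^{n+1}(t,z) \ge \partial_z X_{\mu_0}(z)\,e^{-C_T t}$, i.e. $\left\| \mu^{n+1}(t) \right\|_\infty \le \left\| \mu_0 \right\|_\infty e^{C_T t} =: M_T$. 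Thus all iterates live in a fixed class $\mathcal{K}_T$ of pseudo-inverses of probability measures supported in $[-R_T,R_T]$ with density $\le M_T$.

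On $\mathcal{K}_T$ the map $X \mapsto (\psi_r'\ast\mu)(X(\cdot))$ is Lipschitz in $L^\infty$: splitting $(\psi_r'\ast\mu_1)(X_1(z)) - (\psi_r'\ast\mu_2)(X_2(z))$ into a point-evaluation part, controlled by $\operatorname{Lip}_{[-R_T,R_T]}(\psi_r'\ast\mu_1)\,\left\| X_1-X_2 \right\|_\infty$ (finite by the density bound), and a change-of-measure part $(\psi_r'\ast(\mu_1-\mu_2))(X_2(z))$, which one handles by integration by parts: $\psi_r'\ast(\mu_1-\mu_2) = \psi_r''\ast(F_{\mu_1}-F_{\mu_2})$ with $\psi_r''$ locally integrable when $q_r>1$ (for $q_r=1$ this reads $2(F_{\mu_1}-F_{\mu_2})$, for $q_r=2$ it is the constant $2\int_0^1(X_2-X_1)$), combined with $\left\| F_{\mu_1}-F_{\mu_2} \right\|_\infty \le M_T\,\left\| X_1-X_2 \right\|_\infty$, which again uses the density bound. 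Feeding this into the Duhamel identity for $X^{n+1}-X^n$ and applying Grönwall yields $\sup_{[0,T']}\left\| X^{n+1}(t)-X^n(t) \right\|_\infty \le C_T\,T'\,\sup_{[0,T']}\left\| X^n(t)-X^{n-1}(t) \right\|_\infty$, a contraction for $T'$ small; hence $X^n \to X$ in $C([0,T'],L^\infty([0,1]))$ and the limit satisfies $\partial_t X = \Phi[X]$. The same Grönwall estimate gives uniqueness, and since $R_T$ and $M_T$ are finite for every $T$, the solution extends to all of $[0,\infty)$ by restarting; continuity of $t\mapsto\Phi[X(t,\cdot)]$ in $L^\infty$ then upgrades $X$ to a $C^1$ curve. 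Properties (i)--(iv) now follow: (i) holds by construction; (ii) because $X(t,\cdot)$ is non-decreasing with values in $[-R_T,R_T]$, so $\mu(t):=X(t,\cdot)_\#(1_{[0,1]}\mathcal{L}^1)\in\mathcal{P}(\mathbb{R})$ with $X(t,\cdot)$ its pseudo-inverse and the density bound forcing $\mu(t)$ to be absolutely continuous; (iii) is exactly the fixed-point equation, the reading $\psi_{a,r}' = \operatorname{sgn}$ for $q_{a,r}=1$ being legitimate since the relevant measures are absolutely continuous and $\operatorname{sgn}\ast\nu = 2F_\nu - 1$ there, recovering \eqref{eq:229}; (iv) by differentiating $t\mapsto\int_0^1\varphi(t,X(t,z))\diff z = \int_{\mathbb{R}}\varphi(t,x)\,\diff\mu(t,x)$, inserting $\partial_t X = \Phi[X]$, undoing the substitution of Lemma \ref{lem:16} to return to integrals against $\mu(t)$, and integrating in $t$, which reproduces \eqref{eq:232}. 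I expect the main obstacle to be the Lipschitz estimate for the repulsion on $\mathcal{K}_T$ together with propagation of the density bound: both rest on the convexity $\psi_r''\ge 0$ (for the favorable sign in the a priori estimate) and on trading the non-Lipschitz $\psi_r'$ against the locally integrable $\psi_r''$ paired with uniformly controlled CDFs; by contrast the asymmetry $q_a\neq q_r$ costs nothing, since attraction is always the regular term.
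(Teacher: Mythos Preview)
Your scheme is correct and lands on the same pseudo-inverse solution as the paper, but the route is genuinely different in two places.

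First, the iteration. The paper does not freeze the measure: it applies the Banach fixed-point theorem directly to the full nonlinear operator
\[
S[X](t,z)=\e^{-\lambda t}X_0(z)+\int_0^t \e^{-\lambda(t-s)}\Big[\int_0^1\psi_r'(X(s,z)-X(s,\zeta))\,\diff\zeta-\widetilde V'(X(s,z))\Big]\diff s,
\]
on the closed set of curves obeying the slope condition $X(t,z+h)-X(t,z)\ge \alpha\e^{-\lambda t}h$. The exponential rescaling is chosen so that the slope condition is \emph{preserved by $S$} (using that $\psi_r'$ is increasing and $\widetilde V'=V'-\lambda\,\mathrm{id}$ is decreasing), which is the paper's counterpart of your variational-equation density bound. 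Your semi-implicit iteration (freezing $\mu^n$ and solving a genuine ODE at each step) is equally valid and arguably closer to the classical characteristics picture; the cost is that you must separately propagate the density bound along the iterates, which you do correctly via $\psi_r''\ast\mu^n\ge 0$.

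Second, and more substantially, the Lipschitz estimate for the repulsion. The paper stays in the $z$-variable: it uses the slope condition to guarantee $|X(s,z)-X(s,\zeta)|\ge \alpha\e^{-\lambda t}|z-\zeta|$, then the mean-value theorem gives
\[
|\psi_r'(\widetilde X(s,z)-\widetilde X(s,\zeta))-\psi_r'(X(s,z)-X(s,\zeta))|\le C\,|z-\zeta|^{q_r-2}\,\delta(s),
\]
and integrability of $|z-\zeta|^{q_r-2}$ on $[0,1]$ yields the $L^\infty$ contraction. You instead move to the $x$-variable and trade a derivative via $\psi_r'\ast(\mu_1-\mu_2)=\psi_r''\ast(F_{\mu_1}-F_{\mu_2})$, combined with $\|F_{\mu_1}-F_{\mu_2}\|_\infty\le M_T\|X_1-X_2\|_\infty$ (which indeed follows from the density bound). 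Both arguments exploit the same structural fact---local integrability of $\psi_r''$ plus a uniform lower bound on $\partial_z X$---but your integration-by-parts version makes the role of the 1D order structure and the CDF more explicit, while the paper's version never leaves the pseudo-inverse and treats $q_r=1$ as a separate (simpler) case using $F(X(t,z))=z$. The distributional formulation (iv) is then recovered in both proofs exactly as you describe.
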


This result may appear of relative novelty, as it may also be obtained by taking advantage of the smoothness and confining properties of the linear attractive term $ \nabla \cdot \left[ \left(\nabla \psi_a \ast \omega \right ) \mu \right ]$, combined with the well-posedness of the repulsive term from \cite[Theorem 7]{BCLR}, for the case $1<q_r\leq 2$, and \cite[Theorem 4.3.1]{BS}, for $q_r=1$ respectively.

One ingredient for the proof of Theorem \ref{thm:linfty-ext} is the following lemma.

\begin{lemma}
  \label{lem:18}
  Let \(\omega \in L^\infty(\mathbb{R}) \cap L^1(\mathbb{R})\) such that \( \omega \geq 0 \). Then, \(\psi_a' \ast \omega\) is Lipschitz-continuous.
\end{lemma}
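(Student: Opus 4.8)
The plan is to show Lipschitz continuity of $\psi_a' \ast \omega$ by splitting into the two cases $q_a = 1$ and $1 < q_a \leq 2$, since the regularity of $\psi_a'$ differs fundamentally between them.

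\textbf{Case $q_a = 1$.} Here $\psi_a'(x) = \sgn(x)$ (interpreted appropriately at $0$), so that, as in the computation \eqref{eq:228}--\eqref{eq:328} but for a measure of possibly non-unit mass,
\begin{equation*}
  (\psi_a' \ast \omega)(x) = \int_{\mathbb{R}} \sgn(x - y)\, \omega(y) \diff y = 2\int_{-\infty}^x \omega(y) \diff y - \int_{\mathbb{R}} \omega(y) \diff y.
\end{equation*}
Since $\omega \in L^\infty(\mathbb{R})$, the function $x \mapsto \int_{-\infty}^x \omega(y)\diff y$ is Lipschitz with constant $\|\omega\|_\infty$, hence $\psi_a' \ast \omega$ is Lipschitz with constant $2\|\omega\|_\infty$.

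\textbf{Case $1 < q_a \leq 2$.} Now $\psi_a'(x) = q_a |x|^{q_a - 1}\sgn(x)$, which is continuous everywhere but only H\"older (not Lipschitz) near the origin when $q_a < 2$; its derivative $\psi_a''(x) = q_a(q_a-1)|x|^{q_a-2}$ has an integrable singularity at $0$ since $q_a - 2 > -1$. The idea is to estimate the increment directly: for $x, x' \in \mathbb{R}$,
\begin{equation*}
  \left| (\psi_a' \ast \omega)(x) - (\psi_a' \ast \omega)(x') \right| \leq \int_{\mathbb{R}} \left| \psi_a'(x - y) - \psi_a'(x' - y) \right| \omega(y)\diff y \leq \|\omega\|_\infty \int_{\mathbb{R}} \left| \psi_a'(x - y) - \psi_a'(x' - y) \right| \diff y.
\end{equation*}
After the substitution $z = x - y$ the last integral becomes $\int_{\mathbb{R}} |\psi_a'(z) - \psi_a'(z + (x' - x))|\diff z$, i.e.\ the $L^1$-modulus of continuity of $\psi_a'$ evaluated at $h := x' - x$. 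Splitting the domain into $\{|z| \leq 2|h|\}$ and $\{|z| > 2|h|\}$: on the first region both terms are $O(|z|^{q_a - 1})$ which integrates to $O(|h|^{q_a})$; on the second, $|\psi_a'(z) - \psi_a'(z+h)| \leq |h| \sup_{|t - z| \leq |h|} |\psi_a''(t)| \leq C|h|\,|z|^{q_a - 2}$, but this integral diverges at infinity, so one must additionally exploit that $\omega$ is \emph{compactly supported} and integrable: restricting $y$ to $\supp \omega$ (a bounded set), the $z$-integration is effectively over a bounded set, making $\int |z|^{q_a-2}\diff z$ finite there. Combining the two pieces gives a bound of the form $C(|h| + |h|^{q_a})$, and since we only need Lipschitz behavior, keeping $|h|$ bounded (which suffices for the Lipschitz property, as one can always reduce to small increments) yields $|h|^{q_a} \leq C|h|$, so $\psi_a' \ast \omega$ is Lipschitz with a constant depending on $\|\omega\|_\infty$, $\|\omega\|_1$, $\operatorname{diam}(\supp \omega)$ and $q_a$.

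\textbf{Main obstacle.} The delicate point is the interplay at infinity in the case $1 < q_a < 2$: $\psi_a''$ decays like $|z|^{q_a - 2}$, which is \emph{not} integrable at infinity, so a naive convolution estimate using only $\omega \in L^1 \cap L^\infty$ without the support restriction would fail. One must be careful to use compact support of $\omega$ precisely to localize the $z$-integral. An alternative, perhaps cleaner route is to observe directly that $\psi_a' \ast \omega = \psi_a'' \ast (\text{antiderivative structure})$ or to write $(\psi_a' \ast \omega)' = \psi_a'' \ast \omega$ and check that $\psi_a'' \ast \omega \in L^\infty(\mathbb{R})$: indeed $|\psi_a'' \ast \omega(x)| \leq \|\omega\|_\infty \int_{x - \supp\omega} |\psi_a''(z)|\diff z$, and since $\psi_a''$ is locally integrable and the integration domain is bounded uniformly in $x$, this is bounded; then Lipschitz continuity of $\psi_a' \ast \omega$ follows from boundedness of its weak derivative. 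I would present this second argument, as it avoids the two-region splitting entirely and handles both cases ($q_a = 1$ giving $\psi_a'' = 2\delta_0$, so $\psi_a'' \ast \omega = 2\omega \in L^\infty$) in a uniform way.
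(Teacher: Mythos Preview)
Your treatment of the case $q_a = 1$ is correct and matches the paper. The gap is in the case $1 < q_a \leq 2$: both of your arguments invoke compact support of $\omega$, but the lemma only assumes $\omega \in L^\infty(\mathbb{R}) \cap L^1(\mathbb{R})$. In your first argument you write ``restricting $y$ to $\supp\omega$ (a bounded set)'', and in your preferred second argument you bound $|\psi_a''\ast\omega(x)| \leq \|\omega\|_\infty \int_{x-\supp\omega}|\psi_a''(z)|\diff z$ and claim the domain of integration is bounded uniformly in $x$. Neither step is available without compact support, and your Lipschitz constant ends up depending on $\operatorname{diam}(\supp\omega)$, which is not a quantity in the hypothesis.

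The paper's approach has the same overall strategy as your second argument---show $\psi_a''\ast\omega \in L^\infty$---but with the correct splitting. The point is that $\psi_a''(y) = q_a(q_a-1)|y|^{q_a-2}$ is locally integrable near $0$ and bounded (by $q_a(q_a-1)$) for $|y| \geq 1$. So one splits the convolution as
\[
|\psi_a''\ast\omega(x)| \leq \int_{|y|\leq 1} |\psi_a''(y)|\,\omega(x-y)\diff y + \int_{|y|>1} |\psi_a''(y)|\,\omega(x-y)\diff y,
\]
pairing $\psi_a'' \in L^1([-1,1])$ with $\omega \in L^\infty$ on the first piece and $\psi_a'' \in L^\infty(\mathbb{R}\setminus[-1,1])$ with $\omega \in L^1$ on the second. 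This gives a bound depending only on $\|\omega\|_\infty$, $\|\omega\|_1$, and $q_a$, with no support assumption. Your own diagnosis of the obstacle (``$\psi_a''$ is not integrable at infinity'') was correct; the resolution is to use the $L^1$ norm of $\omega$ there rather than its support.
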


\begin{proof}
  For \(q_a = 1\), remember that we arbitrarily set \(\psi'(x) = \sgn(x)\) and that we explicitly computed the convolution \(\psi_a' \ast \omega\) in \eqref{eq:228}, namely
  \begin{equation}
    \label{eq:253}
    \psi_a' \ast \omega (x) = 2 \int_{-\infty}^{x} \omega(y) \, \mathrm{d} y - \left\| \omega \right\|_{1},
  \end{equation}
  which is obviously Lipschitz-continuous if \(\omega \in L^{\infty}(\mathbb{R})\).

  For \(q_a \in (1,2)\), we consider $\psi_a''(x) =q(q-1)|x|^{q-2}$ and its convolution with $\omega$, and we show that it is uniformly bounded, hence $x \to \psi_a' \ast \omega(x)$ is Lipschitz continuous. As $\psi_a''$ is integrable on $[-1,1]$ and bounded by $1$ on $\mathbb R \setminus [-1,1]$, one gets
  \begin{align}
    \label{eq:384}
    \left| \psi_a'' \ast \omega(x) \right| = {} & \int_{-1}^{1} q_a(q_a-1) \left| y \right|^{q_a-2} \omega (x-y) \diff y\\
    & + q_a(q_a-1) \int_{\mathbb{R} \setminus [-1,1]} \left| y \right|^{q_a-2} \omega(x-y) \diff y\\
    \leq {} &  q_a(q_a-1) \left( \frac{2}{q_a-1} \left\| \omega \right\|_\infty + \left\| \omega \right\|_1 \right).
  \end{align}
\end{proof}

We follow the lines of the proof of \cite[Theorem 2.9]{BDi08}, which means that below, we define a suitable operator whose fixed point will be a solution of \eqref{eq:227}, and then we show that this determines a solution to \eqref{eq:213}. As elements of novelty, two major differences with respect to \cite[Theorem 2.9]{BDi08} are in order:

\begin{itemize}
\item We implement a suitable time rescaling, adapted to the lack of smoothness in $0$ of $W$ for gaining contractivity of the operator, see the exponential term in \eqref{eq:256} below; in particular, as $\psi_{a,r}'$ is {\it not} Lipschitz we need to establish contractivity of the operator by a more careful analysis which requires some technicalities, see Step \ref{item:linfty-contractive} below.
\item Our way to return to solutions of the original equation \eqref{eq:213} is more direct and it does not go through a smooth approximation argument, see Step \ref{item:linfty-dist-form} below.
\end{itemize}

\begin{proof}[Proof (Theorem \ref{thm:linfty-ext})]
  For now, we assume \(q_r \in (1,2]\). The arguments in the case \(q_r = 1\) are in fact even simpler and we elaborate on them afterwards in Step \ref{item:linfty-changes-q-1}.

  In the following, let \( \alpha > 0 \) such that
  \begin{equation}
    \label{eq:230}
    \omega(x) \leq \alpha^{-1}, \ \mu_0(x) \leq \alpha^{-1}, \ \text{for a.e.\@ } x \in \mathbb{R}.
  \end{equation}
    
  \begin{enumpara}[start=0]
     
  \item \emph{(Definition of the operator)} Let \(T > 0\) and $V(x) = \psi_a \ast \omega(x), \ x \in \mathbb{R}$. By the \( L^\infty \) assumption on \( \omega \) together with Lemma \ref{lem:18}, \(V'\) is Lipschitz-continuous. Denote its Lipschitz-constant by $\lambda$ and set
    \begin{equation}
      \label{eq:255}
      \widetilde{V}(x) := V(x) - \frac{\lambda}{2}|x|^2, \ x \in \mathbb{R}.
    \end{equation}
    Now, define the operator
    \begin{align}
      \label{eq:256}
      S[X](t,z) := {} &\exp(-\lambda t)X_0(z) \notag\\ &+\int_0^t \exp(-\lambda(t-s)) \left[ \int_0^1
        \psi_r'(X(s,z) - X(s,\zeta)) \, \mathrm{d} \zeta - \widetilde{V}'(X(s,z)) \right] \, \mathrm{d} s,
    \end{align}
    on the set
    \begin{equation*}
      \label{eq:257}
      \mathcal{B} := \left\{ X(.,.) \in C\big([0,T], L^\infty([0,1])\big) :
      \begin{gathered}
        X(t,.) \text{ has a right-continuous representative}\\
        \text{and fulfills \eqref{eq:258}}
      \end{gathered}
      \right\},
    \end{equation*}
    where
        \begin{equation}
      \label{eq:258}
      \begin{gathered}
        \frac{1}{h} \left(X(t,z+h) - X(t,z) \right) \geq \alpha\exp(-\lambda t) \\
        \text{for all } h \in (0,1) \text{ and } z \in [0,1-h],
      \end{gathered}  \tag{SL}
    \end{equation}
        endowed with the norm 
    \begin{equation}
      \label{eq:259}
      \lVert X \rVert_{\mathcal{B}} := \sup \left\{ \exp(\lambda t) \lVert X(t,.)\Vert
        _{L^\infty} : t \in [0,T]\right\}.
    \end{equation}
    Notice that \(\widetilde{V}\) is concave and hence \(\widetilde{V}'\) decreasing. $\mathcal B$ is actually closed in $L^\infty([0,1])$: Given a convergent sequence \( X_n \xrightarrow{\mathcal{B}} X \), we first remark that despite the exponential rescaling, we still have uniform convergence of \( X_n(t,.) \) and therefore that \( X(t,.) \) is continuous with values in \( L^\infty([0,1]) \). Now, convergence in \( L^\infty \) at each point \( t \) means that right-continuity is preserved via an \( \varepsilon/3 \) argument. Finally, the expression \eqref{eq:258} is continuous in \( X(t,z) \) and \( X(t,z+h) \) for each \( h \), whence we can also pass to the limit there.
    \item \emph{($S$ maps $\mathcal{B}$ into $\mathcal{B}$)} \label{item:s-b-into-b} Firstly, for \(X \in \mathcal{B}\), the continuity of \(t \mapsto S[X](t,.)\) from \([0,T]\) to \(L^{\infty}([0,1])\) follows from the continuity of the integral defining \(S\) and by the continuity of the functions involved, which attain their maximum on the set \([0,1]\).
    
    Secondly, ad slope condition: Let $X \in \mathcal{B}$ (in particular non-decreasing), $h > 0$, $z \in [0, 1 - h]$. By using the slope condition on $X_0$, the fact that $\psi_r'$ is increasing, and that $\widetilde{V}'$ is decreasing one obtains
    \begin{equation}
      \frac{1}{h}\left[ S[X](t,z + h) - S[X](t,z) \right] \geq\alpha\exp(-\lambda t).\label{eq:261}
    \end{equation}
    
  \item \emph{($S$ is contractive)} \label{item:linfty-contractive} Let
    $X,\widetilde{X} \in \mathcal{B}$. Then,
    \begin{align}
      \leadeq{\exp(\lambda t) \cdot \left| S[\widetilde{X}](t,z) - S[X](t,z)\right|} \\
       \leq {} & \int_0^t \exp(\lambda s) \int_0^1
      \left| \psi_r'(\widetilde{X}(s,z) - \widetilde{X}(s,\zeta)) - \psi_r'(X(s,z) - X(s,\zeta))\right|
      \, \mathrm{d} \zeta \, \mathrm{d} s\label{eq:262}\\
       &+ \int_0^t \exp(\lambda s) \left| \widetilde{V}'(\widetilde{X}(s,z)) -
        \widetilde{V}'(X(s,z))\right| \, \mathrm{d} s. \label{eq:263}
    \end{align}

    For the term \eqref{eq:263} we can simply use the Lipschitz-continuity of $\widetilde{V}'$ with Lipschitz-constant \(2\lambda\), which yields
    \begin{align}
      \label{eq:264}
      \int_0^t \exp(\lambda s) \left| \widetilde{V}'(\widetilde{X}(s,z)) -
        \widetilde{V}'(X(s,z))\right| \, \mathrm{d} s \leq 2\lambda \int_0^t \lVert \widetilde{X} - X
      \rVert_{\mathcal{B}} \, \mathrm{d} s \leq 2\lambda t \lVert \widetilde{X} - X
      \rVert_{\mathcal{B}}.
    \end{align}

    For the term \eqref{eq:262}, we observe that $\psi_r'$ is \emph{not} Lipschitz-continuous. However, the slope assumption allows us to see that the part of the integral where this gets critical, i.e.\@ where $X(s,z) - X(s,\zeta)$ is near zero, is small: Let us first assume that $\zeta \leq z$ and without loss of generality
    \begin{equation}
      \label{eq:265}
      \widetilde{X}(s,z) - \widetilde{X}(s,\zeta) \geq X(s,z) - X(s,\zeta).
    \end{equation}
    Then both evaluations of $\psi'$ lie on the positive branch of $\psi'$, while we can bound the difference of the operands by
    \begin{equation}
      \label{eq:266}
      \left| \big|\widetilde{X}(s,z) - \widetilde{X}(s,\zeta)\big| - \big|X(s,z) -
          X(s,\zeta)\big|\right| \leq 2 \underbrace{\sup_{\zeta \in [0,1]}
          \left| X(s,\zeta)
        - \widetilde{X}(s,\zeta)\right|}_{:=\delta(s)},
    \end{equation}
    while for each of them, by \eqref{eq:258}, we have
    \begin{align}
      \label{eq:267}
      \widetilde{X}(s,z) - \widetilde{X}(s,\zeta) &\geq \alpha\e^{-\lambda t}(z - \zeta), \quad
      X(s,z) - X(s,\zeta) \geq \alpha\e^{-\lambda t}(z - \zeta).
    \end{align}
    Hence, the integrand can be estimated by
    \begin{equation}
      \label{eq:268}
      \Big|\psi_r'(\underbrace{\widetilde{X}(s,z) - \widetilde{X}(s,\zeta)}_{\sim x +
        \eta}) - \psi_r'(\underbrace{X(s,z) - X(s,\zeta)}_{\sim x})\Big|
      \leq  \sup_{\substack{\widetilde{x} \leq x\\0 \leq \eta \leq \widetilde{\eta}}}
      \left(\psi_r'(x + \eta) - \psi_r'(x)\right),
    \end{equation}
    where $\widetilde{x} := \alpha\exp(-\lambda t)(z-\zeta)$ and $\widetilde{\eta} := 2\delta(s)$, and we used the monotonicity of \(\psi_r'\) to leave out the modulus.
    \begin{figure}[t]
      \centering
      \includegraphics{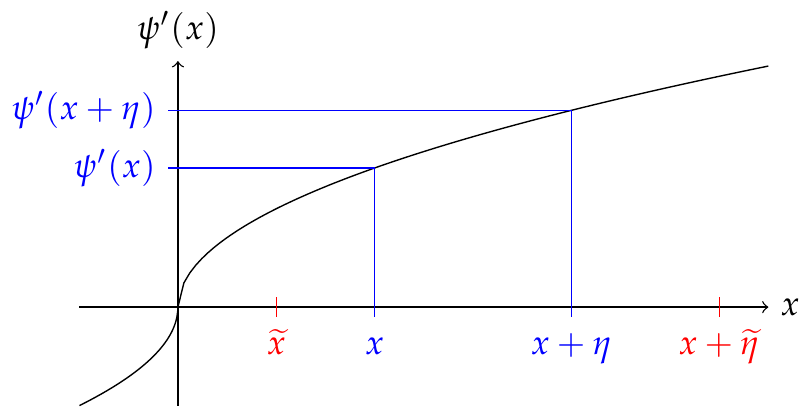}
      \caption{\label{fig:optimise-psi} The biggest distance in \eqref{eq:268}
        is attained for \(\psi'(\widetilde{x} + \widetilde{\eta}) - \psi'(\widetilde{x})\)}
    \end{figure}
    To visualise where this supremum is attained, one might have a look at Figure \ref{fig:optimise-psi}, which is actually for \(x = \widetilde{x}\), and \(\eta = \widetilde{\eta}\): Since \( \psi_r'(x) = q_r \left| x \right|^{q_r-2}x \) is strictly monotonically increasing and positive for \( x > 0 \), it is clear that for fixed \( x \), \( \eta = \widetilde{\eta} \) maximizes the expression. Furthermore, setting \( f(x) := \psi_r'(x+\eta) - \psi_r'(x) \), by \( f'(x) = \psi_r''(x+\eta) - \psi_r''(x) \) and \( \psi_r''(x) = q_r (q_r-1) \left| x \right|^{q-2} \), we see that \( f \) is monotonically decreasing for \( x > 0 \), hence the maximum \wrt \( x \) is attained for the leftmost point \( x = \widetilde{x} \).

So, inserting \(\widetilde{x} = \alpha\e^{-\lambda t}(z - \zeta)\) and \(\widetilde{\eta} = \delta(s)\) in \eqref{eq:268}, and using a similar argument also for $\zeta > z$, we eventually obtain
    \begin{align}
      \leadeq{\left| \psi_r'(\widetilde{X}(s,z) - \widetilde{X}(s,\zeta)) - \psi_r'(X(s,z) - X(s,\zeta))\right|} \nonumber\\
      \leq {} &\psi_r'(\alpha\e^{-\lambda t}(|z-\zeta|) + 2\delta(s) - \psi_r'(\alpha\e^{-\lambda t}(|z - \zeta|)),
    \end{align}
    for all $\zeta \neq z$. We can now use the mean value theorem to get a linear estimate for all $\zeta \neq z$. If for example $\zeta < z$, then
    \begin{align}
      \leadeq{\left| \psi_r'(\widetilde{X}(s,z) - \widetilde{X}(s,\zeta)) - \psi_r'(X(s,z) - X(s,\zeta))\right|} \nonumber \\ 
      \leq {} & 2q_r(q_r-1) \, \delta(s) \, \sup \left\{
        \eta^{q_r - 2} : \eta \in
        [\alpha\e^{-\lambda t}(z-\zeta),\alpha\e^{-\lambda t}(z-\zeta) + 2\delta(s)]
      \right\} \nonumber \\ 
      = {} & 2q_r(q_r - 1) (\alpha\e^{-\lambda t}(z-\zeta))^{q_r - 2} \delta(s)\label{eq:269}
    \end{align}
    and similarly for $\zeta > z$. Integrating \eqref{eq:269} with respect to $\zeta$ yields
    \begin{align}
      \leadeq{\int_0^1 \left| \psi_r'(\widetilde{X}(s,z) - \widetilde{X}(s,\zeta)) -
          \psi_r'(X(s,z) -
          X(s,\zeta))\right|\, \mathrm{d} \zeta} \nonumber \\
      \leq {} & 2q_r(q_r-1) \delta(s)
      \int_0^1\left| \alpha\e^{-\lambda t}(z-\zeta)\right|^{q-2}\, \mathrm{d} \zeta\nonumber\\
      = {} & 2q_r  \delta(s) (\alpha\e^{-\lambda t})^{q_r-2}\left[z^{q_r-1} + (1 - z)^{q_r-1} \right]\nonumber\\
      \leq {} & C \delta(s)
    \end{align}
    with a suitable $C > 0$, as the factors apart from \(\delta(s)\) are bounded for $z \in [0,1]$ and $t \in [0,T]$. Thence,
    \begin{equation}
      \label{eq:270}
      \int_0^t \exp(\lambda s) \int_0^1 \left| \psi_r'(\widetilde{X}(s,z) -
        \widetilde{X}(s,\zeta)) - \psi_r'(X(s,z) - X(s,\zeta))\right| \, \mathrm{d} \zeta \, \mathrm{d} s \leq TC \lVert X
      - \widetilde{X} \rVert_{\mathcal{B}},
    \end{equation}
    in total implying that for $T$ small enough, $S$ is a contraction.

    Combining the previous steps, we find a unique fixed point $X$ of $S$ using the Banach fixed point theorem, i.e.\@ an \(X \in \mathcal{B}\) such that
    \begin{align}
      \label{eq:271}
        X(t,z) = {} & \exp(-\lambda t)X_0(z) \notag\\ & +\exp(-\lambda t)\int_0^t \underbrace{\exp(\lambda s)) \left[ \int_0^1
          \psi'(X(s,z) - X(s,\zeta)) \, \mathrm{d} \zeta - \widetilde{V}'(X(s,z)) \right]}_{\text{integrand}} \, \mathrm{d} s,
    \end{align}
    where the integrand is continuous as a mapping \([0,t]\rightarrow L^{\infty}([0,1])\), again by the continuity of the involved functions and the $L^\infty$-property of $X$. Hence, the right-hand side has the desired \(C^{1}\)-regularity on \([0,T]\) and by equality in \eqref{eq:271}, so has \(X\).
  \item \emph{(Global existence)}\label{item:linfty-global-existence} Differentiating \eqref{eq:271} with respect to time directly yields
    \begin{align}
      \partial_tX(t,z) 
      = \int_0^1 \psi_r'(X(t,z) - X(t,\zeta)) \, \mathrm{d} \zeta - V'(X(t,z)),\label{eq:272}
    \end{align}
    hence $X$ fulfills also the desired equation. Global existence is achieved by preventing a blowup of the $L^\infty$-norm, which we rule out by estimating the growth: by Lipschitz-continuity of $V'$, the estimate $\left| \psi_r'(x)\right| \leq q_r \cdot (1 + \left| x\right|)$, and by Gronwall‘s inequality, we get that
    \begin{equation}
      \label{eq:273}
      \lVert X(t,.) \rVert_{L^\infty} \leq \left(\lVert X_0 \rVert_{L^\infty} + C_{1}t\right) \, \exp(C_{2}t).
    \end{equation}
  \item \label{item:linfty-dist-form} \emph{(Distributional formulation)} Firstly, for every \(t \in [0,\infty)\), \(X(t,.)\) is a right continuous increasing function and hence can be used to construct a probability measure on \(\mathbb{R}\): For this, apply the pseudo-inverse transform to get a right-continuous increasing function on \(\mathbb{R}\) and then use the well-known correspondence between probability measures and CDFs.

    Secondly, let \(\varphi \in C^{\infty}_{c}([0,\infty)\times \mathbb{R})\). As we have \(C^{1}\)-regularity of the solution curve, combining this with the fundamental theorem of calculus, Fubini’s theorem and the compactness of the support of \(\varphi\), we see that
    \begin{align}
      \label{eq:274}
      \int_{0}^{\infty} \int_{0}^{1} \frac{\mathrm{d}}{\mathrm{d} t} \left[\varphi(t, X(t,z))\right] \, \mathrm{d} z \, \mathrm{d} t {} & =
      -\int_{0}^{1}
      \varphi(0,X(0,z)) \, \mathrm{d} z &&\\
            & = -\int_{\mathbb{R}} \varphi(0,x) \, \mathrm{d} \mu(0,x) &&\text{by Lemma \ref{lem:16}},
    \end{align}
    where the use of Lemma \ref{lem:16} is justified because \(\varphi(0,.)\) is bounded and therefore in \(L^{1}(\mu(0))\).

    On the other hand, again by the regularity of the curves and the chain rule, for all $t \in [0,\infty)$ and almost all $z \in [0,1]$,
    \begin{equation}
      \label{eq:275}
      \frac{\mathrm{d}}{\mathrm{d} t} \left[\varphi(t, X(t,z))\right] = \partial_{t}\varphi(t,X(t,z)) +
      \partial_{x}\varphi(t,X(t,z)) \cdot \partial_{t}X(t,z).
    \end{equation}
    The integral over the first term in \eqref{eq:275} yields
    \begin{equation}
      \label{eq:276}
      \int_{0}^{\infty} \int_{0}^{1} \partial_{t}\varphi(t,X(t,z)) \, \mathrm{d} z \, \mathrm{d} t = \int_{0}^{\infty} \int_{\mathbb{R}}
      \partial_{t}\varphi(t,x) \, \mathrm{d} \mu(t,x) \, \mathrm{d} t,
    \end{equation}
    where we again used Lemma \ref{lem:16} as above. By inserting equation \eqref{eq:272} for $\partial_{t}X$, the integral over the second term in \eqref{eq:275} becomes
      \begin{align}
        \leadeq{\int_{0}^{\infty} \int_{0}^{1} \partial_{x}\varphi(t,X(t,z)) \cdot \partial_{t}X(t,z) \, \mathrm{d}
          z \, \mathrm{d} t}\\
        = {} &\int_{0}^{\infty} \int_{0}^{1} \partial_{x}\varphi(t,X(t,z)) \int_{0}^{1} \psi_r'(X(t,z)
        - X(t,\zeta)) \, \mathrm{d} \zeta - V'(X(t,z))
        \, \mathrm{d} z \, \mathrm{d} t\\
        = {} & \int_{0}^{\infty} \int_{0}^{1} \partial_{x}\varphi(t,X(t,z))\Big[(\psi_r' \ast
        \mu(t,.))(X(t,z)) - V'(X(t,z)) \Big] \, \mathrm{d} z \, \mathrm{d} t\\
         = {} & \int_{0}^{\infty} \int_{\mathbb{R}} \partial_{x}\varphi(t,x) \cdot (\psi_r' \ast \mu(t,.))(x) \, \mathrm{d}
        \mu(t,x) \, \mathrm{d} t \\
         & - \int_{0}^{\infty} \int_{\mathbb{R}} \partial_{x}\varphi(t,x) \cdot V'(x) \, \mathrm{d}
        \mu(t,x) \, \mathrm{d} t,\label{eq:277}
      \end{align}
      which is the desired equation. The use of Lemma \ref{lem:16} here is justified because the involved measures are compactly supported, yielding a bound on their second moment; this results in \(\psi_r' \ast \mu(t) \in L^{1}(\mu(t))\) and \(V' \in L^{1}(\mu(t))\), which we then combine with \(\partial_{x}\varphi(t,.) \in L^{\infty}(\mathbb{R})\) to see that the integrands in the last line of \eqref{eq:277} are in \(L^{1}(\mu(t))\).

    \item \label{item:linfty-changes-q-1} \emph{(Adjustments for $q=1$)} We first simplify the pseudo-inverse formulation \eqref{eq:229}: Note that for a strictly increasing pseudo-inverse \(X(t,.)\) with associated measure \(\mu(t)\) and CDF \(F(t,.)\), \(X(t,.)\) is the right-inverse of \(F(t,.)\), which means that we can write \eqref{eq:229} as
      \begin{equation}
        \label{eq:278}
        \partial_{t}X(t,z) = 2 F(t,X(t,z)) - 1 - V'(X(t,z)) = 2 z - 1 - V'(X(t,z)).
      \end{equation}
      We now apply the previous arguments to find a solution to this equation and afterwards justify that \(X(t,.)\) stays strictly increasing, allowing us to go back in the above equation \eqref{eq:278}.

      As \(\omega\) was assumed to be absolutely-continuous with its density belonging to \(L^{\infty}(\mathbb{R})\), the attraction potential \(V'(.)\) is Lipschitz-continuous (see Lemma \ref{lem:18}). Therefore, again denoting its Lipschitz-constant by \(\lambda\), we can define the operator \(S\) analogously to \eqref{eq:256} using the simplified form of \eqref{eq:278} as the right-hand side, i.e.\@
      \begin{align}
        \label{eq:279}
        S[X](t,z) := {} &\exp(-\lambda t)X_0(z) \notag\\ &+ \int_0^t \exp(-\lambda(t-s)) \left[ 2z - 1 - \Big(V'(X(s,z)) - \lambda X(s,z)\Big)\right] \, \mathrm{d} s.
      \end{align}
      Step \ref{item:s-b-into-b} may then still be applied as the integrand in \eqref{eq:279} is continuous and the monotonicity arguments used in \eqref{eq:261} remain true, as well. This provides us with the strict monotonicity of \(X(t,.)\) for all \(t\), so we can reverse the simplification \eqref{eq:278} as intended.

      Now, Step \ref{item:linfty-contractive} is actually much easier, since the mapping
      \begin{equation}
        \label{eq:280}
        X \mapsto 2z - 1 - \left(V'(X) - \lambda X\right), \ X \in \mathbb{R}
      \end{equation}
      is obviously Lipschitz-continuous in \(X\). Finally, Steps \ref{item:linfty-global-existence} and \ref{item:linfty-dist-form} work analogously and can be followed verbatim. \qedhere
    \end{enumpara}
\end{proof}

\section{Asymptotic behavior}
\label{sec:asymptotics}

\subsection{The case $q_r = q_a = 2$}
\label{sec:case-q-2}

Let us first look at the equation for $q = q_a = q_r = 2$. In that case, the corresponding potentials are \(\lambda\)-convex along generalised geodesics, so we could use the extensive theory of solutions of \cite{AGS08}. As we want to work with the pseudo-inverse formulation \eqref{eq:227}, we shall however use the notion of solutions provided by Theorem \ref{thm:linfty-ext}. Here, the solutions of \eqref{eq:227} exhibit a traveling wave behavior: The profile of the initial value \(\mu_{0}\) moves along the real line in the direction of the first moment of \(\omega\), which can be justified as follows:

Assume that the assumptions of Theorem \ref{thm:linfty-ext} are satisfied. We compute $\psi'(x) = 2x$ and remark that equation \eqref{eq:227} simplifies significantly due to the linearity of $\psi'$:
\begin{align}
  \partial_t X(t,z) &= - \int_0^1 2(X(t,z) - Y(\zeta)) \, \mathrm{d} \zeta + \int_0^1 2(X(t,z) - X(t,\zeta))
  \, \mathrm{d} \zeta \\
  &= 2\int_0^1 Y(\zeta) \, \mathrm{d} \zeta - 2\int_0^1 X(t,\zeta)
  \, \mathrm{d} \zeta \\
  &= 2 \left( \rho_{\omega} - \rho_{\mu(t)} \right),\label{eq:233}
\end{align}
where by $\rho_{\mu}$ we denote the first moment of $\mu$, which we can identify with $\int_0^1 X(z) \, \mathrm{d} z$ by Lemma \ref{lem:16}. The expression \eqref{eq:233} is independent of $z$, so \(X\) is left untouched by the evolution except for a vertical translation, i.e.
\begin{equation}
  \label{eq:234}
  X(t,z) = X(0,z) + 2 \int_{0}^{t} \int_0^1 Y(\zeta) - X(t,\zeta)
  \, \mathrm{d} \zeta. 
\end{equation}
To see where our initial distribution is pulled, consider the translated quantity
\begin{equation}
  \overline{\mu} (t,.) := \mu(t,.) - \omega(.)\label{eq:235}
\end{equation}
and integrate \eqref{eq:233} in space to get
\begin{equation}
  \label{eq:236}
  \frac{\mathrm{d}}{\mathrm{d} t} \rho_{\overline{\mu}(t)} = -2 \rho_{\overline{\mu}(t)}.
\end{equation}
As a result, we get exponential convergence of the first moment of $\mu(t)$ to that of \(\omega\), while the initial profile $\mu_0$ is preserved.

\subsection{The case $q_r = 1$}
\label{sec:case-q-1}

\begin{figure}[t]
  \centering
  \subfigure[\( \psi_a' \ast \omega \) determines \( x_0,a,b \)]{\includegraphics[]{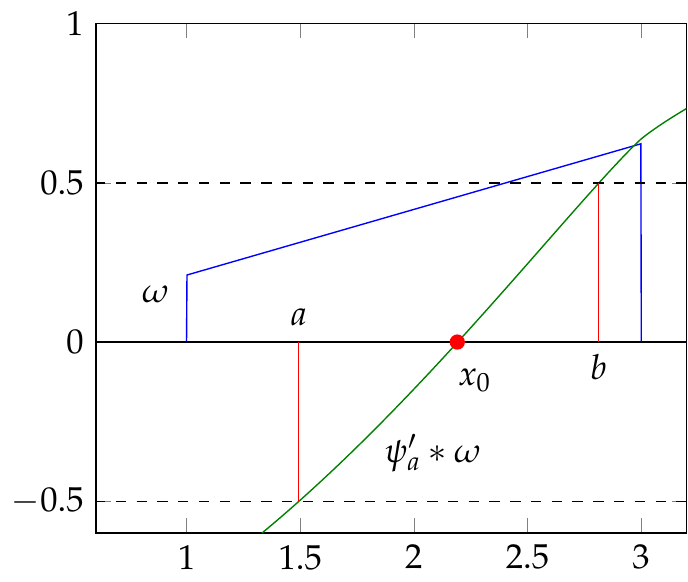}}
  \hspace{.5cm}
  \subfigure[\( \psi_a'' \ast \omega \) determines the shape]{\includegraphics[]{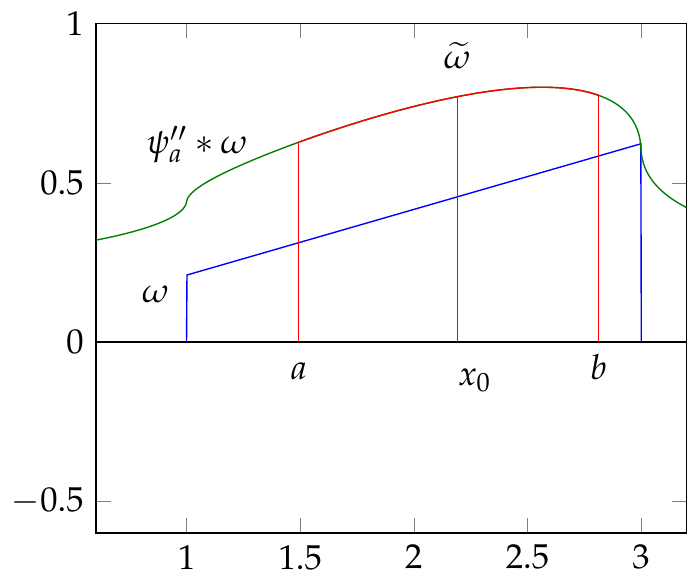}}
  \caption{Example of an \( \widetilde{\omega} \) for \( q_a = 1.5 \)}
  \label{fig:q1plot}
\end{figure}

Now, let us consider the case \( q_r = 1 \), corresponding to \( \psi_r \) being a multiple of the Newtonian potential in 1D, \ie the solution to \( f'' = 2\delta_0 \). Here, we again want to allow \( \omega \) to have a mass  different from \( 1 \) and just ask for \( \int_{\mathbb{R}} \omega(x) \diff x =: m > 0 \). The steady state will now be a suitable cut-off of \( \psi_a'' \ast \omega \) (which stays also valid for \( \psi_a'' = 2\delta_0 \) if \( q_a = 1 \)), which we shall firstly compute in Proposition \ref{prp:steady-stat-equat} and then show convergence of the gradient flow towards it for \( t\rightarrow\infty \) in Theorem \ref{thm:conv-given-datum}. See Figure \ref{fig:q1plot} for an example of the resulting \( \widetilde{\omega} \), where we used the notation of the following Proposition \ref{prp:steady-stat-equat}.

\begin{proposition}[Steady states of the equation]
  \label{prp:steady-stat-equat}
  Let \( \widetilde{\omega}, \omega \in L^\infty_c(\mathbb{R}) \) such that \( \widetilde{\omega}, \omega \geq 0 \) and
  \begin{equation}
    \label{eq:359}
    \int_{\mathbb{R}}  \diff \widetilde{\omega}(x) = 1, \quad \int_{\mathbb{R}}  \diff \omega(x) = m > 0.
  \end{equation}
  and denote their associated pseudo-inverses by \( \widetilde{Y} \) and \( Y \), respectively.

  If \( \widetilde{Y} \) fulfills the steady state equation
  \begin{equation}
    \label{eq:360}
    0 = - \int_0^1 \psi_a'(\widetilde{Y}(t,z) - Y(\zeta)) \, \mathrm{d} \zeta + \int_0^1 \psi_r'(\widetilde{Y}(z) - \widetilde{Y}(\zeta)) \, \mathrm{d} \zeta,
  \end{equation}
  and we define \( x_0, a, b \in \mathbb{R} \) such that
  \begin{equation}
    \label{eq:363}
    \psi_a' \ast \omega (x_0) = 0, \quad 
    \int_{a}^{x_0} \psi_a'' \ast \omega(x) \diff x = 1, \quad 
    \int_{x_0}^{b} \psi_a'' \ast \omega(x) \diff x = 1,
  \end{equation}
  then
  \begin{align}
    & \text{for } q_a > 1, \quad && \widetilde{\omega}(x) = \frac{1}{2} \psi_a'' \ast \omega(x) \, 1_{[a,b]}(x) \text{ a.e.}, \label{eq:364}
\\
    & \text{for } q_a = 1 \text{ and } m \geq 1, \quad && \widetilde{\omega}(x) = \frac{1}{2} \psi_a'' \ast \omega(x) \, 1_{[a,b]}(x) = \omega(x) \, 1_{[a,b]}(x) \text{ a.e.} \label{eq:365}\\
    & \text{for } q_a = 1 \text{ and } m < 1, && \text{ there is no } \widetilde{\omega} \text{ fulfilling the steady state equation \eqref{eq:360}.}\label{eq:366}
  \end{align}
\end{proposition}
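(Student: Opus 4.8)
\emph{Reduction to a spatial identity.} The plan is to rewrite the steady-state equation \eqref{eq:360} as a pointwise relation on $\mathbb{R}$ for the CDF $\widetilde F$ of $\widetilde\omega$. Since $\widetilde\omega\in L^\infty_c(\mathbb{R})$ is atomless, $\widetilde F$ is continuous and the pseudo-inverse $\widetilde Y$ is strictly increasing, so $\widetilde F\circ\widetilde Y=\mathrm{id}$ on $[0,1]$. With $\psi_r'=\sgn$, the substitution formula of Lemma \ref{lem:16} together with \eqref{eq:228} gives $\int_0^1\sgn(\widetilde Y(z)-\widetilde Y(\zeta))\,\diff\zeta=2\widetilde F(\widetilde Y(z))-1=2z-1$, while Lemma \ref{lem:16} also turns the first integral into the convolution $(\psi_a'\ast\omega)(\widetilde Y(z))$ (with $\omega$ of mass $m$). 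Abbreviating $H:=\psi_a'\ast\omega$, equation \eqref{eq:360} becomes equivalent to
\[
H(\widetilde Y(z))=2z-1,\qquad z\in[0,1].
\]

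\emph{The two cases.} Next I would analyze $H$, which is continuous and non-decreasing. For $q_a>1$ one has $H'=\psi_a''\ast\omega$ with $\psi_a''(x)=q_a(q_a-1)|x|^{q_a-2}\in L^1_{\mathrm{loc}}$ strictly positive off the diagonal; since $\omega\ge0$, $\omega\not\equiv0$, this gives $H'>0$ everywhere, and because $q_a-1>0$ we get $H(x)\to\pm\infty$ as $x\to\pm\infty$, so $H$ is a strictly increasing continuous bijection of $\mathbb{R}$. For $q_a=1$, $\psi_a'=\sgn$ and, as in \eqref{eq:253}, $H(x)=2\int_{-\infty}^x\omega-m$ is non-decreasing with $H'=2\omega=\psi_a''\ast\omega$ a.e.\ (reading $\psi_a''=2\delta_0$), and its range is exactly $[-m,m]$.

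\emph{Solving and reading off $\widetilde\omega$.} If $q_a>1$, $H$ is invertible, so necessarily $\widetilde Y(z)=H^{-1}(2z-1)$; equivalently $\widetilde F(x)=\tfrac12(H(x)+1)$ on $\{\,x:H(x)\in[-1,1]\,\}=[a,b]$, with $\widetilde F\equiv0$ below and $\widetilde F\equiv1$ above. Setting $a:=H^{-1}(-1)$, $b:=H^{-1}(1)$, $x_0:=H^{-1}(0)$ recovers exactly the definitions \eqref{eq:363} (for instance $\int_a^{x_0}\psi_a''\ast\omega=H(x_0)-H(a)=1$), and differentiating $\widetilde F$ yields $\widetilde\omega=\tfrac12H'\,1_{[a,b]}=\tfrac12(\psi_a''\ast\omega)\,1_{[a,b]}$ a.e., which is \eqref{eq:364}; note $\int\widetilde\omega=\tfrac12(H(b)-H(a))=1$. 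If $q_a=1$ and $m\ge1$, then $[-1,1]\subseteq[-m,m]=\mathrm{range}(H)$, so the equation is still solvable, and the same computation gives $\widetilde F(x)=\int_{-\infty}^x\omega+\tfrac{1-m}{2}$ on $[a,b]$, with $\int_{-\infty}^a\omega=\tfrac{m-1}{2}$, $\int_{-\infty}^b\omega=\tfrac{m+1}{2}$; hence $\widetilde\omega=\omega\,1_{[a,b]}=\tfrac12(\psi_a''\ast\omega)\,1_{[a,b]}$ a.e., i.e.\ \eqref{eq:365}. Finally, if $q_a=1$ and $m<1$, then $H$ only takes values in $[-m,m]\subsetneq(-1,1)$, while the equation at $z=0$ demands $H(\widetilde Y(0))=-1$, a contradiction; so no admissible $\widetilde\omega$ exists, which is \eqref{eq:366}.

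\emph{Main obstacle.} The delicate part is the first step: converting the integral equation \eqref{eq:360} into the pointwise identity $H\circ\widetilde Y=2z-1$ requires handling the pseudo-inverse calculus carefully — one must use that $\widetilde\omega$ is atomless to upgrade the one-sided bounds \eqref{eq:219} to $\widetilde F\circ\widetilde Y=\mathrm{id}$ and to get strict monotonicity of $\widetilde Y$ — and, in the case $q_a=1$, to deal with intervals where $\psi_a'\ast\omega$ is flat (which force $\widetilde\omega$ to vanish there, so that $\widetilde F=\tfrac12(H+1)$ on all of $[a,b]$ and the endpoints $a,b$ are well defined up to $\omega$-null sets). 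Once this reduction is in place, the entire content of the proposition is the elementary dichotomy between $\mathrm{range}(\psi_a'\ast\omega)=\mathbb{R}$ (for $q_a>1$) and $\mathrm{range}(\psi_a'\ast\omega)=[-m,m]$ (for $q_a=1$), the failure for $m<1$ being purely this range obstruction.
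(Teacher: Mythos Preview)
Your proposal is correct and follows essentially the same route as the paper: both reduce \eqref{eq:360} to the pointwise identity $(\psi_a'\ast\omega)(\widetilde Y(z))=2z-1$ via the $\sgn$-computation \eqref{eq:228} and $\widetilde F\circ\widetilde Y=\mathrm{id}$, then split into the cases $q_a>1$ (invertible $H$) and $q_a=1$ (range $[-m,m]$), reading off $\widetilde F$ and hence $\widetilde\omega$ exactly as in \eqref{eq:333} and \eqref{eq:336}. One small slip: atomlessness of $\widetilde\omega$ gives continuity of $\widetilde F$ and hence $\widetilde F\circ\widetilde Y=\mathrm{id}$, but does \emph{not} give strict monotonicity of $\widetilde Y$; this is harmless since only the former is used, and you yourself flag the flat-interval issue in your final paragraph.
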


\begin{proof}
  Let \( V(x) := \psi_a \ast \omega (x) \), so \( V'(x) = \psi_a' \ast \omega(x) \). For \( q_a > 1 \), \( V' \) is strictly monotonically increasing, continuous and fulfills \( V'(x) \rightarrow \pm \infty \) for \( x \rightarrow \pm \infty \). For \( q_a = 1 \), by the calculation \eqref{eq:328},
  \begin{equation}
    \label{eq:329}
    V'(x) = 2G(x) - m, \quad x \in \mathbb{R},
  \end{equation}
  where
  \begin{equation}
    \label{eq:330}
    G(x) := \int_{-\infty}^{x} \omega(x) \diff x,
  \end{equation}
  so \( V' \) is monotonically increasing, continuous and fulfills \( V'(x) \rightarrow \pm m \) for \( x \rightarrow \pm \infty \). 

  Again using \eqref{eq:328} and the fact that \( \widetilde{Y} \) by assumption has no point masses, the steady state equation \eqref{eq:360} now reads as
  \begin{equation}
    \label{eq:331}
    0 = 2z - 1 - V'(\widetilde{Y}(z)) \Leftrightarrow V'(\widetilde{Y}(z)) = 2z - 1, \quad z \in [0,1].
  \end{equation}
  In the case \( q_a > 1 \), the inverse \( (V')^{-1} \) is well-defined, and its application yields
  \begin{equation}
    \label{eq:332}
    \widetilde{Y}(z) = (V')^{-1}(2z - 1).
  \end{equation}
  Since \( z \mapsto (V')^{-1}(2z-1) \) here is strictly monotonically increasing as well, we can compute the associated CDF \( \widetilde{G} \) to \( \widetilde{Y} \) as its inverse, yielding \begin{equation}
    \label{eq:333}
    \widetilde{G}(x) = 
    \begin{cases}
      0, & V'(x) < -1,\\
      \frac{1}{2} \left( V'(x) + 1 \right), & V'(x) \in [-1,1],\\
      1, & V'(x) > 1.
    \end{cases}
  \end{equation}
  So the steady state will have its median where \( V'(x) \) has its unique zero and its density will then coincide with \( V''(x)/2 \), extending in both directions from the median until mass \( 1 \) is reached, which proves \eqref{eq:364}.

  For \( q_a = 1 \), the steady state equation is
  \begin{equation}
    \label{eq:335}
    2G(\widetilde{Y}(z)) - m = 2z - 1 \Leftrightarrow G(\widetilde{Y}(z)) = z + \frac{m - 1}{2}.
  \end{equation}
  We see that for \( m < 1 \), due to \( G(x) \in [0, m] \), this will not have a solution for \( \widetilde{Y} \) in the whole range \( z \in [0,1] \), proving \eqref{eq:366} (we will see in Remark \ref{rem:van-mass} that indeed for those \( z \) where it has, there is attraction towards that profile and the rest of the mass travels towards \( \pm \infty \)). For \( m \geq 1 \), there is a unique solution due to the fact that \( G \) only fails to be invertible for those right-hand side values where the pseudo inverse \( Y \) of \( \omega \) has a jump, which corresponds to a hole in the support of \( \omega \). Since this can only occur at at most countably \( z \), meaning its complementary set is dense, \( \widetilde{Y} \) is uniquely determined by the condition of being right-continuous, resulting in the CDF \( \widetilde{G}(x) \)
  \begin{equation}
    \label{eq:336}
    \widetilde{G}(x) = 
    \begin{cases}
      0, & G(x) < (m - 1)/2,\\
      G(x) - \frac{m - 1}{2}, & G(x) \in [(m-1)/2, \, 1 + (m-1)/2],\\
      1, & G(x) > 1 + (m-1)/2,
    \end{cases}
  \end{equation}
  which corresponds to the density of the steady state being a cut-off of \( \omega \) such that the medians coincide and again extending from there to ensure that the steady state has mass \( 1 \), which is proves \eqref{eq:365}.
\end{proof}

The monotonicity of \( V' \) now also allows us to show asymptotic stability of these steady states:

\begin{theorem}[Asymptotic stability]
  \label{thm:conv-given-datum}
  Let \( \mu_0, \omega \in L^\infty_c(\mathbb{R}) \) such that \( \mu_0, \omega \geq 0 \) and
  \begin{equation}
    \label{eq:367}
    \int_{\mathbb{R}}  \diff \mu_0(x) = 1, \quad \int_{\mathbb{R}}  \diff \omega(x) > 0.
  \end{equation}
  Denote by $X_{0}$, \( Y \) the pseudo-inverses of \( \mu_0 , \,\omega \), respectively. By Theorem \ref{thm:linfty-ext}, there is a solution
  \begin{equation}
    \label{eq:237}
    X \in C^{1}\left([0,\infty), L^{\infty}([0,1])\right)
  \end{equation}
  of \eqref{eq:229} with \(X(0,.) = X_{0}(.)\) and an associated curve of probability measures \(\mu(t)\) fulfilling the distributional formulation \eqref{eq:232}.

  For \( z \in [0,1] \), let 
  \begin{equation}
    \label{eq:338}
    \widetilde{Y}(z) := 
    \begin{cases}
      (V')^{-1}(2z - 1), &q_a > 1,\\
      Y\left( z + \frac{m-1}{2} \right), &q_a = 1 \text{ and } m \geq 1,\\
    \end{cases}
  \end{equation}
  be the pseudo-inverses of the measures denoted by \( \widetilde{\omega} \) in  \eqref{eq:364} and \eqref{eq:365}, respectively.
  
  The solution \( X(t,.) \) then fulfills
  \begin{equation}
    \label{eq:238}
    X(t,.) \xrightarrow{L^{2}} \widetilde{Y}(.) \,\ \text{ for } t\rightarrow\infty,
  \end{equation}
  which for the associated measures means
  \begin{equation}
    \label{eq:239}
    W_{2}(\mu(t), \widetilde{\omega} ) \rightarrow 0 \,\ \text{ for } t\rightarrow\infty.
  \end{equation}
\end{theorem}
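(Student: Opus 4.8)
The plan is to exploit that, in the case $q_r = 1$, the pseudo-inverse equation \eqref{eq:227} \emph{decouples} into a one-parameter family of scalar autonomous ODEs. For the solution $X(t,\cdot)$ furnished by Theorem \ref{thm:linfty-ext} — which, by the slope bound \eqref{eq:258}, stays strictly increasing for every finite $t$, so that the reduction \eqref{eq:278} is legitimate — the evolution reads
\begin{equation}
  \partial_t X(t,z) = 2z - 1 - V'(X(t,z)), \qquad V' := \psi_a' \ast \omega ,
\end{equation}
and for each fixed $z \in [0,1]$ this is a scalar ODE $\dot x = g_z(x)$ with $g_z(x) := (2z-1) - V'(x)$. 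By Lemma \ref{lem:18} the field $g_z$ is Lipschitz, so its solution is unique; and by the monotonicity of $V'$ recorded in Proposition \ref{prp:steady-stat-equat} it is \emph{non-increasing} in $x$, so its zero set is a (possibly degenerate) closed interval whose left endpoint is precisely $\widetilde Y(z)$, by the characterisation \eqref{eq:331}--\eqref{eq:338}.

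First I would run the elementary phase-line analysis of each scalar equation: starting from $X_0(z)$, if $g_z(X_0(z)) > 0$ the trajectory $t \mapsto X(t,z)$ increases monotonically and, by uniqueness, cannot cross the equilibrium interval, whence $X(t,z) \uparrow \widetilde Y(z)$; symmetrically if $g_z(X_0(z)) < 0$; and if $g_z(X_0(z)) = 0$ the trajectory is stationary. The only way $X(t,z)$ can fail to converge to $\widetilde Y(z)$ is when $X_0(z)$ lies strictly inside a nondegenerate flat stretch of the zero set of $g_z$; for $q_a > 1$ this never occurs, as $V'$ is then strictly increasing and the zero set is a single point, while for $q_a = 1$ it occurs only for $z$ in the (at most countable) set of levels corresponding to a hole of $\supp \omega$. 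In every case the monotonicity of the scalar flow towards its equilibrium interval gives the uniform domination $|X(t,z) - \widetilde Y(z)| \le |X_0(z) - \widetilde Y(z)|$ for all $t \ge 0$ and all $z$, and in particular $X(t,\cdot)$ stays bounded in $L^\infty([0,1])$ uniformly in $t$, so that $\mu(t)$ remains supported in a fixed compact set.

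Then I would conclude by dominated convergence: since $\mu_0$ and $\widetilde\omega$ are compactly supported, $X_0 - \widetilde Y \in L^\infty([0,1]) \subseteq L^2([0,1])$, so the pointwise (a.e.) convergence $X(t,z) \to \widetilde Y(z)$ together with the above domination yields $\|X(t,\cdot) - \widetilde Y(\cdot)\|_{L^2([0,1])} \to 0$ for $t \to \infty$. The assertion \eqref{eq:239} is then immediate from the Wasserstein formula of Lemma \ref{lem:17}, namely $W_2(\mu(t),\widetilde\omega) = \|X(t,\cdot) - \widetilde Y(\cdot)\|_{L^2([0,1])}$. As a sanity check one may also note the Lyapunov identity $\tfrac{\df}{\df t}\|X(t,\cdot) - \widetilde Y\|_{L^2}^2 = 2\int_0^1 (X - \widetilde Y)\bigl(V'(\widetilde Y) - V'(X)\bigr)\dd z \le 0$, which already shows that the $L^2$-distance to $\widetilde Y$ is non-increasing.

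The main obstacle is the weakly-monotone case $q_a = 1$: one must argue that the set of "bad" parameters $z$ — those for which the decoupled trajectory stays trapped away from $\widetilde Y(z)$ — is negligible, by identifying it with the countably many levels at which $\supp \omega$ has a hole, so that $L^2$ (indeed a.e.) convergence is unaffected. A secondary, but routine, point is to confirm that the reduction to the decoupled form remains valid for all times, i.e. that $X(t,\cdot)$ does not degenerate as $t \to \infty$; the slope estimate \eqref{eq:258} from Theorem \ref{thm:linfty-ext} delivers strict monotonicity on every finite interval, which is all that the argument above uses.
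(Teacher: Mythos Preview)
Your argument is correct and reaches the same conclusion, but the route you take to pointwise convergence is genuinely different from the paper's.

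The paper also reduces to the decoupled form $\partial_t X(t,z) = V'(\widetilde Y(z)) - V'(X(t,z))$ and also records the Lyapunov inequality you mention as a sanity check; in fact that inequality is the paper's main tool. The paper integrates the $L^2$ identity in time to obtain $\int_0^\infty I[X(\tau,\cdot)]\,\df\tau < \infty$ for the dissipation $I[X] = \int_0^1 (X-\widetilde Y)(V'(X)-V'(\widetilde Y))\,\df z$, extracts a subsequence along which $I \to 0$, passes to a further subsequence so that the (non-negative) integrand vanishes a.e., and then inverts $V'$ at a.e.\ $z$ to conclude $X(t_k,z)\to\widetilde Y(z)$; monotonicity of $|X(t,z)-\widetilde Y(z)|$ upgrades this to the full limit. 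Your phase-line analysis of each scalar ODE $\dot x = g_z(x)$ replaces this entire dissipation/subsequence machinery by the elementary observation that a bounded monotone trajectory of an autonomous scalar equation converges to an equilibrium. This is shorter and more transparent here; the paper's argument has the advantage of being a template (energy--energy-dissipation) that transfers to situations where the equation does not decouple, cf.\ Section~\ref{sec:conv-against-steady}.

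One small slip worth flagging: by the definition $X_\mu(z)=\inf\{x:F_\mu(x)>z\}$, when $G$ has a flat stretch at level $z+(m-1)/2$ on an interval $[a,b]$, the value $\widetilde Y(z)=Y(z+(m-1)/2)$ is the \emph{right} endpoint $b$, not the left. Consequently your sentence ``$X(t,z)\uparrow\widetilde Y(z)$'' is not literally correct for those $z$ (from the left one converges to $a$, not $b$), and failure to converge to $\widetilde Y(z)$ is not confined to initial data inside the flat stretch. This is harmless for the proof: such $z$ form an at most countable set, as you note, and the Lyapunov inequality still gives the domination $|X(t,z)-\widetilde Y(z)|\le|X_0(z)-\widetilde Y(z)|$ for every $z$ (since $\widetilde Y(z)$ is itself an equilibrium of $g_z$), so dominated convergence goes through unchanged.
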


\begin{proof}
  \thmenumhspace{-1em}
  \begin{enumpara}
  \item \emph{(Simplified equation)} Let us begin with the observation that because Theorem \ref{thm:linfty-ext} guarantees that \(X(t,.)\) stays strictly increasing for all times, \(X(t,.)\) is the right-inverse of \(F(t,.)\) for all times \(t \geq 0\), hence \( F(X) = \operatorname{id} \) and by \eqref{eq:338}, equation \eqref{eq:227} can be written as
    \begin{equation}
      \label{eq:339}
      \partial_t X(t,z) = 2z - 1 - V'(X(t,z)) = V'(\widetilde{Y}(z)) - V'(X(t,z)).
    \end{equation}

  \item \emph{(\(L^2\)-norm and pointwise distance decrease)} \label{item:wasserstein-decr} We can compute the derivative of the \(L^{2}\)-norm between \(X(t,.)\) and \(\widetilde{Y}(.)\):
    \begin{align}
      \label{eq:242}
      \leadeq{\frac{\mathrm{d}}{\mathrm{d} t} \int_0^1 \left| X(z)-\widetilde{Y}(z)\right|^2 \, \mathrm{d} z} \\
      = {} &2 \int_0^1 (X(t,z) - \widetilde{Y}(z)) \cdot \left[V'(\widetilde{Y}(z)) - V'(X(t,z))\right] \, \mathrm{d} z \leq 0
    \end{align}
    by the monotonicity of $V'$ and we conclude that
    \begin{equation}
      \label{eq:243}
      \lVert X(t,.) - \widetilde{Y}(.) \rVert_{L^{2}} \leq \lVert X_{0}(.) - \widetilde{Y}(.)
      \rVert_{L^{2}} \,\ \text{ for all } t \geq 0.
    \end{equation}
    
    The same argument can be used to see that the quadratic distance \( | X(t,z) - \widetilde{Y}(z) |^2 \) also decreases monotonically for every $z$, by computing
    \begin{equation}
      \label{eq:244}
      \frac{\mathrm{d}}{\mathrm{d} t} \left| X(t,z) - \widetilde{Y}(z)\right|^2 = 2 \left(X(t,z) - \widetilde{Y}(z)\right) \cdot \left[(V' \circ \widetilde{Y})(z) - (V' \circ X)(t,z)\right] \leq 0.
    \end{equation}
  \item \emph{(Vanishing dissipation)} \label{item:van-diss} Denote the right-hand side of \eqref{eq:242} by $-2 I[X(t,.)]$, i.e.,
    \begin{equation}
      \label{eq:245}
      I[X(t,.)] := \int_0^1 (X(t,z) - \widetilde{Y}(z)) \left[(V' \circ X)(t,z) - (V' \circ \widetilde{Y})(z)\right] \, \mathrm{d} z.
    \end{equation}
    By integrating \eqref{eq:242} in time, we can conclude from
    \begin{equation}
      \label{eq:246}
      \int_0^1\left(X(t,z) - \widetilde{Y}(z)\right)^2 \, \mathrm{d} z + 2 \int_0^t I[X(\tau,.)]\, \mathrm{d} \tau = \int_0^1 \left(X(0, z) - \widetilde{Y}(z)\right)^2 \, \mathrm{d} z
    \end{equation}
    that
    \begin{equation}
      \label{eq:247}
      \int_0^\infty I[X(\tau,.)]\, \mathrm{d} \tau < \infty.
    \end{equation}
    So there is a sequence $(t_k)_k \subseteq [0,\infty)$ with
    $t_k\rightarrow\infty$ and \( I[X(t_k,.)] \rightarrow 0. \)

    Furthermore, as the integrand in the definition of $I$ is non-negative, this convergence can be interpreted as $L^1$-convergence of the integrand. Therefore we can extract a subsequence along which the integrand converges almost everywhere in $[0,1]$, i.e.
    \begin{equation}
      \label{eq:249}
      (X(t,z) - \widetilde{Y}(z)) \left[(V' \circ X)(t,z) - (V' \circ \widetilde{Y})(z)\right] \rightarrow 0 \,\ 
      \text{ for a.e.\@ } z
    \end{equation}
  \item \emph{(Convergence of the pseudo-inverse a.e.)} Since by Step \ref{item:wasserstein-decr}, \( \left| X(t_{k},z) - Y(z) \right| \) is monotonically decreasing for all \(z \in [0,1]\) and it is obviously bounded from below, it is a convergent sequence. Towards a contradiction, assume that for some \(z\), it was not converging to \(0\). Then for those \(z\) (except possibly for a null set), we would have at least
    \begin{equation}
      \label{eq:250}
      V'(X(t_k,z)) \rightarrow V'(\widetilde{Y}(z)) \ \text{ for } k\rightarrow\infty,
    \end{equation}
    as otherwise the convergence in \eqref{eq:249} would not hold.

    In the case \( q_a > 1 \), we can simply continuously invert \( V' \), yielding a contradiction to the assumption that \(\left| X(t_{k},z) - Y(z)\right|\) is bounded away from zero.

    In the case \( q_a = 1 \), the continuous invertibility of \( V' \) only fails at its jump points, which on the right-hand side of \eqref{eq:250} occur at at most countably many points, meaning almost nowhere. For all other \( z \), there is a small neighborhood around \( \widetilde{Y}(z) \) such that \( V' \) is continuous there, resulting in convergence almost everywhere as well.

    Summarising, we conclude that
    \begin{equation}
      \label{eq:251}
      X(t_k,z) \rightarrow \widetilde{Y}(z) \ \text{ for } k\rightarrow\infty \text{ and a.e.\@ } z.
    \end{equation}
  \item \emph{(Convergence in \(L^2\))} By the assumptions, $(X(0,z) - \widetilde{Y}(z))^2$ is integrable and the sequence $(X(t_k,z) - \widetilde{Y}(z))^2$ is monotonically decreasing, therefore the former is a dominating function for the latter, and by the Dominated Convergence Theorem we get convergence in $L^{2}([0,1])$ along the subsequence $t_k$. Due to the monotonicity proven in Step \ref{item:wasserstein-decr}, this carries over to any sequence \(t_{k}\rightarrow\infty\). The convergence \eqref{eq:239} in \(W_{2}\) of the associated measures then follows from Lemma \ref{lem:17}. \qedhere
  \end{enumpara}
\end{proof}

\begin{remark}[Special case \( m = 1 \), no \( W_\infty \)-convergence]
  If \( q_a = m = 1 \), then \( \widetilde{Y} = Y \) and we have proved \( \mu(t) \rightarrow \omega \) in \( \mathcal{P}_2(\mathbb{R}) \) above.

  Note that in this case, we cannot in general expect \( \infty \)-Wasserstein-convergence: Let \( \inf \supp \mu_0 < \inf \supp \omega \), then
  \begin{equation}
    \label{eq:248}
    \partial_t X(0,0) = 2 \cdot 0 - 1 - V'(X(0,0)) = 2G(X(0,0)) = 0,
  \end{equation}
  so the left edge of the support will stay stationary throughout the evolution (while the mass to the right of it will be pulled towards \( \omega \)).
\end{remark}

\begin{remark}[Traveling mass for \( m < 1 \)]
  \label{rem:van-mass}
  Let \( q_a = 1 \) and \( m < 1 \). For the left tail, \( z \in [0, (1-m)/2)) \), the pseudo-inverse equation is
  \begin{equation}
    \label{eq:240}
     \partial_t X(t,z) = 2z - 1 - V'(X(t,z)) = 2z - 1 + m \leq 0
  \end{equation}
  which means that the left tail of \( \mu \) of mass \( (1-m)/2 \) will travel towards \( -\infty \) and similarly, the right tail of the same mass will travel to \( \infty \).

  As for \( z \in \left[ (1-m)/2, 1 - (1-m)/2 \right] \), we are able to define the pseudo-inverse \( \widetilde{Y} \) there as in \eqref{eq:338} and can thence apply the same arguments as in the proof of Theorem \ref{thm:conv-given-datum}, restricting all integrals, norms and pointwise evaluations to the interval \( [ (1-m)/2,\, 1 - (1-m)/2 ] \), to see that the restricted profile of \( \mu \) there converges to \( \omega \).

  In total, this means that \( \mu(t) \) converges vaguely to \( \widetilde{\omega} \) (\ie, in the duality with functions \( C_0(\mathbb{R}) \) which vanish at \( \pm \infty \)): Let \( f \in C_0(\mathbb{R}) \), then
  \begin{align}
    \label{eq:369}
    \int_{\mathbb{R}} f(x) \diff \mu(t,x) = {} & \int_{0}^{1} f(X(t,z)) \diff z\\
    = {} & \int_{(1-m )/2}^{1-(1-m)/2} f(X(t,z)) \diff z \nonumber \\
    {} & + \int_{0}^{(1-m)/2} f(X(t,z)) \diff z + \int_{1-(1-m)/2}^1 f(X(t,z)) \diff z\\
    \rightarrow {} & \int_{(1-m )/2}^{1-(1-m)/2} f(\widetilde{Y}(z)) \diff z = \int_{\mathbb{R}} f(x) \diff \widetilde{\omega}(x), \quad t\rightarrow\infty,
  \end{align}
  by the substitution formula (Lemma \ref{lem:16}) and the Dominated Convergence Theorem.
\end{remark}

\subsection{Convergence of a subsequence to a steady state}
\label{sec:conv-against-steady}

In Sections \ref{sec:case-q-2} and \ref{sec:case-q-1}, we clarified the asymptotic behavior of solutions in the cases \( q_a = q_r = 2 \) and \( q_r = 1 \leq q_a \in [1,2]  \). In this section we address the problem of establishing convergence to steady states in the range \( 1 < q_r \leq q_a < 2 \). For such a purpose, we shall employ both an energy-energy-dissipation inequality combined with the moment bounds \eqref{eq:358} (Theorem \ref{thm:exist-min-strong}) and \eqref{eq:206} (Theorem \ref{thm:moment-bound-symmetric}) in order to derive both compactness of trajectories and continuity of the dissipation.

It will turn out that such a technique is actually sharply bounded to succeed only for the range of parameters
\begin{align}
  \label{eq:362}
    q_r &\in [1,2), \quad q_r < q_a \leq 2, \quad \text{or}\\
    q_r &= q_a \in \left[ 1,\,4/3 \right),
\end{align}
leaving still open the harder problem to describe the asymptotics for \( q_r > q_a \) or \( 4/3 \leq q_r = q_a < 2 \). Notice that the case \eqref{eq:362} actually applies to all parameters in the range \( [1,2] \) as soon as the attraction is stronger than the repulsion, thanks to the additional compactness given by the confinement property of the attraction (see Section \ref{sec:exist-minim-strong}, in particular the proof of Theorem \ref{thm:exist-min-strong}).

\begin{lemma}[Dissipation formula]
  \label{lem:28}
  Let \( q_a, q_r \in (1,2] \) and \( X(.,.) \) be a solution curve to \eqref{eq:227} as in Theorem \ref{thm:linfty-ext}. Denoting the associated measures by \( \mu(.,.) \), we have
  \begin{align}
    \label{eq:340}
    \frac{\diff}{\diff t} \mathcal{E}[\mu(t,.)] = {} & - \int_{\mathbb{R}} \left| \psi_a' \ast \omega - \psi_r' \ast \mu(t) \right|^2 \diff \mu \\
    = {} & -\int_{0}^{1} \left| \int_{0}^{1} \left[ \psi_a(X(t,z) - Y(\zeta)) - \psi_r(X(t,z) - X(t,\zeta)) \right] \diff \zeta \right|^2 \diff z \\
    = : {} & - \mathcal{D}[\mu(t)]. \label{eq:368}
  \end{align}
\end{lemma}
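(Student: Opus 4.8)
The claim is the standard energy-dissipation identity for Wasserstein gradient flows, here specialized to the pseudo-inverse formulation. The plan is to differentiate $\mathcal{E}[\mu(t,.)]$ in time using the pseudo-inverse representation $\widetilde{\mathcal{E}}$, then substitute the evolution equation \eqref{eq:227}, and finally recognize the resulting expression as the $L^2([0,1])$-norm squared of $\partial_t X$, which by Lemma \ref{lem:16} is exactly the $\mu$-integral of $\lvert \psi_a' \ast \omega - \psi_r' \ast \mu \rvert^2$.

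First I would express the energy along the flow purely in terms of pseudo-inverses. Using the substitution formula (Lemma \ref{lem:16}) twice, together with the symmetrization leading to $\widetilde{\mathcal{E}}$ as in \eqref{eq:37}, one writes
\begin{equation}
  \label{eq:diss-1}
  \mathcal{E}[\mu(t,.)] = \int_0^1 \int_0^1 \psi_a(X(t,z) - Y(\zeta)) \diff \zeta \diff z - \frac{1}{2} \int_0^1 \int_0^1 \psi_r(X(t,z) - X(t,\zeta)) \diff \zeta \diff z.
\end{equation}
Next I would differentiate under the integral sign, which is justified because $X \in C^1([0,\infty), L^\infty([0,1]))$ by Theorem \ref{thm:linfty-ext}, the kernels $\psi_{a,r}$ are locally Lipschitz away from the origin and (for $q_{a,r}>1$) everywhere continuously differentiable with derivative of at most linear growth, and all measures stay compactly supported so the integrands and their $t$-derivatives are dominated uniformly on compact time intervals. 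The chain rule gives
\begin{align}
  \label{eq:diss-2}
  \frac{\diff}{\diff t}\mathcal{E}[\mu(t,.)] = {} & \int_0^1 \partial_t X(t,z) \left[ \int_0^1 \psi_a'(X(t,z) - Y(\zeta)) \diff \zeta \right] \diff z \\
  & - \int_0^1 \partial_t X(t,z) \left[ \int_0^1 \psi_r'(X(t,z) - X(t,\zeta)) \diff \zeta \right] \diff z,
\end{align}
where in the repulsive term the two symmetric contributions from differentiating $\psi_r(X(z)-X(\zeta))$ in $z$ and in $\zeta$ combine (using the oddness of $\psi_r'$ and relabelling) to cancel the factor $\tfrac12$.

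Now I would insert the evolution equation \eqref{eq:227}, namely $\partial_t X(t,z) = -\int_0^1 \psi_a'(X(t,z)-Y(\zeta))\diff\zeta + \int_0^1 \psi_r'(X(t,z)-X(t,\zeta))\diff\zeta$. The bracketed quantity in \eqref{eq:diss-2} is precisely the negative of this, so \eqref{eq:diss-2} becomes $-\int_0^1 \lvert \partial_t X(t,z)\rvert^2 \diff z$, which is the middle line of the asserted identity. Finally, applying Lemma \ref{lem:16} in the form $\int_0^1 g(X(t,z))\diff z = \int_{\mathbb{R}} g \diff \mu(t)$ with $g = \lvert \psi_a'\ast\omega - \psi_r'\ast\mu(t)\rvert^2$ — recalling from the derivation of \eqref{eq:226}--\eqref{eq:227} that $\partial_t X(t,\cdot) = -(\psi_a'\ast\omega - \psi_r'\ast\mu(t))\circ X(t,\cdot)$ — yields the first line, completing the chain of equalities.

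The main obstacle is rigorously justifying the differentiation under the integral sign near the kernel singularity at the origin when $q_r$ (or $q_a$) is close to $1$: there $\psi_r'$ is merely bounded and $\psi_r''$ is non-integrable, so one cannot naively dominate $\partial_t$ of $\psi_r(X(z)-X(\zeta))$ pointwise. This is handled exactly as in Step \ref{item:linfty-contractive} of the proof of Theorem \ref{thm:linfty-ext}: the strict slope bound \eqref{eq:258}, $X(t,z)-X(t,\zeta) \geq \alpha e^{-\lambda t}\lvert z-\zeta\rvert$, keeps the arguments away from $0$ and turns the difference quotients into expressions integrable in $\zeta$ uniformly for $t$ in a compact interval, so that a dominated-convergence argument applies. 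For $q_a=q_r=2$ the kernels are smooth and everything is elementary; for $q_{a,r}=1$ one interprets $\psi'=\sgn$ and uses the explicit formula \eqref{eq:328}, $\psi_r'\ast\mu = 2F(\cdot)-1$, making the dissipation computation direct. With this justification in place, the identity follows as above.
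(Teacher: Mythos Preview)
Your proof is correct and follows essentially the same route as the paper: rewrite $\mathcal{E}[\mu(t)]$ via the substitution formula, differentiate under the integral, use the oddness of $\psi_r'$ to absorb the factor $\tfrac12$, and insert \eqref{eq:227}. One remark: your ``main obstacle'' paragraph is unnecessary here. Since the lemma assumes $q_a,q_r\in(1,2]$, the functions $\psi_a',\psi_r'$ are continuous, and because $X\in C^1([0,T],L^\infty([0,1]))$ with all measures compactly supported, the $t$-derivative of the integrand, namely $\psi_r'(X(t,z)-X(t,\zeta))(\partial_tX(t,z)-\partial_tX(t,\zeta))$, is uniformly bounded on compact time intervals --- no singularity needs to be tamed and the slope condition \eqref{eq:258} plays no role; the paper simply invokes dominated convergence directly.
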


\begin{proof}
  By Lemma \ref{lem:16}, we can write
  \begin{align}
    \label{eq:341}
    \mathcal{E}[\mu(t)] = {} & \int_{\mathbb{R}\times\mathbb{R}} \psi_a(x-y) \diff \omega(x) \diff \mu(y) - \frac{1}{2} \int_{\mathbb{R}\times\mathbb{R}} \psi_r(x-y)  \diff \mu(x) \diff \mu(y) \\
    = {} & \int_{0}^{1} \int_{0}^{1} \psi_a(X(t,z) - Y(\zeta)) \diff \zeta \diff z \nonumber \\
    &- \frac{1}{2} \int_{0}^{1} \int_{0}^{1} \psi_r(X(t,z) - X(t, \zeta)) \diff \zeta \diff z.
  \end{align}
  Since \( \psi_r' \) and \( \psi_a' \) are continuous functions and \( X(.,.) \in C^1([0,T], L^\infty[0,1]) \), for \( t \in [0,T] \), the appearing derivatives will be bounded uniformly in \( t \), so differentiating under the integral sign is justified by the Dominated Convergence Theorem, yielding
  \begin{align}
    \label{eq:342}
    \frac{\diff}{\diff t} \mathcal{E}[\mu(t)] = {} & \int_{0}^{1} \int_{0}^{1} \psi_a'(X(t,z) - Y(\zeta)) \partial_t X(t,z) \diff \zeta \diff z \nonumber \\
    & \quad - \frac{1}{2} \int_{0}^{1} \int_{0}^{1} \psi_r'(X(t,z) - X(t, \zeta)) \left( \partial_t X(t,z) - \partial_t X(t,\zeta) \right) \diff \zeta \diff z \\
     {} = & \int_{0}^{1} \int_{0}^{1} \int_{0}^{1} \left[ \psi_a'(X(t,z) - Y(\zeta)) - \psi_r'(X(t,z) - X(t, \zeta)) \right] \nonumber\\
     & \quad \cdot \left[ - \psi_a'(X(t,z) - Y(\xi)) + \psi_r'(X(t,z) - X(t,\xi)) \right] \diff \xi \diff \zeta \diff z \label{eq:343}\\
     = {} & -\int_{0}^{1} \left| \int_{0}^{1} \left[ \psi_a(X(t,z) - Y(\zeta)) - \psi_r(X(t,z) - X(t,\zeta)) \right] \diff \zeta \right|^2 \diff z,\\
  \end{align}
  where in equation \eqref{eq:343}, we inserted \eqref{eq:227} and used the anti-symmetry of \( \psi_r' \).
\end{proof}

\begin{theorem}[Convergence up to a subsequence]
  \label{thm:conv-steady-state}
  Let either
  \begin{equation}
    \label{eq:346}
    1 < q_r < 2, \quad q_r < q_a < 2
  \end{equation}
  or 
  \begin{equation}
    \label{eq:347}
    1 < q_a = q_r < \frac{4}{3}
  \end{equation}
  and \( \omega, \mu \) as in Theorem \ref{thm:linfty-ext}. Then, there is a probability measure \( \mu^\ast \) and a sequence \( (t_k)_{k \in \mathbb{N}} \) with \( 0 \leq t_k \rightarrow \infty \) for \( k \rightarrow\infty \) such that
  \begin{equation}
    \label{eq:344}
    \mu(t_k) \rightarrow \mu^\ast \text{ narrowly}
  \end{equation}
  and
  \begin{equation}
    \label{eq:345}
    \psi_a' \ast \omega(x) - \psi_r' \ast \mu^\ast(x) = 0, \quad x \in \supp(\mu^\ast),
  \end{equation}
  \ie, \( \mu^\ast \) fulfills the steady state equation of the gradient flow.
\end{theorem}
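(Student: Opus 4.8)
The plan is to combine the energy–energy–dissipation inequality of Lemma \ref{lem:28} with the compactness provided by the moment bound of Theorem \ref{thm:moment-bound-symmetric} (or, in the stronger-attraction case, Theorem \ref{thm:exist-min-strong}). First I would note that along a solution curve \( X(\cdot,\cdot) \) the energy \( \mathcal{E}[\mu(t)] \) is non-increasing by Lemma \ref{lem:28}, and that it is bounded below: indeed, in the symmetric case \( \mathcal{E}[\mu] = \widehat{\mathcal{E}}[\mu] - C \geq -C \) by Corollary \ref{cor:lower-semi-cont} (after passing to the lower semi-continuous envelope, which is legitimate since the measures \( \mu(t) \) have bounded second moments by compact support), and in the case \( q_r < q_a \) the coercivity estimate \eqref{eq:358} again gives a lower bound. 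Hence \( \mathcal{E}[\mu(t)] \) converges as \( t \to \infty \), so \( \int_0^\infty \mathcal{D}[\mu(t)]\diff t < \infty \), which produces a sequence \( t_k \to \infty \) with \( \mathcal{D}[\mu(t_k)] \to 0 \).

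Next I would extract a narrowly convergent subsequence. The key point is that \( \mathcal{E}[\mu(t)] \leq \mathcal{E}[\mu_0] =: M \) for all \( t \), so the whole trajectory lies in a sub-level of \( \widehat{\mathcal{E}} \) (respectively of \( \mathcal{E} \) in the asymmetric case); by Theorem \ref{thm:moment-bound-symmetric} this sub-level has a uniformly bounded \( r \)-th moment for some \( r > 0 \) (with \( r < q/2 \) in the symmetric case, which is where the restriction \( q_a = q_r < 4/3 \) enters, since we need \( r \) compatible with the subsequent step), and by Theorem \ref{thm:exist-min-strong} it has uniformly bounded \( q_a \)-th moment in the asymmetric case. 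Either way Lemma \ref{lem:24} gives tightness, so along a further subsequence \( \mu(t_k) \rightarrow \mu^\ast \) narrowly for some \( \mu^\ast \in \mathcal{P}(\mathbb{R}) \), and the uniform moment bound passes to \( \mu^\ast \).

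The final and most delicate step is to show that the vanishing of the dissipation \( \mathcal{D}[\mu(t_k)] \to 0 \) forces \( \mu^\ast \) to satisfy the steady-state equation \eqref{eq:345} on its support. I would pass to the pseudo-inverse picture: writing \( \mathcal{D}[\mu(t_k)] = \int_0^1 | \int_0^1 [\psi_a'(X(t_k,z)-Y(\zeta)) - \psi_r'(X(t_k,z)-X(t_k,\zeta))]\diff\zeta |^2 \diff z \), I would use the \( L^2 \)-boundedness (equivalently the moment bound, via Lemma \ref{lem:17}) to extract a subsequence with \( X(t_k,\cdot) \to X^\ast(\cdot) \) pointwise a.e.\ and in \( L^2_{\mathrm{loc}} \), where \( X^\ast \) is the pseudo-inverse of \( \mu^\ast \). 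Then I would argue that the inner integral converges for a.e.\ \( z \): the integrand \( \psi_a'(X(t_k,z)-Y(\zeta)) - \psi_r'(X(t_k,z)-X(t_k,\zeta)) \) is, away from the diagonal \( \zeta = z \), a continuous function of its arguments, and near the diagonal the slope condition \eqref{eq:258} together with the integrability of \( \eta \mapsto \eta^{q_r-1} \) (needing \( q_r > 1 \), hence the hypothesis) controls the contribution uniformly in \( k \); this is exactly the kind of estimate carried out in Step \ref{item:linfty-contractive} of the proof of Theorem \ref{thm:linfty-ext}. Passing to the limit, Fatou's lemma gives \( \int_0^1 | \int_0^1 [\psi_a'(X^\ast(z)-Y(\zeta)) - \psi_r'(X^\ast(z)-X^\ast(\zeta))]\diff\zeta |^2 \diff z = 0 \), i.e.\ \( \psi_a' \ast \omega(X^\ast(z)) - \psi_r' \ast \mu^\ast(X^\ast(z)) = 0 \) for a.e.\ \( z \in [0,1] \); since \( \{X^\ast(z) : z \in [0,1]\} \) is dense in \( \supp(\mu^\ast) \) and \( \psi_a'\ast\omega - \psi_r'\ast\mu^\ast \) is continuous, equation \eqref{eq:345} follows on all of \( \supp(\mu^\ast) \). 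The main obstacle is precisely this last interchange of limit and integral near the singularity of \( \psi_r' \), where one must verify a \( k \)-uniform domination using the preserved slope lower bound; the second subtlety is making sure the moment exponent produced by Theorem \ref{thm:moment-bound-symmetric} is large enough to justify the \( L^2 \)-type compactness of the pseudo-inverses, which is what pins down the sharp threshold \( 4/3 \) in \eqref{eq:347}.
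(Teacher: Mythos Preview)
Your first two stages (energy--dissipation inequality, boundedness of the energy from below, extraction of \(t_k\) with \(\mathcal{D}[\mu(t_k)]\to 0\), and tightness via the moment bounds of Theorems \ref{thm:exist-min-strong} and \ref{thm:moment-bound-symmetric}) match the paper essentially verbatim.

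The third stage, however, has a genuine gap. You propose to control the singular part of the inner integral near \(\zeta=z\) by the slope condition \eqref{eq:258}, as in Step~\ref{item:linfty-contractive} of the existence proof. But that slope lower bound reads \(\tfrac{1}{h}(X(t,z+h)-X(t,z))\geq \alpha\,\e^{-\lambda t}\), and it degenerates to \(0\) as \(t_k\to\infty\); it cannot furnish a \(k\)-uniform domination for the singularity of \(\psi_r'\). Consequently your interchange of limit and integral is unjustified, and the argument does not close. (Relatedly, your account of where the threshold \(4/3\) enters is off: it is not about \(L^2\)-compactness of pseudo-inverses.)

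The paper bypasses the pseudo-inverse picture entirely at this point. It expands \(\mathcal{D}[\mu]\) as a sum of triple integrals against product measures \(\mu\otimes\mu\otimes\omega\), \(\mu\otimes\mu\otimes\mu\), etc.\ (equation \eqref{eq:352}), invokes narrow convergence of the products via Lemma \ref{lem:27}, and then dominates each integrand by \(C\big(|x|^{2q_a-2}+|y|^{2q_a-2}+|z|^{2q_a-2}\big)\). Continuity of \(\mathcal{D}\) along \((\mu(t_k))_k\) then follows from Lemma \ref{lem:4} once the \((2q_a-2)\)-th moment is uniformly integrable, which by Lemma \ref{lem:24} holds provided \(2q_a-2\) is strictly below the available moment bound: \(2q_a-2<q_a\) (i.e.\ \(q_a<2\)) in the asymmetric case, and \(2q_a-2<q_a/2\) (i.e.\ \(q_a<4/3\)) in the symmetric case. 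This is precisely where \eqref{eq:346} and \eqref{eq:347} arise; once \(\mathcal{D}[\mu^\ast]=0\), equation \eqref{eq:345} is immediate.
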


\begin{proof}
  We shall use a compactness argument to find the subsequence and then have to justify the continuity of \( \mathcal{D} \) as defined in \eqref{eq:368} to gain \( \mathcal{D}[\mu^\ast] = 0 \) and hence equation \eqref{eq:345}:

  By Lemma \ref{lem:28}, we know that
  \begin{equation}
    \label{eq:348}
    \mathcal{E}[\mu(t)] = \mathcal{E}[\mu(0)] - \int_{0}^{t} \mathcal{D}[\mu(\tau)] \diff \tau,
  \end{equation}
  where \( D[\mu(\tau)] \geq 0 \) for all \( \tau \), so the energy is decreasing and therefore bounded from above. Moreover, by its Fourier representation \eqref{eq:31} for \( q_a = q_r \) and Theorem \ref{thm:exist-min-strong} for \( q_r < q_a \), respectively, we know that \( \mathcal{E} \) is also bounded from below, yielding convergence of the integral \( \int_{0}^{t} \mathcal{D}[\mu(\tau)] \diff \tau \) for \( t\rightarrow\infty \) whence we can select a subsequence \( (t_k)_k \) for which
  \begin{equation}
    \label{eq:349}
    \mathcal{D}[\mu(t_k)] \rightarrow 0, \quad k \rightarrow\infty.
  \end{equation}

  In the case \eqref{eq:346}, by the proof of Theorem \ref{thm:exist-min-strong} we have that the sub-levels of \( \mathcal{E} \) have a uniformly bounded \( q_a \)th moment, while in the case \eqref{eq:347}, by Theorem \ref{thm:moment-bound-symmetric}, the \( r \)th moment is uniformly bounded for all \( r < q_r/2 \), yielding tightness of \( (\mu(t_k))_k \) and uniform integrability of all smaller moments by Lemma \ref{lem:24}. Hence there exist a \( \mu^\ast \in \mathcal{P}(\mathbb{R}) \) and a further subsequence, again denoted by \( (t_k)_k \), for which
  \begin{equation}
    \label{eq:350}
    \mu(t_k) \rightarrow \mu^\ast \quad \text{narrowly}.
  \end{equation}
  Now, we want to deduce \( \mathcal{D}[\mu^\ast] = 0 \) by the continuity of \( \mathcal{D} \) along the sequence \( (\mu(t_k))_k \). For this, expand \( D \) into a sum of triple integrals \wrt probability measures, , \ie
\begin{align}
  \label{eq:352}
  \mathcal{D}[\mu] = {} & - \int_{\mathbb{R}^3} \psi_a'(x-y) \psi_a'(x-z) \diff \omega (y) \diff \omega(z) \diff \mu(x) \nonumber\\
  & + \int_{\mathbb{R}^3} \psi_a'(x-y) \psi_r'(x-z) \diff \omega (y) \diff \mu(z) \diff\mu(x) \nonumber\\
  & + \int_{\mathbb{R}^3} \psi_r'(x-y) \psi_a'(x-z) \diff \mu (y) \diff \omega(z) \diff\mu(x) \nonumber\\
  & - \int_{\mathbb{R}^3} \psi_r'(x-y) \psi_r'(x-z) \diff \mu (y) \diff \mu(z) \diff\mu(x).
\end{align}
To deduce the continuity of \( \mathcal{D} \), we want to use a tensorization argument as in Theorem \ref{thm:exist-min-strong}. Hence we consider narrow convergence of sequences of the type \( \mu(t_k) \otimes \mu(t_k) \otimes \omega \) or \( \mu(t_k) \otimes \omega \otimes \omega \) (which is true by \eqref{eq:350} and Lemma \ref{lem:27}) and the uniform integrability of the integrands of the integrals in \eqref{eq:352}. For the latter it will be enough to show that they are bounded by uniformly integrable functions: Using \( \left| x \right|^r \leq \left| x \right|^q + 1 \) for all \( x \in \mathbb{R} \) and \( 0 \leq r \leq q \), each of the occurring integrands can be estimated (up to a constant) by
  \begin{align}
    \label{eq:351}
    \leadeq{\left| x - y \right|^{q_a-1} \left| x - z \right|^{q_a-1} + \left| x-y \right|^{q_a-1} + \left| x - z \right|^{q_a-1} + 1} \\
    \leq {} & C \left[ \left| x - y \right|^{2q_a - 2} + \left| x-z \right|^{2q_a - 2} + 1\right] \\
    \leq {} & C \left[ \left| x \right|^{2q_a - 2} + \left| y \right|^{2q_a - 2} + \left| z \right|^{2q_a - 2} \right].
  \end{align}
  The uniform integrability of the moments is now enough to deduce continuity of \( \mathcal{D} \) by the Lemmata \ref{lem:24} and \ref{lem:4}, yielding the claim: In the asymmetric case \eqref{eq:347} of a stronger attraction, \( 2q_a - 2 < q_a \) by \( q_a < 2 \) and in the symmetric case \eqref{eq:344}, we have \( 2q_a - 2 < q_a/2 \) by \( q_a < 4/3 \).
\end{proof}

\begin{remark}[Sharpness of \eqref{eq:351}]
  \label{rem:sharpness}
  For our purposes, \ie, using only bounds on the moments, we cannot do better than \eqref{eq:351} and hence not better as condition \eqref{eq:346} as well: Formally assume \( \omega = \delta_0 \) (this particular choice is excluded by the \( L^\infty_c \)-assumption, but can easily be approximated with convergence in \( \mathcal{D} \)). Then, one of the occurring summands in \eqref{eq:351} is
  \begin{equation}
    \label{eq:370}
    \int_{\mathbb{R} \times \mathbb{R}} \left| x-y \right|^{q_a - 1} \left| x-z \right|^{q_a - 1} \diff \omega(y) \diff \omega(z) = \left| x \right|^{2q_a - 2},
  \end{equation}
  so this moment has to be uniformly integrable for our argumentation to work.
\end{remark}

\section{Conclusion}
\label{sec:conclusion2}

For the first two main results, the analysis of the asymptotic behavior in Section \ref{sec:case-q-1} and Section \ref{sec:conv-against-steady}, we remark that the pseudo-inverse technique proved very helpful in understanding the equation. It also revealed the special structure for \( q_r = 1 \) and \( q_r = 2 \) which we exploited, namely that in terms of the pseudo-inverse, there is a certain locality in this case which is lost for \( 1 < q_r < 2 \), where the equation becomes highly non-local. On the other hand, we remark that for the arguments in Section \ref{sec:conv-against-steady}, as in the first part of this work, Section \ref{cha:vari-prop-symm}, the Fourier representation was indispensable to understand the compactness properties for a wider range of parameter combinations.

Finally, there is a large range of parameters for which the asymptotic behavior remains still open, in particular the specific characterization of the steady states, which is likely to necessitate additional or completely different techniques compared to the ones used here.

\appendix
\clearpage{}\makeatletter{}\chapter{Conditionally positive definite functions}
\label{cha:cond-posit-semi}

In order to compute the Fourier representation of the energy functional \( \mathcal{E} \) in Section \ref{sec:four-repr-gener}, we used the notion of \emph{generalized Fourier transforms} and \emph{conditionally positive definite functions} from \cite{Wend05}, which we would like to briefly introduce here.

Our representation formula \eqref{eq:39} is a consequence of Theorem \ref{thm:repr-thm-cond-semi} below, which serves as a characterization theorem in the theory of conditionally positive definite functions:

\begin{definition}
  \label{def:cond-definit}
  \cite[Definition 8.1]{Wend05}
  Let \( \mathbb{P}_{k}(\mathbb{R}^d) \) denote the set of polynomial functions on \( \mathbb{R}^d \) with degree less or equal than \( k \). We call a continuous function \( \Phi \colon \mathbb{R}^d \rightarrow \mathbb{C} \) \emph{conditionally positive semi-definite of order} \( m \) if for all \( N\in\mathbb{N} \), pairwise distinct points \( x_1,\ldots,x_N \in \mathbb{R}^d \) and \( \alpha \in \mathbb{C}^N \) with
  \begin{equation}
    \label{eq:281}
    \sum_{j=1}^{N} \alpha_j p(x_j) = 0, \quad \text{for all } p \in \mathbb{P}_{m - 1}(\mathbb{R}^d),
  \end{equation}
  the quadratic form given by \( \Phi \) is non-negative, i.e.
  \begin{equation}
    \label{eq:282}
    \sum_{j,k=1}^{N} \alpha_j \overline{\alpha_k} \Phi(x_j - x_k) \geq 0.
  \end{equation}
  Moreover, we call \( \Phi \) \emph{conditionally positive definite of order} \( m \) if the above inequality is strict for \( \alpha \neq 0 \).
\end{definition}

\section{Generalized Fourier transform}
\label{sec:gener-four-transf}

When working with distributional Fourier transforms, which can serve to characterize the conditionally positive definite functions defined above, it can be opportune to further reduce the standard Schwartz space \( \mathcal{S} \) to functions which in addition to the polynomial decay for large arguments also exhibit a certain decay for small ones. This way, one can elegantly neglect singularities in the Fourier transform which could otherwise arise.

\begin{definition}
  [Restricted Schwartz class \( \mathcal{S}_m \)]
  \label{def:restr-schwartz}
  \cite[Definition 8.8]{Wend05}
  Let \( \mathcal{S} \) be the space of functions in \( C^\infty(\mathbb{R}^d) \) which for \( \left| x \right| \rightarrow \infty \) decay faster than any fixed polynomial. Then, for \( m \in \mathbb{N} \), we denote by \( \mathcal{S}_m \) the set of those functions in \( S \) which additionally fulfill
  \begin{equation}
    \label{eq:283}
    \gamma(\xi) = O(\left| \xi \right|^m) \quad \text{for } \xi \rightarrow 0.
  \end{equation}

  Furthermore, we shall call an (otherwise arbitrary) function \( \Phi \colon \mathbb{R}^d \rightarrow \mathbb{C} \) \emph{slowly increasing} if there is an \( m \in \mathbb{N} \) such that
  \begin{equation}
    \label{eq:284}
    \Phi(x) = O \left( \left| x \right|^m \right) \quad \text{for } \left| x \right| \rightarrow \infty.
  \end{equation}
\end{definition}

\begin{definition}
  [Generalized Fourier transform]
  \label{def:gen-fourier-transform}
  \cite[Definition 8.9]{Wend05}
  For \( \Phi \colon \mathbb{R}^d \rightarrow \mathbb{C} \) continuous and slowly increasing, we call a measurable function \(\widehat{\Phi} \in L_{\mathrm{loc}}^2(\mathbb{R}^d \setminus \left\{ 0 \right\})\) the \emph{generalized Fourier transform} of \( \Phi \) if there exists an integer \( m \in \mathbb{N}_0 \) such that
  \begin{equation}
    \label{eq:285}
    \int_{\mathbb{R}^d} \Phi(x)\widehat{\gamma}(x) \diff x = \int_{\mathbb{R}^d} \widehat{\Phi}(\xi) \gamma(\xi) \diff \xi \quad \text{for all } \gamma \in \mathcal{S}_{2m}.
  \end{equation}
  Then, we call \( m \) the \emph{order} of \( \widehat{\Phi} \).
\end{definition}

Note that the order here is defined in terms of \( 2m \) instead of \( m \).

The consequence of this definition is that we ignore additive polynomial factors in \( \Phi \) which would translate to Dirac distributions in the Fourier transform:

\begin{proposition}
  \label{prp:poly-vanish}
  \cite[Proposition 8.10]{Wend05} If \( \Phi \in \mathbb{P}_{m-1}(\mathbb{R}^d) \), then \( \Phi \) has the generalized Fourier transform \( 0 \) of order \( m/2 \). Conversely, if \( \Phi \) is a continuous function which has generalized Fourier transform \( 0 \) of order \( m/2 \), then \( \Phi \in \mathbb{P}_{m-1} \left( \mathbb{R}^d \right) \).
\end{proposition}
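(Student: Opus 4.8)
The plan is to reformulate everything in the language of tempered distributions and then apply the structure theorem for distributions supported at a single point. Since a polynomial is continuous and slowly increasing, it is a tempered distribution, and for $\Phi$ continuous and slowly increasing I write $\widehat{\Phi}$ for its distributional Fourier transform, characterized by $\langle \widehat{\Phi}, \gamma\rangle = \langle \Phi, \widehat{\gamma}\rangle$ for all $\gamma \in \mathcal{S}$. With this notation, Definition \ref{def:gen-fourier-transform} says that $\Phi$ has generalized Fourier transform $0$ of order $m/2$ precisely when $\int_{\mathbb{R}^d}\Phi(x)\widehat{\gamma}(x)\,dx = 0$ for every $\gamma \in \mathcal{S}_m$, i.e. $\langle \widehat{\Phi}, \gamma\rangle = 0$ for all $\gamma \in \mathcal{S}_m$; note also that the zero function trivially lies in $L^2_{\mathrm{loc}}(\mathbb{R}^d\setminus\{0\})$, so it is an admissible candidate.

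For the forward implication it suffices, by linearity, to treat a monomial $\Phi(x) = x^\alpha$ with $|\alpha|\le m-1$. Here I would use the elementary identity $\int_{\mathbb{R}^d} x^\alpha\widehat{\gamma}(x)\,dx = c_\alpha\,(\partial^\alpha\gamma)(0)$ with $c_\alpha\neq 0$, obtained by writing $x^\alpha\widehat{\gamma}$ as the Fourier transform of a derivative and applying Fourier inversion. If in addition $\gamma \in \mathcal{S}_m$, then $\gamma(\xi) = O(|\xi|^m)$ near $0$, so by Taylor's theorem every derivative $\partial^\alpha\gamma(0)$ with $|\alpha|\le m-1$ vanishes; hence $\int x^\alpha\widehat{\gamma} = 0$, and summing gives $\int_{\mathbb{R}^d}\Phi\widehat{\gamma} = 0$ for all $\gamma \in \mathcal{S}_m$, which is exactly the claim.

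For the converse, let $\Phi$ be continuous and slowly increasing with $\langle\widehat{\Phi},\gamma\rangle = 0$ for all $\gamma \in \mathcal{S}_m$. Since every $\phi \in C^\infty_c(\mathbb{R}^d\setminus\{0\})$ lies in $\mathcal{S}_m$, we get $\langle\widehat{\Phi},\phi\rangle = 0$, so $\supp\widehat{\Phi}\subseteq\{0\}$, and the structure theorem for distributions supported at a point yields $\widehat{\Phi} = \sum_{|\beta|\le k}c_\beta\,\partial^\beta\delta_0$ for some finite $k$. To see that only $|\beta|\le m-1$ survive, I would eliminate the top order inductively: if the maximal order $k$ present satisfies $k\ge m$, then for each $\beta$ with $|\beta| = k$ the test function $\gamma_\beta(\xi) := \xi^\beta e^{-|\xi|^2/2}$ lies in $\mathcal{S}_m$ (it vanishes to order $k\ge m$ at the origin) and satisfies $\partial^{\beta'}\gamma_\beta(0) = 0$ for all $|\beta'|\le k$ except $\beta' = \beta$, where it equals $\beta!$; testing $\widehat{\Phi}$ against $\gamma_\beta$ then forces $c_\beta = 0$. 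Repeating lowers $k$ until $k\le m-1$, so $\widehat{\Phi} = \sum_{|\beta|\le m-1}c_\beta\,\partial^\beta\delta_0$. Taking the inverse Fourier transform and using that the inverse transform of $\partial^\beta\delta_0$ is a constant multiple of the monomial $x^\beta$ gives $\Phi \in \mathbb{P}_{m-1}(\mathbb{R}^d)$.

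The genuine obstacle is the converse direction: one must correctly localize $\widehat{\Phi}$ at the origin (easy, since $C^\infty_c(\mathbb{R}^d\setminus\{0\})\subseteq\mathcal{S}_m$), invoke the right form of the structure theorem, and then carry out the order-by-order bookkeeping — verifying that the explicit functions $\xi^\beta e^{-|\xi|^2/2}$ with $|\beta|\ge m$ both belong to $\mathcal{S}_m$ and genuinely decouple the coefficients $c_\beta$ at the top level. The forward direction and the passage to distributions are routine once the identity $\int x^\alpha\widehat{\gamma} = c_\alpha(\partial^\alpha\gamma)(0)$ and the Taylor-vanishing description of $\mathcal{S}_m$ are in place.
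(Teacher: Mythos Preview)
Your argument is correct. The forward direction matches the paper's sketch exactly: both reduce to the identity $\int x^\alpha\widehat{\gamma}=c_\alpha\,\partial^\alpha\gamma(0)$ and the observation that $\gamma\in\mathcal{S}_m$ forces all derivatives of order $<m$ to vanish at the origin.

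For the converse, however, you take a genuinely different route. The paper (following \cite{Wend05}) projects an arbitrary $g\in\mathcal{S}$ onto $\mathcal{S}_m$ in one stroke by subtracting its Taylor jet times a cutoff,
\[
\gamma(x):=g(x)-\sum_{|\beta|<m}\frac{D^\beta g(0)}{\beta!}\,x^\beta\chi(x),
\]
so that $\langle\widehat{\Phi},g\rangle=\langle\widehat{\Phi},g-\gamma\rangle$ depends only on the derivatives $D^\beta g(0)$ with $|\beta|\le m-1$; this immediately identifies $\widehat{\Phi}$ as a combination of $\partial^\beta\delta_0$, $|\beta|\le m-1$, with no appeal to the structure theorem and no induction. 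Your route instead first localizes $\widehat{\Phi}$ to the origin, then invokes the structure theorem (which may a priori produce derivatives of arbitrarily high order), and finally peels off the excess orders one level at a time using the probes $\xi^\beta e^{-|\xi|^2/2}$. Both arguments are valid; the paper's projection trick is shorter and gets the degree bound for free, while your approach is more modular and makes the role of the structure theorem explicit.
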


\begin{proof}[Sketch of proof]
  The first claim follows from the fact that multiplication with polynomials corresponds to computing derivatives of the Fourier transform: By condition \eqref{eq:283}, all derivatives of order less than \( m \) of a test function \( \gamma \in \mathcal{S}_m \) have to vanish.

  The second claim follows from considering the coupling \( \int_{\mathbb{R}^d} \Phi(x) \widehat{g}(x) \diff x \) for a general \( g \in \mathcal{S} \) and projecting it into \( \mathcal{S}_m \) by setting
  \begin{equation}
    \label{eq:286}
    \gamma(x) := g(x) - \sum_{\left| \beta \right| < m} \frac{D^{\beta}g(0)}{\beta!}x^\beta \chi(x), \quad x \in \mathbb{R}^d,
  \end{equation}
  with a \( \chi \in C_0^\infty(\mathbb{R}^d) \) which is \( 1 \) close to \( 0 \).
\end{proof}

\section{Representation formula for conditionally positive definite functions}
\label{sec:repr-form-cond}

Before proceeding to prove Theorem \ref{thm:repr-thm-cond-semi}, we need two lemmata. The first one is the key to applying the generalized Fourier transform in our case, namely that functions fulfilling the decay condition \eqref{eq:283} can be constructed as Fourier transforms of point measures satisfying condition \eqref{eq:281}. The second one recalls some basic facts about the Fourier transform of the Gaussian, serving to pull the exponential functions in Lemma \ref{lem:19} into \( \mathcal{S}_m \).

\begin{lemma}
  \cite[Lemma 8.11]{Wend05}
  \label{lem:19}
  Given pairwise distinct points \( x_1,\ldots,x_N \in \mathbb{R}^d \) and \( \alpha \in \mathbb{C}^N \setminus \left\{ 0 \right\} \) such that
  \begin{equation}
    \label{eq:287}
    \sum_{j=1}^{N} \alpha_j p(x_j) = 0, \quad \text{for all } p \in \mathbb{P}_{m-1}(\mathbb{R}^d),
  \end{equation}
  then
  \begin{equation}
    \label{eq:288}
    \sum_{j=1}^{N} \alpha_j \e^{\i x_j \cdot \xi} = O \left( \left| \xi \right|^m \right) \quad \text{for } \left| \xi \right| \rightarrow 0.
  \end{equation}
\end{lemma}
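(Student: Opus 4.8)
The statement to prove is Lemma 8.11 (here labeled \ref{lem:19}): if $\alpha \in \mathbb{C}^N \setminus \{0\}$ annihilates all polynomials of degree $\leq m-1$ against the points $x_j$, then $\sum_j \alpha_j e^{\mathrm{i} x_j \cdot \xi} = O(|\xi|^m)$ as $\xi \to 0$.

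The plan is to Taylor-expand the exponential function around the origin. Let me write $g(\xi) := \sum_{j=1}^N \alpha_j e^{\mathrm{i} x_j \cdot \xi}$. Since $e^{\mathrm{i} x_j \cdot \xi} = \sum_{k=0}^{\infty} \frac{(\mathrm{i} x_j \cdot \xi)^k}{k!}$ converges absolutely and locally uniformly, we can swap the finite sum over $j$ with the series to get $g(\xi) = \sum_{k=0}^{\infty} \frac{\mathrm{i}^k}{k!} \sum_{j=1}^N \alpha_j (x_j \cdot \xi)^k$. Now $\xi \mapsto (x_j \cdot \xi)^k$ is a homogeneous polynomial of degree $k$ in $\xi$, so for fixed $\xi$, $x \mapsto (x \cdot \xi)^k$ is a polynomial of degree $k$ in $x$. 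Hence for $k \leq m-1$, the annihilation hypothesis \eqref{eq:287} gives $\sum_{j=1}^N \alpha_j (x_j \cdot \xi)^k = 0$. Therefore the series for $g$ starts at order $k = m$: $g(\xi) = \sum_{k=m}^{\infty} \frac{\mathrm{i}^k}{k!} \sum_{j=1}^N \alpha_j (x_j \cdot \xi)^k$.

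It remains to bound the tail. For $|\xi| \leq 1$, using $|x_j \cdot \xi| \leq |x_j|\,|\xi| \leq R |\xi|$ where $R := \max_j |x_j|$, and the triangle inequality, we estimate
\begin{equation}
  |g(\xi)| \leq \sum_{k=m}^{\infty} \frac{1}{k!} \sum_{j=1}^N |\alpha_j| (R|\xi|)^k \leq \Bigl( \sum_{j=1}^N |\alpha_j| \Bigr) R^m |\xi|^m \sum_{k=m}^{\infty} \frac{(R|\xi|)^{k-m}}{k!} \leq \Bigl( \sum_{j=1}^N |\alpha_j| \Bigr) R^m e^{R} |\xi|^m,
\end{equation}
where in the last step we used $|\xi| \leq 1$ so that $\sum_{k \geq m} (R|\xi|)^{k-m}/k! \leq \sum_{\ell \geq 0} R^\ell / \ell! = e^R$. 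This is precisely a bound of the form $C|\xi|^m$ for $|\xi|$ near $0$, which is the claimed $O(|\xi|^m)$ behavior.

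There is really no serious obstacle here; the only point requiring a little care is justifying the interchange of the finite sum and the exponential series (immediate from absolute convergence, since there are only $N$ terms) and noting that $x \mapsto (x\cdot\xi)^k$ genuinely lies in $\mathbb{P}_k(\mathbb{R}^d)$ so that \eqref{eq:287} applies for each $k \leq m-1$. One could alternatively phrase the argument via Taylor's theorem with remainder for the function $\xi \mapsto e^{\mathrm{i} x_j \cdot \xi}$ up to order $m-1$, observing that all derivatives of order $< m$ of $g$ at $0$ vanish because they are (up to constants) sums $\sum_j \alpha_j$ times monomials in the $x_j$'s of degree $< m$, hence zero by hypothesis; the remainder term is then $O(|\xi|^m)$. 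Both routes are elementary; I would present the series version as above since it is the cleanest.
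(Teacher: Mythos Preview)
Your proof is correct and follows exactly the paper's approach: expand the exponential as a power series, swap with the finite sum, and use the moment condition \eqref{eq:287} to kill the terms with $k \le m-1$. The paper's version is terser (it omits the explicit tail bound and simply says ``giving us the desired behavior''), but the argument is identical.
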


\begin{proof}
  Expanding the exponential function into its power series yields
  \begin{equation}
    \label{eq:289}
    \sum_{j=1}^{N} \alpha_j \e^{\i x_j \cdot \xi} = \sum_{k = 0}^{\infty} \frac{\i^k}{k!} \sum_{j=1}^{N} \alpha_j \left( x_j \cdot \xi \right)^k,
  \end{equation}
  and by condition \eqref{eq:287} its first \( m \) terms vanish, giving us the desired behavior.
\end{proof}

\begin{lemma}
  \label{lem:20}
  \cite[Theorem 5.20]{Wend05}
  Let \( l > 0 \) and \( g_l (x) := (l/\pi)^{d/2} \e^{-l \left| x \right|^2} \). Then,
  \begin{enumparaalph}
  \item \label{lem:21} \( \widehat{g}_l(\xi) = \e^{-\left| \xi \right|^2/(4l)} \);
  \item \label{lem:22} for \( \Phi \colon \mathbb{R}^d \rightarrow \mathbb{C}\) continuous and slowly increasing, we have
    \begin{equation}
      \label{eq:290}
      \Phi(x) = \lim_{l \rightarrow \infty} (\Phi \ast g_l)(x).
    \end{equation}
  \end{enumparaalph}
\end{lemma}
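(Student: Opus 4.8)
The plan is to treat the two parts separately: \ref{lem:21} is a direct Gaussian integral, and \ref{lem:22} is the statement that $(g_l)_{l>0}$ is an approximate identity as $l\to\infty$, applied to a function of at most polynomial growth. Note first that since $g_l\in\mathcal{S}$, its generalized Fourier transform coincides with the ordinary one, so for \ref{lem:21} I would simply compute
\[
\widehat{g}_l(\xi) = (l/\pi)^{d/2}\int_{\mathbb{R}^d}\e^{-l\left| x \right|^2}\e^{-\i x\cdot\xi}\diff x .
\]
Completing the square gives $-l\left| x \right|^2 - \i x\cdot\xi = -l\bigl| x + \tfrac{\i\xi}{2l}\bigr|^2 - \tfrac{\left| \xi \right|^2}{4l}$, and after shifting the contour back to the real axis (legitimate since $z\mapsto\e^{-l z^2}$ is entire with Gaussian decay along horizontal lines, so Cauchy's theorem applies coordinatewise) one is left with $\e^{-\left| \xi \right|^2/(4l)}$ times $(l/\pi)^{d/2}\int_{\mathbb{R}^d}\e^{-l\left| x \right|^2}\diff x = (l/\pi)^{d/2}(\pi/l)^{d/2} = 1$. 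This computation also records the normalization $\int_{\mathbb{R}^d} g_l(x)\diff x = 1$, which is needed in the second part.

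For \ref{lem:22} I would first collect the three properties that make $(g_l)_l$ an approximate identity: $g_l\geq 0$, $\int_{\mathbb{R}^d} g_l = 1$, and $\int_{\left| x \right|\geq\delta} g_l(x)\diff x\to 0$ as $l\to\infty$ for every fixed $\delta>0$ — the last one obtained by the substitution $y=\sqrt{l}\,x$, which turns the integral into $\pi^{-d/2}\int_{\left| y \right|\geq\sqrt{l}\delta}\e^{-\left| y \right|^2}\diff y$, whose domain shrinks to the empty set. Then, fixing $x$ and $\varepsilon>0$, I would write
\[
(\Phi\ast g_l)(x) - \Phi(x) = \int_{\mathbb{R}^d} g_l(y)\bigl(\Phi(x-y)-\Phi(x)\bigr)\diff y
\]
and split the integral over $\{\left| y \right|<\delta\}$ and $\{\left| y \right|\geq\delta\}$. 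On the first region, continuity of $\Phi$ at $x$ allows choosing $\delta$ so small that $\left|\Phi(x-y)-\Phi(x)\right|<\varepsilon$ there, and $\int g_l=1$ bounds that contribution by $\varepsilon$. On the second region I would use the slow-increase bound $\left|\Phi(z)\right|\leq C(1+\left| z \right|)^{m}$ together with $1+\left| x \right|+\left| y \right|\leq(1+\left| x \right|)(1+\left| y \right|)$ to get $\left|\Phi(x-y)-\Phi(x)\right|\leq C_x(1+\left| y \right|)^{m}$ with $C_x$ depending only on the fixed $x$; then $\int_{\left| y \right|\geq\delta} g_l(y)(1+\left| y \right|)^m\diff y \leq \pi^{-d/2}\int_{\left| y \right|\geq\sqrt{l}\delta}\e^{-\left| y \right|^2}(1+\left| y \right|)^m\diff y$ for $l\geq 1$ (same rescaling), which tends to $0$ since the full integrand is integrable on $\mathbb{R}^d$ and the domain is eventually empty. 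Letting $l\to\infty$ and then $\varepsilon\to 0$ yields $(\Phi\ast g_l)(x)\to\Phi(x)$.

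The argument is essentially routine; the only point requiring a little care is the tail estimate on $\{\left| y \right|\geq\delta\}$, where $\Phi$ is only assumed to grow polynomially. This is harmless because the Gaussian's super-polynomial decay dominates any fixed power, and after the substitution $y=\sqrt{l}\,y'$ the domain of integration runs off to infinity, so no quantitative growth rate of $\Phi$ actually enters the bound. The contour shift in \ref{lem:21} is the other place one should state the justification explicitly, but it is standard.
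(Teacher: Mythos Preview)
Your proof is correct. Note, however, that the paper does not actually prove this lemma: it is quoted from \cite[Theorem 5.20]{Wend05} and simply used as a tool in the proof of Theorem \ref{thm:repr-thm-cond-semi}. So there is no ``paper's own proof'' to compare against. Your argument --- computing the Gaussian Fourier transform by completing the square and a contour shift, then running the standard approximate-identity splitting into a near region (continuity) and a tail (Gaussian decay beats polynomial growth) --- is exactly the expected route and is carried out cleanly; the rescaling $y\mapsto\sqrt{l}\,y$ and the bound $(1+\left| y \right|/\sqrt{l})^m\leq(1+\left| y \right|)^m$ for $l\geq 1$ are both correct.
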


\begin{theorem}
  \label{thm:repr-thm-cond-semi}
  \cite[Corollary 8.13]{Wend05}
  Let \( \Phi \colon \mathbb{R}^d \rightarrow \mathbb{C} \) be a continuous and slowly increasing function with a non-negative, non-vanishing generalized Fourier transform \( \widehat{\Phi} \) of order \( m \) that is continuous on \( \mathbb{R}^d \setminus \left\{ 0 \right\} \). Then, we have
  \begin{equation}
    \label{eq:291}
    \sum_{j,k=1}^N \alpha_j \overline{\alpha}_k \Phi \left( x_j-x_k \right) = \int_{\mathbb{R}^d} \left| \sum_{j=1}^{N} \alpha_j \e^{\i x_j \cdot \xi}\right|^2 \widehat{\Phi}(\xi) \diff \xi.
  \end{equation}
\end{theorem}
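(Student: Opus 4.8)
The plan is to prove the representation formula \eqref{eq:291} by approximation, pulling the (possibly singular) generalized Fourier transform $\widehat\Phi$ into a pairing with a genuine Schwartz-class test function built from the point masses. The crucial input is Lemma \ref{lem:19}: since the coefficients $\alpha$ annihilate all polynomials of degree $<m$, the function $\xi\mapsto \sum_j \alpha_j \e^{\i x_j\cdot\xi}$ vanishes to order $m$ at the origin, hence so does $\bigl|\sum_j \alpha_j \e^{\i x_j\cdot\xi}\bigr|^2$, which vanishes to order $2m$. This is exactly the decay required to be a legitimate test function against a generalized Fourier transform of order $m$ (recall the order is defined in terms of $2m$). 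The only defect is that the exponential sum is not Schwartz, since it does not decay at infinity; this is repaired by the Gaussian dampening of Lemma \ref{lem:20}.

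Concretely, I would fix $l>0$ and set $g_l(x):=(l/\pi)^{d/2}\e^{-l|x|^2}$, with $\widehat{g}_l(\xi)=\e^{-|\xi|^2/(4l)}$ by Lemma \ref{lem:20}\ref{lem:21}. Consider the function
\begin{equation*}
  \gamma_l(\xi) := \Bigl|\sum_{j=1}^N \alpha_j \e^{\i x_j\cdot\xi}\Bigr|^2 \, \widehat{g}_l(\xi)
  = \sum_{j,k=1}^N \alpha_j\overline{\alpha}_k \, \e^{\i(x_j-x_k)\cdot\xi}\,\widehat{g}_l(\xi).
\end{equation*}
This is smooth, decays faster than any polynomial at infinity (because of the Gaussian factor), and by Lemma \ref{lem:19} satisfies $\gamma_l(\xi)=O(|\xi|^{2m})$ as $\xi\to 0$; hence $\gamma_l\in\mathcal{S}_{2m}$ and the defining identity \eqref{eq:285} of the generalized Fourier transform applies:
\begin{equation*}
  \int_{\mathbb{R}^d}\Phi(x)\,\widehat{\gamma_l}(x)\diff x = \int_{\mathbb{R}^d}\widehat\Phi(\xi)\,\gamma_l(\xi)\diff \xi.
\end{equation*}
On the left, $\widehat{\gamma_l}$ is computed termwise: the Fourier transform of $\e^{\i(x_j-x_k)\cdot\xi}\widehat{g}_l(\xi)$ is a translate of $g_l$, namely (up to the normalization conventions of the appendix) $g_l\bigl(\,\cdot-(x_j-x_k)\bigr)$ evaluated appropriately, so that $\int \Phi(x)\widehat{\gamma_l}(x)\diff x = \sum_{j,k}\alpha_j\overline{\alpha}_k (\Phi\ast g_l)(x_j-x_k)$. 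Letting $l\to\infty$ and using Lemma \ref{lem:20}\ref{lem:22}, the left-hand side converges to $\sum_{j,k}\alpha_j\overline{\alpha}_k\Phi(x_j-x_k)$.

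For the right-hand side, as $l\to\infty$ the Gaussian factor $\widehat{g}_l(\xi)=\e^{-|\xi|^2/(4l)}$ increases monotonically to $1$ pointwise, and since $\widehat\Phi\geq 0$ and $\bigl|\sum_j\alpha_j\e^{\i x_j\cdot\xi}\bigr|^2\geq 0$, the monotone convergence theorem gives
\begin{equation*}
  \lim_{l\to\infty}\int_{\mathbb{R}^d}\widehat\Phi(\xi)\,\gamma_l(\xi)\diff\xi
  = \int_{\mathbb{R}^d}\Bigl|\sum_{j=1}^N\alpha_j\e^{\i x_j\cdot\xi}\Bigr|^2\widehat\Phi(\xi)\diff\xi,
\end{equation*}
with the value in $[0,\infty]$; equating the two limits yields \eqref{eq:291} and in particular shows the right-hand integral is finite. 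I expect the main obstacle to be purely bookkeeping rather than conceptual: getting the Fourier-transform normalization constants and the precise form of $\widehat{g}_l$ consistent with the conventions fixed in \cite{Wend05} and the appendix, and verifying carefully that $\gamma_l$ genuinely lies in $\mathcal{S}_{2m}$ (smoothness and the $O(|\xi|^{2m})$ vanishing must both be checked, the latter via the power-series expansion already used in the proof of Lemma \ref{lem:19}). The positivity of $\widehat\Phi$ is what makes the passage to the limit on the spectral side painless — without it one would need a dominated-convergence argument and an a priori integrability bound, which is exactly what the monotone convergence circumvents here.
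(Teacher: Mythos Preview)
Your proposal is correct and follows essentially the same argument as the paper: define the Gaussian-damped test function $\gamma_l(\xi)=\bigl|\sum_j\alpha_j\e^{\i x_j\cdot\xi}\bigr|^2\widehat{g}_l(\xi)\in\mathcal{S}_{2m}$ via Lemma~\ref{lem:19}, apply the defining duality \eqref{eq:285}, then pass to the limit $l\to\infty$ using Lemma~\ref{lem:20}\ref{lem:22} on the spatial side and monotone convergence (exploiting $\widehat\Phi\geq 0$) on the spectral side. The only cosmetic difference is that the paper starts from the right-hand side and works toward the left, whereas you apply the identity first and then take limits on both sides simultaneously.
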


\begin{proof}
  Let us start with the right-hand side of the claimed identity \eqref{eq:291}: By Lemma \ref{lem:19}, the function
  \begin{equation}
    \label{eq:292}
    f(\xi) := \left| \sum_{j=1}^{N} \alpha_j \e^{\i x_j \cdot \xi} \right|^2 \widehat{g}_l(\xi)
  \end{equation}
  is in \( \mathcal{S}_{2m} \) for all \( l > 0 \). Moreover, by the Monotone Convergence Theorem,
  \begin{align}
    \label{eq:293}
    \int_{\mathbb{R}^d} \left| \sum_{j=1}^{N} \alpha_j \e^{\i x_j \cdot \xi}\right|^2 \widehat{\Phi}(\xi) \diff \xi = {} &\lim_{l \rightarrow \infty} \int_{\mathbb{R}^d} \left| \sum_{j=1}^{N} \alpha_j \e^{\i x_j \cdot \xi}\right| \widehat{g}_l(\xi)\, \widehat{\Phi}(\xi) \diff \xi\\
    = {} &\lim_{l \rightarrow\infty} \int_{\mathbb{R}^d} \left( \left| \sum_{j=1}^{N} \alpha_j \e^{\i x_j \cdot .}\right|^2 \widehat{g}_l(.) \right)^{\wedge}(x)\, \Phi(x) \diff x.
  \end{align}
  Now, by Lemma \ref{lem:20}\ref{lem:21},
  \begin{align}
    \label{eq:294}
    \left( \left| \sum_{j=1}^{N} \alpha_j \e^{\i x_j \cdot .}\right|^2 \widehat{g}_l(.) \right)^{\wedge}(x) = {} &\widehat{\widehat{g\,}}_l \ast \left( \sum_{j=1}^{N} \alpha_j \delta_{x_j} \right) \ast \left( \sum_{j=1}^{N} \overline{\alpha_j} \delta_{-x_j} \right)(x)\\
    = {} & g_l\ast \left( \sum_{j=1}^{N} \alpha_j \delta_{x_j} \right) \ast \left( \sum_{j=1}^{N} \overline{\alpha_j} \delta_{-x_j} \right) (x)
  \end{align}
  and therefore
  \begin{align}
    \label{eq:295}
    \leadeq{\lim_{l \rightarrow\infty} \int_{\mathbb{R}^d} \left( \left| \sum_{j=1}^{N} \alpha_j \e^{\i x_j \cdot .}\right|^2 \widehat{g}_l(.) \right)^{\wedge}(x)\, \Phi(x) \diff x}\\
    = {} & \lim_{l \rightarrow\infty} \int_{\mathbb{R}^d}  \Phi(x) \, g_l \ast \left( \sum_{j=1}^{N} \alpha_j \delta_{x_j} \right) \ast \left( \sum_{j=1}^{N} \overline{\alpha_j} \delta_{-x_j} \right)(x) \diff x\\
    = {} & \lim_{l \rightarrow\infty} \sum_{i,j=1}^{N} \int_{\mathbb{R}^d} \alpha_i \overline{\alpha_j}\,  \Phi(x) \, g_l(x-(x_i-x_j)) \diff x\\
    = {} & \lim_{l \rightarrow\infty} \sum_{i,j=1}^{N} \int_{\mathbb{R}^d} \alpha_i \overline{\alpha_j}\,  \Phi(x-(x_i-x_j)) \, g_l(x) \diff x\\
    = {} & \sum_{i,j=1}^{N} \alpha_i \overline{\alpha_j} \, \Phi(x_i-x_j)
  \end{align}
   by Lemma \ref{lem:20}\ref{lem:22}.
\end{proof}

\section{Computation for the power function}
\label{sec:comp-power-funct}

Given Theorem \ref{thm:repr-thm-cond-semi}, we are naturally interested in the explicit formula for the power function \( x \mapsto \left| x \right|^q \) for \( q \in [1,2) \). It is a nice example of how to pass from an ordinary Fourier transform to the generalized Fourier transform by extending a formula by means of complex analysis. Our starting point will be the multiquadric \( x \mapsto \left( c^2 + \left| x \right|^2 \right)^{\beta} \) for \( \beta < -d/2 \), whose Fourier transform involves the modified Bessel function of the third kind:

\begin{definition}
  [Modified Bessel function]
  \cite[Definition 5.10]{Wend05}
  For \( \nu \in \mathbb{C} \), \( z \in \mathbb{C} \) with \( \left| \operatorname{arg} z \right| < \pi/2 \), set
  \begin{equation}
    \label{eq:298}
    K_\nu(z) := \int_{0}^\infty \exp(-z \cosh(t)) \cosh(\nu t)  \diff t,
  \end{equation}
  the \emph{modified Bessel function of the third kind of order} \( \nu \in \mathbb{C} \).
\end{definition}

\begin{theorem}
  \label{poly:cond-ft-multquad1}
  \cite[Theorem 6.13]{Wend05}
    For \( c > 0 \) and \( \beta < -d/2 \),
    \begin{equation}
      \label{eq:299}
      \Phi(x) = (c^2+\left| x \right|^2)^{\beta}, \quad x \in \mathbb{R}^d,
    \end{equation}
    has the Fourier transform
    \begin{equation}
      \label{eq:300}
      \widehat{\Phi}(\xi) =(2\pi)^{d/2}  \frac{2^{1+\beta}}{\Gamma(-\beta)} \left( \frac{\left| \xi \right|}{c} \right)^{-\beta-d/2}K_{d/2+\beta}(c \left| \xi \right|).
    \end{equation}
\end{theorem}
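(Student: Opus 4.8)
The plan is to represent the multiquadric as a superposition of Gaussians and thereby reduce the computation of $\widehat{\Phi}$ to a single scalar integral which is a classical representation of the modified Bessel function $K_\nu$. Since $2\beta < -d$, the function $\Phi(x) = (c^2 + |x|^2)^\beta$ lies in $L^1(\mathbb{R}^d)$, so its Fourier transform in the sense of \eqref{eq:32} is an absolutely convergent integral and in particular agrees with its generalized Fourier transform; it therefore suffices to compute the ordinary transform.

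First I would invoke the Gamma-function identity
\begin{equation*}
  (c^2 + |x|^2)^\beta = \frac{1}{\Gamma(-\beta)} \int_0^\infty t^{-\beta - 1} \e^{-t(c^2 + |x|^2)} \diff t, \qquad \beta < 0,
\end{equation*}
which is legitimate because $-\beta > 0$ and $\Gamma(-\beta) > 0$. Substituting this into the Fourier integral and exchanging the order of integration --- permitted by Tonelli applied to the non-negative integrand $t^{-\beta-1}\e^{-t(c^2+|x|^2)}$, whose double integral equals $\pi^{d/2}\int_0^\infty t^{-\beta-d/2-1}\e^{-tc^2}\diff t$ and is finite precisely under the hypothesis $\beta < -d/2$ --- turns $\widehat{\Phi}(\xi)$ into $\Gamma(-\beta)^{-1}\int_0^\infty t^{-\beta-1}\e^{-tc^2}\,\widehat{\e^{-t|\cdot|^2}}(\xi)\,\diff t$. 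Using the Gaussian transform $\widehat{\e^{-t|\cdot|^2}}(\xi) = (\pi/t)^{d/2}\e^{-|\xi|^2/(4t)}$ (completion of the square in one variable, tensorized over the $d$ coordinates), this becomes
\begin{equation*}
  \widehat{\Phi}(\xi) = \frac{\pi^{d/2}}{\Gamma(-\beta)} \int_0^\infty t^{-\beta - d/2 - 1}\, \e^{-c^2 t - |\xi|^2/(4t)} \diff t.
\end{equation*}

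The next step is to identify the remaining integral with the standard formula $\int_0^\infty t^{\nu-1}\e^{-bt - a/t}\diff t = 2(a/b)^{\nu/2} K_\nu(2\sqrt{ab})$ for $a,b > 0$, taken with $\nu = -\beta - d/2$, $a = |\xi|^2/4$, $b = c^2$; this can be derived directly from the integral \eqref{eq:298} defining $K_\nu$ via the substitutions $t = \sqrt{a/b}\,u$ and then $u = \e^s$. This gives $\widehat{\Phi}(\xi) = 2\pi^{d/2}\Gamma(-\beta)^{-1}(|\xi|/(2c))^{-\beta-d/2} K_{-\beta-d/2}(c|\xi|)$. Collecting constants --- $K_{-\nu} = K_\nu$ so that $K_{-\beta-d/2} = K_{d/2+\beta}$, moreover $(|\xi|/(2c))^{-\beta-d/2} = 2^{\beta+d/2}(|\xi|/c)^{-\beta-d/2}$ and $2\cdot 2^{d/2}\pi^{d/2} = 2(2\pi)^{d/2}$ --- then yields exactly \eqref{eq:300}.

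I expect the only mildly delicate points to be the bookkeeping of the Fourier normalization (the paper's convention \eqref{eq:32} carries no $(2\pi)^{-d/2}$ prefactor, which is what ultimately produces the $(2\pi)^{d/2}$ in the statement) and checking that the Fubini/Tonelli interchange is licensed in exactly the regime $\beta < -d/2$; the Bessel integral identity itself is classical and poses no real obstacle. One then recovers the generalized Fourier transform of the pure power $|x|^q$ in the main text by analytic continuation in $\beta$ together with the limit $c \to 0$.
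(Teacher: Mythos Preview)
Your proof is correct and is the standard Gamma-integral/Gaussian-superposition argument for this identity. Note, however, that the paper does not actually give its own proof of this theorem: it is simply quoted from \cite[Theorem 6.13]{Wend05} and used as input for the subsequent analytic continuation in Theorem~\ref{thm:cond-ft-power}. Your argument is in fact essentially the proof one finds in Wendland, so there is nothing to compare --- you have supplied the omitted details faithfully, including the Fubini justification in the exact range $\beta < -d/2$ and the constant bookkeeping under the paper's Fourier convention \eqref{eq:32}.
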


The next lemma provides the asymptotic behavior of the involved Bessel function for large and small values, which we need for the following proof.

\begin{lemma}
  [Estimates for \( K_\nu \)]
  \label{lem:23}
  \thmenumhspace{-1em}
  \begin{enumparaalph}
  \item \cite[Lemma 5.14]{Wend05}
    For \( \nu \in \mathbb{C}, r > 0 \),
    \begin{equation}
      \label{eq:301}
      \left| K_\nu(r) \right| \leq
      \begin{cases}
        2^{\left| \Re (\nu) \right| - 1}\Gamma \left( \left| \Re(\nu) \right| \right) r^{-\left| \Re(\nu) \right|}, &\Re(\nu) \neq 0,\\
        \frac{1}{\e}-\log \frac{r}{2},&r < 2, \Re(\nu) = 0.
      \end{cases}
    \end{equation}

  \item For large \( r \), \( K_\nu \) has the asymptotic behavior
    \begin{equation}
      \label{eq:302}
      \left| K_\nu(r) \right| \leq \sqrt{\frac{2\pi}{r}} \e^{-r} \e^{\left| \Re(\mu) \right|^2/(2r)}, \quad r > 0.
    \end{equation}

  \end{enumparaalph}
\end{lemma}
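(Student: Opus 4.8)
The plan is to establish the large-argument bound \eqref{eq:302} directly from the defining integral \eqref{eq:298}, the first estimate \eqref{eq:301} being quoted verbatim from \cite[Lemma 5.14]{Wend05}. The first step is to reduce to the case of a real order. Writing $\nu = \alpha + \i\beta$ with $\alpha = \Re(\nu)$ and separating real and imaginary parts of $\cosh(\nu t) = \cosh(\alpha t)\cos(\beta t) + \i\sinh(\alpha t)\sin(\beta t)$, one finds $\left| \cosh(\nu t) \right|^2 = \cosh^2(\alpha t) - \sin^2(\beta t) \leq \cosh^2(\alpha t)$, so that $\left| \cosh(\nu t) \right| \leq \cosh(\left| \alpha \right| t)$ for every $t \in \mathbb{R}$. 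Inserting this into \eqref{eq:298} gives $\left| K_\nu(r) \right| \leq K_{\left| \alpha \right|}(r)$, and it therefore suffices to bound $K_a(r) = \int_0^\infty \e^{-r\cosh t}\cosh(a t)\,\diff t$ for real $a := \left| \Re(\nu) \right| \geq 0$.

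Next I would dominate the integrand of $K_a(r)$ by a Gaussian times an exponential. Using the elementary inequality $\cosh t \geq 1 + t^2/2$ (valid for all real $t$ since the power series of $\cosh$ has non-negative coefficients) and $\cosh(a t) \leq \e^{a t}$ for $t \geq 0$, one obtains $\e^{-r\cosh t}\cosh(a t) \leq \e^{-r}\e^{-r t^2/2}\e^{a t}$. Completing the square in the exponent yields
\[
  -\frac{r}{2}t^2 + a t = -\frac{r}{2}\Bigl(t - \frac{a}{r}\Bigr)^2 + \frac{a^2}{2r},
\]
so that after enlarging the domain of integration from $[0,\infty)$ to $\mathbb{R}$ the remaining integral is a translated Gaussian with value $\sqrt{2\pi/r}$. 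Collecting the factors gives $K_a(r) \leq \sqrt{2\pi/r}\,\e^{-r}\e^{a^2/(2r)}$, which is precisely \eqref{eq:302} with $a = \left| \Re(\nu) \right|$.

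There is no substantial obstacle: the whole argument is a short, self-contained estimate. The only points that need a little attention are the domination $\left| \cosh(\nu t) \right| \leq \cosh(\left| \Re(\nu) \right| t)$ for complex $\nu$, which legitimizes the reduction to a real order, and the inequality $\cosh(a t) \leq \e^{a t}$, which holds only when $a t \geq 0$ — this is exactly why one first passes to the absolute value of the real part before splitting off the exponential factor.
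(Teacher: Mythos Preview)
Your proof is correct. The reduction $|\cosh(\nu t)| \leq \cosh(|\Re(\nu)|\,t)$ is verified exactly as you write, the inequality $\cosh t \geq 1 + t^2/2$ follows from the power series, and the Gaussian integral after completing the square gives precisely $\sqrt{2\pi/r}$, yielding the stated bound.

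There is nothing to compare against: the paper states this lemma without proof. Part (a) is quoted directly from \cite[Lemma 5.14]{Wend05}, and part (b) is simply asserted (presumably also from Wendland or standard references). Your argument therefore supplies a proof the paper omits. One incidental observation: the paper's statement of \eqref{eq:302} contains a typo, writing $|\Re(\mu)|$ where $|\Re(\nu)|$ is clearly intended; your proposal silently corrects this.
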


\begin{theorem}
  \label{thm:cond-ft-power}
  \thmenumhspace{-1em}
  \begin{enumparaalph}
  \item \label{itm:cond-ft-multquad2} \cite[Theorem 8.15]{Wend05}
    \( \Phi(x) = (c^2 + \left| x \right|^2)^\beta \), \( x \in \mathbb{R}^d \) for \( c > 0 \) and \( \beta \in \mathbb{R} \setminus \frac{1}{2} \mathbb{N}_0 \) has the generalized Fourier transform
    \begin{equation}
      \label{eq:303}
      \widehat{\Phi}(\xi) = (2\pi)^{d/2} \frac{2^{1+\beta}}{\Gamma(-\beta)} \left( \frac{\left| \xi \right|}{c} \right)^{-\beta-d/2} K_{d/2+\beta}(c \left| \xi \right|), \quad \xi \neq 0
    \end{equation}
    of order \( m = \max(0, \lceil 2\beta \rceil/2) \).
  \item \label{itm:cond-ft-power} \cite[Theorem 8.16]{Wend05}
    \( \Phi(x) = \left| x \right|_2^\beta \), \( x \in \mathbb{R}^d \) with \( \beta \in \mathbb{R}_+ \setminus \mathbb{N} \) has the generalized Fourier transform
    \begin{equation}
      \label{eq:304}
      \widehat{\Phi}(\xi) = (2\pi)^{d/2}\frac{2^{\beta+d/2}\Gamma((d+\beta)/2)}{\Gamma(-\pi/2)} \left| \xi \right|^{-\beta-d}, \quad \xi \neq 0.
    \end{equation}
    of order \( m = \lceil \beta \rceil/2 \).
  \end{enumparaalph}
\end{theorem}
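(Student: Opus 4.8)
The plan is to establish part \ref{itm:cond-ft-multquad2} by analytic continuation in the exponent $\beta$, bootstrapping from the classical Fourier-transform identity of Theorem \ref{poly:cond-ft-multquad1}, which is valid on the half-line $\beta < -d/2$ where $\mathcal{S}_0 = \mathcal{S}$ and \eqref{eq:285} is the ordinary Plancherel-type identity; part \ref{itm:cond-ft-power} then follows by letting $c \downarrow 0$ in part \ref{itm:cond-ft-multquad2}.

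\emph{Part \ref{itm:cond-ft-multquad2}.} Fix a Schwartz function $\gamma$ vanishing to high order at the origin, $\gamma(\xi) = O(|\xi|^N)$ with $N$ large, so that $\gamma \in \mathcal{S}_{2m}$ for every $m \le N/2$. Write $\Phi_\beta(x) := (c^2+|x|^2)^\beta$ and let $\widehat{\Phi}_\beta$ denote the right-hand side of \eqref{eq:303}; it is entire in $\beta$ for each fixed $\xi \neq 0$ since $1/\Gamma(-\beta)$ is entire and $\nu \mapsto K_\nu(z)$ is entire for $z>0$. I would show that the two maps
\[
  \beta \longmapsto \int_{\mathbb{R}^d} \Phi_\beta(x)\,\widehat{\gamma}(x)\,\mathrm{d}x
  \qquad\text{and}\qquad
  \beta \longmapsto \int_{\mathbb{R}^d} \widehat{\Phi}_\beta(\xi)\,\gamma(\xi)\,\mathrm{d}\xi
\]
are holomorphic on the strip $\{\Re\beta < N/2\}$. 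The first is handled by dominating $|\Phi_\beta(x)\widehat\gamma(x)|$ locally uniformly in $\beta$ by $C(1+|x|^2)^{\Re\beta}|\widehat\gamma(x)| \in L^1$ and invoking Morera's theorem together with Fubini; the second uses the Bessel estimates of Lemma \ref{lem:23}, which give $|K_{d/2+\beta}(r)| \le C r^{-|d/2+\Re\beta|}$ for small $r$ and exponential decay $|K_{d/2+\beta}(r)| \le C r^{-1/2}\mathrm{e}^{-r}$ for large $r$, so that $\widehat\Phi_\beta(\xi)$ grows at worst like $|\xi|^{-2\Re\beta-d}$ near the origin and decays exponentially at infinity — integrable against $\gamma = O(|\xi|^N)$ precisely because $N > 2\Re\beta$ on the strip, and again holomorphic by Morera. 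Since the two maps coincide for $\beta < -d/2$ by Theorem \ref{poly:cond-ft-multquad1}, the identity theorem forces coincidence on the whole strip; letting $N \to \infty$ shows that $\widehat\Phi_\beta$ is the generalized Fourier transform of $\Phi_\beta$ in the sense of Definition \ref{def:gen-fourier-transform} for every $\beta \notin \tfrac12\mathbb{N}_0$. The exact order $m = \max\{0,\lceil 2\beta\rceil/2\}$ then reads off the singularity exponent: \eqref{eq:285} holds for all $\gamma$ with $\gamma(\xi)=O(|\xi|^{2m})$ because $2m > 2\beta$ (here $2\beta \notin \mathbb{N}_0$ gives the \emph{strict} inequality, hence local integrability of $|\xi|^{-2\beta-d+2m}$), whereas for $\gamma$ of order only $2(m-1)$ one can exhibit an explicit nonnegative bump near $0$ making the frequency-side integral diverge, so no smaller order works.

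\emph{Part \ref{itm:cond-ft-power}.} Apply part \ref{itm:cond-ft-multquad2} with $\beta$ replaced by $\beta/2$, so $\Phi^c(x) := (c^2+|x|^2)^{\beta/2} \to |x|^\beta$ pointwise as $c \downarrow 0$, and pass to the limit in \eqref{eq:285}. On the spatial side, dominated convergence (dominating function $C(1+|x|)^{\beta}|\widehat\gamma(x)|$ for $c \le 1$) gives $\int \Phi^c\widehat\gamma \to \int |x|^\beta\widehat\gamma$. On the frequency side, the classical small-argument asymptotics $r^\nu K_\nu(r) \to 2^{\nu-1}\Gamma(\nu)$ with $\nu = (d+\beta)/2 > 0$ yield the pointwise limit
\[
  \widehat{\Phi}^c(\xi) \;\xrightarrow[c\downarrow 0]{}\; (2\pi)^{d/2}\,\frac{2^{\beta+d/2}\,\Gamma((d+\beta)/2)}{\Gamma(-\beta/2)}\,|\xi|^{-\beta-d},
\]
and the Bessel bounds of Lemma \ref{lem:23} furnish a $c$-uniform integrable dominating function — crucially, the powers of $c$ cancel, leaving $|\widehat\Phi^c(\xi)| \le C(|\xi|^{-\beta-d} + |\xi|^{-(\beta+d+1)/2})$ — so dominated convergence again applies and delivers \eqref{eq:304}, with the order $m = \lceil\beta\rceil/2$ read off from the singularity $|\xi|^{-\beta-d}$ exactly as before.

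\emph{Main obstacle.} The delicate point is the holomorphy/analytic-continuation step: one must verify, locally uniformly on the $\beta$-strip, the integrability and continuity needed to apply Morera's theorem simultaneously to both sides, and this rests entirely on the two-sided asymptotics of $K_\nu$ from Lemma \ref{lem:23} — the small-$r$ bound to control the growth of $\widehat\Phi_\beta$ near $\xi = 0$ (matching the admissible decay of the test functions) and the large-$r$ bound to supply the exponential decay that tames $\gamma$ at infinity. The remaining steps — pinning down the constant, the limit $c \downarrow 0$, and identifying the order by a divergent test integral — are routine by comparison.
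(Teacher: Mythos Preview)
Your proposal is correct and follows essentially the same route as the paper: analytic continuation in the exponent from the classical range $\beta < -d/2$ using the Bessel estimates of Lemma~\ref{lem:23} as dominating functions (the paper phrases this via Cauchy's integral formula and Fubini rather than Morera, which is equivalent), followed by the limit $c \downarrow 0$ with dominated convergence and the asymptotic $r^\nu K_\nu(r) \to 2^{\nu-1}\Gamma(\nu)$. Your additional remark that the order $m$ is sharp (via a divergent test integral) goes slightly beyond what the paper verifies, but is a harmless and correct supplement.
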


\begin{proof}
  \thmenumhspace{-1em}
  \begin{enumparaalph}
  \item We can pass from formula \eqref{eq:300} to \eqref{eq:303} by analytic continuation, where the increasing \( m \) serves to give us the needed dominating function.

    Let \( G = \left\{ \lambda \in \mathbb{C} : \Re(\lambda) < m \right\} \) and
    \begin{align}
      \label{eq:305}
      \varphi_\lambda(\xi) := {} &(2\pi)^{d/2} \frac{2^{1+\lambda}}{\Gamma(-\lambda)} \left( \frac{\left| \xi \right|}{c} \right)^{-\lambda-d/2} K_{d/2+\lambda}(c \left| \xi \right|)\\
      \Phi_\lambda(\xi) := {} & \left( c^2 + \left| \xi \right|^2 \right)^\lambda.
    \end{align}
    We want to show
    \begin{equation}
      \label{eq:306}
      \int_{\mathbb{R}^d} \Phi_\lambda(\xi)\widehat{\gamma}(\xi) \diff \xi = \int_{\mathbb{R}^d} \varphi_\lambda(\xi) \gamma(\xi) \diff \xi, \quad \text{for all } \gamma \in \mathcal{S}_{2m},
    \end{equation}
    which is true for \( \lambda < d/2 \) by \eqref{eq:300}. As the integrands \( \Phi_\lambda \) and \( \varphi_\lambda \) are analytic, the integral functions are also analytic by Cauchy’s Integral Formula and Fubini’s Theorem if we can find a uniform dominating function for each of them on an arbitrary compact set \( \mathcal{C} \subseteq G \). As this is clear for \( \Phi_\lambda \) by the decay of \( \gamma \in \mathcal{S} \), it remains to consider \( \varphi_\lambda \).

    Setting \( b := \Re(\lambda) \), for \( \xi \) close to \( 0 \) we get by estimate \eqref{eq:301} of Lemma \ref{lem:23} that
    \begin{equation}
      \label{eq:307}
      \left| \varphi_\lambda(\xi) \gamma(\xi) \right| \leq C_\gamma \frac{2^{b+\left| b + d/2 \right|}\Gamma(\left| b + d/2 \right|)}{\left| \Gamma(-\lambda) \right|}c^{b+d/2-\left| b+d/2 \right|}\left| \xi \right|^{-b-d/2-\left| b+d/2 \right|+2m}
    \end{equation}
    for \( b \neq -d/2 \) and
    \begin{equation}
      \label{eq:308}
      \left| \varphi_\lambda(\xi)\gamma(\xi) \right|\leq C_\lambda \frac{2^{1-d/2}}{\left| \Gamma(-\lambda) \right|}\left( \frac{1}{\e} - \log \frac{c \left| \xi \right|}{2} \right).
    \end{equation}
    for \( b = -d/2 \). Taking into account that \( \mathcal{C} \) is compact and \( 1/\Gamma \) is an entire function, this yields
    \begin{equation}
      \label{eq:309}
      \left| \varphi_\lambda(\xi)\gamma(\xi) \right| \leq C_{\lambda,m,c,\mathcal{C}} \left( 1 + \left| \xi \right|^{-d+2\varepsilon}-\log \frac{c \left| \xi \right|}{2} \right),
    \end{equation}
    with \( \left| \xi \right| < min \left\{ 1/c,1 \right\} \) and \( \varepsilon := m-b \), which is locally integrable.

    For \( \xi \) large, we similarly use estimate \eqref{eq:302} of Lemma \ref{lem:23} to obtain
    \begin{equation}
      \label{eq:310}
      \left| \varphi_\lambda(\xi)\gamma(\xi) \right| \leq C_\lambda \frac{2^{1+b}\sqrt{2\pi}}{\left| \Gamma(-\lambda) \right|}c^{b+(d-1)/2} \left| \xi \right|^{-b-(d+1)/2} \e^{-c \left| \xi \right|} \e^{\left| b+d/2 \right|^2/(2c \left| \xi \right|)}
    \end{equation}
    and consequently
    \begin{equation}
      \label{eq:311}
      \left| \varphi_\lambda(\xi)\gamma(\xi) \right| \leq C_{\gamma,m,\mathcal{C},c} \e^{-c \left| \xi \right|},
    \end{equation}
    which certainly is integrable.
  \item We want to pass to \( c \rightarrow 0 \) in formula \eqref{eq:303}. This can be done by applying the Dominated Convergence Theorem in the definition of the generalized Fourier transform \eqref{eq:285}. Writing \( \Phi_c(x) := \left( c^2+ \left| x \right|^2 \right)^{\beta/2} \) for \( c>0 \), we know that
    \begin{equation}
      \label{eq:312}
      \widehat{\Phi}_c(\xi) = \varphi_c(\xi) := (2\pi)^{d/2} \frac{2^{1+\beta/2}}{\left| \Gamma(-\beta/2) \right|} \left| \xi \right|^{-\beta-d}(c \left| \xi \right|)^{(\beta+d)/2}K_{(\beta+d)/2}(c \left| \xi \right|).
    \end{equation}
    By using the decay properties of a \( \gamma \in \mathcal{S}_{2m} \) in the estimate \eqref{eq:307}, we get
    \begin{equation}
      \label{eq:313}
      \left| \varphi_c(\xi)\gamma(\xi) \right| \leq C_\gamma \frac{2^{\beta+d/2}\Gamma((\beta+d)/2}{\left| \Gamma(-\beta/2) \right|} \left| \xi \right|^{2m-\beta-d} \quad \text{for } \left| \xi \right| \rightarrow 0
    \end{equation}
    and
    \begin{equation}
      \label{eq:314}
      \left| \varphi_c(\xi)\gamma(\xi) \right| \leq C_\gamma \frac{2^{\beta+d/2}\Gamma((\beta+d)/2)}{\left| \Gamma(-\beta/2) \right|} \left| \xi \right|^{-\beta-d},
    \end{equation}
    yielding the desired uniform dominating function. The claim now follows by also taking into account that
    \begin{equation}
      \label{eq:315}
      \lim_{r\rightarrow 0} r^\nu K_\nu(r) = \lim_{r\rightarrow 0} 2^{\nu-1} \int_{0}^{\infty} \e^{-t} \e^{-r^2/(4t)} t^{\nu-1} \diff t = 2^{\nu-1} \Gamma(\nu).
    \end{equation}
  \end{enumparaalph}
\end{proof}

\begin{remark}[Fractional orders]
  \label{rem:frac-ord}
  In Theorem \ref{thm:cond-ft-power}, we have slightly changed the statement compared to the reference in order to allow orders which are a multiple of \( 1/2 \) instead of just integers. This makes sense because the definition of the order involves the space \( \mathcal{S}_{2m} \) due to its purpose in the representation formula of Theorem \ref{thm:repr-thm-cond-semi}, involving a quadratic functional. However, in Section \ref{sec:moment-bound-symm} we needed the generalized Fourier transform in a linear context.

  Fortunately, the proofs we repeated here from \cite{Wend05}, still apply: All integrability arguments remain true when permitting multiples of \( 1/2 \), in particular the estimates of \eqref{eq:307} and \eqref{eq:313}.
\end{remark}
			     
\clearpage{}

\bibliographystyle{alpha}
\bibliography{Bibliography}

\end{document}